\documentclass[11pt, twoside]{article}
\usepackage{bcc-cambridge-class,mathrsfs,verbatim}

\addtolength{\oddsidemargin}{-24pt}



\bcctitle{Intersection Problems in Extremal Combinatorics: Theorems, Techniques and Questions Old and New.}
\bccshorttitle{Intersection problems}

\bccname{David Ellis}
\bccshortname{David Ellis} 

\bccaddressa{School of Mathematics \\ University of Bristol \\ Fry Building \\ Woodland Road \\ Bristol \\ BS8 1UG \\ United Kingdom.}
\bccemaila{david.ellis@bristol.ac.uk}
\DeclareMathOperator{\Domain}{\textrm{Domain}}

\begin{document}

\global\long\def\connected{\text{highly connected}}
\global\long\def\f{\mathcal{F}}
\global\long\def\pn{\mathcal{P}\left(\left[n\right]\right)}
\global\long\def\g{\mathcal{G}}
\global\long\def\l{\mathcal{L}}
\global\long\def\s{\mathcal{S}}
\global\long\def\j{\mathcal{J}}
\global\long\def\d{\mathcal{D}}
\global\long\def\Cay{\mathrm{Cay}}
\global\long\def\GL{\mathrm{GL}}
\global\long\def\Inf{}
\global\long\def\Id{\textrm{Id}}
\global\long\def\Tr{\mathrm{Tr}} 
\global\long\def\pn{\mathcal{P}([n])}
\global\long\def\sgn{\textrm{sgn}}
\global\long\def\p{\mathcal{P}}
\global\long\def\Prob{\text{Prob}}
\global\long\def\h{\mathcal{H}}
\global\long\def\n{\mathbb{N}}
\global\long\def\a{\mathcal{A}}
\global\long\def\b{\mathcal{B}}
\global\long\def\j{\mathcal{J}}
\global\long\def\c{\mathcal{C}}
\global\long\def\e{\mathbb{E}}
\global\long\def\x{\mathbf{x}}
\global\long\def\y{\mathbf{y}}
\global\long\def\av{\mathsf{A}}
\global\long\def\chop{\mathrm{Chop}}
\global\long\def\stab{\mathrm{Stab}}
\global\long\def\Span{\mathrm{Span}}
\global\long\def\codim{\mathrm{codim}}
\global\long\def\Var{\mathrm{Var}}
\global\long\def\rank{\mathrm{rank}}
\global\long\def\t{\mathsf{T}}

\makebcctitle

\begin{abstract}
The study of intersection problems in extremal combinatorics dates back perhaps to 1938, when Paul Erd\H{o}s, Chao Ko and Richard Rado proved the (first) `Erd\H{o}s-Ko-Rado theorem' on the maximum possible size of an intersecting family of $k$-element subsets of a finite set. Since then, a plethora of results of a similar flavour have been proved, for a range of different mathematical structures, using a wide variety of different methods. Structures studied in this context have included families of vector subspaces, families of graphs, subsets of finite groups with given group actions, and of course uniform hypergraphs with stronger or weaker intersection conditions imposed. The methods used have included purely combinatorial methods such as shifting/compressions, algebraic methods (including linear-algebraic, Fourier analytic and representation-theoretic), and more recently, analytic, probabilistic and regularity-type methods. As well as being natural problems in their own right, intersection problems have revealed connections with many other parts of combinatorics and with theoretical computer science (and indeed with many other parts of mathematics), both through the results themselves, and the methods used.

In this survey paper, we discuss both old and new results (and both old and new methods), in the field of intersection problems. Many interesting open problems remain; we will discuss several such. For expositional and pedagogical purposes, we also take this opportunity to give slightly streamlined versions of proofs (due to others) of several classical results in the area. This survey is intended to be useful to graduate students, as well as to more established researchers. It is a somewhat personal perspective on the field of intersection problems, and is not intended to be exhaustive; the author apologises for any omissions. It is an expanded version of a paper that will appear in the Proceedings of the 29th British Combinatorial Conference, University of Lancaster, 11th-15th July 2022.
\end{abstract}

\section{Introduction: three proofs of one theorem.}
Intersection problems are of great interest, and importance, in extremal combinatorics and theoretical computer science. Roughly speaking, they take the following form: how large can a family of (mathematical) objects be, if any two of the objects in the family intersect in some specified way? (`Two' can be replaced by an integer greater than two, in some problems, but for now we will try to keep things simple.)
\newline

To make things concrete, let us give a specific example.
\begin{qn}
\label{question:power-set}
If $X$ is an $n$-element set, what is the maximum possible size of a family $\mathcal{F}$ of subsets of $X$, such that $A \cap B \neq \emptyset$ for all $A,B \in \f$?
\end{qn}
This question turns out to be rather easy. Without loss of generality, we may assume that $X = [n] := \{1,2,\ldots,n\}$, the standard $n$-element set. If $\mathcal{F}$ is a family as in Question \ref{question:power-set}, then it can contain at most one of $S$ and $[n] \setminus S$, for any subset $S \subset [n]$. Writing $\mathcal{P}([n])$ for the power-set of $[n]$ (i.e.\ the set of all subsets of $[n]$), and noting that the pairs $\{S,[n]\setminus S\}$ partition $\mathcal{P}([n])$, it follows that $\mathcal{F}$ contains at most half of all the subsets of $[n]$, i.e.\ $|\mathcal{F}| \leq 2^{n-1}$. This upper bound is sharp, as can be seen by taking $\mathcal{F}$ to be all subsets of $[n]$ containing the element $1$. There are many other ways of attaining this upper bound: one can also take $\mathcal{F}$ to be all subsets of $[n]$ of size greater than $n/2$, if $n$ is odd, or one can take $\mathcal{F}$ to be all subsets of $[n]$ containing at least two elements of $\{1,2,3\}$, if $n \geq 3$. \label{triv}

Some more terminology is helpful. A family of sets $\mathcal{F}$ is said to be {\em intersecting} if any two sets in $\mathcal{F}$ have nonempty intersection. So, in the previous paragraph, we proved the simple fact that any intersecting family of subsets of an $n$-element set has size at most $2^{n-1}$.

Let us now turn to a somewhat harder question, first considered by Paul Erd\H{o}s, Chao Ko and Richard Rado in 1938:

\begin{qn}
\label{question:ekr}
For $n,k \in \mathbb{N}$, what is the maximum possible size of an intersecting family of $k$-element subsets of $\{1,2,\ldots,n\}$?
\end{qn}

An aside: if $X$ is a finite set, we write ${X \choose k} : = \{S \subset X:\ |S|=k\}$ for the set of all $k$-element subsets of $X$. A family of sets where all the sets are of the same size is often called a {\em uniform} family; if all the sets have size $k$, it is called a {\em $k$-uniform} family. By contrast, a family of sets where the sets have different sizes is often called a {\em non-uniform family}. Problems about subsets of ${X \choose k}$, for some finite set $X$ and $k \in \mathbb{N}$, are often called `uniform' problems; problems about subsets of $\mathcal{P}(X)$ are often called `non-uniform' problems. So Question \ref{question:ekr} is an example of a `uniform' problem: a problem about uniform set-families. Question \ref{question:power-set} is an example of a `non-uniform' problem.

It makes sense, in Question \ref{question:ekr}, to impose the restriction that $k \leq n/2$, since if $k > n/2$, then any two $k$-element subsets of $[n]$ have nonempty intersection, and therefore the family of all $k$-element subsets of $X$ is an intersecting family: the question is `trivial'. In fact, the question is also easy when $k=n/2$ and $n$ is even: essentially the same pairing argument as in Question 1.1, shows that if $n$ is even, then an intersecting family $\mathcal{F} \subset {[n] \choose n/2}$ has $|\mathcal{F}| \leq \tfrac{1}{2} {n \choose n/2}$; this upper bound can be attained by taking $\mathcal{F}$ to be all $(n/2)$-element subsets of $[n]$ containing the element 1. However, Question \ref{question:ekr} is harder for $k < n/2$. In 1938, Erd\H{o}s, Ko and Rado proved the following theorem \cite{ekr}.

\begin{theorem}[Erd\H{o}s, Ko and Rado, 1938]
\label{thm:ekr}
Let $n,k \in \mathbb{N}$ with $k \leq n/2$. If $\mathcal{F}$ is an intersecting family of $k$-element subsets of $[n]$, then
\begin{equation}
\label{eq:ekr-bound}
|\f| \leq {n-1 \choose k-1}.
\end{equation}
If $k < n/2$, then equality holds in (\ref{eq:ekr-bound}) if and only if there exists $i \in [n]$ such that
$$\f = \{S \subset [n]:\ |S|=k,\ i \in S\}.$$
\end{theorem}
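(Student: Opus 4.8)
The plan is to prove the upper bound $(\ref{eq:ekr-bound})$ by Katona's \emph{cycle method}, and then to obtain the characterisation of the extremal families by analysing when equality can hold in that argument. For the bound, identify $[n]$ with $\mathbb{Z}/n\mathbb{Z}$ and call a bijection $\sigma\colon \mathbb{Z}/n\mathbb{Z} \to [n]$ a \emph{cyclic order}; the sets $I^{\sigma}_{j} := \{\sigma(j), \sigma(j+1), \ldots, \sigma(j+k-1)\}$, for $j \in \mathbb{Z}/n\mathbb{Z}$, are its \emph{arcs}. The key claim is that, for each fixed $\sigma$, the intersecting family $\f$ contains at most $k$ of the $n$ arcs of $\sigma$ (this is where $k \le n/2$ enters): if $\f$ contains an arc, which by rotating we may take to be $I^{\sigma}_{0}$, then any other arc of $\f$ must meet $I^{\sigma}_{0}$, which forces its index into the set of $2(k-1)$ residues $\{-(k-1), \ldots, -1\} \cup \{1, \ldots, k-1\}$ (distinct since $2k-1 \le n-1$); partitioning these into the $k-1$ pairs $\{\,i,\ i-k\,\}$ with $1 \le i \le k-1$, the arcs $I^{\sigma}_{i}$ and $I^{\sigma}_{i-k}$ occupy $2k \le n$ consecutive positions and so are disjoint, whence $\f$ contains at most one arc of each pair, i.e.\ at most $1+(k-1)=k$ arcs in all.

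The bound now follows by double counting the pairs $(\sigma, A)$ with $A \in \f$ an arc of $\sigma$: a fixed $k$-set $A$ is an arc of exactly $n\cdot k!\,(n-k)!$ cyclic orders ($n$ places for the arc, then $k!$ orderings inside it and $(n-k)!$ outside), while each of the $n!$ cyclic orders contributes at most $k$ by the claim; hence $|\f|\cdot n\cdot k!\,(n-k)! \le n!\cdot k$, which rearranges to $|\f| \le {n-1 \choose k-1}$.

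For the extremal case, suppose $k < n/2$ and $|\f| = {n-1 \choose k-1}$. Then the double count is an equality, and since each of its $n!$ summands is at most $k$, \emph{every} cyclic order $\sigma$ has exactly $k$ arcs in $\f$. As these $k$ arcs pairwise intersect, their indices lie within $k-1$ of one another in $\mathbb{Z}/n\mathbb{Z}$, and (using $2k-1 < n$) a set of $k$ such indices must be a block $\{a, a+1, \ldots, a+k-1\}$ of consecutive indices; all $k$ arcs then share the unique common point $x_{\sigma} := \sigma(a+k-1)$, since $\bigcap_{j=a}^{a+k-1} I^{\sigma}_{j} = \{\sigma(a+k-1)\}$. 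I would then show that $x_{\sigma}$ is the \emph{same} element $i \in [n]$ for all $\sigma$. Granting this: every $B \in \f$ is an arc of some cyclic order in which a fixed $A \in \f$ is also an arc — one can build such a $\sigma$ because $A \cap B \neq \emptyset$ keeps $|A \cup B| \le 2k-1 < n$ — so $i$ lies in both $A$ and $B$; hence $i \in \bigcap\f$, so $\f \subseteq \{S \in {[n] \choose k}:\ i \in S\}$, and equality of cardinalities forces $\f = \{S:\ i \in S\}$.

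The delicate point — the step I expect to be hardest — is the consistency claim that $x_{\sigma}$ is independent of $\sigma$. The natural route is a connectivity argument on the set of cyclic orders: transposing two adjacent positions of $\sigma$ changes exactly two of its arcs, so if the transposed positions both lie outside the window of positions used by the $\f$-block of $\sigma$, then neither the block nor $x_{\sigma}$ changes; since any two cyclic orders are joined by a chain of adjacent transpositions, the remaining work is to handle transpositions at the boundary of the block, and it is here that the extremality of $\f$ must be used to stop the block — and hence $x_{\sigma}$ — from drifting. An attractive alternative, which yields the bound and the extremal characterisation together and avoids this bookkeeping, is the compression (shifting) method: repeatedly applying $(i,j)$-compressions, which preserve $k$-uniformity, the intersecting property and $|\f|$, one reduces to the case that $\f$ is \emph{shifted}, for which a short direct argument identifies the extremal intersecting families as exactly the stars.
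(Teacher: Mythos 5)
Your proof of the upper bound is Katona's cycle method and coincides with the first of the three proofs given in the paper: the claim that an intersecting family contains at most $k$ of the $n$ arcs of any fixed cyclic order (proved by pairing the index $i$ with $i-k$ and using $k\le n/2$ to see the paired arcs are disjoint), followed by the double count over all $n!$ orders, is exactly the paper's argument, and this part of your write-up is correct. Note, however, that the paper deliberately proves only the inequality --- it says explicitly that it will focus on the proofs of the upper bound and never establishes the uniqueness statement --- so your treatment of the equality case goes beyond the source.

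That treatment has a genuine gap, which you have correctly located but not closed. From equality in the double count you rightly deduce that every cyclic order $\sigma$ carries exactly $k$ arcs of $\f$, that these form a consecutive block, and that they share a unique point $x_\sigma$. But the entire content of the uniqueness statement is the assertion that $x_\sigma$ is the same element for all $\sigma$; without it, your concluding step only shows that for each pair $A,B\in\f$ there is \emph{some} $\sigma$ with $x_\sigma\in A\cap B$, which merely re-derives the hypothesis that $\f$ is intersecting. The connectivity argument you sketch handles only the easy transpositions (those away from the block); an adjacent transposition at the boundary of the block genuinely can change which arcs lie in $\f$ and hence where the block sits, and showing that the common point nevertheless does not move is precisely where all the remaining work lies --- you leave it open. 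Your fallback via compressions is likewise only a sketch: since $C_{ij}$ changes the family, identifying the extremal \emph{shifted} families does not by itself identify the extremal families; one must also argue that extremality pulls back through each compression (e.g.\ that if $C_{ij}(\f)$ is a star then $\f$ already was one, modulo exceptional configurations), and the ``short direct argument'' for shifted families is itself not a one-liner. Either route can be completed, but as written neither is.
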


Theorem \ref{thm:ekr} is often called `The' Erd\H{o}s-Ko-Rado theorem, even though there are several theorems of Erd\H{o}s, Ko and Rado that relate to intersecting families.

We remark that the conclusion of Theorem \ref{thm:ekr} illustrates a common (though by no means universal) feature of intersection problems: the `winning' families are often those consisting of all the objects containing a small, fixed sub-object (in this case, a single point) that guarantees what we are looking for in the intersection of each pair of objects (in fact, in the intersection of all of the objects in the family). Many intersection problems take the following form: given two finite families of sets $\mathcal{H}$ and $\mathcal{I}$, determine the maximum possible size of a subset $\f$ of $\mathcal{H}$ such that for any two sets $S,T \in \f$, the intersection $S \cap T$ contains a set in $\mathcal{I}$. Clearly, Question \ref{question:ekr} is of this form, with $\mathcal{H} = {[n] \choose k}$ and $\mathcal{I}$ consisting of all the nonempty sets (or, equivalently, all the singletons). An intersection problem of this form is said to have the {\em Erd\H{o}s-Ko-Rado property} if some family of the form $\{S \in \mathcal{H}:\ I \subset S\}$ (for some $I \in \mathcal{I}$) is an optimal solution to the problem. It is said to have the {\em strict Erd\H{o}s-Ko-Rado property} if all the optimal solutions are of this form. Rephrased in this language, Theorem \ref{thm:ekr} states that for $k \leq n/2$, Question \ref{question:ekr} has the Erd\H{o}s-Ko-Rado property, and that for $k < n/2$, the strict form of the property holds.

Slightly more generally, families of sets consisting of all the sets having specified intersection with some `small' set (of bounded size) are often called {\em juntas}. More precisely, a non-uniform family $\f \subset \mathcal{P}(X)$ is said to be a {\em $j$-junta} if there exists a subset $J \in {X \choose j}$ and a family $\mathcal{J} \subset \mathcal{P}(J)$ such that $S \in \f$ if and only if $S \cap J \in \mathcal{J}$, for all $S \subset X$. Similarly, a $k$-uniform family $\f \subset {X \choose k}$ is said to be a {\em $j$-junta} if there exists a subset $J \in {X \choose j}$ and a family $\mathcal{J} \subset \mathcal{P}(J)$ such that $S \in \f$ if and only if $S \cap J \in \mathcal{J}$, for all $S \in {X \choose k}$. So in the Erd\H{o}s-Ko-Rado theorem, the (unique) winning families are 1-juntas. By contrast, in Question \ref{question:power-set}, the winning family $\{S \subset [n]:\ 1 \in S\}$ is a 1-junta, and the winning family $\{S \subset [n]:\ |S \cap [3]| \geq 2\}$ is a 3-junta, but the winning family $\{S \subset [n]: |S| > n/2\}$ (for $n$ odd) is not a junta. (It is an $n$-junta, trivially, but it is not even an $(n-1)$-junta, and, thinking of $n \to \infty$ as our asymptotic parameter, $n$ does not have bounded size, so an $n$-junta is not `really' a junta.) In many intersection problems (though not all), the winning families turn out to be juntas. (We will shortly see examples of intersection theorems where none of the winning families are juntas --- so in particular, the Erd\H{o}s-Ko-Rado property does not hold.) \label{junta}

The proofs of the upper bound (\ref{eq:ekr-bound}) are (arguably) more interesting and (certainly) more elegant than the proof of the characterisation of the equality case, so in our discussion we will focus our attention on the proofs of the upper bound. 

There are now many proofs of (the upper bound in) Theorem \ref{thm:ekr}, of which we draw attention to three: the original proof of Erd\H{o}s, Ko and Rado (which uses `combinatorial shifting', a.k.a.\ `compressions', together with induction on $n$); the averaging proof of Katona (perhaps the most elegant of all the proofs), and the algebraic (spectral) proof of Lov\'asz. Each of these three proofs contains important ideas that have been used to tackle a range of other interesting and important problems in combinatorics and theoretical computer science, so we will discuss all three of them at some length.
\newline

We start with Katona's proof \cite{katona}, which Paul Erd\H{o}s described as a `Book Proof'. It is probably the shortest proof, and is purely combinatorial. It rests on the following claim.
\begin{claim}
Let $X$ be an $n$-element set, and let $k \leq n/2$. Let $\mathcal{F}$ be an intersecting family of $k$-element subsets of $X$. Let $C$ be any cyclic ordering of the elements of $X$. Then at most $k$ sets in $\mathcal{F}$ are (cyclic) intervals in $C$.
\end{claim}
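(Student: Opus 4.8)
The plan is to work in the concrete model $X = \mathbb{Z}/n\mathbb{Z}$ with $C$ the natural cyclic order $0, 1, \ldots, n-1$; this loses no generality. If no member of $\f$ is a cyclic interval in $C$ there is nothing to prove, so fix one such interval $A \in \f$, and after rotating assume $A = \{0, 1, \ldots, k-1\}$. For $t \in \mathbb{Z}/n\mathbb{Z}$ write $B_t := \{t, t+1, \ldots, t+k-1\}$ (indices mod $n$) for the cyclic interval of length $k$ starting at $t$; every cyclic interval of length $k$ equals $B_t$ for a unique $t$, so there are exactly $n$ of them.

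First I would pin down which intervals can lie in $\f$ alongside $A$. Since $\f$ is intersecting, any $B_t \in \f$ must meet $A$, and a direct check (using $n \geq 2k$, so there is no ``wrap-around'' coincidence) shows $B_t \cap A \neq \emptyset$ exactly when $t \in \{-(k-1), -(k-2), \ldots, k-1\}$. Removing the value $t = 0$, which gives $A$ itself, leaves precisely $2k-2$ candidate intervals.

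The heart of the argument is to organise these $2k-2$ candidates into $k-1$ pairs of \emph{disjoint} sets. For each $i$ with $1 \leq i \leq k-1$, pair $B_i = \{i, i+1, \ldots, i+k-1\}$ with $B_{i-k} = \{i-k, i-k+1, \ldots, i-1\}$. As $i$ ranges over $\{1, \ldots, k-1\}$ the starting points $i$ cover $\{1, \ldots, k-1\}$ and the starting points $i-k$ cover $\{-(k-1), \ldots, -1\}$, each value occurring once, so these $k-1$ pairs partition the candidate set. Within a pair, $B_{i-k}$ and $B_i$ together occupy the $2k$ consecutive positions $i-k, i-k+1, \ldots, i+k-1$; since $n \geq 2k$ these positions do not wrap around to overlap, so $B_{i-k} \cap B_i = \emptyset$. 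As $\f$ is intersecting it therefore contains at most one interval from each of the $k-1$ pairs, hence at most $(k-1) + 1 = k$ cyclic intervals in total (the extra $1$ being $A$ itself), which is the claim.

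The only subtle points are the two appeals to the hypothesis $k \leq n/2$ — once in identifying which intervals meet $A$, and once in checking disjointness within a pair — together with the bookkeeping that verifies the pairing is exhaustive; I would spell these out, but they are routine once the right pairing $\{B_i, B_{i-k}\}$ has been written down, and finding that pairing is really the whole content of the claim.
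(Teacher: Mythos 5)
Your proof is correct and is essentially the paper's argument: fix one interval $A$ in $\f$, observe that the only other candidate intervals are the $2k-2$ that meet $A$, and split them into $k-1$ disjoint pairs $\{B_{i-k},B_i\}$ (the paper uses the identical pairing, indexed by $i\in\{2,\ldots,k\}$ with $1$-based labels), so that $\f$ contains at most one from each pair plus $A$ itself. Both appeals to $k\leq n/2$ are exactly where the paper makes them, so there is nothing to change.
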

This claim quickly implies the upper bound (\ref{eq:ekr-bound}). Indeed, for any cyclic ordering $C$ of the elements of $X$, there are exactly $n$ sets of size $k$ that are intervals in $C$, so the above claim says that an intersecting family can contain at most a $(k/n)$-fraction of them (for any cyclic ordering $C$). Averaging over all the cyclic orderings of $X$ immediately implies that
$$|\f| \leq \frac{k}{n}{n \choose k} = {n-1\choose k-1},$$
as required.

We now turn to the proof of the claim. Without loss of generality, we may assume that $X = \{1,2,\ldots,n\}$ and that $C = 123\ldots n1$, the `standard' cyclic ordering of $1,2,\ldots,n$. Let $\mathcal{F}$ be an intersecting family of $k$-element subsets of $\{1,2,\ldots,n\}$. If $\mathcal{F}$ contains no interval in $C$, then we are done, so without loss of generality, we may assume that $\mathcal{F}$ contains the cyclic interval $\{1,2,\ldots,k\}$. Any other set in $\mathcal{F}$ must intersect $\{1,2,\ldots,k\}$, so the only intervals in $C$ that $\mathcal{F}$ can possibly contain, are the $2k-1$ sets
\begin{align*}
&\{n-k+2,n-k+3,\ldots,n,1\},\\
&\{n-k+3,n-k+4,\ldots,n,1,2\},\\
&\ldots\\
&\{k-1,k,\ldots,2k-2\},\\
&\{k,k+1,\ldots,2k-1\}.
\end{align*}
Now we observe that for each $i \in \{2,\ldots,k\}$, $\mathcal{F}$ contains at most one of the cyclic intervals $\{i-k,i-k+1,\ldots,i-1\}$ and $\{i,i+1,\ldots,i+k-1\}$ (addition and subtraction being modulo $n$), since these two sets are disjoint. (Here we use the fact that $i-k+n > i+k-1$, which follows from $k \leq n/2$). It follows that $\mathcal{F}$ contains at most $k$ sets that are intervals in $C$, proving the claim.

\subsubsection*{Averaging over smaller subsets: a general proof-technique} It is worth abstracting one of the ideas of Katona's proof, as it occurs rather often in proofs in extremal combinatorics. Suppose we have a (finite) universe $U$ of mathematical objects, and a property $P$ of subsets of $U$ which is closed under taking subsets (i.e.\ if $\f \subset U$ has the property $P$, and $\g \subset \f$, then $\g$ also has the property $P$). Suppose we wish to prove that any subset $\mathcal{F}$ of $U$ with the property $P$, has $|\f| \leq M$. Now suppose that there is a group $H$ of permutations of $U$ acting transitively on the elements of $U$, such that elements of $H$ (when applied to subsets), preserve the property $P$. Rather than focussing on the entire universe $U$, we can focus on a smaller subset $\mathcal{S} \subset U$; if we can prove that for any subset $\mathcal{G}$ of $\mathcal{S}$ satisfying the property $P$, we have $|\mathcal{G}| \leq (M|\mathcal{S}|)/|U|$, then we are done, by averaging over all translates of $\mathcal{S}$ (by elements of $H$). In the case of Katona's proof, $U$ was ${[n] \choose k}$, the set of all $k$-element subsets of $[n]$, $H$ was the symmetric group $S_{[n]}$, and $\mathcal{S}$ was the set of all $k$-element subsets of $[n]$ that are (cyclic) intervals in the cyclic ordering $123\ldots n1$.
\newline

We now turn to a proof of Theorem \ref{thm:ekr} using combinatorial shifting (a.k.a.\ compressions) and induction on $n$, essentially a streamlined version of the original proof of Erd\H{o}s, Ko and Rado in \cite{ekr}. Like Katona's proof, it is purely combinatorial. A key tool in this proof is the {\em $ij$-compression} operator $C_{ij}$, defined as follows. We assume without loss of generality that $X = \{1,2,\ldots,n\}$, the standard $n$-element set, which we hereafter denote by $[n]$. For $1 \leq i < j \leq n$ and for a set $S \subset [n]$, we define
$$C_{ij}(S) = \begin{cases} (S \cup \{i\}) \setminus \{j\}, \text{ if } S \cap \{i,j\}=\{j\},\\ S, \text{ otherwise.}\end{cases}
$$
For a family $\mathcal{F} \subset \mathcal{P}([n])$, we define
$$C_{ij}(\mathcal{F}) = \{C_{ij}(S):\ S \in \mathcal{F}\} \cup \{S \subset [n]:\ C_{ij}(S) \in \f\}.$$
It is easy to check that $|C_{ij}(\mathcal{F})| = |\mathcal{F}|$ for any family $\mathcal{F} \subset \mathcal{P}([n])$, that if $\mathcal{F}$ is intersecting, then so is $C_{ij}(\mathcal{F})$, and that if all the sets in $\mathcal{F}$ have size $k$, then the same is true of $C_{ij}(\mathcal{F})$ (for any $i < j$). In other words, the $ij$ compression operation $C_{ij}$ preserves both the size of a family, the property of being a $k$-uniform family, and the property of being an intersecting family.

We say a family $\mathcal{F} \subset \mathcal{P}([n])$ is {\em left-compressed} if $C_{ij}(\mathcal{F}) = \mathcal{F}$ for all $i < j$. It is clear that, for an arbitrary family $\mathcal{F} \subset \mathcal{P}([n])$, we can apply some sequence of $C_{ij}$'s to $\mathcal{F}$ so as to produce a left-compressed family, $\mathcal{G}$ say. (If at some stage in this process, we have a family $\mathcal{F}'$ which is not $ij$-compressed for some $i < j$, we replace $\mathcal{F}'$ with $C_{ij}(\mathcal{F}')$, noting that this reduces $\sum_{S \in \mathcal{F}'} \sum_{x \in S} x$, a non-negative quantity, so this process must terminate.) We have $|\mathcal{G}| = |\mathcal{F}|$, and if $\mathcal{F}$ is intersecting, so is $\mathcal{G}$. Finally, if all the sets in $\mathcal{F}$ have size $k$, the same is true of $\mathcal{G}$ (compressions preserve the property of being $k$-uniform). Hence, in proving the bound (\ref{eq:ekr-bound}), we may assume that $\mathcal{F}$ is left-compressed.

The bound (\ref{eq:ekr-bound}) is trivial when $n=1$. Let $n \geq 2$, and assume by induction that (\ref{eq:ekr-bound}) holds when $n$ is replaced by $n-1$. Now let $k \leq n/2$ and let $\mathcal{F} \subset {[n] \choose k}$ be intersecting; by the above argument, we may and shall assume that $\mathcal{F}$ is left-compressed. If $k=n/2$, then we are done, by the easy pairing argument mentioned above; so we assume henceforth that $k < n/2$. Let us consider the two families
$$\mathcal{A}: = \{S \setminus \{n\}:\ S \in \f,\ n \in S\} \subset {[n-1] \choose k-1},\quad \mathcal{B} : = \{S \in \f:\ n \notin S\} \subset {[n-1] \choose k}.$$
We claim that $\mathcal{A}$ is an intersecting family. Indeed, suppose for a contradiction that there exist $A_1,A_2 \in \mathcal{A}$ such that $A_1 \cap A_2 = \emptyset$. Note that $A_1 \cup \{n\},A_2 \cup \{n\} \in \mathcal{F}$, and further that $[n] \setminus (A_1 \cup A_2) \neq \emptyset$ (using the fact that $k \leq n/2$). Pick $i \in [n] \setminus (A_1 \cup A_2)$. Since $\mathcal{F}$ is left-compressed, we must have $C_{in}(A_1 \cup \{n\}) \in \mathcal{F}$, but $C_{in}(A_1 \cup \{n\}) \cap A_2 = \emptyset$, contradicting the fact that $\mathcal{F}$ is intersecting.

Trivially, $\mathcal{B}$ is also an intersecting family, so by induction (noting that $k \leq (n-1)/2$), we have
$$|\mathcal{A}| \leq {n-2 \choose k-2},\quad |\mathcal{B}| \leq {n-2 \choose k-1}.$$
Finally, we have 
$$|\mathcal{F}| = |\mathcal{A}|+|\mathcal{B}| \leq {n-2 \choose k-2}+{n-2 \choose k-1} = {n-1 \choose k-1},$$
completing the inductive step, and proving the bound (\ref{eq:ekr-bound}).

\subsubsection*{Combinatorial shifting/compressions: a general proof-strategy} As with Katona's proof, it is worth abstracting one of the ideas of this proof (viz., compressions/shifting), as it is very useful, both in other parts of combinatorics and in theoretical computer science, but also indeed in other parts of mathematics (notably, geometry). Suppose we have some real-valued function $f$ defined on some universe $U$ of mathematical objects, and some property $P$ possessed by some of the objects in $U$ (formally, $P \subset U$), and we wish to prove that the maximum possible value of $f$, over all objects in $U$ possessing the property $P$, is equal to $M$ (where $M \in \mathbb{R}$). One potential proof-technique is to define a family of `shifting' operations $\{\mathcal{S}_i:\ i \in I\}$, with the properties that
\begin{enumerate}
\item If $A \in U$, then $\mathcal{S}_i(A) \in U$ for all $i \in I$;
\item If $A \in P$, then $\mathcal{S}_i(A) \in P$ for all $i \in I$;
\item $f(\mathcal{S}_i(A)) \geq f(A)$ for all $A \in U$; 
\item For any $A \in U$, we can apply some sequence of the `shifting' operations $\mathcal{S}_i$ to $A$ in such a way that the resulting object $B$ is stable under all of the $\mathcal{S}_i$;
\item It is (fairly) easy to prove that the maximum of $f(B)$, over all objects $B \in U$ that are stable under all of the shifting operations, is equal to $M$.
\end{enumerate}
It is clear that this technique `works', at least, when the sequence in item 4.\ is finite, as is usually the case in combinatorics. (In geometry, the sequence may need to be infinite, in which case one needs to worry about convergence issues, but these issues can often be dealt with successfully, though the convergence issue is sometimes delicate. This is the case, for example, with the proof of the isoperimetric inequality in the plane which uses Steiner symmetrizations; the proof-method was first proposed/sketched by Steiner \cite{steiner} in 1838, but it was only made rigorous more than seventy years later, by Carath\'eodory \cite{cara} in 1909, essentially by finding a sequence of Steiner symmetrizations under which the iterates of a body converge, in an appropriate metric.)

Within combinatorics, the general technique of `shifting', outlined above, has been particularly successful in the field of discrete isoperimetric inequalities. The most elegant proofs of the vertex-isoperimetric inequality and the edge-isoperimetric inequality for the $n$-dimensional hypercube, use shifting (with different compressions, in each case; see \cite{leader-survey}), as does the most elegant proof of the Kruskal-Katona theorem (due to Daykin \cite{daykin-kk}), and as do the only known proofs of the vertex- and edge-isoperimetric inequalities for the discrete $\ell^1$-grid; see \cite{bl-comp,bl-grid}. (Though the proof of the edge-isoperimetric inequality for the discrete $\ell^1$-grid, which is due to Bollob\'as and Leader \cite{bl-grid}, requires other difficult ingredients too.) Daykin \cite{daykin-ekr} observed that the Kruskal-Katona theorem in fact implies the upper bound in the Erd\H{o}s-Ko-Rado Theorem (Theorem \ref{thm:ekr}), and we shall see later that there are several other close connections between isoperimetric inequalities on the one hand, and intersection problems on the other.
\newline

We now turn to the algebraic (spectral) proof of Theorem \ref{thm:ekr}, due to Lov\'asz \cite{lovasz}. For this, we need a little more terminology. If $G = (V,E)$ is a graph, an {\em independent set} in $G$ is a subset $S \subset V$ such that $s_1s_2 \notin E(G)$ for all $s_1,s_2 \in S$. The {\em independence number} of a finite graph $G$ is the maximum size of an independent set in $G$. We denote the independence number of $G$ by $\alpha(G)$.

The upper bound in Theorem \ref{thm:ekr} can be rephrased as a statement about the independence numbers of Kneser graphs. For positive integers $k \leq n$, the {\em Kneser graph} $K_{n,k}$ is the graph with vertex-set ${[n] \choose k}$ in which two $k$-element sets are joined by an edge if they are disjoint. An intersecting family of $k$-element subsets of $[n]$ is precisely an independent set in the Kneser graph $K_{n,k}$, so the upper bound in Theorem \ref{thm:ekr} says precisely that an independent set in $K_{n,k}$ has size at most ${n-1 \choose k-1}$.

There are various well-known upper bounds on the independence number of a finite graph. One such is Hoffman's bound, which bounds the independence number of a finite $d$-regular graph in terms of the maximum and minimum eigenvalue of the adjacency matrix of the graph, and this is (essentially) the bound used in Lov\'asz's proof.

\begin{theorem}[Hoffman, 1974 (unpublished)]
\label{thm:hoffman}
Let $G = (V,E)$ be a finite, $d$-regular graph, and let $A$ be the adjacency matrix of $G$. Let $N = |V(G)|$, and let $d = \lambda_1 \geq \lambda_2 \geq \ldots \geq \lambda_{N}$ be the eigenvalues of $A$, repeated with their multiplicities. Let $S \subset V(G)$ be an independent set of vertices of $G$. Then
$$|S| \leq \frac{-\lambda_{N}}{d-\lambda_{N}} N.$$
Equality holds only if 
$$1_S - \frac{|S|}{N}(1,1,\ldots,1)$$
is a $\lambda_{N}$-eigenvector of $A$, where $1_{S}$ denotes the indicator vector of $S$.
\end{theorem}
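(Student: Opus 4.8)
The plan is to apply the spectral theorem to the symmetric matrix $A$, using the indicator vector $1_S$ as a test vector and the $d$-regularity of $G$ to pin down its component along the top eigenvector. First I would fix an orthonormal eigenbasis $v_1,\ldots,v_N$ of $\mathbb{R}^N$ with $Av_i=\lambda_i v_i$; since $G$ is $d$-regular, the all-ones vector $\mathbf{1}=(1,\ldots,1)$ is a $d$-eigenvector, so we may take $v_1=\mathbf{1}/\sqrt{N}$. Writing $x=1_S$ and expanding $x=\sum_{i=1}^N c_i v_i$, one reads off $c_1=\langle x,v_1\rangle=|S|/\sqrt{N}$, and Parseval gives $\sum_i c_i^2=\|x\|^2=|S|$, so that $\sum_{i\ge 2}c_i^2=|S|-|S|^2/N$.

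The second ingredient is that $S$ is independent, which translates into the identity $x^{T}Ax=2e(S)=0$, where $e(S)$ is the number of edges of $G$ inside $S$. Comparing this with the eigenbasis expansion $x^{T}Ax=\sum_i\lambda_i c_i^2=d c_1^2+\sum_{i\ge 2}\lambda_i c_i^2$ and bounding $\lambda_i\ge\lambda_N$ term by term, I obtain $0\ge d\,|S|^2/N+\lambda_N(|S|-|S|^2/N)$; dividing by $|S|$ (the case $S=\emptyset$ being trivial) and rearranging gives $\tfrac{|S|}{N}(d-\lambda_N)\le -\lambda_N$. It remains to divide by $d-\lambda_N$, which is where one must check that this quantity is strictly positive: for any $d$-regular simple graph with $d\ge 1$ one has $\lambda_N<d$, since $\lambda_N=d$ would force $A=dI$, contradicting the zero diagonal of $A$. (When $d=0$ the stated bound is $0/0$, so $d\ge 1$ is implicit.) This yields $|S|\le\tfrac{-\lambda_N}{d-\lambda_N}N$.

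For the equality statement, I would track when the inequality $\sum_{i\ge 2}\lambda_i c_i^2\ge\lambda_N\sum_{i\ge 2}c_i^2$ is tight: this happens precisely when $c_i=0$ for every $i\ge 2$ with $\lambda_i\ne\lambda_N$. Hence $x-c_1 v_1=\sum_{i\ge 2}c_i v_i$ is a linear combination of $\lambda_N$-eigenvectors, hence itself a $\lambda_N$-eigenvector of $A$; and since $c_1 v_1=\tfrac{|S|}{\sqrt N}\cdot\tfrac{\mathbf 1}{\sqrt N}=\tfrac{|S|}{N}\mathbf 1$, the vector $x-c_1 v_1$ is exactly $1_S-\tfrac{|S|}{N}(1,\ldots,1)$, as claimed.

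Honestly there is no real obstacle here: the whole proof is a two-line application of the spectral decomposition together with the observation $1_S^{T}A1_S=2e(S)$. The only genuinely delicate points are bookkeeping ones --- confirming the denominator $d-\lambda_N$ is positive so that the final division is legitimate and the bound is meaningful, and phrasing the equality case so that it correctly uses the fact that a sum of $\lambda_N$-eigenvectors is again a $\lambda_N$-eigenvector (noting in passing that $1_S-\tfrac{|S|}{N}\mathbf 1$ is nonzero unless $S$ is empty or all of $V$).
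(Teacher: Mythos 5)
Your proof is correct and follows essentially the same route as the paper's: expand $1_S$ in an orthonormal eigenbasis containing the all-ones direction, use independence to get $1_S^{T}A1_S=0$, and bound the remaining spectral weight below by $\lambda_N$. The only cosmetic difference is that the paper normalises the inner product by $1/N$ (so its coefficients are $|S|/N$ rather than $|S|/\sqrt N$); your added remarks on the positivity of $d-\lambda_N$ and on the equality case are sound.
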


Hoffman did not actually publish this result, and it seems to have first appeared in print in Lov\'asz's 1979 paper (which credits Hoffman). It can be proved straightforwardly, as follows.
\begin{proof}(of Theorem \ref{thm:hoffman}).
Equip $\mathbb{R}^{V(G)}$ with the inner product
$$\langle u,v \rangle = \frac{1}{N} \sum_{x \in V(G)} u(x) v(x).$$
Let $S \subset V(G)$ be an independent set, and let $v_1,\ldots,v_N$ be an orthonormal basis of eigenvectors of $A$ with $v_i$ having eigenvalue $\lambda_i$ (for each $i \in [N]$), and with $v_1$ being the all-1's vector $(1,1,\ldots,1): = \mathbf{f}$ (which has eigenvalue $d$). Expand the indicator vector $1_{S}$ in terms of this eigenbasis:
$$1_{S} = \sum_{i=1}^{N} \alpha_i v_i.$$
Note that
$$\alpha_1 = \langle 1_S, \mathbf{f} \rangle = |S|/N = \langle 1_S,1_S \rangle = \sum_{i=1}^{N} \alpha_i^2.$$
Now, 
\begin{equation} \label{eq:ip} 0 = \frac{1}{|V|} \sum_{x,y \in S} A_{x,y} = \langle A1_{S},1_{S} \rangle = \sum_{i=1}^{N} \lambda_i \alpha_i^2 \geq d \alpha_1^2 + \lambda_N \sum_{i=2}^{N} \alpha_i^2 = d \alpha_1^2 + \lambda_N(\alpha_1-\alpha_1^2).\end{equation}
Rearranging yields
$$\alpha_1 \leq \frac{-\lambda_N}{d-\lambda_N},$$
which is precisely Hoffman's bound. Clearly, equality holds in (\ref{eq:ip}) only if $\alpha_i = 0$ whenever $i \geq 2$ and $\lambda_i > \lambda_N$, which yields the equality statement of the theorem.
\end{proof}

Clearly, the Kneser graph $K_{n,k}$ is ${n-k \choose k}$-regular, so to deduce the upper bound in Theorem \ref{thm:ekr} from Hoffman's bound, it suffices to prove that the least eigenvalue of $K_{n,k}$ is $-{n-k-1 \choose k-1}$, for all $k \leq n/2$, and this was indeed proved by Lov\'asz. The Kneser graph $K_{n,k}$ is actually one of the adjacency matrices of the Johnson association scheme. The general theory of association schemes (see \cite{delsarte-73}) implies that $K_{n,k}$ has exactly $k+1$ eigenspaces $V_0,V_1,\ldots,V_k$, where $V_0$ is the 1-dimensional space of constant vectors, and for each $i \in [k]$, $V_i$ is the orthogonal complement of $V_{i-1}$ in
$$U_i: = \Span\{w_T:\ T \in {[n] \choose i}\},$$
where $w_T \in \mathbb{R}[{[n] \choose k}]$ is the indicator vector corresponding to the event that a subset `contains $T$', explicitly,
$$w_T(S) = \begin{cases} 1 & \text{ if } T \subset S,\\ 0 & \text{ otherwise.}\end{cases}$$
Equipped with this knowledge, it is fairly easy to calculate the eigenvalues of $K_{n,k}$ explicitly: the eigenvalue $\lambda_i$ corresponding to $V_i$ is given by
$$\lambda_i = (-1)^i {n - k - i \choose k-i}\quad (0 \leq i \leq k),$$
so in particular, the least eigenvalue is $-{n-k-1 \choose k-1}$, as required. The reader is referred to the paper of Lov\'asz for details of this calculation.

\subsubsection*{The `generalised harmonic analysis' approach} One aspect of Lov\'asz's proof is particularly worth abstracting. The use of Hoffman's bound is a (relatively simple) instance of a very general method (or philosophy, even) that has led to huge progress in extremal combinatorics, analysis and additive number theory, in recent years. Suppose we want to understand the (extremal) properties of those subsets $\mathcal{S}$ of some universe $U$ that satisfy a certain property, $P$. A general method of attack is to consider the indicator function (a.k.a.\ the indicator vector) $1_{\mathcal{S}} \in \mathbb{R}^U$, defined by $1_{\mathcal{S}}(u) = 1$ if $u \in \mathcal{S}$ and $1_{\mathcal{S}}(u) = 0$ if $u \notin \mathcal{S}$, and to expand $1_{\mathcal{S}}$ as a linear combination of functions with `nice' algebraic/analytic properties. One can then hope to use the information that $\mathcal{S}$ satisfies the property $P$, in combination with the above-mentioned expansion, to obtain some extremal information about $1_{\mathcal{S}}$, from which we can hope to deduce the desired (extremal) conclusion about $\mathcal{S}$. Often, as in the case of Lov\'asz's proof sketched above, this will involve applying a linear operator (related to the property $P$) to our expansion, and using spectral methods. Often (though not in the case of Lov\'asz's proof), the expansion can be in terms of the characters of an Abelian group, in which case we are doing harmonic analysis, or in terms of the representations of a non-Abelian group, in which case we are doing `non-Abelian harmonic analysis'. So one can view the general method outlined above, as a `generalisation' of the application of harmonic analysis to extremal problems. We will see several other instances of this general method being used to solve intersection problems (with best-possible bounds), later in this survey paper. Of course, harmonic analysis has been used with outstanding success in extremal problems in analytic number theory, ever since Hardy and Littlewood developed their `circle method'; Roth's seminal 1953 theorem on subsets of integers containing no 3-term arithmetic progressions, is another example, and harmonic analysis is ubiquitous in modern additive combinatorics / discrete analysis. But the bounds proved via these kinds of method, in these areas of Mathematics, are typically not exact: rather, they differ from the (conjectured) truth by very large constants or by logarithmic factors, so the mathematics involved has a slightly different flavour to it (more `analytic' and less `algebraic', perhaps), compared to the search for more `exact' bounds in extremal combinatorics, which we discuss in this survey.

The remainder of this survey paper is structured as follows. In Section \ref{sec:sets}, we discuss intersection problems for families of subsets of an unstructured ground-set. In Section \ref{sec:stab}, we discuss `stability' results that describe the structure of `large' intersecting families. In Section \ref{sec:structure}, we consider intersection problems where extra structure is imposed upon the ground-set, such as the additive structure of the integers, or a graph structure. In Section \ref{sec:more-complicated}, we consider intersection problems for families of more complicated mathematical objects, such as permutations, partitions, vector subspaces and linear maps. In Section \ref{sec:sym}, we consider what happens in intersection problems when additional `symmetry' requirements are imposed upon the intersecting families in question.

This survey is a somewhat personal perspective on the field of intersection problems, and is not intended to be exhaustive; if a discussion of the reader's favourite theorem is omitted, I can only apologise profusely. The reader is referred to the excellent surveys of Borg \cite{borg}, Frankl and Tokushige \cite{frankl-survey}, and Godsil and Meagher \cite{godsil-meagher-survey}, for slightly different perspectives on the field.

\section{Intersection problems for families of subsets of an (unstructured) set}
\label{sec:sets}
In this section, we give a quick (and by no means exhaustive) survey of intersection theorems and intersection problems, for families of subsets of a ground-set where there is no special `structure' on the ground-set.

\subsection{The Complete Intersection Theorem, and some of its predecessors}

In view of the Erd\H{o}s-Ko-Rado theorem (Theorem \ref{thm:ekr}), it is natural to ask what happens if we strengthen the intersection condition, while keeping the `universe' the same (viz., ${[n] \choose k}$). One natural strengthening is as follows. For a positive integer $t$, we say a family of sets is {\em $t$-intersecting} if any two sets in the family have intersection of size at least $t$. This leads to the following generalisation of Question \ref{question:ekr}.
\begin{qn}
\label{question:ak} 
For each triple of positive integers $(n,k,t)$, what is the maximum possible size $M(n,k,t)$ of a $t$-intersecting family of $k$-element subsets of $\{1,2,\ldots,n\}$?
\end{qn}
This question is only interesting for $n > 2k-t$, since for $n \leq 2k-t$, the family ${[n] \choose k}$ is itself $t$-intersecting, so trivially, $M(n,k,t) = {n \choose k}$ for all $n \leq 2k-t$.

In \cite{ekr}, Erd\H{o}s, Ko and Rado gave a short proof that for each $t < k$, there exists $n_0 = n_0(k,t) \in \mathbb{N}$ such that for all $n \geq n_0(k,t)$, we have
\begin{equation}
\label{eq:tumvirate-0} M(n,k,t) = {n-t \choose k-t};
\end{equation}
the extremal families in this case are precisely the families of the form $\{S \in {[n] \choose k}:\ S \supset B\}$, for $B \in {[n] \choose t}$, sometimes called the {\em $t$-umvirates}. However, for small $n$ and $t > 1$, these families are no longer extremal: for example, when $n=2k$ and $t=2$, it is easy to check that
$$\left|\left\{S \in {[2k] \choose k}:\ |S \cap \{1,2,3,4\}| \geq 3\right\}\right| > \left|\left\{S \in {[2k] \choose k}:\ \{1,2\} \subset S\right\}\right|$$
for all $k$ sufficiently large. Frankl conjectured in 1978 that
$$M(n,k,t) = \max_{0 \leq i \leq (n-t)/2} |\mathcal{F}_i|,$$
where
$$\mathcal{F}_i: = \left\{S \in {[n] \choose k}:\ |S \cap [t+2i]| \geq t+i\right\}\quad (0 \leq i \leq (n-t)/2).$$
Frankl's conjecture was finally proved in 1997 by Ahlswede and Khachatrian \cite{AK}; it is now known as the `complete intersection theorem'. It is a remarkable and seminal result. The proof of Ahlswede and Khachatrian in \cite{AK} is entirely combinatorial, and is very intricate (perhaps rather miraculous), making clever use of $ij$-compressions (defined in the previous section), generating families, and taking complements. (We say a family $\mathcal{G} \subset \mathcal{P}([n])$ {\em generates} a $k$-uniform family $\mathcal{F} \subset {[n] \choose k}$ if $\mathcal{F} = \{S \in {[n] \choose k}:\ T \subset S \text{ for some } T \in \mathcal{G}\}$. The relevance of taking complements stems from the observation that if $\mathcal{F} \subset {[n] \choose k}$ is $t$-intersecting if and only if $\overline{\mathcal{F}} : = \{[n] \setminus S:\ S \in \mathcal{F}\} \subset {[n] \choose n-k}$ is $(n-2k+t)$-intersecting.) In \cite{push-pull}, Ahlswede and Khachatrian give a different proof (of the Complete Intersection Theorem), which in a sense is `dual' to the proof in \cite{AK}. 

It is important to note that in 1984, Wilson \cite{wilson} gave a spectral proof of the following special case of the Ahlswede-Khachatrian's Complete Intersection Theorem.
\begin{theorem}[Wilson, 1984]
\label{thm:wilson}
If $n,k,t \in \mathbb{N}$ with $n \geq (k-t+1)(t+1)$, and $\f \subset {[n] \choose k}$ is $t$-intersecting, then
\begin{equation}\label{eq:tumvirate} |\f| \leq {n-t \choose k-t},\end{equation}
and equality holds if and only if $\f$ is a $t$-umvirate, i.e.\ $\f =\{S \in {[n] \choose k}:\ S \supset B\}$ for some $B \in {[n] \choose t}$.
\end{theorem}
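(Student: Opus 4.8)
The plan is to stay close in spirit to Lov\'asz's argument and prove the bound (\ref{eq:tumvirate}) via a \emph{weighted} eigenvalue inequality for the Johnson association scheme; this is, in essence, Wilson's proof. Recall that a symmetric matrix $M$ indexed by $\binom{[n]}{k}$ lies in the Bose--Mesner algebra of the scheme precisely when $M_{A,A'} = f(|A\cap A'|)$ for some function $f$, and that every such $M$ is diagonalised by the eigenspaces $V_0,V_1,\ldots,V_k$ discussed above (with $V_0$ the line of constant vectors). The first ingredient I would set up is a weighted form of Hoffman's bound --- in effect the Delsarte linear-programming bound: \emph{if $M$ is positive semidefinite, if $f(s)\le 0$ for every $s\in\{t,t+1,\ldots,k-1\}$, and if $\lambda_0$ denotes the (constant) row-sum of $M$, then every $t$-intersecting $\f\subset\binom{[n]}{k}$ satisfies $|\f|\le f(k)\binom{n}{k}/\lambda_0$.} This is proved exactly as Hoffman's bound was: writing $1_{\f} = \frac{|\f|}{\binom{n}{k}}\mathbf{1} + y$ with $y\perp\mathbf{1}$, on the one hand $1_{\f}^{\top}M 1_{\f} = \sum_{A,A'\in\f} f(|A\cap A'|)\le f(k)|\f|$, because for distinct $A,A'\in\f$ the intersection size lies in $\{t,\ldots,k-1\}$; and on the other hand $1_{\f}^{\top}M 1_{\f} = \frac{\lambda_0}{\binom{n}{k}}|\f|^2 + y^{\top}M y \ge \frac{\lambda_0}{\binom{n}{k}}|\f|^2$ since $M$ is positive semidefinite.

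Everything then reduces to choosing the matrix $M$ well. The key point is that two sets in a $t$-intersecting family never meet in fewer than $t$ points, so the values $f(0),f(1),\ldots,f(t-1)$ are irrelevant to the bound and can be chosen freely; accordingly I would look for $M$ of the form $M = f(k)\,\Id + \sum_{d=k-t+1}^{k}\hat{f}(d)\,A^{(d)}$, where $A^{(d)}$ denotes the adjacency matrix of the relation ``$|A\cap A'| = k-d$'' of the scheme. This forces $f(s)=0$ for all $s\in\{t,\ldots,k-1\}$ --- so the off-diagonal hypothesis of the weighted Hoffman bound holds with equality, and a $t$-umvirate attains equality throughout the displayed chain above --- while leaving the $t$ weights $\hat{f}(k-t+1),\ldots,\hat{f}(k)$ (and the overall scale) at our disposal. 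The eigenvalue of $M$ on $V_i$ is $\lambda_i = f(k) + \sum_{d}\hat{f}(d)\,E_d(i)$, where $E_d(i)$ is the Eberlein polynomial giving the eigenvalue of $A^{(d)}$ on $V_i$; I would pin down the $\hat{f}(d)$ by imposing $\lambda_1 = \lambda_2 = \cdots = \lambda_t = 0$, a $t\times t$ linear system with an essentially unique solution. This is precisely the choice that makes a $t$-umvirate extremal: for $B\in\binom{[n]}{t}$ the indicator $1_{\{A:\,B\subset A\}}$ equals the vector $w_B$ and hence lies in $V_0\oplus V_1\oplus\cdots\oplus V_t$, so the vector ``$y$'' attached to it lies in $V_1\oplus\cdots\oplus V_t\subset\ker M$.

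It then remains to verify that this $M$ has the required properties, and this is where the real work lies, and where the hypothesis $n\ge(k-t+1)(t+1)$ enters. First I would check (i) that $\lambda_0 > 0$ and that the bound $f(k)\binom{n}{k}/\lambda_0$ it produces simplifies to exactly $\binom{n-t}{k-t}$ (equivalently $\lambda_0 = f(k)\binom{n}{t}/\binom{k}{t}$); this is a binomial-coefficient identity that I expect to fall out of the construction, and which for $t=1$ reduces to the familiar statement that the least eigenvalue of the Kneser graph is $-\binom{n-k-1}{k-1}$. The main obstacle is (ii): showing that $\lambda_i\ge 0$ for every $i\in\{t+1,\ldots,k\}$, so that $M$ is positive semidefinite. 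This forces one to control the Eberlein polynomials $E_d(i)$ for $d$ near $k$ (equivalently, the solution of the linear system above) tightly enough to see that the ``binding'' eigenspace among $V_{t+1},\ldots,V_k$ contributes a non-negative eigenvalue exactly when $n$ is at or above the threshold $(k-t+1)(t+1)$ --- the remaining eigenspaces being automatically fine. Finally, for the equality statement, when $n > (k-t+1)(t+1)$ one expects $\lambda_{t+1},\ldots,\lambda_k > 0$ strictly (the boundary case $n=(k-t+1)(t+1)$ needing a little extra care), so that $\ker M = V_1\oplus\cdots\oplus V_t$; a $t$-intersecting family $\f$ attaining the bound then has $1_{\f}\in U_t = \Span\{w_T:\,T\in\binom{[n]}{t}\}$, and a short, standard combinatorial argument shows that a $\{0,1\}$-valued vector lying in $U_t$, of weight $\binom{n-t}{k-t}$, and arising from a $t$-intersecting family, must be the indicator of a $t$-umvirate.
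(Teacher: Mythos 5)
Your proposal follows essentially the same route as the paper's account of Wilson's proof: apply a Delsarte--Hoffman-type bound to a matrix in the Bose--Mesner algebra of the Johnson scheme whose off-diagonal support is the intersection sizes $0,\ldots,t-1$, normalised so that $V_1\oplus\cdots\oplus V_t$ lies in the extremal eigenspace and the resulting bound evaluates to $\binom{n-t}{k-t}$. Like the paper, you correctly isolate (but do not carry out) the one genuinely hard step --- verifying $\lambda_i\geq 0$ for $t+1\leq i\leq k$ under the hypothesis $n\geq(k-t+1)(t+1)$ --- which is precisely the ``non-trivial calculation'' the paper defers to Wilson.
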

The bound $n \geq (k-t+1)(t+1)$ in Wilson's theorem is best-possible, so some thirteen years before the Ahlswede-Khachatrian theorem, Wilson had determed the values of $(n,k,t) \in \mathbb{N}^3$ for which the $t$-umvirates are optimal (i.e.\ the values for which the Erd\H{o}s-Ko-Rado property holds). This proof is very different to the Ahlswede-Khachatrian proof, and the general technique has been used more recently to solve problems where purely combinatorial techniques (such as compressions) seem not to work, so we proceed to discuss Wilson's proof of Theorem \ref{thm:wilson} in some detail.

A natural first attempt at a spectral proof of Theorem \ref{thm:wilson} is to try to mimic Lov\'asz's spectral proof of the Erd\H{o}s-Ko-Rado theorem and to calculate the minimum and maximum eigenvalues of the adjacency matrix $A$ of the graph $K(n,k,<\!\!t)$ with vertex-set ${[n] \choose k}$, where two $k$-element sets are joined by an edge if they have intersection of size less than $t$. (Clearly, a $t$-intersecting family is precisely an independent set of vertices in this graph.) One can then apply Hoffman's bound (Theorem \ref{thm:hoffman}), and see what comes out as the upper bound on the size of an independent set in $K(n,k,<\!\!t)$. Unfortunately, this only yields the desired bound (viz., ${n-t \choose k-t}$ for all $n \geq (k-t+1)(t+1)$) in the case $t=1$. However, one can then observe that Hoffman's bound still holds if $A$ is replaced by a {\em pseudoadjacency matrix} for the graph $G$. (A {\em pseudoadjacency matrix} for a finite graph $G = (V,E)$ is a real symmetric matrix $M$ with rows and columns indexed by $V(G)$, where all the row-sums are equal and positive, and where $M_{x,y} = 0$ whenever $xy \notin E(G)$.) The proof of Hoffman's bound, given above, clearly works when the adjacency matrix $A$ is replaced by a pseudoadjacency matrix $M$ --- an observation that goes back to Delsarte --- and in fact, the graph $G$ need not be regular. We therefore immediately obtain the following.

\begin{theorem}[The Delsarte-Hoffman bound] 
\label{thm:delsarte-hoffman}
Let $G = (V,E)$ be a finite graph, let $N = |V(G)|$ and let $M$ be a pseudoadjacency matrix of $G$. Let $\lambda_1$ be the eigenvalue of $M$ corresponding to the eigenvector $\mathbf{f}: = (1,1,\ldots,1)$, and let $\lambda_{\min}$ be the minimum (i.e.\ the `most negative') eigenvalue of $M$. Let $S \subset V(G)$ be an independent set of vertices of $G$. Then
\begin{equation}
\label{eq:dhbound} |S| \leq \frac{-\lambda_{\min}}{\lambda_1-\lambda_{\min}} N.\end{equation}
Equality holds only if 
$$1_S - \frac{|S|}{N}(1,1,\ldots,1)$$
is a $\lambda_{\min}$-eigenvector of $M$, where $1_{S}$ denotes the indicator vector of $S$.
\end{theorem}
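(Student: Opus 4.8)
The plan is essentially to observe that the proof of Hoffman's bound already given goes through verbatim when the adjacency matrix $A$ is replaced by a pseudoadjacency matrix $M$, since the only properties of $A$ that were used are exactly the defining properties of a pseudoadjacency matrix together with the independence of $S$. So rather than inventing anything new, I would simply re-run that argument, being careful about which steps rely on which hypothesis.

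First I would set up the same inner product $\langle u,v\rangle = \tfrac{1}{N}\sum_{x\in V} u(x)v(x)$ on $\mathbb{R}^{V(G)}$, and pick an orthonormal eigenbasis $v_1,\ldots,v_N$ of the real symmetric matrix $M$ with eigenvalues $\mu_1 = \lambda_1 \geq \mu_2 \geq \cdots \geq \mu_N = \lambda_{\min}$; here I use that all row-sums of $M$ are equal to the positive number $\lambda_1$, so $\mathbf{f} = (1,1,\ldots,1)$ is an eigenvector of $M$ with eigenvalue $\lambda_1$, and since $M$ has nonnegative off-diagonal... — actually I do not even need $\lambda_1$ to be the largest eigenvalue, only that it is the eigenvalue belonging to $\mathbf{f}$, exactly as stated in the theorem. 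Expanding $1_S = \sum_i \alpha_i v_i$ (taking $v_1 = \mathbf{f}$), I get $\alpha_1 = \langle 1_S,\mathbf{f}\rangle = |S|/N$ and $\sum_i \alpha_i^2 = \langle 1_S,1_S\rangle = |S|/N$, so again $\alpha_1 = \sum_i \alpha_i^2$.

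The key step is the chain of (in)equalities replacing (\ref{eq:ip}): since $S$ is independent and $M_{x,y} = 0$ whenever $xy\notin E(G)$, we have $M_{x,y} = 0$ for all distinct $x,y\in S$, and the diagonal entries contribute... here one must be slightly careful, because unlike a genuine adjacency matrix, $M$ may have nonzero diagonal entries $M_{x,x}$. The cleanest fix is to note that $\langle M 1_S, 1_S\rangle = \tfrac{1}{N}\sum_{x,y\in S} M_{x,y} = \tfrac{1}{N}\sum_{x\in S} M_{x,x}$, and since each $M_{x,x} \geq 0$ would be convenient but is not guaranteed either; so instead I would simply bound $\langle M 1_S,1_S\rangle = \sum_i \mu_i\alpha_i^2 \geq \lambda_1\alpha_1^2 + \lambda_{\min}\sum_{i\geq 2}\alpha_i^2 = \lambda_1\alpha_1^2 + \lambda_{\min}(\alpha_1 - \alpha_1^2)$, and separately I need an \emph{upper} bound of the form $\langle M1_S,1_S\rangle \leq 0$. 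This is where the one genuine subtlety lies: for a true adjacency matrix the diagonal is zero so $\langle A1_S,1_S\rangle = 0$ exactly, whereas for a pseudoadjacency matrix I should either (i) additionally assume $M$ has zero diagonal — which is the convention in several references — or (ii) absorb the diagonal by replacing $M$ with $M - cI$ for suitable $c$, noting this shifts every eigenvalue by $-c$ and shifts $\lambda_1,\lambda_{\min}$ equally so the ratio $-\lambda_{\min}/(\lambda_1 - \lambda_{\min})$ is unchanged, and choosing $c = \min_x M_{x,x}$ (or just $c=0$ if the diagonal is already zero) so that $M-cI$ has nonnegative diagonal and $\langle (M-cI)1_S,1_S\rangle = \tfrac1N\sum_{x\in S}(M_{x,x}-c)\cdot 0$... — in fact the diagonal drops out entirely in the independent-set sum only if it is zero, so the honest statement is that one assumes (as is standard, and as the displayed definition here implicitly intends) that a pseudoadjacency matrix has $M_{x,y}=0$ whenever $x=y$ or $xy\notin E$; then $\langle M1_S,1_S\rangle = 0$ and the argument closes immediately.

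Granting that, rearranging $0 \geq \lambda_1\alpha_1^2 + \lambda_{\min}(\alpha_1 - \alpha_1^2)$ and dividing by $\alpha_1 > 0$ (the case $S=\emptyset$ being trivial) gives $0 \geq \lambda_1\alpha_1 + \lambda_{\min}(1-\alpha_1)$, i.e. $\alpha_1 \leq -\lambda_{\min}/(\lambda_1 - \lambda_{\min})$, which upon multiplying by $N$ is exactly (\ref{eq:dhbound}). For the equality clause, equality forces $\sum_i \mu_i\alpha_i^2 = \lambda_1\alpha_1^2 + \lambda_{\min}\sum_{i\geq 2}\alpha_i^2$, hence $\alpha_i = 0$ for every $i\geq 2$ with $\mu_i > \lambda_{\min}$; this says precisely that $1_S - \alpha_1\mathbf{f} = 1_S - \tfrac{|S|}{N}\mathbf{f} = \sum_{i\geq 2}\alpha_i v_i$ lies in the $\lambda_{\min}$-eigenspace of $M$, as claimed. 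I expect no real obstacle beyond the diagonal-entry bookkeeping mentioned above; everything else is a line-by-line transcription of the proof of Theorem \ref{thm:hoffman}, which is why it is reasonable to simply remark, as the text does, that "the proof of Hoffman's bound, given above, clearly works when $A$ is replaced by a pseudoadjacency matrix $M$."
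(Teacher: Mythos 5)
Your proposal is correct and is exactly the paper's approach: the paper gives no separate argument for Theorem \ref{thm:delsarte-hoffman}, merely observing that the proof of Theorem \ref{thm:hoffman} goes through verbatim with $A$ replaced by $M$, which is what you carry out. The one point you agonise over --- the diagonal entries --- is already settled by the definition as stated, since for a loopless graph $xx \notin E(G)$ and hence $M_{x,x}=0$ is forced by the condition ``$M_{x,y}=0$ whenever $xy \notin E(G)$'', so $\langle M1_S,1_S\rangle = 0$ holds exactly and no modification of $M$ is needed.
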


The next step (and most of the work) of Wilson's proof is to find a pseudoadjacency matrix $M$ (for the graph $K(n,k,<\!\!t)$) that has the appropriate maximum and minimum eigenvalues to imply the bound (\ref{eq:tumvirate}) (for all $n \geq (k-t+1)(t+1)$). Given the symmetries of ${[n] \choose k}$, it is natural to seek such a matrix $M$ where $M_{S,T}$ depends only upon $|S \cap T|$, and where $M_{S,T} = 0$ for all $|S \cap T| \geq t$. This condition can easily be seen to be equivalent to taking
$$M_{S,T} = \sum_{j=0}^{t-1} c_j A_j,$$
for some real coefficients $c_0,c_1,\ldots,c_{t-1}$, where $A_j$ is the adjacency matrix of the graph $K(n,k,=\!\!j)$ with vertex-set ${[n] \choose k}$, where two sets are joined by an edge if they have intersection of size exactly $j$. The $A_j$'s (which are in fact defined for all $0 \leq j \leq k$) are precisely the adjacency matrices of the {\em Johnson association scheme}, and it follows from the general theory of association schemes that they are simultaneously diagonalisable, with (common) eigenspaces $V_0,V_1,\ldots,V_k$ (defined in the previous section). The corresponding eigenvalues are also known (they were given by Delsarte in 1973 \cite{delsarte-73}).

We are left with the task of choosing the coefficients $c_0,c_1,\ldots,c_{t-1}$ so as to ensure the correct maximum and minimum eigenvalues to imply the desired upper bound. This might be a rather hard task, but for the fact that we have an important clue: equality holds in (\ref{eq:dhbound}) only if the `shifted characteristic vector' $1_S - \frac{|S|}{|V(G)|}(1,1,\ldots,1)$ is a $\lambda_{\min}$-eigenvector of $M$. We know that equality must hold in (\ref{eq:tumvirate}) whenever $\f$ is of the form $\{S \in {[n] \choose k}:\ B \subset S\}$ for some $B \in {[n] \choose t}$, and it is easy to check that the shifted characteristic vectors of these families span $V_1 \oplus V_2 \oplus \ldots \oplus V_{t-1}$. Hence, if $\lambda_i$ is the eigenvalue of $M$ corresponding to $V_i$ (for each $i \in [k] \cup \{0\}$), then the $\lambda_i$ must satisfy
$$\frac{-\lambda_i}{\lambda_0-\lambda_i} = \frac{{n-t \choose k-t}}{{n \choose k}} \quad \forall 1 \leq i \leq t-1.$$
These conditions determine $c_0,c_1,\ldots,c_{t-1}$ up to a constant factor (and rescaling all the $c_i$ by a constant factor simply rescales all the $\lambda_i$ by a constant factor, which makes no difference to the Delsarte-Hoffman bound). It turns out that, under this choice, the eigenvalues $\lambda_t,\lambda_{t+1},\ldots,\lambda_k$ are all at least $\lambda_{t-1} = \ldots = \lambda_1$ (provided $n \geq (k-t+1)(t+1)$); this is checked by Wilson, though it involves some non-trivial calculation. Hence, this choice of $M$ yields Wilson's theorem.

In fact, the $t$-intersection question turns out to be non-trivial even in the `easier' setting of subfamilies of $\mathcal{P}([n])$. This problem was resolved by Katona \cite{katona-t} in 1964.
\begin{theorem}[Katona, 1964.]
\label{thm:katona}
Let $t \in \mathbb{N}$. If $\mathcal{F} \subset \mathcal{P}([n])$ is $t$-intersecting, then
$$|\mathcal{F}| \leq \begin{cases} \sum_{i \geq (n+t)/2}{n \choose i} & \text{ if } n+t \text{ is even;}\\ {n-1 \choose (n+t-1)/2}+\sum_{i \geq (n+t+1)/2} {n \choose i}& \text{ if }n+t \text{ is odd.}\end{cases}$$
\end{theorem}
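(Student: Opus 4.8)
The plan is to reduce the problem to the case where $\mathcal{F}$ is an up-set, and then to run the shifting/compression proof of Theorem~\ref{thm:ekr} (split on the last element, induct on $n$), but carried through in a strengthened, ``cross-intersecting'' form.

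First I would reduce to up-sets. If $\mathcal{F}\subseteq\mathcal{P}([n])$ is $t$-intersecting, then its up-closure $\mathcal{F}^{\uparrow}:=\{B\subseteq[n]:\ A\subseteq B\text{ for some }A\in\mathcal{F}\}$ is again $t$-intersecting --- if $C\supseteq A\in\mathcal{F}$ and $D\supseteq B\in\mathcal{F}$ then $C\cap D\supseteq A\cap B$, so $|C\cap D|\ge t$ --- and $|\mathcal{F}^{\uparrow}|\ge|\mathcal{F}|$; since the conjectured extremal families are themselves up-sets, we lose nothing by assuming $\mathcal{F}$ is an up-set. Applying $ij$-compressions (which preserve both the size and the $t$-intersecting property, and --- re-taking the up-closure afterwards if need be --- can be taken to preserve the up-set property), we may assume in addition that $\mathcal{F}$ is left-compressed. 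The one ``free'' observation is that the $t$-intersection condition only bites for pairs $A,B$ with $|A|+|B|\le n+t-1$, since otherwise $|A\cap B|\ge|A|+|B|-n\ge t$ automatically; in particular the family $L$ of all subsets of $[n]$ of size $\ge\lceil(n+t)/2\rceil$ is itself $t$-intersecting, so if every member of $\mathcal{F}$ has size $\ge\lceil(n+t)/2\rceil$ then $\mathcal{F}\subseteq L$ and we are done --- this matches the claimed bound exactly when $n+t$ is even, and lies strictly below it when $n+t$ is odd, leaving slack. All the work is therefore in the case where $\mathcal{F}$ contains a set that is ``too small''.

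For that, I would induct on $n$, splitting $\mathcal{F}$ by the element $n$: put $\mathcal{B}=\{A\in\mathcal{F}:\ n\notin A\}$ and $\mathcal{A}=\{A\setminus\{n\}:\ A\in\mathcal{F},\ n\in A\}$, both regarded as families in $\mathcal{P}([n-1])$, so $|\mathcal{F}|=|\mathcal{A}|+|\mathcal{B}|$. One checks that $\mathcal{B}$ is $t$-intersecting, that $\mathcal{A}$ is $(t-1)$-intersecting (deleting the common point $n$ costs one from each pairwise intersection), that $\mathcal{A}$ and $\mathcal{B}$ are cross-$t$-intersecting, and that $\mathcal{B}\subseteq\mathcal{A}$ (because $\mathcal{F}$ is an up-set). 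However, simply feeding the two naive inductive bounds --- for $\mathcal{A}$ being $(t-1)$-intersecting, and $\mathcal{B}$ being $t$-intersecting, in $\mathcal{P}([n-1])$ --- into $|\mathcal{F}|=|\mathcal{A}|+|\mathcal{B}|$ overshoots the target by a single binomial-coefficient term. So the inductive hypothesis must be upgraded: one proves, in a single induction, a statement about cross-$t$-intersecting pairs $(\mathcal{A},\mathcal{B})$ with $\mathcal{B}\subseteq\mathcal{A}$ and $\mathcal{A}$ being $(t-1)$-intersecting, with a bound engineered so that the recursion closes exactly. The base cases are $n$ small and $t=1$, where the statement collapses to the $|\mathcal{F}|\le 2^{n-1}$ of Question~\ref{question:power-set}.

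The hard part will be identifying the correct cross-intersecting strengthening and verifying that it propagates through the recursion --- in particular, accounting precisely for the extra ${n-1\choose(n+t-1)/2}$ that appears exactly when $n+t$ is odd is the delicate bookkeeping at the core of the argument. Two alternative routes are worth noting. One can pass to complements: $\mathcal{F}$ is $t$-intersecting if and only if $\overline{\mathcal{F}}:=\{[n]\setminus A:\ A\in\mathcal{F}\}$ is a family any two of whose members have union of size at most $n-t$, and such ``union-bounded'' families can be pushed towards a Hamming ball by compressions, much as in Kleitman's proof of the diametric theorem for the cube --- a theorem to which this one is closely related, carrying exactly the same extremal numbers. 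Alternatively, Katona's original 1964 argument is a cyclic-ordering averaging argument in the spirit of his proof of Theorem~\ref{thm:ekr}: fix a cyclic ordering of $[n]$, bound the number of cyclic intervals --- of all sizes at once --- that a $t$-intersecting family can contain, average over cyclic orderings, and then optimise the resulting linear constraint on the size-profile $(|\mathcal{F}\cap{[n]\choose k}|)_k$ of $\mathcal{F}$.
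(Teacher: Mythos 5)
Your setup (reduce to a left-compressed family, split on one element, induct on $n$) is the right framework and is essentially the framework of Wang's proof given in the paper, but the proposal stops exactly where the work is: you correctly observe that the naive inductive bounds --- $(t-1)$-intersecting for the part containing the distinguished element, $t$-intersecting for the part avoiding it --- overshoot by one binomial coefficient, and you then defer the fix to an unspecified ``cross-intersecting strengthening'' of the inductive hypothesis. That missing step is the entire content of the proof. The key observation (Wang's) is that no strengthened hypothesis is needed at all: if $\mathcal{F}$ is left-compressed and $t$-intersecting, then the subfamily $\mathcal{B}=\{S\in\mathcal{F}:\ 1\notin S\}$ is automatically $(t+1)$-intersecting. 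Indeed, for $B_1,B_2\in\mathcal{B}$ pick $j\in B_1\cap B_2$; since $\mathcal{F}$ is $1j$-compressed, $(B_1\cup\{1\})\setminus\{j\}\in\mathcal{F}$, and intersecting this set with $B_2$ shows $|B_1\cap B_2|\geq t+1$. Feeding the inductive hypothesis into the two parts with parameters $t-1$ and $t+1$ (rather than $t-1$ and $t$) then closes the recursion exactly, via Pascal's rule, in both parities of $n+t$; the base cases are $t=1$ and $n=1$ as you say.

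Two further cautions. First, the element you split on matters: with the compression $C_{ij}$ ($i<j$) moving $j$ down to $i$, the upgrade to $(t+1)$-intersecting is obtained for the sets \emph{avoiding the element $1$}, not for the sets avoiding $n$ as in your decomposition; the analogous argument for your $\mathcal{B}=\{A\in\mathcal{F}:\ n\notin A\}$ would require compressions pushing elements \emph{up} to $n$, and the argument for upgrading $\mathcal{A}$ from $(t-1)$- to $t$-intersecting needs a spare element outside $A_1\cup A_2$, which is not available in the non-uniform setting. Second, your two ``alternative routes'' (complementation towards Kleitman's diametric theorem; Katona's cyclic averaging over all interval lengths with an optimisation over the size-profile) are genuinely viable directions but are likewise only sketched at the level of strategy, so they do not repair the gap in the main argument as submitted.
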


Theorem \ref{thm:katona} says, for example, that when $n+t$ is even, the $t$-intersecting family $\{S \subset [n]:\ |S| \geq (n+t)/2\}$ `wins'. Note that this family is far from being a junta; moreover, it is easy to check that any winning family (in fact, any family with size `close' to the maximum) is `far' from being a junta, in a well-defined sense. In particular, the Erd\H{o}s-Ko-Rado property certainly does not hold, here.

As observed by Wang \cite{wang}, Theorem \ref{thm:katona} can be given a short proof using $ij$-compressions, which were introduced in the previous section. We sketch Wang's proof from \cite{wang}. We may assume, in proving Theorem \ref{thm:katona}, that $\mathcal{F}$ is left-compressed. We claim that if $\mathcal{F}$ is left-compressed and $t$-intersecting, then
$$\mathcal{A}: = \{S \subset [n] \setminus \{1\}:\ S \cup \{1\} \in \mathcal{F}\} \subset \mathcal{P}([n] \setminus \{1\})$$
is $(t-1)$-intersecting, and
$$\mathcal{B}: = \{S \subset [n] \setminus \{1\}:\ S \in \mathcal{F}\} \subset \mathcal{P}([n] \setminus \{1\})$$
is $(t+1)$-intersecting. The first statement is clear, so let us prove the second. Let $B_1,B_2 \in \mathcal{B}$. Choose $j \in B_1 \cap B_2$. Since $\mathcal{F}$ is $1j$-compressed, we have $(B_1 \cup \{1\}) \setminus \{j\} \in \mathcal{F}$, so using the fact that $\mathcal{F}$ is $t$-intersecting, we have $|B_1 \cap B_2| = |((B_1 \cup \{1\}) \setminus \{j\}) \cap B_2|+1 \geq t+1$, as required.

Given the above claim, it is straightforward to prove Theorem \ref{thm:katona} by induction on $n$. Indeed, the theorem is easy for $t=1$ (see Question \ref{question:power-set}), and easy to check for $n=1$, so we may assume that $t \geq 2$. Let $n \geq 2$, and assume by induction that the theorem holds when $n$ is replaced by $n-1$. Let $\mathcal{F} \subset \mathcal{P}([n])$ be $t$-intersecting; then $|\mathcal{F}| = |\mathcal{A}|+|\mathcal{B}|$, and applying the inductive hypothesis to $\mathcal{A}$ and to $\mathcal{B}$, with $t-1$ and $t+1$ in place of $t$, respectively, yields the bound in the theorem.

\subsection{Forbidding just one intersection}

There is another natural way of generalising Question \ref{question:ekr}, which is to forbid just one intersection-size.
\begin{qn}
For each triple of non-negative integers $(n,k,\ell)$, what is the maximum possible size $N(n,k,\ell)$ of a family of $k$-element subsets of $[n]$ such that no two sets in the family have intersection of size $\ell$?
\end{qn}
When $\ell=0$, the answer is given by the Erd\H{o}s-Ko-Rado theorem, but for many values of $(n,k,\ell)$, the exact answer remains elusive. Some remarkable results are known, however, on this problem.
\newline

\subsubsection*{The case when $n$ is large compared to $\ell$} In 1985, Frankl and F\"uredi \cite{ff} proved that $N(n,k,\ell) = {n-\ell-1 \choose k-\ell-1}$ for all $k \geq 2\ell+2$ and all $n \geq n_0(k)$; the extremal families in this case are precisely those of the form $\{S \in {[n] \choose k}:\ T \subset S\}$, for $T \in {[n] \choose \ell+1}$. Their proof relies on an intricate application of the `delta-system method', which was first developed and used by Frankl in the 1970s (see e.g.\ \cite{frankl-77}, or \cite{frankl-78-int} for a particularly transparent application), and first described in full by Deza, Erd\H{o}s and Frankl in 1978 \cite{def}.

Recently, two results were obtained (one by Keller and Lifshitz \cite{keller-lifshitz}, another by Keller, Lifshitz and the author \cite{ekl-entropy}) which together imply that $N(n,k,\ell) = M(n,k,\ell+1)$ for all $k \geq 2\ell+2$ and all $n \geq n_0(\ell)$ (see Question \ref{question:ak} for the definition of $M(n,k,t)$). The proofs of these two results use the `junta method', together with appropriate notions of pseudorandomness/regularity. We will discuss these methods, in a slightly easier context, in Section \ref{subsection:perms}.

\subsection*{The Frankl-Wilson theorem(s)}

The fact that $N(n,k,\ell) = M(n,k,\ell+1)$ for all $k \geq 2\ell+2$ and all $n$ sufficiently large depending on $\ell$, may be somewhat surprising: it says that for $n$ large, banning intersections of size exactly $\ell$ has the same effect (on the maximum size of a $k$-uniform family) as banning intersections of sizes $0,1,2,\ldots,\ell-1$ and $\ell$. However, the following theorem of Frankl and Wilson is (arguably) even more surprising (indeed, spectacular).

\begin{theorem}[Frankl-Wilson \cite{fw81}, 1981]
\label{thm:fw81}
If $p$ is prime, then
$$N(4p,2p,p) \leq 2{4p \choose p-1}.$$
\end{theorem}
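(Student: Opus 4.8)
The statement is the classical Frankl–Wilson theorem, and the standard proof is linear-algebraic, working with the $\mathbb{F}_p$-vector space of functions on the relevant set-system. First I would set $n = 4p$, $k = 2p$, and consider a family $\f \subset \binom{[4p]}{2p}$ with no two sets intersecting in exactly $p$ elements. The key arithmetic observation is that if $|A| = |B| = 2p$, then $|A \cap B| = p$ if and only if $|A \cap B| \equiv p \pmod p$, i.e. $|A \cap B| \equiv 0 \pmod p$, \emph{except} that $|A \cap B|$ also lies in $\{0, p, 2p\}$ modulo $p$ being $0$; so I need to be slightly careful — the forbidden intersection size $p$ is congruent to $0 \bmod p$, but so are $0$ and $2p$. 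So the clean mod-$p$ statement is: among sets of size $2p$, two sets have $|A\cap B| \in \{0, 2p\}$ or $|A \cap B| = p$ according to the residue. The way Frankl–Wilson get around this is to \emph{split} $\f$ according to $|A \cap [2p]| \bmod$ something, or more cleanly, to partition $\f$ into two parts and bound each; this is exactly why the bound is $2\binom{4p}{p-1}$ rather than $\binom{4p}{p-1}$.

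\textbf{Key steps, in order.} (1) Reduce to the ``mod $p$'' setting: observe that for $A, B \in \binom{[4p]}{2p}$, we have $|A \cap B| \equiv |A| + |B| - |A \cup B| \equiv -|A \cup B| \pmod p$, and more usefully that $|A \cap B| = p$ forces $|A \cap B| \not\equiv 0$ only among... — actually the right move is: partition $\f = \f_0 \cup \f_1$ where $\f_0 = \{A \in \f : |A \cap \{1\}| = 0\}$ won't quite work either; the genuine trick (Frankl–Wilson) is to note that if $A \ne B$ in $\f$ then $|A \cap B| \in \{0,1,\dots,2p\} \setminus \{p\}$, so $|A\cap B| \bmod p \in \{0,1,\dots,p-1\}$ but $|A \cap B| = 0$ is impossible only if we further restrict. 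I would instead apply the \emph{modular Ray-Chaudhuri–Wilson / Frankl–Wilson} inequality directly: for a prime $p$ and a family $\f$ of $k$-sets in $[n]$ with $k \equiv 0$, and $|A \cap B| \bmod p$ taking at most $s$ distinct nonzero values, one has $|\f| \le \binom{n}{s}$; here, after removing the ``diagonal'' issue, the intersection sizes of distinct sets, reduced mod $p$, take values in a set of size $\le p-1$, but we must exclude the case $|A\cap B| \equiv 0$, which happens precisely when $|A \cap B| \in \{0, p, 2p\}$ — and $|A\cap B| = 2p$ forces $A = B$, $|A \cap B| = p$ is forbidden, so among distinct sets $|A \cap B| \equiv 0 \pmod p$ iff $|A \cap B| = 0$. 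Thus distinct sets with $|A \cap B| \ne 0$ have $|A \cap B| \bmod p \in \{1, 2, \dots, p-1\}$, which is $p-1$ values. (2) Split off the ``sunflower-free-like'' part: let $\f'$ be a maximal subfamily of $\f$ any two of whose members meet (so no two members of $\f'$ are disjoint); then every member of $\f \setminus \f'$ is disjoint from some member of $\f'$, but since $n = 2k$, each $A$ has a \emph{unique} disjoint partner $[4p] \setminus A$, so $\f \setminus \f'$ injects into the complements of $\f'$; hence $|\f| \le 2|\f'|$. (3) Bound $|\f'|$ by the Frankl–Wilson modular rank bound: apply the polynomial method over $\mathbb{F}_p$ to the sets in $\f'$, using that their pairwise intersection sizes reduce mod $p$ to one of the $p-1$ values $\{1,\dots,p-1\}$ (never $0$, since $\f'$ is intersecting and intersections of size $p$ are forbidden), obtaining $|\f'| \le \binom{4p}{p-1}$. (4) Combine: $|\f| \le 2|\f'| \le 2\binom{4p}{p-1}$.

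\textbf{The polynomial method step in more detail.} For (3): to each $A \in \f'$ associate the incidence vector $v_A \in \mathbb{F}_p^{4p}$, and define the polynomial $f_A(x) = \prod_{i=1}^{p-1}\bigl(\langle x, v_A\rangle - i\bigr)$ in $\mathbb{F}_p[x_1,\dots,x_{4p}]$. Evaluating at $v_B$ gives $\prod_{i=1}^{p-1}(|A \cap B| - i) \bmod p$, which is nonzero iff $|A \cap B| \bmod p \notin \{1,\dots,p-1\}$, i.e. iff $|A \cap B| \equiv 0$, which for distinct members of $\f'$ never happens and for $A = B$ gives $|A| = 2p \equiv 0$, so $f_A(v_B) = 0$ for $A \ne B$ and $f_A(v_A) \ne 0$ — wait, this is backwards, so I would instead take $g_A(x) = \prod_{i=0}^{p-1}(\langle x, v_A\rangle - i)$ is not it either; the correct construction (Frankl–Wilson) uses $f_A(x) = \prod_{i \in L}(\langle x, v_A \rangle - i)$ where $L = \{1, \dots, p-1\}$ is the set of \emph{allowed} nonzero residues, so that $f_A(v_A) = \prod_{i=1}^{p-1}(2p - i) = \prod_{i=1}^{p-1}(-i) \not\equiv 0 \pmod p$ (since $2p \equiv 0$, this is $(-1)^{p-1}(p-1)! \equiv 1 \bmod p$ by Wilson's theorem, nonzero), while for $A \ne B$ in $\f'$, $|A \cap B| \bmod p \in L$, so one of the factors vanishes and $f_A(v_B) = 0$. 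Then multilinear reduction (replacing $x_j^2$ by $x_j$, valid on $0/1$ vectors) expresses each $f_A$ as a multilinear polynomial of degree $\le p-1$, and the $f_A$ are linearly independent by the triangular structure $f_A(v_B) = \delta_{AB}\cdot(\text{unit})$; since the space of multilinear polynomials of degree $\le p-1$ in $4p$ variables has dimension $\sum_{j=0}^{p-1}\binom{4p}{j}$, I would want exactly $\binom{4p}{p-1}$ — so the final step needs the refinement (also due to Frankl–Wilson, using that all sets have the \emph{same} size $2p \equiv 0 \bmod p$) that lets one discard the lower-degree parts and reduce the dimension count to the single binomial $\binom{4p}{p-1}$: one adjoins the polynomial $(x_1 + \dots + x_{4p} - 0)$ worth of relations, or works in the quotient by the ideal generated by $\sum x_j$, cutting the dimension down. \textbf{The main obstacle I expect} is precisely this last dimension-counting refinement — getting $\binom{4p}{p-1}$ rather than the crude $\sum_{j\le p-1}\binom{4p}{j}$ — which requires the ``homogenization'' trick exploiting $|A| \equiv 0 \pmod p$, and a careful argument that the relevant evaluation functionals remain independent after passing to the quotient; everything else (the factor-of-2 reduction from $n = 2k$, the mod-$p$ translation of the forbidden intersection) is routine.
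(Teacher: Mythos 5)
Your reduction is exactly the paper's: pass to a subfamily $\f'$ containing at most one set from each complementary pair (so that distinct members have intersection size in $\{1,\dots,2p-1\}\setminus\{p\}$, hence $\not\equiv 0 \pmod p$), apply the modular Frankl--Wilson theorem with the $s=p-1$ allowed residues $\{1,\dots,p-1\}$ to get $|\f'|\le\binom{4p}{p-1}$, and conclude $|\f|\le 2|\f'|$. The paper, like you, invokes the modular theorem (its Theorem 1.8) as a black box at this point, so your proof of the stated bound is complete and identical in approach. Your further sketch of the modular theorem itself only reaches the cruder bound $\sum_{j\le p-1}\binom{4p}{j}$, and you are right that the refinement to the single binomial coefficient $\binom{4p}{p-1}$ (exploiting $|A|\equiv 0\pmod p$) is the genuinely delicate step of Frankl--Wilson's argument; but since that theorem is being cited rather than proved, this does not affect the correctness of your proof of the statement at hand.
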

We note that ${4p \choose p-1} < e^{-p/4} \cdot 2^{4p}$ (using e.g.\ a standard Chernoff bound such as (\ref{eq:chernoff}), below), so the bound in Theorem \ref{thm:fw81} is exponentially small as a fraction of ${4p \choose 2p}$ (recall that ${4p \choose 2p} \gtrsim 2^{4p}/\sqrt{p}$). Theorem \ref{thm:fw81} is a very remarkable result: it says that just banning one intersection-size can force a family to be exponentially small. It is a consequence of the following theorem, also from \cite{fw81}.

\begin{theorem}[Frankl-Wilson, 1981]
\label{thm:fw81gen}
Let $p$ be a prime, and let $k,n \in \mathbb{N}$ with $k \leq n$. Let $\mathcal{F} \subset {[n] \choose k}$, and let $\lambda_1,\ldots,\lambda_s \in [p-1] \cup \{0\}$ with $\lambda_i \not\equiv k \mod p$ for all $i \in [s]$. Suppose that for any two distinct sets $S,T \in \mathcal{F}$, there exists $i \in [s]$ such that $|S \cap T| \equiv \lambda_i \mod p$. Then
$$|\f| \leq {n \choose s}.$$
\end{theorem}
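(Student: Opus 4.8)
The plan is to run the polynomial (linear-algebra) method over the field $\mathbb{F}_p$. To each $S \in \mathcal{F}$ I would attach its characteristic vector $v_S \in \{0,1\}^n$, regarded as an element of $\mathbb{F}_p^n$, so that $\langle v_S, v_T\rangle$ is the image of $|S\cap T|$ in $\mathbb{F}_p$, and define a polynomial $f_S \in \mathbb{F}_p[x_1,\ldots,x_n]$ by
\[ f_S(x) = \prod_{i=1}^s (\langle v_S, x\rangle - \lambda_i). \]
The crux is a pair of evaluations. First, $f_S(v_S) = \prod_{i=1}^s (k-\lambda_i) \neq 0$ in $\mathbb{F}_p$, since $\langle v_S, v_S\rangle = |S| = k$, the field $\mathbb{F}_p$ is an integral domain (as $p$ is prime), and $\lambda_i \not\equiv k$ (mod $p$) for every $i$. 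Secondly, for distinct $S,T \in \mathcal{F}$ we have $f_S(v_T) = \prod_{i=1}^s (|S\cap T| - \lambda_i) = 0$ in $\mathbb{F}_p$, because $|S\cap T| \equiv \lambda_i$ (mod $p$) for some $i$, by hypothesis. This ``diagonal'' behaviour forces the functions $\{f_S : S \in \mathcal{F}\}$, viewed as functions on the slice ${[n] \choose k}$ (which contains every $v_T$), to be linearly independent over $\mathbb{F}_p$: in any relation $\sum_{S} c_S f_S \equiv 0$, evaluation at $v_T$ kills all the terms but $c_T f_T(v_T)$, so $c_T = 0$. Hence $|\mathcal{F}|$ is at most the dimension of the span of the $f_S$.

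It remains to fit that span inside a space of dimension ${n \choose s}$. Since the members of $\mathcal{F}$ are subsets of $[n]$, I would work modulo the relations $x_i^2 = x_i$: reducing each $f_S$ this way does not raise its degree, and exhibits $f_S$ (as a function on ${[n] \choose k}$) as a linear combination of the multilinear monomials $w_R := \prod_{i\in R} x_i$ with $|R| \le s$ --- exactly the ``contains $R$'' indicators $w_R$ of Section~1. On its own this already yields $|\mathcal{F}| \le \sum_{j=0}^s {n \choose j}$. To reach the sharp bound I would invoke that $\mathcal{F}$ is $k$-\emph{uniform}, which on the slice ${[n] \choose k}$ gives, for each $R$ with $|R| \le s$, the identity
\[ \sum_{R' \supseteq R,\ |R'| = s} w_{R'} = {k-|R| \choose s-|R|}\, w_R ; \]
provided the binomial coefficients ${k-d \choose s-d}$ with $0 \le d < s$ are nonzero modulo $p$, the $w_R$ with $|R| = s$ therefore span the whole space of multilinear polynomials of degree $\le s$ on the slice. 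That space has dimension at most ${n \choose s}$ (its dimension is the rank of the $s$-versus-$k$ set-inclusion matrix, which has only ${n \choose s}$ rows), and combining this with the linear independence above yields $|\mathcal{F}| \le {n \choose s}$.

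The step I expect to be the genuine obstacle is precisely this sharpening, from the easy $\sum_{j\le s}{n \choose j}$ down to ${n \choose s}$: it is here that $k$-uniformity is essential, and here that one must take a little care working over $\mathbb{F}_p$ rather than $\mathbb{Q}$, the reduction hinging on the non-vanishing modulo $p$ of the binomials ${k-d \choose s-d}$. In the case needed for Theorem~\ref{thm:fw81} --- $n=4p$, $k=2p$, and $s=p-1$ after deleting one set from each disjoint (equivalently, complementary) pair at the cost of a factor $2$, so that the surviving intersection sizes have residues in $\{1,\ldots,p-1\}$, none of which is $2p \bmod p$ --- a one-line application of Lucas's theorem confirms this non-vanishing, and Theorem~\ref{thm:fw81} drops out. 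By contrast, writing down the $f_S$ and running the diagonal linear-independence argument are completely routine once one has found the right polynomials; the cleverness of Frankl and Wilson lies in realising that passing to a prime modulus and planting the forbidden intersection-residues as the roots turns a genuinely hard extremal problem into an almost mechanical rank computation.
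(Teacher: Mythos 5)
Your first half is correct and is the standard opening of the linear-independence method (which the survey only illustrates via Proposition \ref{prop:oddtown}, without proving Theorem \ref{thm:fw81gen} itself): the polynomials $f_S$ over $\mathbb{F}_p$, the diagonal evaluations $f_S(v_S)\neq 0$, $f_S(v_T)=0$, and multilinear reduction correctly yield $|\mathcal{F}|\leq\sum_{j=0}^{s}\binom{n}{j}$. The genuine gap is exactly where you located it, but your proposed repair does not work. Your reduction of $w_R$ (with $|R|=d<s$) to the span of the $w_{R'}$ with $|R'|=s$ needs $\binom{k-d}{s-d}$ to be invertible in $\mathbb{F}_p$; this is not a hypothesis of the theorem and fails for general $(n,k,s,p)$. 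Worse, it fails in the very case needed for Theorem \ref{thm:fw81}: with $k=2p$ and $s=p-1$, the case $d=0$ requires $\binom{2p}{p-1}\not\equiv 0\pmod p$, whereas Lucas's theorem gives $\binom{2p}{p-1}\equiv\binom{2}{0}\binom{0}{p-1}=0\pmod p$ (for instance $\binom{6}{2}=15\equiv 0\pmod 3$). So the ``one-line application of Lucas's theorem'' establishes the opposite of what you assert. Nor is there an obvious detour: expressing the constant function $w_\emptyset$ via the degree-$j$ indicators on the slice $\binom{[4p]}{2p}$ produces the coefficient $\binom{2p}{j}$, which vanishes mod $p$ for every $1\leq j\leq p-1$, so all the natural identities degenerate, and there is good reason to believe the spanning statement itself (not merely your proof of it) is false here.

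The moral is that the sharpening from $\sum_{j\leq s}\binom{n}{j}$ to $\binom{n}{s}$ is the genuinely hard part of the modular Frankl--Wilson theorem, and it cannot be obtained by the routine ``reduce everything to degree exactly $s$'' step that works over $\mathbb{Q}$ in the Ray-Chaudhuri--Wilson setting (where $\binom{k-d}{s-d}$ is a nonzero integer). Frankl and Wilson's argument instead hinges on a delicate analysis of the inclusion matrices $W_{i,k}$ over $\mathbb{F}_p$, showing that the row space of the particular combination $\sum_i a_i W_{i,k}^{\mathsf{T}}W_{i,k}$ arising from the polynomial $\prod_i(x-\lambda_i)$ is contained in that of $W_{s,k}$ --- a fact special to this situation, not a consequence of the (false mod $p$) containment of the row space of each $W_{i,k}$ in that of $W_{s,k}$. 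Two honest ways to proceed: either carry out that inclusion-matrix argument, or settle for the bound $\sum_{j=0}^{s}\binom{n}{j}$, which your argument does prove and which (being at most about $\tfrac{3}{2}\binom{4p}{p-1}$ when $n=4p$, $s=p-1$) still suffices, with slightly worse constants, for all the geometric applications of Theorem \ref{thm:fw81} discussed in the survey --- though not for the stated bound $N(4p,2p,p)\leq 2\binom{4p}{p-1}$.
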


To deduce Theorem \ref{thm:fw81} from Theorem \ref{thm:fw81gen}, we let $\mathcal{F} \subset {[4p] \choose 2p}$ with $|S \cap T| \neq p$ for all distinct $S,T \in \f$. For each pair $\{S,[4p] \setminus S\} \subset \f$, we remove one of $S$ and $[4p] \setminus S$ from $\f$, producing a family $\f'$ that satisfies $|S \cap T| \not\equiv 0$ (mod.\ $p$) for all distinct $S,T \in \f'$. By applying Theorem \ref{thm:fw81gen} to $\f'$ with $n=4p$, $k=2p$, $s=p-1$ and $\{\lambda_1,\ldots,\lambda_s\} = [p-1]$, we see that $|\f'| \leq {4p \choose p-1}$, and therefore $|\f|\leq 2|\f'| \leq 2{4p \choose p-1}$, as required.

\subsubsection*{The `linear independence method' in extremal combinatorics}
Frankl and Wilson's proof of Theorem \ref{thm:fw81gen} uses a rather ingenious variant of what may be called the `linear algebraic method' or `linear independence method' in extremal combinatorics. The general idea behind this method is as follows. Suppose $U$ is a universe of mathematical objects, and $P$ is a property of subsets of $U$, and we wish to show that any subset $\mathcal{S} \subset U$ satisfying $P$, has $|\mathcal{S}| \leq M$. One possible way of doing this is to associate, to each element $u$ of $U$, a vector $\phi(u)$ in some vector space $V$ of dimension at most $M$, and to show that, if $\mathcal{S} \subset U$ satisfies the property $P$, then $\phi(u_1) \neq \phi(u_2)$ for all distinct $u_1,u_2 \in S$ and $\{\phi(u):\ u \in \mathcal{S}\}$ is a linearly independent set; it then follows immediately that $|\mathcal{S}| = |\phi(\mathcal{S})| \leq M$.

This general method is more easily illustrated with the following slight generalisation of a (very) special case of Theorem \ref{thm:fw81gen}, due to Berlekamp \cite{berkelamp}.

\begin{prop}
\label{prop:oddtown}[The `oddtown' theorem]
Let $\f \subset \mathcal{P}([n])$ such that $|S|$ is odd for all $S \in \f$, and $|S \cap T|$ is even for all distinct $S,T \in \f$. Then $|\f| \leq n$.
\end{prop}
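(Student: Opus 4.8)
This is the classic "Oddtown" theorem, and the plan is to use the linear independence method over the field $\mathbb{F}_2$ with exactly the kind of set-up just described. To each set $S \in \f$, associate its characteristic vector $1_S \in \mathbb{F}_2^n$. The key observation is that over $\mathbb{F}_2$, the standard dot product satisfies $\langle 1_S, 1_T \rangle = |S \cap T| \pmod 2$. So the hypotheses translate directly: $\langle 1_S, 1_S \rangle = |S| \equiv 1$ for every $S \in \f$, and $\langle 1_S, 1_T \rangle = |S \cap T| \equiv 0$ for distinct $S, T \in \f$.

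The heart of the argument is then to show that $\{1_S:\ S \in \f\}$ is linearly independent over $\mathbb{F}_2$; since these vectors live in $\mathbb{F}_2^n$, which has dimension $n$, this immediately gives $|\f| \leq n$ (and in particular all the $1_S$ are distinct, so $|\f| = |\{1_S:\ S\in\f\}|$). To prove linear independence, suppose $\sum_{S \in \f} c_S 1_S = 0$ with $c_S \in \mathbb{F}_2$. Fix any $T \in \f$ and take the inner product of both sides with $1_T$: by bilinearity, $\sum_{S \in \f} c_S \langle 1_S, 1_T\rangle = 0$. By the hypotheses, every term with $S \neq T$ vanishes (even intersection), and the $S = T$ term equals $c_T \cdot 1 = c_T$. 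Hence $c_T = 0$, and since $T$ was arbitrary, all coefficients vanish.

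I do not anticipate a serious obstacle here --- the proof is short and essentially mechanical --- but the one genuinely delicate point, and the place where the precise wording of the hypotheses matters, is that we are working over $\mathbb{F}_2$ rather than $\mathbb{Q}$ or $\mathbb{R}$. The ``self-intersection'' condition $|S| \equiv 1 \pmod 2$ is exactly what makes $\langle 1_S, 1_S \rangle \neq 0$ in $\mathbb{F}_2$; without it the inner-product trick would collapse, since over a field of characteristic $2$ a nonzero vector can be isotropic. It is worth emphasising in the write-up that this parity condition is not a technicality but the crux: it is what allows the Gram matrix of the $1_S$ to be the identity over $\mathbb{F}_2$, from which linear independence is immediate. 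One should also note explicitly that distinctness of the sets in $\f$ is part of the standing assumption (``for all distinct $S, T$''), so no separate argument is needed to rule out repeated characteristic vectors.
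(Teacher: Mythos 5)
Your proof is correct and follows exactly the paper's argument: characteristic vectors in $\mathbb{F}_2^n$ with the standard inner product, where odd sizes give $\langle \chi_S,\chi_S\rangle = 1$ and even intersections give $\langle \chi_S,\chi_T\rangle = 0$, forcing linear independence and hence $|\f|\leq n$. You merely spell out the orthonormality-implies-independence step that the paper takes for granted.
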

\begin{proof}
For each $S \in \f$, consider the characteristic vector $\chi_S \in \mathbb{F}_2^n$, defined by $\chi_S(i) = 1$ if $i \in S$ and $\chi_S(i)=0$ if $i \notin S$. Let
$$\langle u,v \rangle := \sum_{i=1}^{n} u_i v_i\quad (u,v \in \mathbb{F}_2^n)$$
denote the standard inner product in $\mathbb{F}_2^n$. Since $|S|$ is odd for all $S \in \f$, we have $\langle \chi_S,\chi_S \rangle = 1$ for all $S \in \f$, and since $|S \cap T|$ is even for all distinct $S,T \in \f$, we have $\langle \chi_S,\chi_T \rangle = 0$ for all distinct $S,T \in \f$. Hence, $\{\chi_S:\ S \in \f\}$ is orthonormal, and therefore linearly independent in $\mathbb{F}_2^n$, a vector space of dimension $n$. The proposition follows.
\end{proof}

We remark that the bound in Proposition \ref{prop:oddtown} is very strong indeed: the condition that $|S \cap T|$ is even for all distinct $S,T \in \f$ forces a polynomial (indeed, linear) upper bound, compared to the exponential $|\{S \subset [n]:\ |S| \text{ is odd}\}|=2^{n-1}$. Proposition \ref{prop:oddtown} is clearly best possible, as is evidenced by the family of singletons $\{\{i\}:\ i \in [n]\}$.

Results such as Theorem \ref{thm:fw81gen} and Proposition \ref{prop:oddtown}, supplying upper bounds on the size of a family of sets where intersection-sizes are specified modulo $m$ for some integer $m$, are often referred to as `oddtown'-type results. A wide variety of such results have been proven over the last 40 years; the reader is referred e.g.\ to \cite{babai-frankl,ly} for a sample of such results. For a survey of a great many other applications of what we have called the `linear independence method', in combinatorics and combinatorial geometry, the reader is referred to the book of Babai and Frankl \cite{babai-frankl}.

\subsection{Geometric consequences of the Frankl-Wilson theorem(s)}

We now discuss three beautiful and important geometric consequences of the aforementioned theorems of Frankl and Wilson.

\subsubsection*{The Kahn-Kalai counterexample to Borsuk's conjecture} Theorem \ref{thm:fw81} was used by Kahn and Kalai in 1993 \cite{kahn-kalai-borsuk} to disprove (in an extremely strong sense), a hitherto widely believed conjecture of Borsuk in geometry, namely,
\begin{conj}[Borsuk]
Any bounded subset $B \subset \mathbb{R}^n$ can be partitioned into $n+1$ sets, each of which has strictly smaller diameter than $B$.
\end{conj}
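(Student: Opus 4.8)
The plan is to \emph{disprove} Borsuk's conjecture, for infinitely many dimensions, by feeding the Frankl--Wilson theorem (Theorem~\ref{thm:fw81}) into a carefully chosen Euclidean embedding of the configuration ${[4p] \choose 2p}$. Fix a prime $p$ and write $m = 4p$. To each $S \in {[m] \choose 2p}$ associate the $\pm 1$ vector $x_S \in \mathbb{R}^m$ with $(x_S)_i = -1$ if $i \in S$ and $(x_S)_i = +1$ otherwise, and then the symmetric rank-one matrix $M_S := x_S x_S^{\mathsf{T}}$, which we regard as a point of the ${m+1 \choose 2}$-dimensional real vector space of symmetric $m \times m$ matrices (with the Frobenius inner product). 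The purpose of this quadratic map is that $\langle M_S, M_T\rangle = \langle x_S, x_T\rangle^2$; and since $\langle x_S, x_T\rangle = m - 2|S| - 2|T| + 4|S \cap T| = 4(|S \cap T| - p)$ for sets of size $2p$, we obtain
\[
\| M_S - M_T \|^2 = 2m^2 - 2\langle x_S, x_T\rangle^2 = 32\bigl(p^2 - (|S\cap T| - p)^2\bigr).
\]
Consequently the finite set $P := \{ M_S : S \in {[m] \choose 2p}\}$ --- which has exactly $\tfrac12{4p \choose 2p}$ distinct points, since $M_S = M_{[m]\setminus S}$ while $S \mapsto M_S$ is otherwise injective --- has diameter $4\sqrt{2}\,p$, and two of its points attain this maximal distance \emph{if and only if} the corresponding $2p$-sets meet in exactly $p$ elements.

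The combinatorics now enters cleanly. A subset $P' \subseteq P$ has diameter strictly smaller than $\operatorname{diam}(P)$ precisely when, on choosing one $2p$-set to represent each point of $P'$, the resulting family $\mathcal{F} \subseteq {[m] \choose 2p}$ contains no two sets meeting in exactly $p$ elements; by Theorem~\ref{thm:fw81} (deduced from Theorem~\ref{thm:fw81gen} exactly as above, with $n = 4p$ and $k = 2p$) any such family satisfies $|\mathcal{F}| \le 2{4p \choose p-1}$, hence $|P'| \le 2{4p \choose p-1}$. Therefore any partition of $P$ into parts of strictly smaller diameter uses at least
\[
\frac{|P|}{2{4p \choose p-1}} = \frac{{4p \choose 2p}}{4{4p \choose p-1}}
\]
parts, and with the estimates recorded just after Theorem~\ref{thm:fw81} --- namely ${4p \choose p-1} < e^{-p/16}2^{4p}$ and ${4p \choose 2p} \gtrsim 2^{4p}/\sqrt{p}$ --- this quantity exceeds $c\,e^{p/16}/\sqrt{p}$ for an absolute constant $c > 0$. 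On the other hand $P$ lives in dimension $n = {4p+1 \choose 2} = 8p^2 + 2p$, so the number of parts required grows like $e^{\Theta(\sqrt{n})}$, which for $p$ large is far larger than $n+1$. Letting $p$ range over the (infinitely many) sufficiently large primes, we conclude that Borsuk's conjecture fails in infinitely many dimensions --- and, since the number of parts needed is exponential in $\sqrt{n}$, it fails spectacularly.

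The genuinely clever step --- and hence the main obstacle to rediscovering the argument --- is the choice of the embedding $S \mapsto x_S x_S^{\mathsf{T}}$: once one decides to look for a quadratic map, the Frankl--Wilson hypothesis ``$|S \cap T| \ne p$'' translates automatically into ``not at maximal Euclidean distance'', so that ``partition into pieces of smaller diameter'' becomes ``partition into families forbidding the intersection size $p$'', which Theorem~\ref{thm:fw81} controls. The decisive quantitative point is that this embedding costs only a \emph{quadratic} blow-up in dimension (from $4p$ to ${4p+1 \choose 2}$), whereas Frankl--Wilson forces every diameter-decreasing piece to be an \emph{exponentially} small fraction of $P$; it is this mismatch between polynomial dimension and exponential lower bound that destroys the conjecture. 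Everything after the embedding --- the two-line distance computation, the citation of Theorem~\ref{thm:fw81}, and the asymptotics --- is routine, the only minor care being the harmless coincidence $M_S = M_{[m]\setminus S}$, which merely halves $|P|$.
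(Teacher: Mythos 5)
Your disproof is correct and is essentially the paper's own argument (the Kahn--Kalai counterexample): the rank-one embedding $S \mapsto x_S x_S^{\mathsf{T}}$ is, up to an affine change of coordinates and the constant diagonal, exactly the paper's embedding of $S$ as the complete bipartite graph with parts $S$ and $[4p]\setminus S$ inside $\{0,1\}^{\binom{4p}{2}}$, since $(x_S x_S^{\mathsf{T}})_{ij}=-1$ precisely when $ij\in E(G_S)$. The distance computation, the appeal to Theorem \ref{thm:fw81}, and the final count of $\gtrsim e^{p/16}/\sqrt{p}\geq c^{\sqrt{n}}$ pieces in dimension $\Theta(p^2)$ all match the paper.
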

This is true for $n \leq 3$; the regular simplex in $\mathbb{R}^n$ shows that, if Borsuk's conjecture were true, then it would be best-possible. Unfortunately, it is wildly false in general.

We call a decomposition of a bounded subset $B \subset \mathbb{R}^n$ into sets of strictly smaller diameter than $B$, a {\em Borsuk decomposition} of $B$. Kahn and Kalai exhibited, for each $n \in \mathbb{N}$, a (discrete) subset $B_n$ of $\mathbb{R}^n$ requiring at least $c^{\sqrt{n}}$ sets in a Borsuk decomposition, where $c>1$ is an absolute constant. In fact, $B_n \subset \{0,1\}^n$ for each $n$. To explain their construction, we identify $\{0,1\}^n$ with $\mathcal{P}([n])$ in the natural way, and we assume that $n={4p \choose 2}$ where $p$ is prime, so that we can identify $\mathcal{P}([n])$ with the set of all edge-sets of labelled graphs on the vertex-set $[4p]$. For each subset $S \in {4p \choose 2p}$, we consider the graph $G_S$ which is the complete bipartite graph with vertex-classes $S$ and $[4p] \setminus S$. We take $B_n = \{G_S:\ S \in {4p \choose 2p}\}$. Note that for any $S,T \in {4p \choose 2p}$, we have
\begin{align*} |E(G_S) \Delta E(G_T)| & = |E(G_S)|+|E(G_T)| - 2|E(G_S) \cap E(G_T)|\\
&= 8p^2 - 2(|S \cap T|^2 + (2p-|S \cap T|)^2),
\end{align*}
which is maximized when $|S \cap T| = p$. Hence, any subset of $B_n$ with diameter strictly smaller than $B_n$, must not contain $G_S$ and $G_T$ with $|S \cap T| = p$, and therefore has cardinality at most $2{4p \choose p-1}$, by Theorem \ref{thm:fw81}, so the number of pieces needed in a Borsuk decomposition of $B_n$ is at least
$$\frac{\tfrac{1}{2}{4p \choose 2p}}{2 {4p \choose p-1}} \gtrsim e^{p/4}/\sqrt{p} \geq c^{\sqrt{n}},$$
where $c>1$ is an absolute constant.

Writing $b(n)$ for the maximum possible number of sets required in a Borsuk decomposition of a bounded subset of $\mathbb{R}^n$, it is known that $b(n) \leq C^n$ for all $n \in \mathbb{N}$, where $C$ is an absolute constant (this upper bound in due to Schramm \cite{schramm}). The best-known lower bound on $b(n)$ is still of the form $c^{\sqrt{n}}$.

\subsubsection*{The chromatic number of Euclidean space}

A classical problem is to determine (or bound) the chromatic number of the unit distance graph on $\mathbb{R}^n$: that is, the graph with vertex-set $\mathbb{R}^n$, where two points are joined if they are at a (Euclidean) distance of one apart; we denote this chromatic number by $\chi(\mathbb{R}^n)$. Even the determination of $\chi(\mathbb{R}^2)$ is a notorious open problem, known as the Hadwiger-Nelson problem; for more than 50 years the best known bounds were $4 \leq \chi(\mathbb{R}^2) \leq 7$ (the lower bound being due to the brothers William and Leo Moser, the upper bound being due to Isbell), until the 2018 breakthrough of Aubrey de Grey \cite{grey} showing that $\chi(\mathbb{R}^2) \geq 5$. In \cite{fw81}, Frankl and Wilson applied Theorem \ref{thm:fw81} to show that $\chi(\mathbb{R}^n) \geq c^n$ for all $n \in \mathbb{N}$, where $c>1$ is an absolute constant. To see this, assume that $n=4p$ for a prime $p$, and embed ${4p \choose 2p} \subset \mathcal{P}([4p])$ into $\{0,1\}^{4p} \subset \mathbb{R}^{4p}$ in the natural way; then two sets $S,T \in {4p \choose 2p}$ with $|S \cap T| = p$ correspond exactly to two points $x,y \in \{0,1\}^{4p}$ with $\|x-y\|_2 = \sqrt{2p}$. Rescaling by a factor of $\sqrt{2p}$, we see that a monochromatic subset of $(1/\sqrt{2p})\cdot \{0,1\}^{4p}$ (in a proper colouring of $\mathbb{R}^{4p}$) has size at most $2{4p \choose p-1}$, so
$$\chi(\mathbb{R}^{4p}) \geq \frac{2^{4p}}{2{4p \choose p-1}} \geq \frac{2^{4p}}{2e^{-p/4} \cdot 2^{4p}} = \tfrac{1}{2}e^{p/4}.$$
Using the classical fact (Bertrand's postulate) that for every $x>1$, there exists a prime $p$ with $x \leq p \leq 2x$, it follows that $\chi(\mathbb{R}^n) > c^n$ for each $n \in \mathbb{N}$, where $c>1$ is an absolute constant.

In the other direction, it is easy to see, by partitioning $\mathbb{R}^n$ into half-open cubes of side-length $1/\sqrt{n}$ and greedily colouring the cubes (giving each cube one colour), that $\chi(\mathbb{R}^n) \leq C^n$ where $C$ is an(other) absolute constant, so the rate of growth of $\chi(\mathbb{R}^n)$ is exponential. Its precise (exponential) rate of growth is unknown. The best-known general lower bound is $\chi(\mathbb{R}^n)\geq (1.239+o(1))^n$, due to Raigordskii \cite{raigordskii}, and the best-known general upper bound is $\chi(\mathbb{R}^n) \leq (3+o(1))^n$, due to Larman and Rogers \cite{larman-rogers}.

\subsubsection*{Witsenhausen's problem on subsets of the sphere}
For each $n \in \mathbb{N}$, let $\mu_n$ denote Lebesgue measure on the $n$-dimensional unit sphere $S^n \subset \mathbb{R}^{n+1}$. An old problem of Witsenhausen is to determine the maximum measure $m(n)$ of a Lebesgue-measureable subset $\f \subset S^{n}$ such that $\langle x,y \rangle \neq 0$ for all $x,y \in \f$. Kalai conjectures that the extremal sets for this problem are the double spherical caps of half-angle $\pi/3$, i.e.\ the sets of the form $\{x \in S^n:\ |\langle x,x_0 \rangle| > 1/2\}$ for $x_0 \in S^n$ (this beautiful conjecture remains open). In \cite{fw81}, Frankl and Wilson applied Theorem \ref{thm:fw81} to show that $m(n) \leq c^n$ for each $n \in \mathbb{N}$, for some absolute constant $c<1$. To see this, take $n=4p-1$ where $p$ is prime, and observe that, embedding ${[4p] \choose 2p} \subset \mathcal{P}([4p])$ into $\{\pm 1\}^{4p}$ in the natural way, two sets $S,T \in {[4p] \choose 2p}$ with $|S \cap T| = p$ correspond exactly to two points $x,y \in \{\pm1\}^{4p}$ with $\langle x,y \rangle = 0$. Rescaling, we have a copy $\mathcal{C}$ of ${[4p] \choose 2p}$ inside $(1/(2\sqrt{p}))\cdot \{\pm 1\}^n \subset S^{4p-1}$ where, again, two sets $S,T \in {[4p] \choose 2p}$ with $|x \cap y| = p$ correspond exactly to two points $x,y \in (1/(2\sqrt{p}))\cdot\{\pm1\}^{4p}$ with $\langle x,y \rangle = 0$. If $\f \subset S^{4p-1}$ is measurable with $\langle x,y \rangle \neq 0$ for all $x,y \in \f$, then by Theorem \ref{thm:fw81}, we have 
$$\frac{|\f \cap \mathcal{C}|}{|\mathcal{C}|} \leq \frac{2{4p \choose p-1}}{{4p \choose 2p}} \leq \frac{2 e^{-p/4} \cdot 2^{4p}}{\frac{1}{4p} \cdot 2^{4p}} \leq c^{4p-1},$$
where $c<1$ is an absolute constant. The same bound clearly holds if $\mathcal{C}$ is replaced by $\sigma(\mathcal{C})$, where $\sigma \in O(4p,\mathbb{R})$; so averaging over all images of $\mathcal{C}$ under elements of $O(4p,\mathbb{R})$, proves that $\mu_{4p-1}(\f) \leq c^{4p-1}$. (Here, as usual, for $N \in \mathbb{N}$, we denote by $O(N,\mathbb{R})$ the group of $N$ by $N$ orthogonal matrices with real entries.)

\subsection{Forbidding just one intersection, in the non-uniform setting}

In contrast to Question \ref{question:ak}, which is easier in the setting of $\mathcal{P}([n])$ (the `non-uniform setting) than in the case of ${[n] \choose k}$ (the `uniform setting'), the forbidden intersection problem appears at least as difficult in the setting of $\mathcal{P}([n])$ as in the case of ${[n] \choose k}$. For $n,\ell \in \mathbb{N}$, we write
$$N(n,\ell) = \max\{|\f|:\ \f \subset \mathcal{P}([n]),\ |S \cap T| \neq \ell\ \text{for all distinct }S,T \in \f\}.$$
Solving a $\$250$ problem of Erd\H{o}s from 1976, Frankl and R\"odl proved in a breakthrough 1987 paper \cite{fr87} that $N(n,\lfloor n/4 \rfloor) \leq 1.99^n$ for all $n \in \mathbb{N}$. This was deduced from the following theorem (together with a calculation of the appropriate constants).

\begin{theorem}[Frankl-R\"odl, 1987]
\label{thm:fronefamily}
For each $0 < \eta < 1/4$, there exists $\epsilon = \epsilon(\eta)>0$ such that for any $\ell \in \mathbb{N}$ with $\eta n \leq \ell \leq (1/2-\eta)n$, if $\f \subset \mathcal{P}([n])$ with $|S \cap T| \neq \ell$ for all distinct $S,T \in \f$, then $|\f| \leq (2-\epsilon)^n$.
\end{theorem}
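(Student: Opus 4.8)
The plan is to decompose $\mathcal{F}$ according to set size: write $\mathcal{F}_j := \mathcal{F} \cap \binom{[n]}{j}$, so that $|\mathcal{F}| = \sum_{j=0}^{n} |\mathcal{F}_j|$, and to bound the layers in four regimes. When $j \le \ell$, two distinct $j$-sets cannot meet in exactly $\ell$ points, so $\mathcal{F}_j$ is unconstrained and we use $|\mathcal{F}_j| \le \binom{n}{j}$; summing, $\sum_{j \le \ell}\binom{n}{j} \le 2^{H(\ell/n)\,n}$, and since $\ell/n \le 1/2-\eta$ and the binary entropy function satisfies $H(1/2-\eta) < 1$, these layers contribute at most $(2-\epsilon_1)^n$ with $\epsilon_1 = \epsilon_1(\eta) > 0$. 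Symmetrically, when $j > (n+\ell)/2$ any two $j$-sets meet in more than $\ell$ points, so these layers are also unconstrained, and $\sum_{j > (n+\ell)/2}\binom{n}{j} = \sum_{i < (n-\ell)/2}\binom{n}{i} \le 2^{H(1/2 - \ell/(2n))\,n} \le (2-\epsilon_2)^n$, using $\ell \ge \eta n$. Among the remaining layers, those $j$ with $|j - n/2| \ge (\eta/4)\,n$ satisfy $\binom{n}{j} \le (2-\epsilon_3)^n$, so — there being at most $n$ of them — they too contribute at most $(2-\epsilon_3/2)^n$, say.

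This leaves the genuinely dangerous layers: those $j$ with $|j - n/2| < (\eta/4)\,n$, which automatically lie strictly between $\ell$ and $(n+\ell)/2$. For such $j$, a one-line computation from $\eta n \le \ell \le (1/2-\eta)n$ shows that $\ell$, $j - \ell$ and $n - 2j + \ell$ are all at least $(\eta/2)\,n$. Thus each $\mathcal{F}_j$ is a $j$-uniform family on $[n]$ avoiding intersection size $\ell$, in which $\ell$, $j - \ell$ and $n - 2j + \ell$ all grow linearly in $n$ — exactly the hypotheses of the \emph{uniform} forbidden-intersection theorem of Frankl and R\"odl, which asserts that such a family has size at most $2^{-c(\delta)\,n}\binom{n}{j}$, where $\delta = \eta/2$ and $c(\delta) > 0$. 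Granting this, each dangerous layer has $|\mathcal{F}_j| \le 2^{-c(\delta)\,n}\,2^n = 2^{(1-c(\delta))\,n}$; summing over the at most $n$ dangerous layers gives $(2-\epsilon_4)^n$, and combining all four regimes yields $|\mathcal{F}| \le (2-\epsilon)^n$ with $\epsilon = \epsilon(\eta) > 0$, as required. (The finitely many small $n$ are handled by shrinking $\epsilon$.)

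The hard part is precisely this uniform statement: the layer surgery above only disposes of the range where the intersection condition is vacuous or where the layer is anyway exponentially thin, and the slice $j = \lfloor n/2 \rfloor$ is always dangerous, so the uniform theorem really does carry all the content. To prove it I would follow Frankl and R\"odl's combinatorial route rather than attempt to plug the Johnson-scheme graph $A_\ell$ into the Delsarte--Hoffman bound (Theorem~\ref{thm:delsarte-hoffman}), which does not obviously give an exponentially small ratio across this whole parameter range. The Frankl--R\"odl idea is to partition $[n]$ into $\Theta(n)$ blocks of bounded size, to observe that a family avoiding intersection $\ell$ must, inside a typical block, avoid a ``local'' pattern and hence suffer a multiplicative density deficiency there, and then to multiply these per-block deficiencies across the blocks; the interaction between blocks, and the passage between the uniform statement and a cross-intersecting ``two-family'' version that behaves well under this product, is handled with $\Delta$-system (sunflower) and shifting arguments. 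Extracting the explicit constant needed to beat $1.99^n$ in Erd\H{o}s's \$250 problem (the case $\ell = \lfloor n/4 \rfloor$) then requires running this argument quantitatively. (More recent proofs of results of this flavour instead use hypergraph containers, or the junta method together with a pseudorandomness/spikiness dichotomy, as in the work mentioned above on $N(n,k,\ell) = M(n,k,\ell+1)$.)
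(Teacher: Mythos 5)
Your layer decomposition is sound as far as it goes: the entropy bounds for the regimes $j \le \ell$, $j > (n+\ell)/2$ and $|j-n/2| \ge (\eta/4)n$ are all correct, and so is the check that the remaining ``dangerous'' layers have $\ell$, $j-\ell$ and $n-2j+\ell$ all at least $(\eta/2)n$. But this is a genuinely different route from the paper's, and it parks the entire content of the theorem inside a result you do not prove. The paper deduces Theorem \ref{thm:fronefamily} from the \emph{non-uniform two-family} version (Theorem \ref{thm:frtwofamily}): discard the at most $\binom{n}{\ell} \le 2^{H_2(1/2-\eta)n}$ sets of size exactly $\ell$, apply the two-family bound with $\mathcal{F}=\mathcal{G}$, and take square roots. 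The two-family theorem is then proved directly on $\mathcal{P}([n])$ with no uniform reduction at all, via the density-increment algorithm resting on Harper's vertex-isoperimetric inequality (Lemmas \ref{lem:fr1} and \ref{lem:fr2}): each iteration either multiplies $\mu(\mathcal{F})\mu(\mathcal{G})$ by $(1+\delta)$ or widens the interval of forbidden intersections at a cost of only $(1-\delta-2\delta^2)$, and termination lands in a regime handled by one of the two isoperimetric lemmas. Your route instead funnels all the difficulty into the \emph{uniform} forbidden-intersection theorem on the middle layers, and your account of that theorem --- block partition, local deficiency, $\Delta$-systems and shifting --- is a gesture at what is the hardest result in Frankl and R\"odl's paper, strictly harder than the statement you were asked to prove. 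So: as a reduction your argument is valid if the uniform theorem is granted as a black box, and it has the virtue of isolating exactly where the difficulty lives (the slice $j=\lfloor n/2\rfloor$ is unavoidable); but as a proof it is incomplete precisely there, whereas the paper's density-increment argument is self-contained modulo Harper's inequality and the Chernoff bound.
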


This in turn was deduced from a two-family version.
\begin{theorem}[Frankl-R\"odl, 1987]
\label{thm:frtwofamily}
For each $0 < \eta < 1/4$, there exists $\epsilon = \epsilon(\eta)>0$ such that for any $\ell \in \mathbb{N}$ with $\eta n \leq \ell \leq (1/2-\eta)n$, if $\f,\g \subset \mathcal{P}([n])$ with $|S \cap T| \neq \ell$ for all $S \in \f$ and $T \in \g$, then $|\f||\g| \leq (4-\epsilon)^n$.
\end{theorem}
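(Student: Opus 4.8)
The plan is to reduce the two-family inequality to a form amenable to induction on $n$, and to locate the single regime where the problem has genuine content. First I would pass to families of a fixed cardinality: writing $\f_a := \f \cap {[n] \choose a}$ and $\g_b := \g \cap {[n] \choose b}$, it suffices to bound $|\f_a||\g_b|$ for each pair $(a,b)$, since $|\f||\g| \le (n+1)^2 \max_{a,b} |\f_a||\g_b|$. A crude entropy estimate --- using only that $\eta n \le \ell \le (1/2-\eta)n$ --- then disposes of every pair $(a,b)$ for which either $a$ or $b$ is bounded away from $n/2$ (by a constant depending on $\eta$), and also of the ``degenerate'' pairs for which no two sets of sizes $a,b$ can meet in exactly $\ell$ points, since in all these cases ${n \choose a}{n \choose b} \le (4-\epsilon(\eta))^n$ already, with the slack coming precisely from $\eta$ being bounded away from $0$. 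What remains --- the real content --- is the \emph{balanced} case: two families living inside (essentially) the middle layer ${[n] \choose \lfloor n/2 \rfloor}$, with no cross-pair of intersection size exactly $\ell$.

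The natural line of attack on the balanced case is to imitate Wilson's proof of Theorem~\ref{thm:wilson}, viewing $\f_a,\g_b$ as a cross-independent pair in the graph of the Johnson association scheme whose edges join sets meeting in exactly $\ell$ points, and applying the cross-family version of the Delsarte--Hoffman bound (Theorem~\ref{thm:delsarte-hoffman}). This route fails decisively, and understanding why pinpoints the difficulty. Delsarte's eigenvalue formulas for the Johnson scheme show that, for $k \approx n/2$ and $\ell$ in the admissible range, the relevant nontrivial eigenvalue of this graph is only a \emph{constant} factor below the degree (for instance $1-4\eta$ when $\ell=\eta n$), not an exponentially small one, so the spectral bound gives nothing better than the trivial ${n \choose \lfloor n/2 \rfloor}^2 \approx 4^n$. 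Moreover, since the forbidden configuration is a \emph{single} graph of the scheme, there is no room (as there was for Wilson, who could combine several scheme graphs) to improve matters by choosing a cleverer pseudoadjacency matrix. Thus single-eigenvalue spectral methods are simply not strong enough here; a genuinely combinatorial argument is needed, and supplying one is the heart of the theorem.

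Here the approach I would take is an induction on $n$, performed directly on the families by processing one coordinate at a time; Frankl and R\"odl's actual argument is considerably more intricate, but broadly of this flavour. Splitting over the coordinate $n$ into the ``link'' $\f^{+} := \{S\setminus\{n\}: n\in S\in\f\}$ and the ``non-link'' $\f^{-}:=\{S: n\notin S\in\f\}$ (and likewise $\g^{\pm}$), one checks that $(\f^{+},\g^{+})$ now forbids intersection $\ell-1$ while the three remaining pairs still forbid intersection $\ell$; expanding $|\f||\g| = \sum_{\sigma,\tau\in\{+,-\}}|\f^{\sigma}||\g^{\tau}|$ and feeding the four terms into the inductive hypothesis yields only $4\cdot(4-\epsilon)^{n-1}$, which misses the target $(4-\epsilon)^{n}$ by the fatal factor $4/(4-\epsilon)>1$. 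The real work --- and the main obstacle --- is to extract a quantitative gain showing that the four sub-products cannot all be simultaneously near-extremal, so that in fact $|\f||\g|\le(4-\epsilon)(4-\epsilon)^{n-1}$; and then to make this gain \emph{uniform} as the recursion runs, for the forbidden size drifts downward (towards $\ell$ minus the number of ``link'' steps taken) and must be kept inside a usable middle window throughout. This is exactly the role played by the hypothesis $\eta n \le \ell \le (1/2-\eta)n$: near the endpoints of the window one splices in the crude entropy bounds of the first paragraph, and the whole case-juggling must be arranged so that a single $\epsilon=\epsilon(\eta)>0$ works for all admissible $\ell$ and all $n$ (this is the source of the explicit constant in the $N(n,\lfloor n/4\rfloor)\le 1.99^{n}$ consequence). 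Finally, Theorem~\ref{thm:fronefamily} follows by taking $\g=\f$: then $|\f|^{2}=|\f||\g|\le(4-\epsilon)^{n}$, whence $|\f|\le(2-\epsilon')^{n}$ for a suitable $\epsilon'=\epsilon'(\eta)>0$.
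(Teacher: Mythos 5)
Your proposal correctly diagnoses the difficulty --- naive coordinate-by-coordinate induction gives $4\cdot(4-\epsilon)^{n-1}$ and so loses a factor $4/(4-\epsilon)$ at every step --- but it stops exactly there: the sentence ``the real work is to extract a quantitative gain showing that the four sub-products cannot all be simultaneously near-extremal'' names the heart of the theorem without supplying it, so the proof has a genuine gap. The mechanism Frankl and R\"odl actually use is specific and is absent from your sketch. One tracks a forbidden \emph{interval} $[a,b]$ of intersection sizes (initially $[\ell,\ell]$), and at each coordinate one has a trichotomy: either $(\f_1,\g_1)$ gives a multiplicative density increment $(1+\delta)$ with the interval shifted to $[a-1,b-1]$, or $(\f_0,\g_0\cup\g_1)$ gives such an increment with the interval unchanged, or else one passes to $(\f_1,\g_0\cap\g_1)$, which forbids the strictly \emph{wider} interval $[a-1,b]$ while the density product drops by only a factor $1-\delta-2\delta^2$. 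Since the product of densities is at most $1$, increments cannot occur much more often than widenings, so the forbidden interval grows linearly and eventually hits an endpoint ($a=0$ or $b=m$); at that point all cross-intersections are either $>b$ or $<a$, and Harper's vertex-isoperimetric inequality (Lemmas \ref{lem:fr1} and \ref{lem:fr2}) delivers the exponential saving. Your version, in which ``the forbidden size drifts downward'' as a single value, never widens the interval and therefore never reaches a terminal configuration about which anything nontrivial can be said; nor does it contain the isoperimetric endgame, which is where the exponential gain actually comes from.

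Two smaller points. The opening reduction to uniform slices $\f_a,\g_b$ is not needed and is not how the argument runs --- the density-increment recursion must be carried out on non-uniform families, since the operations $\f\mapsto\f_0,\f_1,\g_0\cup\g_1,\g_0\cap\g_1$ do not preserve uniformity --- though your observation that spectral/Delsarte--Hoffman methods fail for a single forbidden intersection near the middle layer is correct and is indeed why a combinatorial argument is required (the Keevash--Long route via Theorem \ref{thm:fw81gen} is the one known algebraic alternative). Finally, deducing Theorem \ref{thm:fronefamily} by setting $\g=\f$ needs a one-line fix: the one-family hypothesis only forbids $|S\cap T|=\ell$ for \emph{distinct} $S,T$, so you must first delete the (at most ${n\choose\ell}$) sets of size exactly $\ell$ before the two-family hypothesis applies to the diagonal.
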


Frankl and R\"odl's proof (in \cite{fr87}) of Theorem \ref{thm:frtwofamily} is ingenious, and purely combinatorial, using a density increment argument. We believe it deserves to be more widely known and better understood, particularly as density increment methods (and similar increment methods --- incrementing another parameter such as `entropy' or `energy', appropriately defined) have been very successfully used in combinatorics and other areas of mathematics, over the last 30 years. So we proceed to give a (very detailed) sketch of the Frankl-R\"odl proof.

Frankl and R\"odl begin with two observations that follow from Harper's vertex-isoperimetric inequality (see \cite{harper}) for the discrete cube. Harper's theorem easily implies the following.
\begin{theorem}
\label{thm:harper}
Let $\mathcal{A} \subset \mathcal{P}([n])$ with $|\mathcal{A}| \geq \sum_{i=0}^{a} {n \choose i}$. Then $|N_t(\mathcal{A})| \geq \sum_{i=0}^{a+t} {n \choose i}$.
\end{theorem}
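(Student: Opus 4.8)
The plan is to deduce Theorem~\ref{thm:harper} directly from Harper's vertex-isoperimetric inequality for the discrete cube, exploiting the fact that the two quantities appearing in the statement are exactly sizes of Hamming balls. Writing $B_r := \{S \subseteq [n] :\ |S| \leq r\}$ for the Hamming ball of radius $r$ about the empty set, one has $|B_r| = \sum_{i=0}^{r} {n \choose i}$, and moreover $N_t(B_r) = B_{r+t}$ (a set lies within Hamming distance $t$ of some set of size $\leq r$ precisely when it has size $\leq r+t$). So the content of the theorem is exactly that, among all subsets of the cube of a given size, Hamming balls minimise the size of the $t$-neighbourhood --- which is the iterated form of Harper's theorem.

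Concretely, I would argue as follows. Let $\mathcal{C} \subseteq \mathcal{P}([n])$ be the initial segment of the \emph{simplicial order} (sets ordered first by size, with a suitable fixed order within each layer) of size $|\mathcal{A}|$. Since the simplicial order lists every set of size $<r$ before any set of size $r$, its initial segments are nested, and $B_r$ is precisely the initial segment of size $\sum_{i=0}^{r}{n \choose i}$. Harper's theorem states that $|N_t(\mathcal{A})| \geq |N_t(\mathcal{C})|$. By hypothesis $|\mathcal{A}| \geq |B_a|$, so nestedness of initial segments gives $\mathcal{C} \supseteq B_a$, whence $N_t(\mathcal{C}) \supseteq N_t(B_a) = B_{a+t}$ by monotonicity of the neighbourhood operator. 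Combining, $|N_t(\mathcal{A})| \geq |N_t(\mathcal{C})| \geq |B_{a+t}| = \sum_{i=0}^{a+t}{n \choose i}$, as required. (Alternatively, if one wishes to quote only the $t=1$ / vertex-boundary form of Harper's theorem, one can induct on $t$ via $N_t(\mathcal{A}) = N_1(N_{t-1}(\mathcal{A}))$: the inductive hypothesis bounds $|N_{t-1}(\mathcal{A})|$ below by $|B_{a+t-1}|$, and one further application of Harper, together with $N_1(B_{a+t-1}) = B_{a+t}$, completes the step.)

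There is essentially no obstacle here beyond Harper's theorem itself, which we are taking as given; the only points that need care are (i) invoking Harper in the correct $t$-neighbourhood (iterated) form rather than the bare vertex-boundary form, (ii) the easy but necessary identifications $|B_r| = \sum_{i=0}^{r}{n \choose i}$ and $N_t(B_r) = B_{r+t}$, together with the observation that Hamming balls are initial segments of the simplicial order, and (iii) the degenerate range $a+t \geq n$, where the right-hand side is to be read as $2^n$ (using the convention ${n \choose i} = 0$ for $i>n$) and the conclusion is immediate once some neighbourhood $N_{t'}(\mathcal{A})$ has already filled the whole cube. The only genuinely substantive input is Harper's inequality, a classical result, which is cited as such.
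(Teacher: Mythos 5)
Your proof is correct, and it is exactly the deduction the paper has in mind: the paper simply asserts that the statement "follows easily from Harper's vertex-isoperimetric inequality" and gives no further details, while you supply the standard argument (Hamming balls about $\emptyset$ are initial segments of the simplicial order, $|B_r|=\sum_{i=0}^{r}{n\choose i}$, $N_t(B_r)=B_{r+t}$, then compression to an initial segment plus nestedness, or equivalently induction on $t$). Nothing further is needed.
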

Here, $N_t(\mathcal{A})$ denotes the {\em $t$-neighbourhood} of $\mathcal{A}$, i.e.\ family of sets which are at Hamming distance at most $t$ from $\mathcal{A}$.

The first observation of Frankl and R\"odl is as follows.
\begin{lemma}
\label{lem:fr1}
Let $0 < \beta < 1$. Let $\f,\g \subset \pn$ such that $|F \cap G| > \beta n$ for all $F \in \f$ and $G \in \g$. Then
$$|\f||\g| \leq 2^{2nH_2((1+\beta)/2)},$$
where $H_2(p): = p \log_2(1/p)+(1-p)\log_2(1/(1-p))$ denotes the binary entropy function.
\end{lemma}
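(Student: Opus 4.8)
The plan is to convert the hypothesis of large pairwise intersections into the statement that $\f$ and a reflected copy of $\g$ lie \emph{far apart} in Hamming distance, and then let Harper's vertex-isoperimetric inequality (Theorem~\ref{thm:harper}) finish the job: two far-apart families cannot both be large, since inflating one of them to its Hamming $t$-neighbourhood leaves no room inside $\{0,1\}^n$ for the other.

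\emph{Complementation, then isoperimetry.} First I would pass to complements: for $F \in \f$ and $G \in \g$, writing $\bar G := [n]\setminus G$, the set $F \triangle \bar G$ is the disjoint union $(F \cap G) \cup ([n]\setminus(F\cup G))$, so the Hamming distance between $F$ and $\bar G$ equals $|F\cap G| + |[n]\setminus(F\cup G)| \ge |F \cap G| > \beta n$. Hence, setting $\h := \{[n]\setminus G :\ G \in \g\}$ (so $|\h| = |\g|$), every set in $\f$ is at Hamming distance more than $\beta n$ from every set in $\h$. Now put $t := \lfloor \beta n \rfloor$, so that $N_t(\f) \cap \h = \emptyset$ and hence $|N_t(\f)| + |\h| \le 2^n$. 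Choosing $a$ maximal with $\sum_{i=0}^{a}{n \choose i} \le |\f|$ — so that $|\f| \le \sum_{i=0}^{a+1}{n \choose i}$ — Theorem~\ref{thm:harper} gives $|N_t(\f)| \ge \sum_{i=0}^{a+t}{n \choose i}$ (the case $a+t \ge n$ forces $N_t(\f) = \pn$, hence $\g = \emptyset$, and the lemma is trivial), and therefore $|\g| = |\h| \le 2^n - \sum_{i=0}^{a+t}{n \choose i} = \sum_{j=0}^{\,n-a-t-1}{n \choose j}$. So $|\f|$ and $|\g|$ are each at most the size of a Hamming ball, with radii $a+1$ and $n-a-t-1$ that sum to $n-t \le (1-\beta)n+1$; in particular, summing to at most $n$, at most one of these two radii can exceed $n/2$.

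\emph{Entropy estimate.} Next I would feed this into the standard bound $\sum_{i=0}^{k}{n \choose i} \le 2^{n H_2(k/n)}$, valid for $0 \le k \le n/2$ (and $\le 2^n$ in general), together with the facts that $H_2$ is concave, increasing on $[0,1/2]$, and satisfies $H_2(p)=H_2(1-p)$. If both radii are at most $n/2$, then $|\f|\,|\g| \le 2^{n(H_2(\alpha)+H_2(\gamma))}$ with $\alpha+\gamma \le 1-\beta$, so by concavity $H_2(\alpha)+H_2(\gamma) \le 2H_2\big(\tfrac{\alpha+\gamma}{2}\big) \le 2H_2\big(\tfrac{1-\beta}{2}\big) = 2H_2\big(\tfrac{1+\beta}{2}\big)$. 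If instead one radius exceeds $n/2$ — say the one bounding $|\f|$, so $|\f| \le 2^n$ — then the other is at most roughly $(\tfrac12-\beta)n$, giving $|\f|\,|\g| \le 2^{n(1+H_2(1/2-\beta))}$, and $1+H_2\big(\tfrac12-\beta\big) \le 2H_2\big(\tfrac{1-\beta}{2}\big)$ follows by applying concavity to the points $\tfrac12$ and $\tfrac12-\beta$ (whose midpoint is $\tfrac{1-\beta}{2}$) and using $H_2(\tfrac12)=1$. Either way we land at $2^{2nH_2((1+\beta)/2)}$.

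\emph{The main obstacle.} The conceptual core here — complementation turning intersection size into Hamming distance, after which Harper squeezes out one of the two families — is short, and I expect no real difficulty with it. The only genuine labour is in the last step: the case split according to whether each Hamming-ball radius falls below or above $n/2$, and the absorption of the $O(1)$ additive slack in the radii (coming from the floor $t=\lfloor\beta n\rfloor$, and from ``$a+1$'' versus ``$a$''), which costs only a subexponential factor — harmless in the intended application, removable with a little more care, and absent entirely when $\beta n \in \mathbb{N}$.
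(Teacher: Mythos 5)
Your proposal is correct and follows essentially the same route as the paper: complement $\g$, observe that $\f$ and $\overline{\g}$ are Hamming-far apart, apply Harper's theorem to inflate $\f$ to $N_t(\f)$, and finish with entropy bounds on binomial tails, split according to whether the relevant radius lies below or above $n/2$. The only difference is bookkeeping: the paper picks $a$ with $\sum_{i=0}^{a}{n\choose i} \geq |\f| > \sum_{i=0}^{a-1}{n\choose i}$ and $t$ maximal with $|F\cap G|\geq t$ so the stated inequality comes out exactly, whereas your choices leave the $O(1)$ additive slack in the radii that you already flag as removable.
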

Note that for $\beta >0$, the quantity $H_2((1+\beta)/2)$ is bounded away from 1, so the upper bound in Lemma \ref{lem:fr1} is exponentially small compared to $2^{2n}$.

To prove Lemma \ref{lem:fr1}, assume without loss of generality that $|\f| \leq |\g|$. Then choose $a \in \mathbb{N}$ such that
$$\sum_{i=0}^{a} {n \choose i} \geq |\f| > \sum_{i=0}^{a-1}{n \choose i}.$$
Since $F \cap G \neq \emptyset$ for all $F \in \f$ and $G \in \g$, we have $(F \in \f) \Rightarrow ([n] \setminus F \notin \g)$, and therefore $|\f| + |\g| \leq 2^n$, so $|\f| \leq 2^{n-1}$; it follows that $a \leq n/2$. Let $t \in \mathbb{N}$ be maximal such that $|F \cap G| \geq t$ for all $F \in \f$ and $G \in \g$; note that $t > \beta n$. It follows from Theorem \ref{thm:harper} that
$$|N_t(\f)| \geq \sum_{i=0}^{a+t-1}{n \choose i}.$$
Let $\overline{\g}: = \{[n] \setminus G:\ G \in \g\}$. Since $|F \cap G| \geq t$ for all $F \in \f$ and $G \in \g$, we must have $N_t(\f) \cap \overline{\g} = \emptyset$, and therefore $|\g| = |\overline{\g}| \leq \sum_{i=a+t}^{n} {n \choose i}$. Hence,
$$|\f||\g| \leq \left( \sum_{i=0}^{a} {n \choose i}\right)\cdot \left( \sum_{i=a+t}^{n} {n \choose i}\right).$$
Maximising over the choice of $a$, and using the Chernoff bounds
$$\sum_{i=0}^{a}{n \choose i} \leq 2^{H_2(a/n)n} \quad \forall a \leq n/2,\quad \sum_{i=a+t}^{n}{n \choose i} \leq 2^{H_2((a+t)/n)n}\quad \text{for } a+t \geq n/2$$
and using the trivial bound
$$\sum_{i=a+t}^{n} {n \choose i} \leq 2^n$$
in the case where $a+t < n/2$, the conclusion of the lemma follows.

The second observation of Frankl and R\"odl is an easy consequence of the first.
\begin{lemma}
\label{lem:fr2}
Let $0 < \kappa < 1/2$. Let $\f,\g \subset \pn$ such that $|F \cap G| < (1/2-\kappa)n$ for all $F \in \f,\ G \in \g$. Then for any $0 < \lambda < \kappa$, we have
$$|\f||\g| \leq \max\{2\cdot 2^{n(1+H_2(1/2-\lambda))},2\cdot 2^{2nH_2((1+\kappa-\lambda)/2)}\}.$$
\end{lemma}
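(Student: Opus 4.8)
The plan is to deduce Lemma~\ref{lem:fr2} from Lemma~\ref{lem:fr1} by taking complements in one of the two families and partitioning the \emph{other} family according to the sizes of its members. Set $\beta := \kappa-\lambda$, and note $0<\beta<1$ since $0<\lambda<\kappa<1/2$. Split $\f=\f'\cup\f''$ (a disjoint union), where $\f':=\{F\in\f:\ |F|\geq(1/2-\lambda)n\}$ and $\f'':=\f\setminus\f'$, so that $|\f||\g|=|\f'||\g|+|\f''||\g|$.

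For the ``small'' part $\f''$, I would bound $|\f''|\leq\sum_{0\leq i<(1/2-\lambda)n}{n\choose i}\leq 2^{H_2(1/2-\lambda)n}$, using the standard estimate $\sum_{i=0}^{a}{n\choose i}\leq 2^{H_2(a/n)n}$ for $a\leq n/2$ (applicable since $1/2-\lambda<1/2$ and $H_2$ is increasing on $[0,1/2]$), and then combine this with the trivial bound $|\g|\leq 2^n$ to get $|\f''||\g|\leq 2^{n(1+H_2(1/2-\lambda))}$. For the ``large'' part $\f'$, the key point is that every $F\in\f'$ has a \emph{large} intersection with the complement of every $G\in\g$: indeed $|F\cap([n]\setminus G)|=|F|-|F\cap G|>(1/2-\lambda)n-(1/2-\kappa)n=\beta n$, where we used the hypothesis $|F\cap G|<(1/2-\kappa)n$. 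Writing $\overline{\g}:=\{[n]\setminus G:\ G\in\g\}$, Lemma~\ref{lem:fr1} applied to $\f'$ and $\overline{\g}$ (with parameter $\beta$) then gives $|\f'||\g|=|\f'||\overline{\g}|\leq 2^{2nH_2((1+\beta)/2)}=2^{2nH_2((1+\kappa-\lambda)/2)}$.

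Adding the two estimates and using $a+b\leq 2\max\{a,b\}$ yields exactly the claimed bound. There is essentially no real obstacle here; the only thing to watch is the constant factor in front, which is the reason one should split just one of the two families. Partitioning both $\f$ and $\g$ by size would produce four products, and the cross terms (each of the form ``small family times everything'') would contribute a further $2\cdot 2^{n(1+H_2(1/2-\lambda))}$, overshooting the stated bound; the one-sided split instead produces precisely a sum of two terms matching $\max\{2\cdot 2^{n(1+H_2(1/2-\lambda))},\ 2\cdot 2^{2nH_2((1+\kappa-\lambda)/2)}\}$.
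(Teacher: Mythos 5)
Your proposal is correct and follows essentially the same route as the paper: the same split of $\f$ by size at the threshold $(1/2-\lambda)n$, the same entropy bound on the small part paired with the trivial bound $|\g|\leq 2^n$, and the same application of Lemma \ref{lem:fr1} to the large part against $\overline{\g}$ with parameter $\kappa-\lambda$. The only cosmetic difference is that you add the two contributions and use $a+b\leq 2\max\{a,b\}$, whereas the paper obtains the same factor of $2$ by casing on whether the small or the large part contains at least half of $\f$.
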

Note that, as with the previous lemma, both $H_2(1/2-\lambda)$ and $H_2((1+\kappa-\lambda)/2)$ are bounded away from 1 for any $0 < \lambda < \kappa$, so the upper bound in Lemma \ref{lem:fr2} is exponentially small compared to $2^{2n}$.

Lemma \ref{lem:fr2} may be proved as follows. We let
$$\f_s : = \{F \in \f:\ |F| \leq (1/2-\lambda)n\},\quad \f_l: = \f \setminus \f_s,$$
$s$ standing for `small' and $l$ for `large'. Note that $|\f_s| \leq 2^{H_2(1/2-\lambda)n}$, so if $|\f_s| \geq |\f|/2$ then we are done. We may assume, therefore, that $|\f_l| \geq |\f|/2$. Letting $\overline{\g}: = \{[n] \setminus G:\ G \in \g\}$, we have
$$|F\cap H| = |F \cap ([n] \setminus G)| = |F| - |F \cap G| > (1/2-\lambda)n-(1/2-\kappa)n = (\kappa-\lambda)n$$
for all $F \in \f_l$ and all $H \in \overline{\g}$ (here, $G \in \g$), so applying Lemma \ref{lem:fr1} to $\f_l$ and $\overline{\g}$, yields
$$|\f_l||\overline{\g}| \leq 2^{2nH_2((1+\kappa-\lambda)/2)},$$
and therefore
$$|\f||\g| \leq 2\cdot 2^{2nH_2((1+\kappa-\lambda)/2)},$$
proving the lemma.

Equipped with the two preceding lemmas, the Frankl-R\"odl proof proceeds as follows. Roughly speaking, the idea is to show that we can pass to smaller copies of $\mathcal{P}([n])$ in such a way that we either obtain a `large' density increment on the copies, or else we `widen the interval of forbidden intersections' (while approximately preserving the density), in such a way as to reduce either to the case covered by Lemma \ref{lem:fr1} or to that covered by Lemma \ref{lem:fr2}.

To make this precise, for families $\f,\g \subset \pn$ and integers $0 \leq a \leq b \leq n$, we write $(\f,\g) \in \mathcal{P}(n,[a,b])$ if $|F \cap G| \notin [a,b]$ for all $F \in \f$ and $G \in \g$, i.e.\ if intersections in the interval $[a,b]$ are forbidden. Theorem \ref{thm:frtwofamily} (our goal) deals with the case where $a=b$; Lemma \ref{lem:fr1} deals with the case where $a=0$ and $b/n$ is bounded away from zero, and Lemma \ref{lem:fr2} deals with the case where $b=n$ and $a/n$ is bounded from above, away from $1/2$.

The idea of `passing to copies' rests on the following. For $\mathcal{A} \subset \pn$, we define
$$\mathcal{A}_0 = \{S \in \mathcal{A}:\ n \notin \mathcal{A}\} \subset \mathcal{P}([n-1]),\quad \mathcal{A}_1 = \{S \setminus \{n\}:\ n \in S,\ S \in \mathcal{A}\} \subset \mathcal{P}([n-1]).$$
We observe that if $(\f,\g) \in \mathcal{P}(n,[a,b])$, then
\begin{enumerate}
\item[(i)] $(\f_1,\g_1) \in \mathcal{P}(n-1,[a-1,b-1])$;
\item[(ii)] $(\f_0,\g_0 \cup \g_1) \in \mathcal{P}(n-1,[a,b])$;
\item[(iii)] $(\f_1,\g_0 \cap \g_1) \in \mathcal{P}(n-1,[a-1,b])$.
\end{enumerate}
The observations (i) and (ii) will enable us to achieve a density increment within $\mathcal{P}([n-1])$, while preserving the width of the interval of forbidden intersections; when this is not possible, the observation (iii) will enable us to widen the interval of forbidden intersections, while approximately preserving the density.

To keep track of the (correct) densities, for $m \in \mathbb{N}$ and for a family $\mathcal{A} \subset \mathcal{P}([m])$, we write $\mu(\mathcal{A}) = |\mathcal{A}|/2^m$; in other words, $\mu = \mu_m$ denotes the uniform measure on $\mathcal{P}([m])$ (though we suppress $m$ from the notation, as it will be clear from the context).

The proof of Theorem \ref{thm:frtwofamily} is accomplished by the following algorithm.

\begin{enumerate}
\item Set $m=n$, $a=\ell$, $b = \ell$, and fix $\delta = \delta(\eta)>0$ a sufficiently small positive real number (with $\delta \leq 1/10)$.
\item Check whether $a=0$. If yes, terminate; if not, go to (3).
\item Check whether $b=m$. If yes, terminate; if not, go to (4).
\item Check whether $\mu(\f_1)\mu(\g_1) > (1+\delta)\mu(\f)\mu(\g)$. If yes, replace $\f$ by $\f_1$, replace $\g$ by $\g_1$, replace $a$ by $a-1$ and replace $b$ by $b-1$, and go to (8); if not, go to (5).
\item Choose $\f_1$ or $\g_1$ (say $\f_1$) with $\mu(\f_1) \leq \sqrt{1+\delta}\mu(\f)$, and go to (6).
\item Check whether $\mu(\f_0)\mu(\g_0 \cup \g_1)  > (1+\delta) \mu(\g) \mu(\f)$. If yes, replace $\f$ with $\f_0$ and $\g$ by $\g_0 \cup \g_1$, and go to (8); if not, go to (7).
\item Replace $\f$ by $\f_1$, $\g$ by $\g_0 \cap \g_1$ and $a$ by $a-1$, and go to (8).
\item Replace $m$ by $m-1$ and go to (2).
\end{enumerate}

The key observation is that if at some iteration of (steps (2)-(8) of) the algorithm, we have a pair of families $(\f,\g) \in \mathcal{P}(m,[a,b])$ at the start of step (2), then by step (8), they have either been replaced by
\begin{align}
\label{eq:density-inc}
\text{a pair of families }&(\f',\g') \in \mathcal{P}(m-1,[a,b]) \cup \mathcal{P}(m-1,[a-1,b-1])\nonumber \\
\text{with }&\mu(\f')\mu(\g') > (1+\delta)\mu(\f)\mu(\g),\end{align}
or by
\begin{align} \label{eq:interval-widening} \text{a pair of families }&(\f',\g') \in \mathcal{P}(m-1,[a-1,b]) \nonumber \\
\text{with }&\mu(\f')\mu(\g') > (1-\delta-2\delta^2)\mu(\f)\mu(\g).\end{align}
In the first case, we have a density increment; in the second case, we have widened the interval of forbidden intersections while approximately preserving the density. To prove this, observe that we have the required density increment unless we are directed (at step (6)) to go to step (7). In the latter case, just before applying step (7), we have (w.l.o.g.)
$$\mu(\f_1) \leq \sqrt{1+\delta} \mu(\f),\quad \mu(\f_0)\mu(\g_0 \cup \g_1) \leq (1+\delta)\mu(\f)\mu(\g).$$
It is easy to check from these inequalities that $\mu(\f_1) \mu(\g_0 \cap \g_1) \geq (1-\delta-2\delta^2)\mu(\f)\mu(\g)$. Indeed, write
$$\frac{\mu(\f_1)}{\mu(\f)} = 1+y,\quad \frac{\mu(\g_0 \cup \g_1)}{\mu(\g)} = 1+x,$$
where $x,y \in \mathbb{R}$; note that $x \geq 0$. Since
$$\frac{\mu(\f_0)}{\mu(\f)}+\frac{\mu(\f_1)}{\mu(\f)} = 2,$$
we have $\mu(\f_0)/\mu(\f) = 1-y$; since $\mu(\g_0 \cap \g_1) + \mu(\g_0 \cup \g_1) = 2\mu(\g)$, we have $\mu(\g_0 \cap \g_1)/\mu(\g) = 1-x$. Suppose first that $y \leq 0$, i.e.\ that $\mu(\f_1) \leq \mu(\f)$. Since
$$(1+x)(1-y) + (1-x)(1+y) = 2-2xy \geq 2,$$
we then have
\begin{align*} \frac{\mu(\f_1)}{\mu(\f)} \cdot \frac{\mu(\g_0 \cap \g_1)}{\mu(\g)} & = (1-x)(1+y)\\
& \geq 2 - (1+x)(1-y)\\
&= 2-\frac{\mu(\g_0\cup \g_1)}{\mu(\g)} \cdot \frac{\mu(\f_0)}{\mu(\f)}\\
& \geq 2-(1+\delta)\\
& = 1-\delta,\end{align*}
which suffices. Suppose now that $y \geq 0$. Then we have $x,y \geq 0$ with $1+y \leq \sqrt{1+\delta}$ (which implies $y < \delta/2$), and $(1+x)(1-y) \leq 1+\delta$, where $\delta \leq 1/10$; it is easy to check from this that
$$(1-x)(1+y) \geq 1-\delta-2\delta^2,$$
which again suffices, similarly to above.

We now examine what happens when the algorithm terminates. We let $\alpha n$ be the number of steps at which (\ref{eq:density-inc}) holds, and $\beta n$ be the number of steps at which (\ref{eq:interval-widening}) holds. Suppose the algorithm terminates at $m \in \mathbb{N}$, so that it runs for $n-m$ steps; then
\begin{equation}
\label{eq:total}
n-m = (\alpha+\beta)n
\end{equation}
Let $\f^{*}$ and $\g^{*}$ be the families with which the algorithm terminates. We may assume that $\mu(\f) \mu(\g) \geq (1-\delta^2)^{n}$, otherwise we are done; then
$$1 \geq \mu(\f^*)\mu(\g^*) \geq (1+\delta)^{\alpha n} (1-\delta-2\delta^2)^{\beta n} \mu(\f) \mu(\g) \geq (1+\delta)^{\alpha n} (1-\delta-2\delta^2)^{\beta n}(1-\delta^2)^n.$$
Taking logs and dividing by $n$ yields
$$\alpha \ln(1+\delta)+\beta \ln(1-\delta-2\delta^2) + \ln(1-\delta^2) \leq 0;$$
rearranging, we obtain
$$\alpha-\beta \leq \frac{\beta\ln(1/((1+\delta)(1-\delta-2\delta^2))) + \ln(1/(1-\delta^2))}{\ln(1+\delta)}.$$
Using the inequalities $x \geq \ln(1+x) \geq x-x^2/2$ (for all $x \geq 0$), this implies
$$\alpha-\beta \leq \left( \beta\frac{3 +2\delta}{(1+\delta)(1-\delta-2\delta^2)(1-\delta/2)} + \frac{1}{(1-\delta^2)(1-\delta/2)} \right)\delta,$$
which (together with $\alpha \leq 1/2$ and $\delta \leq 1/10$) implies that
\begin{equation}
\label{eq:abrel}
\alpha - \beta \leq 3\delta.
\end{equation}
In other words, the number of density incrementing steps cannot be too much greater than the number of interval-widening steps.

First suppose the algorithm terminates with $a=0$, so that $|F \cap G| > b$ for all $F \in \f^*$ and $G \in \g^*$. Since the width of the interval of forbidden intersections increases by one at each step where (\ref{eq:interval-widening}) holds and remains the same at each step where (\ref{eq:density-inc}) holds, we must have $b = \beta n$. Since the algorithm starts with $a=b=\ell$, and $a$ decreases by at most one at each step, we must clearly have $(\alpha+\beta)n \geq \ell \geq \eta n$, so $\alpha+\beta \geq \eta$. Combining this fact with $\alpha-\beta \leq 3\delta$ implies that
$$\beta \geq \eta/2-3\delta/2.$$
Hence, we have $b \geq (\eta/2-3\delta/2)n$. Consider now the families
\begin{align*} \f^{\dagger} &: = \{F \cup S:\ F \in \f^*,\ S \subset \{m+1,\ldots,n\}\} \subset \pn,\\
 \g^{\dagger}& : = \{G \cup S:\ G \in \g^*,\ S \subset \{m+1,\ldots,n\}\} \subset \pn.
 \end{align*}
We clearly have $|F \cap G| > b$ for all $F \in \f^{\dagger}$ and $G \in \g^{\dagger}$; further $\mu(\f^{\dagger}) = \mu(\f^*)$ and $\mu(\g^{\dagger}) = \mu(\g^*)$. We apply Lemma \ref{lem:fr1} (passing from sizes of families, to measures of families) to obtain
$$\mu(\f^*)\mu(\g^*) = \mu(\f^{\dagger})\mu(\g^{\dagger}) \leq 4^{(H_2(1/2+\beta/2)-1)n} \leq 4^{(H_2(1/2+\eta/4-3\delta/4)-1)n}.$$
Very crudely, we have $\beta \leq 1$ and therefore
\begin{align*} \mu(\f) \mu(\g) & \leq \frac{\mu(\f^*)\mu(\g^*)}{(1-\delta-2\delta^2)^{\beta n}}\\
& \leq \frac{4^{(H_2(1/2+\eta/4-3\delta/4)-1)n}}{(1-\delta-2\delta^2)^{n}}\\
& \leq (1-\delta)^{2n},
\end{align*}
using the fact that
$$4^{(H_2(1/2+\eta/4-3\delta/4)-1)} \leq (1-\delta-2\delta^2)(1-\delta)^2$$
provided $\delta$ is sufficiently small depending on $\eta$ (consider the limits of both sides as $\delta \to 0$, for a fixed $\eta >0$).

The case where the algorithm terminates with $b=m$ is dealt with similarly, except that Lemma \ref{lem:fr2} is applied instead of Lemma \ref{lem:fr1}. We leave the details to the reader.

We remark that in \cite{keevash-long}, Keevash and Long show how to use the method of `dependent random choice' to deduce Theorem \ref{thm:fronefamily} from Theorem \ref{thm:fw81gen}, whose proof is purely algebraic. So there are now two very different proofs of Theorem \ref{thm:fronefamily}.

It is a `folklore' conjecture that for each $\ell, n \in \mathbb{N}$, if $\f \subset \mathcal{P}([n])$ with $|S \cap T| \neq \ell$ for all distinct $S,T \in \f$, then $\f$ is no larger than the family $\{S \subset [n]:\ |S| < \ell \text{ or } |S| > (n+\ell)/2\}$ if $n+\ell$ is odd, and no larger than the family $\{S \subset [n]:\ |S| < \ell \text{ or } |S \cap [n-1]| \geq (n+\ell)/2\}$, if $n+\ell$ is even. (This was proven for $n$ sufficiently large depending on $\ell$, by Frankl and F\"uredi \cite{ff-un}.) An approximate version of this conjecture appears e.g.\ in \cite{mr}. These conjectures would supply very sharp versions of Theorem \ref{thm:fronefamily}. Both are wide open, to the best of our knowledge.

\subsection{Forbidding a matching}
There is another natural way of weakening the intersection condition in the Erd\H{o}s-Ko-Rado theorem: what happens if we demand that among any $s+1$ sets (for some $s \geq 2$), at least two must intersect? In other words, we forbid a matching of size $s+1$. (A {\em matching of size $r$} consists of $r$ pairwise disjoint sets.) For $n,k,s \in \mathbb{N}$, we write $m(n,k,s) := \max\{|\f|:\ \f \subset {[n] \choose k},\ \f \text{ contains no matching of size }s+1\}$.

Clearly, if $n < k(s+1)$, then no $s+1$ sets of size $k$ can be pairwise disjoint, so $m(n,k,s) = {n \choose k}$ for all $n < k(s+1)$. However, for $n > k(s+1)$ the problem is non-trivial. Erd\H{o}s conjectured the following \cite{emc}.
\begin{conj}[Erd\H{o}s Matching Conjecture, 1965]
If $n,k,s \in \mathbb{N}$ with $n \geq (s+1)k$, then
$$m(n,k,s) = \max\left\{{n \choose k}-{n-s \choose k},{k(s+1) -1 \choose k}\right\}.$$
\end{conj}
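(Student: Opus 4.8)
The final statement is the Erd\H{o}s Matching Conjecture, so what follows is a proof \emph{strategy} rather than a proof: the lower bound is routine, and it is the matching upper bound that is (in general) still open. For the lower bound, both candidate families achieve the claimed size. The family $\mathcal{A}_1 := \{S \in {[n] \choose k}:\ S \cap [s] \neq \emptyset\}$ of all $k$-sets meeting the fixed $s$-set $[s]$ contains no $s+1$ pairwise disjoint members (by pigeonhole, two of them would share a point of $[s]$), and $|\mathcal{A}_1| = {n \choose k} - {n-s \choose k}$; the family $\mathcal{A}_2 := {[k(s+1)-1] \choose k}$ contains no matching of size $s+1$ for lack of room, and $|\mathcal{A}_2| = {k(s+1)-1 \choose k}$. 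Hence $m(n,k,s) \geq \max\{{n \choose k} - {n-s \choose k},\ {k(s+1)-1 \choose k}\}$, and all the difficulty lies in the reverse inequality.

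For the upper bound it is natural to treat two regimes separately, since the clique $\mathcal{A}_2$ should be optimal for $n$ near $(s+1)k$ and the ``star-like'' family $\mathcal{A}_1$ for $n$ large. The first line of attack is shifting: the $ij$-compressions $C_{ij}$ preserve $k$-uniformity, the size of a family, and --- a standard fact in this area --- the property $\nu(\f)\le s$ of containing no matching of size $s+1$; so one may assume $\f$ is left-compressed, and then run an induction on $n$ (or on $s$) by splitting $\f$ according to whether a fixed coordinate is used, exactly as in the compression proof of Theorem~\ref{thm:ekr} but carrying the matching parameter along. Left-compressedness forces strong ``initial-segment''-type constraints on $\f$, which one then matches against $\mathcal{A}_1$ and $\mathcal{A}_2$. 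This is essentially Frankl's method, and it is known to deliver the conjecture for $n \geq (2s+1)k - s$; driving $n$ down towards $(s+1)k$ is the crux.

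For $n$ large compared to $k$ (and, with the pseudorandomness/regularity refinements discussed later in this survey, large compared only to $s$), I would instead use the junta/stability method: via a supersaturation/removal step, show that a family with no matching of size $s+1$ whose size is close to ${n \choose k} - {n-s \choose k}$ must be essentially contained in a bounded junta --- morally, in $\mathcal{A}_1$ --- and then optimise over juntas directly; this recovers the conjecture for $n \ge n_0(k)$ (Erd\H{o}s; Bollob\'as--Daykin--Erd\H{o}s; Huang--Loh--Sudakov for $n \ge 3k^2s$; Frankl--{\L}uczak--Mieczkowska, and so on). A cleaner variant of the same philosophy passes to the measure relaxation: bound the maximum uniform measure on ${[n]\choose k}$ of a family with no \emph{fractional} matching of size $s+1$, using a local-LYM / Kruskal--Katona pushing argument (or a spectral bound on an appropriate ``matching operator''), and transfer back; this transparently locates the crossover between $\mathcal{A}_1$ and $\mathcal{A}_2$.

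The genuine obstacle --- the reason the conjecture is still open --- is precisely the intermediate window, roughly $(s+1)k \le n \lesssim 2(s+1)k$, where neither extremal family dominates by enough of a margin for a stability/removal argument to bite, and where the shifting induction stalls because the two subfamilies produced by splitting on a coordinate do not cleanly inherit the ``no matching of size $s+1$'' hypothesis with the right parameters. A complete proof would need a new idea for this window; partial progress (Frankl; Frankl--Kupavskii; Kolupaev--Kupavskii; the case $k=2$ via Erd\H{o}s--Gallai, and $k=3$ resolved by {\L}uczak--Mieczkowska and Frankl) has come from increasingly intricate shifting plus case analysis, but a single argument covering all $(n,k,s)$ with $n \ge (s+1)k$ remains out of reach.
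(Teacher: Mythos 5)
This statement is a conjecture, not a theorem, and the paper offers no proof of it — only the observation that the two candidate families attain the stated sizes and a survey of the ranges of $(n,k,s)$ in which the upper bound has been verified. Your proposal correctly identifies this status, your lower-bound verification (pigeonhole on $[s]$ for the first family, lack of room for the second) is right, and your account of the known partial results and of where the difficulty lies matches the paper's discussion, so there is nothing to fault.
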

\noindent The Erd\H{o}s Matching Conjecture says that one of the two families
$$\left\{S \in {[n] \choose k}:\ S \cap [s] \neq \emptyset\right\},\quad {[k(s+1)-1] \choose k}$$
must `win'. It is easy to check that ${n \choose k}-{n-s \choose k} > {k(s+1) -1 \choose k}$ whenever $n \geq (k+1)(s+1)$, i.e.\ the first of the two families above beats the second (in this range), so the Erd\H{o}s Matching Conjecture implies that 
\begin{equation}
\label{eq:large-bound}
m(n,k,s) = {n \choose k} - {n-s \choose k}
\end{equation}
whenever $n \geq (k+1)s$. The bound (\ref{eq:large-bound}) was verified by Erd\H{o}s for all $n \geq n_0(k,s)$, by Bollob\'as, Daykin and Erd\H{o}s \cite{bde} for all $n \geq 2k^3s$, by Huang, Loh and Sudakov \cite{hls} for all $n \geq 3k^2s$, by Frankl \cite{frankl-new} for all $n \geq (2s+1)k-s$, and by Frankl and Kupavskii \cite{fk} for all $n \geq \tfrac{5}{3} ks - \tfrac{2}{3}s$ (provided $s \geq s_0$, where $s_0$ is an absolute constant).

On the other hand, when $n=k(s+1)$, a simple averaging argument shows that $m(n,k,s) \leq {k(s+1)-1 \choose k}$ (see \cite{kleitman-emc}), so the second of the two families above `wins'. Frankl \cite{frankl-other-range} recently proved that $m(n,k,s) = {k(s+1)-1 \choose k}$ for all $k(s+1) \leq n \leq (k+\epsilon)(s+1)$, where $\epsilon = \epsilon(k) >0$ for each $k$. The general case remains open.

\subsection{Covering by intersecting families: Lov\'asz's proof of Kneser's conjecture, and a Boolean analogue.}
Recall that if $G = (V,E)$ is a graph, the {\em chromatic number} $\chi(G)$ of $G$ is the minimum integer $k$ such that $V(G)$ may be partitioned into $k$ independent sets. In 1955, Kneser \cite{kneser} made the following conjecture regarding the chromatic number of the Kneser graph $K_{n,k}$.
\begin{conj}[Kneser, 1955]
\label{conj:kneser}
For $k,n \in \mathbb{N}$ with $k \leq n/2$, we have $\chi(K_{n,k}) = n-2k+2$.
\end{conj}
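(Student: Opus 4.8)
The plan is to prove the two inequalities $\chi(K_{n,k}) \leq n-2k+2$ and $\chi(K_{n,k}) \geq n-2k+2$ separately. For the upper bound I would give an explicit colouring: colour a set $S \in {[n] \choose k}$ by $\min(S)$ whenever $\min(S) \leq n-2k+1$, and assign one further, common colour to every $k$-subset of the $(2k-1)$-element set $\{n-2k+2,\ldots,n\}$. Two sets receiving a colour $i \leq n-2k+1$ both contain $i$, hence intersect; and any two $k$-subsets of a $(2k-1)$-set intersect; so this is a proper colouring of $K_{n,k}$ using $(n-2k+1)+1 = n-2k+2$ colours.

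For the lower bound --- the substantial half, and the heart of Lov\'asz's contribution --- I would argue by contradiction using the Borsuk--Ulam theorem, in its Lusternik--Schnirelmann--Borsuk form: if $S^d$ is covered by $d+1$ sets, each of which is open (or each of which is closed), then one of them contains a pair of antipodal points. Suppose $K_{n,k}$ admits a proper colouring with $d := n-2k+1$ colours; this partitions ${[n] \choose k}$ into intersecting families $\mathcal{F}_1,\ldots,\mathcal{F}_d$. First I would fix $n$ points $x_1,\ldots,x_n$ in general position on the sphere $S^d \subset \mathbb{R}^{d+1}$, meaning that no $d+1$ of them lie on a common hyperplane through the origin. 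For $x \in S^d$, write $H(x) = \{y \in S^d : \langle x,y \rangle > 0\}$ for the open hemisphere centred at $x$. Next I would define, for $i \in [d]$, the set $A_i := \{x \in S^d : \{x_j : j \in S\} \subset H(x) \text{ for some } S \in \mathcal{F}_i\}$, which is open since containment of a finite point-set in an open hemisphere is an open condition, together with the closed set $A_{d+1} := S^d \setminus (A_1 \cup \cdots \cup A_d)$. These $d+1$ sets cover $S^d$, so Lusternik--Schnirelmann--Borsuk yields an index $j$ and a point $x$ with $x, -x \in A_j$.

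To finish, I would derive a contradiction in each case. If $j \leq d$, then $H(x)$ contains the image of some $S \in \mathcal{F}_j$ and $H(-x)$ contains the image of some $T \in \mathcal{F}_j$; since $H(x) \cap H(-x) = \emptyset$, the sets $S$ and $T$ are disjoint, contradicting that $\mathcal{F}_j$ is intersecting. If $j = d+1$, then $H(x)$ contains the image of no $k$-subset of $[n]$ at all (as every $k$-set has some colour), so $|H(x) \cap \{x_1,\ldots,x_n\}| \leq k-1$, and likewise $|H(-x) \cap \{x_1,\ldots,x_n\}| \leq k-1$; hence at least $n - 2(k-1) = n-2k+2 = d+1$ of the $x_\ell$ lie on the great subsphere $\{y \in S^d : \langle x,y \rangle = 0\}$, which spans a hyperplane through the origin --- contradicting general position. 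Either way we reach a contradiction, so $\chi(K_{n,k}) \geq d+1 = n-2k+2$.

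The step I expect to be the main obstacle is the invocation of the Borsuk--Ulam theorem: some genuinely topological input is unavoidable here (this was, historically, the first striking application of algebraic topology in combinatorics), and the real content of the proof lies precisely in the translation of the colouring hypothesis into a covering of $S^d$ by $d+1$ sets to which Borsuk--Ulam applies. The subsidiary points --- openness of the $A_i$, the availability of a configuration in general position, and the elementary case analysis --- are comparatively routine once that framework is set up.
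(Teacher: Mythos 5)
The paper does not actually prove this statement: it records Kneser's conjecture, notes that Lov\'asz proved it in 1977 by topological means, and that B\'ar\'any then gave a shorter proof via the Borsuk--Ulam theorem, but no argument is reproduced. So there is nothing in the text to compare you against line by line; your proposal has to stand on its own. It does. The upper bound via the colouring $S \mapsto \min(S)$ for $\min(S)\le n-2k+1$, with one extra colour for the $k$-subsets of the final $(2k-1)$-element block, is exactly the covering the paper exhibits after the conjecture. Your lower bound is Greene's well-known simplification of B\'ar\'any's argument: general position on $S^d$ replaces B\'ar\'any's appeal to Gale's theorem (which places $n$ points on $S^d$ so that every open hemisphere contains at least $k$ of them), at the price of introducing the residual closed set $A_{d+1}$. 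Both routes are legitimate; Gale's lemma lets you work with an all-open cover by the $d+1$ colour classes alone, whereas your version trades that external input for an elementary general-position condition plus a case analysis.

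The one point you should tighten is the topological input itself. You state the Lusternik--Schnirelmann--Borsuk theorem in the form ``$d+1$ sets, each of which is open (or each of which is closed)'', but you then apply it to a cover consisting of $d$ open sets together with one closed set. The version you actually need is the mixed one: if $S^d$ is covered by $d+1$ sets \emph{each of which is either open or closed}, then one of them contains a pair of antipodal points. This is true and standard (it is the form Greene uses, and it follows from Borsuk--Ulam; see Matou\v{s}ek's book), but it is genuinely stronger than the statement you quoted, and the all-open version does not apply verbatim to your cover since $A_{d+1}$ need not be open. Either cite the mixed version explicitly or supply the short derivation of it from the open version. With that repaired, the proof is complete and correct.
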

Kneser's conjecture says that if $k \leq n/2$, then $n-2k+2$ intersecting families are required to cover ${[n] \choose k}$; this may be achieved by taking the covering
$$\left(\bigcup_{i=1}^{n-2k+1} \{S \in {[n] \choose k}:\ i \in S\}\right) \cup {\{n-2k+2,n-2k+3,\ldots,n\} \choose k}.$$
Kneser's conjecture was proved by Lov\'asz \cite{lovasz-kneser} in 1977. His proof is one of the first examples of algebraic topology being used to resolve a problem in extremal combinatorics. Shortly afterwards, B\'ar\'any \cite{barany} gave a shorter proof, also topological, relying on the Borsuk-Ulam theorem.

An attractive variant of Kneser's problem concerning the Boolean cube $\{0,1\}^n$, was recently posed (independently) by Alon \cite{alon-talk} and Long \cite{long-personal}. For each $1 \leq t \leq n$, let us define $G_{n,t}$ to be the graph with vertex-set $\{0,1\}^n$, where two vertices are joined by an edge if their Hamming distance is at least $n-t$ (i.e.\ iff they agree on at most $t$ coordinates). The problem is to find $\chi(G_{n,t})$. It is easy to show that for $t \leq \sqrt{n}$, we have $t+1 \leq \chi(G_{n,t}) \leq O(t^2)$. (The upper bound may be proved using a colouring where the colour-classes consist of $Ct^2$ Hamming balls (or subsets thereof), whose centres are chosen independently at random; this works with high probability, provided $C>0$ is a sufficiently large absolute constant. The lower bound follows from Kneser's conjecture, now a theorem, together with the fact that $G_{n,t}$ contains a copy of $K_{n,\lceil (n-t)/2\rceil}$.) However, for $t \leq \sqrt{n}$, it is unknown whether $\chi(G_{n,t})$ is linear or quadratic in $t$.

\section{The structure of `large' intersecting families}
\label{sec:stab}
Most of the questions we have considered up to now have simply asked for the maximum possible size of a family of mathematical objects that satisfies some property $P$: this is the perhaps the most obvious question to ask, from the point of view of extremal combinatorics. Another natural class of questions asks for a description of the structure of `large' families of objects satisfying a property $P$. Here, `large' does not necessarily mean `of the maximum possible size': it can mean, for example, within a factor of $1-\epsilon$ of the maximum possible size (for a sufficiently small $\epsilon >0$), or it can mean within a factor of $c$ of the maximum possible size, for a fixed positive constant $c>0$ (letting the size of the ground-set tend to infinity). Different notions of `large' typically lead to structural results of different flavours. Sometimes, it so happens that `large' families share some structural features of the extremal families (those of the maximum possible size): this phenomenon is sometimes known as `stability'. Sometimes, it happens that the structure of `large' families can differ wildly from that of the extremal families: a phenomenon we may call `instability'.

One of the first `stability' results in the area was obtained by Hilton and Milner \cite{hilton-milner} in 1967: this strengthens the Erd\H{o}s-Ko-Rado theorem.
\begin{theorem}[Hilton-Milner, 1967]
\label{thm:hm}
Let $3\leq k < n/2$, and let $\f \subset {[n] \choose k}$ be an intersecting family such that $\cap_{S \in \f}S = \emptyset$. Then
\begin{equation} \label{eq:hm} |\f| \leq {n-1 \choose k-1}-{n-k-1 \choose k-1}+1.\end{equation}
If equality holds, then either (i) there exists $i \in [n]$ and $T \in {[n] \setminus \{i\} \choose k}$ such that
$$\f = \{T\} \cup \{S \in {[n] \choose k}:\ i \in S,\ S \cap T \neq \emptyset\},$$
or else (ii) $k=3$ and there exists $Y \in {[n] \choose 3}$ such that
$$\f = \{S \in {[n] \choose 3}:\ |S \cap Y| \geq 2\}.$$
\end{theorem}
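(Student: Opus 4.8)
The plan is to follow Frankl's compression-based approach, combining the $ij$-compressions already used for Theorem~\ref{thm:ekr} with the cross-intersecting analogue of the Erd\H{o}s--Ko--Rado theorem. Write $h(n,k):={n-1\choose k-1}-{n-k-1\choose k-1}+1$ for the bound in (\ref{eq:hm}), and note that the hypothesis $\bigcap_{S\in\f}S=\emptyset$ says precisely that the covering number $\tau(\f)$ of $\f$ is at least $2$. First I would reduce to a structured family. Apply $ij$-compressions $C_{ij}$ to $\f$; each preserves $|\f|$, $k$-uniformity and the intersecting property, and a non-trivial one strictly decreases $\sum_{S\in\f}\sum_{x\in S}x$, so iterating terminates. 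The only subtlety is that a compression can lower $\tau$ to $1$, turning $\f$ into a star; but a direct check of the definition of $C_{ij}$ shows this happens only when $\{i,j\}$ is already a vertex cover of $\f$. Hence, replacing $\f$ by a compressed image if necessary, we may assume that $\f$ is either left-compressed, or else has a $2$-element vertex cover.

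In the second case, fix a cover $\{a,b\}$ and split $\f$ into the families $\f_a,\f_b,\f_{ab}$ of members containing exactly $a$, exactly $b$, or both; since $\tau(\f)\geq 2$, both $\f_a$ and $\f_b$ are nonempty. Trivially $|\f_{ab}|\leq{n-2\choose k-2}$, while $\{A\setminus\{a\}:A\in\f_a\}$ and $\{B\setminus\{b\}:B\in\f_b\}$ are nonempty cross-intersecting families of $(k-1)$-subsets of the $(n-2)$-set $[n]\setminus\{a,b\}$. Here I would invoke the cross-intersecting EKR inequality --- if $\a,\b\subseteq{[m]\choose r}$ are nonempty and cross-intersecting with $m\geq 2r$ then $|\a|+|\b|\leq{m\choose r}-{m-r\choose r}+1$, provable by the same shifting method (compress $\a,\b$ simultaneously and analyse the left-compressed case, where one family reduces to a single set) --- applied with $m=n-2\geq 2(k-1)$ and $r=k-1$. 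This gives $|\f_a|+|\f_b|\leq{n-2\choose k-1}-{n-k-1\choose k-1}+1$, and adding $|\f_{ab}|$ while using ${n-2\choose k-1}+{n-2\choose k-2}={n-1\choose k-1}$ yields $|\f|\leq h(n,k)$.

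There remains the left-compressed case (the residual difficulty being when $\tau(\f)\geq 3$). Here $\{1,\dots,k\}$ is automatically a cover, since a $k$-set disjoint from it could be shifted down to $\{1,\dots,k\}$, producing two disjoint members of $\f$; and, $\f$ not being a star, shifting down a member avoiding $1$ shows $T:=\{2,\dots,k+1\}\in\f$. The key point is that $\f^{0}:=\{S\in\f:1\notin S\}$ is $2$-intersecting --- if two of its members met in a single point $j$, then $C_{1j}$ applied to one of them would produce a member of $\f$ disjoint from the other. Since every member of $\f$ through $1$ meets all of $\f^{0}$, a double-counting of the $k$-sets through $1$ that miss some member of $\f^{0}$ shows that each member of $\f^{0}$ beyond $T$ rules out at least one such set, again giving $|\f|\leq h(n,k)$; I expect this trade-off count to be the fussiest part, and it is exactly where the exceptional family of clause~(ii) --- the left-compressed family $\{S:|S\cap\{1,2,3\}|\geq 2\}$ for $k=3$ --- shows up, as the instance where the bound is met without $\f^{0}$ being the single set $\{T\}$.

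Finally I would extract the equality characterisation by tracing equality through each step above: equality in the two-cover case forces $|\f_{ab}|={n-2\choose k-2}$ together with equality in the cross-intersecting inequality, which holds only when one of $\f_a,\f_b$ is a single set, or a ``triangle'' when $k-1=2$; undoing the compressions then identifies $\f$ with family (i), respectively family (ii), and a check shows no other configuration attains $h(n,k)$. The main obstacles, in summary, are: (a) pinning down precisely when a compression lowers $\tau$, needed to justify the first reduction; (b) the cross-intersecting EKR inequality together with its equality case; and (c) the trade-off count in the left-compressed case, where the small case $k=3$ needs separate handling.
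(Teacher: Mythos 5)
First, a point of reference: the paper states the Hilton--Milner theorem without proof (it is cited to \cite{hilton-milner}), so there is no in-paper argument to compare yours against; I am assessing the proposal on its own terms. Your overall architecture --- compress; split into the case where some compression would create a star (hence the current family has a $2$-element cover) and the fully left-compressed case; and in each case reduce to a nonempty cross-intersecting inequality --- is sound and is essentially the standard shifting proof of Hilton--Milner (Frankl--F\"uredi's, and Frankl's later proof via nonempty cross-intersecting pairs). Your reduction (a) is correct: if $C_{ij}(\f)$ is a star but $\f$ is not, the centre must be $i$ and every member of $\f$ meets $\{i,j\}$. The two-cover case is also correct as written, granted the nonempty cross-intersecting inequality you quote (a genuine theorem, though your one-line parenthetical does not amount to a proof of it), and the arithmetic checks out.

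The genuine gap is the left-compressed case, which is where the whole difficulty of the theorem lives. What you need there is that the number of $k$-sets $S$ with $1\in S$, $S\cap T\neq\emptyset$ and $S\notin\f^{1}$ is at least $|\f^{0}|-1$; the statement that ``each member of $\f^{0}$ beyond $T$ rules out at least one such set'' does not give this, because the ruled-out sets must be \emph{distinct} --- you need an injection from $\f^{0}\setminus\{T\}$ into the excluded sets, or a count that controls multiplicities, and neither is supplied. Equivalently, you are asserting the inequality $|\a|+|\b|\leq{m\choose a}-{m-b\choose a}+1$ for nonempty cross-intersecting pairs $\a\subset{[m]\choose a}$, $\b\subset{[m]\choose b}$ with $\b$ intersecting (here even $2$-intersecting), applied with $m=n-1$, $a=k-1$, $b=k$. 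Note that the plain cross-intersecting bound you invoke in the two-cover case gives only ${m\choose b}-{m-a\choose b}+1$ in this asymmetric setting, which is strictly weaker than what is needed (e.g.\ $17$ versus $13$ when $n=7$, $k=3$), so the $2$-intersecting property of $\f^{0}$ must enter essentially, and constructing the required injection is a real piece of work (it is exactly what Frankl's and Hurlbert--Kamat's arguments supply). A smaller slip: in the two-cover case with $k=3$, the second equality configuration of the cross-intersecting inequality is not a triangle but a pair of stars at a common vertex $c$ (each of size $m-1$), which together with a full $\f_{ab}$ reassembles into family (ii) with $Y=\{a,b,c\}$; the triangle $\a=\b=\{12,13,23\}$ attains the bound only when $m=4$, which is excluded since $m=n-2\geq 5$.
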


This is a beautiful and strong (in fact, exact) result. It says that a rather strong form of stability occurs: either an intersecting family $\f$ is intersecting for `trivial' reasons (viz., because there exists an element of $[n]$ contained in all of the members of $\f$), or else it has size significantly smaller than the maximum. We note that
$${n-1 \choose k-1}-{n-k-1 \choose k-1} < k{n-2 \choose k-2} = \frac{k(k-1)}{n-1} {n-1 \choose k-1},$$
as can be seen from a simple union bound (the extremal families of type (i) in the Hilton-Milner theorem, are contained within $\cup_{j \in T} \{S \in {[n] \choose k}:\ \{i,j\} \subset S\}$), and the right-hand side is $o({n-1 \choose k-1})$ whenever $k = o(\sqrt{n})$. So whenever $k = o(\sqrt{n})$, the maximum possible size of a `non-trivially' intersecting family is an $o(1)$-fraction of the maximum possible size of an intersecting family, and the bound (\ref{eq:hm}) is very strong. On the other hand, we have
$$\frac{{n-k-1 \choose k-1}}{{n-1 \choose k-1}} = \frac{n-k-1}{n-1}\cdot \frac{n-k-2}{n-2}\cdot \ldots \cdot \frac{n-2k+1}{n-k+1} \leq \left(1-\frac{k}{n-1}\right)^{k-1} \leq e^{-k(k-1)/(n-1)},$$
which is $o(1)$ when $\sqrt{n} = o(k)$. So whenever $\sqrt{n} = o(k)$, the maximum possible size of a `non-trivially' intersecting family is within a $(1-o(1))$-fraction of the maximum possible size of an intersecting family, and the bound (\ref{eq:hm}) is not perhaps so strong.

We note that when $k$ is close to $n/2$, the Erd\H{o}s-Ko-Rado theorem exhibits what we may call `instability'. Indeed, the Erd\H{o}s-Ko-Rado theorem itself tells us that when $n=2k+1$, the unique extremal families are those consisting of all the $k$-element sets containing a fixed point, but the intersecting family
$$\f = \{S \subset [2k+1]:\ |S \cap [k]| > k/2\}$$
has $|\f| = (1-O(1/\sqrt{k})){n-1 \choose k-1}$, and yet is very far in structure from the extremal families.

It is natural to ask for structural information about intersecting families which have size below the bound (\ref{eq:hm}), but which are still `large' to some extent. Such information is provided by a beautiful 1987 theorem of Frankl. To state it in full, we need some more definitions.

If $\f \subset \p([n])$, we define $\deg(\f) : = \max_{j \in [n]} |\{F \in \f:\ j \in F\}|$ to be the maximum degree of $\f$ (considering $\f$ as a hypergraph). For $2 \leq k \leq n-1$ and $3 \leq i \leq k+1$, we define
\begin{align*}
\g_i &:= \left\{S \in {[n] \choose k}: 1 \in S \text{ and } S \cap \{2,3,\ldots,i\} \neq \emptyset\right\}\\
& \cup \left\{S \in {[n] \choose k}: 1 \not \in S \text{ and }\{2,3,\ldots,i \} \subset S\right\}.
\end{align*}
Clearly, each $\g_i$ is an intersecting family. 
\begin{theorem}[Frankl, 1987]\label{thm:frankl87}
Let $n,k,i \in \mathbb{N}$ with $k < n/2$ and $3 \leq i \leq k+1$. Let $\f \subset {[n] \choose k}$ be an intersecting family with $\mathrm{deg}(\f) \leq \mathrm{deg}(\g_i)$. Then
\begin{equation}
\label{eq:frankl-bound}
|\f| \leq |\g_i|.
\end{equation}
If equality holds in (\ref{eq:frankl-bound}), then either $\f$ is isomorphic to $\g_i$ or else $i=4$ and $\f$ is isomorphic to $\g_3$.
\end{theorem}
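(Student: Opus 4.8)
The plan is to adapt the shifting proof of the Hilton--Milner theorem (Theorem~\ref{thm:hm}), which is precisely the case $i=k+1$, so as to handle all $3\le i\le k+1$ simultaneously. The natural first move is to reduce to the case where $\f$ is left-compressed, using the $ij$-compressions $C_{ij}$ from Section~1: these preserve $k$-uniformity, the intersecting property and $|\f|$. The catch --- and this is where I expect the real difficulty to lie --- is that an $ij$-compression need \emph{not} preserve the hypothesis $\deg(\f)\le\deg(\g_i)$, since $C_{uv}$ can raise $\deg_u$ above the previous maximum degree; and carelessness here genuinely fails, as is shown by the family of \emph{all} $k$-sets through a fixed point (degree ${n-1\choose k-1}$, far above $\deg(\g_i)$, yet size ${n-1\choose k-1}>|\g_i|$), which violates the hypothesis but is the sort of thing a naive sequence of compressions might produce. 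One therefore has to perform the compressions with care --- only applying those that do not increase the maximum degree, and separately analysing the structure of a family that admits no such compression but is not yet left-compressed --- and this bookkeeping is the technical heart of the argument. Once $\f$ is left-compressed, its degree sequence $\deg_1(\f)\ge\deg_2(\f)\ge\cdots\ge\deg_n(\f)$ is non-increasing (for $u<v$, the compression $C_{uv}$ injects the sets of $\f$ through $v$ but not $u$ into the sets of $\f$ through $u$ but not $v$), so $\deg(\f)=\deg_1(\f)$ and the hypothesis reads $|\{S\in\f:1\in S\}|\le\deg_1(\g_i)={n-1\choose k-1}-{n-i\choose k-1}$.

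The next step is to split $\f$ by the element $1$: put $\a:=\{S\setminus\{1\}:1\in S\in\f\}\subset{[2,n]\choose k-1}$ and $\b:=\{S\in\f:1\notin S\}\subset{[2,n]\choose k}$, where $[2,n]:=\{2,3,\ldots,n\}$, so that $|\f|=|\a|+|\b|$ and $|\a|\le{n-1\choose k-1}-{n-i\choose k-1}$. Three structural facts fall out: (i) as $\f$ is intersecting, $\a$ and $\b$ are cross-intersecting and $\b$ is intersecting; (ii) as $\f$ is left-compressed, for every $T\in\b$ and every $j\in T$ we have $(T\setminus\{j\})\cup\{1\}\in\f$, so every $(k-1)$-subset of every member of $\b$ lies in $\a$, i.e.\ the shadow $\partial\b$ is contained in $\a$; (iii) combining (i) and (ii), no two distinct sets of $\b$ can meet in a single element $j$ (else $T'\setminus\{j\}\in\partial\b\subset\a$ would be disjoint from $T$), so $\b$ is $2$-intersecting, and likewise $\b$ is left-compressed as a family on $[2,n]$. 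To finish one bounds $|\a|+|\b|$ by $|\g_i|={n-1\choose k-1}-{n-i\choose k-1}+{n-i\choose k-i+1}$: for $i$ close to $k+1$ this is driven by the shadow inequality $\partial\b\subset\a$, which together with the cap on $|\a|$ forces $|\b|$ to be small via the Kruskal--Katona theorem; for smaller $i$ one also uses the $t=2$ instance of Wilson's theorem (Theorem~\ref{thm:wilson}), or the Complete Intersection Theorem when $n$ is not large, to bound the $2$-intersecting family $\b$, and then plays off $|\a|$ against $|\b|$ to close the estimate. An alternative, perhaps cleaner, execution of this step is to iterate: apply the Hilton--Milner theorem (or an inductive hypothesis for smaller parameters) to $\b$ inside $[2,n]$, distinguishing the case that $\b$ has a common element --- peel off that coordinate and recurse --- from the case that $\b$ has no single-point cover, in which $\b$ is already small.

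For the equality statement, I would determine which configurations make every inequality above tight: equality in $|\a|\le\deg_1(\g_i)$ fixes the link of $1$, equality in the Kruskal--Katona / $2$-intersecting step fixes $\b$, and the generic tight family is $\g_i$ itself. The one sporadic possibility is the identity $|\g_3|=|\g_4|$ (which follows from ${n-3\choose k-1}-{n-3\choose k-2}={n-4\choose k-1}-{n-4\choose k-3}$), combined with $\deg(\g_3)<\deg(\g_4)$: thus $\g_3$ also meets the hypothesis, and attains the bound, when $i=4$. This is exactly the avatar (for general $k$) of the exceptional family $\{S:|S\cap Y|\ge 2\}$ in the Hilton--Milner theorem, where $i=k+1=4$ forces $k=3$. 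I expect the degree-control issue of the first paragraph, together with the case-analysis in this equality discussion, to absorb essentially all of the genuine work.
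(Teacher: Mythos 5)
First, a caveat: the survey does not actually prove Theorem~\ref{thm:frankl87}; it only records that Frankl's argument ``relies on $ij$-compressions, and also us[es] the Kruskal-Katona theorem.'' Your toolkit therefore matches the one-line description the paper gives, and your structural observations that are fully argued are correct: the identities $\deg_1(\g_i)={n-1\choose k-1}-{n-i\choose k-1}$ and $|\g_3|=|\g_4|$ with $\deg(\g_3)<\deg(\g_4)$, the containment $\partial\b\subset\a$ for a $1j$-compressed family, and the deduction that $\b$ is $2$-intersecting are all right, as is your reading of the exceptional case in the equality statement.

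However, as a proof the proposal has two genuine gaps, both of which you flag but neither of which you close. The first is not mere ``bookkeeping'': your argument needs $\f$ to be $1j$-compressed for all $j$ (this is exactly what gives $\partial\b\subset\a$, the engine of the whole second paragraph) \emph{and} needs $\deg_1(\f)\leq\deg_1(\g_i)$. But the compressions $C_{1j}$ are precisely the ones that increase $\deg_1$, so these two requirements pull in opposite directions; your fallback of ``only applying those compressions that do not increase the maximum degree'' may leave the family not $1j$-compressed, at which point the shadow containment, the $2$-intersecting property of $\b$, and everything downstream is lost. Resolving this tension is the missing idea, not a technicality. The second gap is the closing estimate $|\a|+|\b|\leq|\g_i|$. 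The tools you name do not obviously suffice: Wilson's theorem with $t=2$ needs $n-1\geq 3(k-1)$, which does not follow from $k<n/2$, and in any case it bounds a $2$-intersecting family in ${[2,n]\choose k}$ by ${n-3\choose k-2}$, which is far larger than the ${n-i\choose k-i+1}$ you need for $i>3$; so the entire burden falls on the unexecuted Kruskal--Katona trade-off between $|\a|$ (capped by $\deg_1(\g_i)$) and $|\b|$ (constrained by $\partial\b\subset\a$), and ``plays off $|\a|$ against $|\b|$ to close the estimate'' is a statement of the problem rather than a solution. In short: the skeleton is plausible and consistent with the method the survey attributes to Frankl, but the two steps you yourself identify as ``the technical heart'' and ``essentially all of the genuine work'' are exactly the theorem, and they are not supplied.
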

(Here, `isomorphic' means `equal up to permutations of $[n]$'.) This theorem is clearly sharp, as is evidenced by the $\g_i$ themselves. We note that $\g_{k+1}$ is precisely an extremal family of type (i) in the Hilton-Milner theorem, so Frankl's theorem strengthens the Hilton-Milner theorem. (Indeed, if $\f \subset {[n] \choose k}$ is intersecting with $\cap_{F \in \f} F = \emptyset$, then for any $i \in [n]$ there exists $S \in \f$ such that $i \notin S$, and there are exactly $d(\g_{k+1})$ elements of ${[n] \choose k}$ that intersect $F$ and contain $i$, so $\deg(\f) \leq \deg(\g_{k+1})$, and therefore by Frankl's theorem, $|\f| \leq |\g_{k+1}|$.) However, unlike the Hilton-Milner theorem, Frankl's theorem also provides structural information when $|\f| \geq c {n-1 \choose k-1}$ and $\sqrt{n} = o(k)$ (provided $c \geq 3k/n$). For example, the $i=3$ case of Frankl's theorem implies that if $\f \subset {[n] \choose k}$ is intersecting with $|\f| = 3{n-2 \choose k-2}-{n-3 \choose k-3}$, then $\deg(\f) \geq 2{n-2 \choose k-2} - {n-3 \choose k-3}$, so at least (roughly) two-thirds of the members of $\f$, contain some fixed $i \in [n]$. 

Frankl's proof of Theorem \ref{thm:frankl87} is very elegant, and purely combinatorial, relying on $ij$-compressions, and also using the Kruskal-Katona theorem.

It is natural to ask similar questions about $t$-intersecting families. The following theorem of Ahlswede and Khachatrian \cite{ak-nontrivial} gives an exact analogue of the Hilton-Milner theorem, for $t$-intersecting families.

\begin{theorem}[Ahlswede-Khachatrian, 1996]
\label{thm:aknontriv}
Let $n > (t+1)(k-t+1)$, and let $\f \subset {[n] \choose k}$ be $t$-intersecting with $|\cap_{F \in \f}F| < t$. If $k \leq 2t+1$, then
\begin{equation}
\label{eq:bound-1}
|\f| \leq |\f_1|,
\end{equation}
where
$$\f_1: = \left\{S \in {[n] \choose k}:\ |S \cap [t+2]| \geq t+1\right\},$$
and equality holds in (\ref{eq:bound-1}) only if $\f$ is isomorphic to $\f_1$. If $k > 2t+1$, then
\begin{equation}
\label{eq:bound-2}
|\f|\leq \max\{|\f_1|,|\mathcal{H}|\},
\end{equation}
where
$$\mathcal{H} = \left\{S \in {[n] \choose k}:\ [t] \subset S,\ S \cap \{t+1,\ldots,k+1\} \neq \emptyset\} \cup \{[k+1] \setminus \{i\}:\ i \in [t]\right\},$$
and equality holds in (\ref{eq:bound-2}) only if $\f$ is isomorphic to $\f_1$ or to $\mathcal{H}$.
\end{theorem}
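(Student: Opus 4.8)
The natural plan is to combine combinatorial shifting (the $ij$-compressions of Section~1) with the Erd\H{o}s--Ko--Rado-type bounds already available for $n>(t+1)(k-t+1)$ --- namely Wilson's Theorem~\ref{thm:wilson} and, more generally, the Complete Intersection Theorem --- thereby reducing the problem to a finite structural optimisation. \emph{Step 1 (reduction to left-compressed families).} Since the $C_{ij}$ preserve $k$-uniformity, the property of being $t$-intersecting, and cardinality, we may apply a suitable sequence of them and assume $\f$ is left-compressed. The one subtlety is that compressions can \emph{enlarge} the kernel $\bigcap_{F\in\f}F$, so a priori a left-compressed version of $\f$ could be a $t$-umvirate; but if it were, then (being left-compressed) it would equal $\{S\in{[n] \choose k}:[t]\subseteq S\}$, so we would have $|\f|={n-t \choose k-t}$, and since $n>(t+1)(k-t+1)$, Theorem~\ref{thm:wilson} says the only $t$-intersecting families of that size are $t$-umvirates, forcing $\f$ itself to be a $t$-umvirate and contradicting $|\bigcap_{F\in\f}F|<t$. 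So we may assume $\f$ is left-compressed, in which case its kernel is an initial segment $[j]$ with $0\le j\le t-1$. It is also convenient first to enlarge $\f$ to a maximal $t$-intersecting family --- this only shrinks the kernel, so the ``non-trivial kernel'' hypothesis persists, and (using that compressions take maximal $t$-intersecting families to maximal ones) we may re-compress --- so that in addition $\f=\{F\in{[n] \choose k}:|F\cap G|\ge t\ \text{for all }G\in\mathcal{G}\}$ for a suitable left-compressed family $\mathcal{G}$.

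\emph{Step 2 (classifying the relevant configurations).} This is the heart of the matter: one must understand left-compressed $t$-intersecting families whose kernel is a proper initial segment of $[t]$. The key phenomenon is already visible in Frankl's Theorem~\ref{thm:frankl87}: among non-trivial $t$-intersecting families there is a one-parameter family of ``canonical'' configurations interpolating between the \emph{concentrated} family $\f_1=\{S:|S\cap[t+2]|\ge t+1\}$ (which is co-generated by the $(t+1)$-subsets of the fixed $(t+2)$-set $[t+2]$, and is the $t$-analogue of Frankl's $\g_3$) and the \emph{spread-out} family $\mathcal{H}$ (the $t$-analogue of $\g_{k+1}$, i.e.\ of the Hilton--Milner family). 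The plan is to use left-compression together with the Kruskal--Katona theorem, and --- for the ``bulk'' of $\f$, which after fixing a small trace behaves like a $t'$-intersecting family on a smaller ground-set --- Wilson's Theorem~\ref{thm:wilson} and the Complete Intersection Theorem, to show that every left-compressed non-trivial $t$-intersecting family is contained in one of these canonical families up to a controlled number of exceptional members.

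\emph{Step 3 (the finite optimisation and the equality cases).} Splitting a member $F\in\f$ according to its trace on a small ``core'' set --- $[t+2]$ when $k\le 2t+1$, and $[k+1]$ when $k>2t+1$ --- one bounds each part of $\f$ separately and sums, comparing the total with $|\f_1|$ and with $|\mathcal{H}|$. The dichotomy $k\le 2t+1$ versus $k>2t+1$ is precisely where the concentrated family $\f_1$ stops beating the spread-out family $\mathcal{H}$, exactly as the optimal $\g_i$ in Theorem~\ref{thm:frankl87} moves from $i=3$ towards $i=k+1$ as $k$ grows relative to $t$ (and, just as with cases (i) and (ii) of the Hilton--Milner theorem, at the crossover both $\f_1$ and $\mathcal{H}$ are optimal); above the threshold $n>(t+1)(k-t+1)$ these two are the only non-trivial families that can be optimal, which is why no intermediate configuration appears in the statement (for smaller $n$, $t$-analogues of the intermediate $\g_i$ would have to be included). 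Equality is then handled by propagating the uniqueness statements in Wilson's theorem and in Kruskal--Katona through the inequalities, and finally ``un-shifting'' to conclude that $\f$ is isomorphic to $\f_1$ or to $\mathcal{H}$.

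\emph{Expected main obstacle.} The difficulty is concentrated in Steps 2 and 3: determining the complete finite list of left-compressed non-trivial $t$-intersecting configurations that must be examined, and making the counting sharp enough to separate $|\f_1|$, $|\mathcal{H}|$ and every intermediate configuration --- with the estimates degrading in just the right controlled way, so that the threshold $n>(t+1)(k-t+1)$ emerges as exactly what is needed for the two claimed families to be the only optima. This is where Ahlswede and Khachatrian's original argument becomes genuinely intricate.
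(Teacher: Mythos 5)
Your proposal is a roadmap, not a proof, and the gap is exactly where you say it is. The survey does not prove Theorem \ref{thm:aknontriv}; it only records that the proof ``uses the same methods as that of Ahlswede and Khachatrian's Complete Intersection Theorem'' --- i.e.\ $ij$-compressions, generating families, and taking complements --- and describes that argument as very intricate. Your Step 1 is fine and matches the standard opening move (the observation that compressing could trivialise the kernel, and that Wilson's theorem at $n>(t+1)(k-t+1)$ rules this out unless $\f$ was already a $t$-umvirate, is correct and worth having). But Steps 2 and 3 contain no argument: the assertion that ``every left-compressed non-trivial $t$-intersecting family is contained in one of these canonical families up to a controlled number of exceptional members'' is, modulo the counting, the theorem itself, and you give no mechanism for proving it. Invoking Kruskal--Katona, Wilson's theorem and the Complete Intersection Theorem ``for the bulk of $\f$'' does not by itself produce the classification; the whole content of Ahlswede--Khachatrian's argument is the machinery of generating sets (replacing $\f$ by a left-compressed family of minimal generators and ``pushing'' generators up and down while tracking the exact change in $|\f|$), which is what actually yields the finite list of candidate configurations and the comparison of their sizes. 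None of that appears in your write-up, and you acknowledge as much in your final paragraph.

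Two smaller points. First, your claim in Step 1 that compressions take maximal $t$-intersecting families to maximal ones is true but needs a short argument (it is a lemma in the generating-set framework, not an immediate consequence of the definitions you were given). Second, there is a latent circularity risk in leaning on the Complete Intersection Theorem inside Step 2: after fixing a trace on a core set the residual family is $t'$-intersecting on a smaller ground set, but the relevant parameter range there need not satisfy the hypothesis $n'\geq(k'-t'+1)(t'+1)$ of Wilson's theorem, so you would genuinely need the full CIT (all the $\mathcal{F}_i$), and you would then have to show that the intermediate $\mathcal{F}_i$-type configurations are beaten by $\f_1$ and $\mathcal{H}$ precisely when $n>(t+1)(k-t+1)$. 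That computation is asserted to ``emerge'' but is not performed. As it stands the proposal correctly identifies the tools and the shape of the dichotomy $k\leq 2t+1$ versus $k>2t+1$, but does not constitute a proof.
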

The proof of Theorem \ref{thm:aknontriv} uses the same methods as that of Ahlswede and Khachatrian's complete intersection theorem, discussed above. 

 Interestingly, an exact analogue, for $t$-intersecting families, of Frankl's theorem (Theorem \ref{thm:frankl87}), is not known. However, a number of `approximate' structure theorems exist. Interestingly, several of these results use (in their proofs) techniques from the analysis of Boolean functions.
 
 Note that the family $\mathcal{H}$ in Theorem \ref{thm:aknontriv} satisfies
 $$|\mathcal{H}| = {n-t \choose k-t} - {n-k-1 \choose k-t} + t,$$
 and
 $$\frac{{n-k-1 \choose k-t}}{{n-t \choose k-t}} \leq \left(1-\frac{k-t+1}{n-t}\right)^{k-t} \leq \exp(-(k-t+1)(k-t)/(n-t)),$$
 which is $o(1)$ (as $n \to \infty$) if $k > 2t+1$ and $\sqrt{n} = o(k)$. Hence, for $k > 2t+1$ and $\sqrt{n} = o(k)$, we have $|\mathcal{H}| = (1-o_{n \to \infty}(1)){n-t \choose k-t}$, so in this case (and in particular, in the case where $k = \Theta(n)$), Theorem \ref{thm:aknontriv} only yields structural information about $t$-intersecting families with size within a $(1-o(1))$-factor of the maximum possible size.
 
 The following theorem of Friedgut \cite{friedgut-measure} was the first structural result concerning $t$-intersecting families of size a constant fraction of the maximum possible size, for $t>1$ and $k = \Theta(n)$.
 
\begin{theorem}[Friedgut, 2008]
\label{thm:frt}
For any $t \in \mathbb{N}$ and $\eta >0$, there exists $C=C(t,\eta)>0$ such that the following holds. Let $\eta n < k < (1/(t+1)-\eta)n$ and let $\epsilon \geq \sqrt{(\log n) / n}$. If $\f \subset {[n] \choose k}$ is a $t$-intersecting family with $|\f| \geq (1-\epsilon){n - t \choose k-t}$, then there exists $B \in {[n] \choose t}$ such that
$$|\{F \in \f:\ B \not\subset F\}| \leq C \epsilon {n-t \choose k-t}.$$
\end{theorem}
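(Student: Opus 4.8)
The plan is a ``spectral stability'' argument: use near-extremality to force the indicator $1_{\f}$ to have almost all of its mass on the bottom eigenspaces of the Johnson scheme, and then feed this into a junta-type theorem to recover combinatorial structure. For the first part I would run Wilson's spectral proof of Theorem~\ref{thm:wilson} ``with an error term''. Recall that his pseudoadjacency matrix $M = \sum_{j=0}^{t-1} c_j A_j$ for the graph $K(n,k,<\!\!t)$ is diagonalised by the Johnson-scheme eigenspaces $V_0, V_1, \ldots, V_k$, with eigenvalue $\lambda_0 > 0$ on the constants, with $\lambda_1 = \cdots = \lambda_{t-1} = \lambda_{\min}$ (write $\lambda := -\lambda_{\min} > 0$), and with $\lambda_i \ge \lambda_{t-1}$ for $t \le i \le k$; the Delsarte--Hoffman bound (Theorem~\ref{thm:delsarte-hoffman}) then yields $\mu(\f) := |\f|/{n \choose k} \le \mu^{*} := {n-t \choose k-t}/{n \choose k}$. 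Writing $1_{\f} = \sum_i f_i$ with $f_i \in V_i$, the relation $\langle M 1_{\f}, 1_{\f} \rangle = 0$ (valid since $\f$ is an independent set in $K(n,k,<\!\!t)$) rearranges to
\[ \sum_{t \le i \le k} (\lambda_i - \lambda_{\min})\,\|f_i\|^2 \;=\; (\lambda_0 + \lambda)\,\big(\mu^{*} - \mu(\f)\big)\,\mu(\f), \]
whose right-hand side is at most $\lambda\,\epsilon\,\mu(\f)$ when $|\f| \ge (1-\epsilon){n-t \choose k-t}$. The genuinely new input is a \emph{uniform} spectral gap $\lambda_i - \lambda_{\min} \ge c(t,\eta)\,\lambda$ for all $t \le i \le k$: this holds precisely in the regime $\eta n \le k \le (1/(t+1)-\eta)n$ --- the same regime in which Wilson established $\lambda_i \ge \lambda_{t-1}$ --- and a quantitative version should fall out of his explicit formulae for the $\lambda_i$. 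It then follows that $\sum_{i \ge t}\|f_i\|^2 = O_{t,\eta}(\epsilon)\,\mu(\f)$, i.e.\ all but an $O(\epsilon)$-fraction of the Johnson-scheme Fourier weight of $1_{\f}$ lies in $V_0 \oplus \cdots \oplus V_{t-1}$.

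Next I would transfer this to the $p$-biased cube with $p = k/n \in (\eta, 1/(t+1))$, which is bounded away from $0$ and $1$. Passing, say, through the up-family $\mathcal{U}(\f) = \{A \subseteq [n] :\ \exists F \in \f,\ F \subseteq A\}$ --- which is again $t$-intersecting, and has $\mu_p(\mathcal{U}(\f)) \approx \mu(\f)$ --- the standard comparison between functions on the slice ${[n] \choose k}$ and functions on $(\{0,1\}^n,\mu_p)$ translates low-degree Johnson-scheme concentration of $1_{\f}$ into low-degree $p$-biased Fourier concentration of the corresponding Boolean function $g$, up to additive errors governed by Chernoff-type fluctuations of set-sizes; this is where the hypothesis $\epsilon \ge \sqrt{(\log n)/n}$ enters (below that threshold the transfer error swamps the signal). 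One then applies a Kindler--Safra-type junta theorem for the $p$-biased cube --- the Friedgut--Kalai--Naor theorem already suffices when $t=1$ --- to conclude that $g$, whose $p$-biased Fourier weight above level $t-1$ is $O_{t,\eta}(\epsilon)$, is $O_{t,\eta}(\epsilon)$-close in $\mu_p$ to a Boolean function depending on at most $C(t)$ coordinates. Pulling back through the comparison, $\f$ is $O_{t,\eta}(\epsilon)$-close (in the normalised counting measure on ${[n] \choose k}$) to a junta supported on a bounded coordinate set $J$ with $|J| \le C(t)$.

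The last step, which I expect to be the main obstacle, is to upgrade ``$\f$ is close to a bounded junta'' to ``$\f$ is close to a $t$-umvirate''. Here one works inside the bounded set $J$: since $\f$ is simultaneously $t$-intersecting and of measure within a factor $1 - O(\epsilon)$ of $\mu^{*} = {n-t \choose k-t}/{n \choose k}$, a short clean-up argument --- discard the $O(\epsilon)$-fraction of $\f$ on which it disagrees with the junta, then restrict to $J$ and re-close under $t$-intersection --- should show that the only juntas on $J$ that can be this close to $\f$ are the $t$-umvirates $\{F \in {[n] \choose k} :\ B \subset F\}$ with $B \in {J \choose t}$; any ``non-umvirate'' junta of comparable measure either produces an (almost-)disjoint-in-fewer-than-$t$ pair, contradicting $t$-intersection after the clean-up, or has measure bounded away from $\mu^{*}$. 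The delicacy is that both the clean-up and the finite case-check over juntas on $J$ must be performed with constants uniform in $n$ and in $p \in (\eta, 1/(t+1))$; the quantitative spectral estimate from the first paragraph is exactly what keeps the clean-up affordable. Granting this, we obtain $|\f \,\triangle\, \{F \in {[n] \choose k} :\ B \subset F\}| = O_{t,\eta}(\epsilon)\,{n-t \choose k-t}$ for some $B \in {[n] \choose t}$, and in particular $|\{F \in \f :\ B \not\subset F\}| \le C(t,\eta)\,\epsilon\,{n-t \choose k-t}$, which is the assertion of the theorem.
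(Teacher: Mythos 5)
Your overall architecture --- a spectral argument forcing the Fourier weight of $1_{\f}$ onto the bottom $t$ levels, followed by a Kindler--Safra-type junta theorem and a finite check that the resulting junta must be a $t$-umvirate --- is exactly the architecture of Friedgut's proof, and your last two paragraphs (the role of $\epsilon \ge \sqrt{(\log n)/n}$, the upgrade from ``bounded junta'' to ``$t$-umvirate'') match what the paper does. The structural difference is \emph{where} the spectral step happens. Friedgut first passes from $\f \subset {[n] \choose k}$ to the up-closure $\f^{\uparrow} \subset \pn$ using only the measure comparison of Lemma \ref{lem:chernoff} (iterated Local LYM plus Chernoff), so that $\mu_p(\f^{\uparrow}) \ge (1-O(\epsilon))p^t$ for a suitable $p$ slightly above $k/n$; he then carries out the entire spectral argument on the $p$-biased cube (Lemma \ref{lem:friedgut-spectral}, via a pseudoadjacency matrix of $K(n,<\!\!t)$ diagonalised by the $p$-biased characters, together with Theorem \ref{thm:biased-hoff}). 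You instead run the spectral argument on the slice with Wilson's Johnson-scheme matrix and only afterwards try to move to the cube.

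This ordering creates the one genuine gap in your proposal: the claim that ``the standard comparison between functions on the slice and on $(\{0,1\}^n,\mu_p)$ translates low-degree Johnson-scheme concentration into low-degree $p$-biased Fourier concentration.'' There is no such off-the-shelf comparison (certainly none available in 2008); making that transfer rigorous is essentially the content of much later invariance principles for the slice, and it is not a Chernoff-type estimate --- Lemma \ref{lem:chernoff} compares \emph{measures} of families, not Fourier spectra of their indicators. As written, your first paragraph is therefore doing work you cannot cash in. The repair is exactly the paper's route: after passing to $\f^{\uparrow}$ and establishing $\mu_p(\f^{\uparrow}) \ge (1-O_t(\epsilon))p^t$, rerun your Delsarte--Hoffman computation (the identity $\sum_{i\ge t}(\lambda_i-\lambda_{\min})\|f_i\|^2 = (\lambda_0+\lambda)(\mu^*-\mu)\mu$ together with a uniform spectral gap above level $t-1$) in the cube setting, where the eigenvectors genuinely are the $p$-biased characters; this gives $W^{(p)}_{>t}(1_{\f^{\uparrow}}) = O_{t,\eta}(\epsilon)$ directly, and Theorem \ref{thm:ks} applies with no transfer needed. (Your other flagged assumption --- a quantitative gap $\lambda_i - \lambda_{\min} \ge c(t,\eta)\lambda$ for all $i \ge t$ when $k/n$ is bounded away from $1/(t+1)$ --- is correct and checkable from explicit eigenvalue formulae, but it is the cube version, not Wilson's slice version, that the argument actually needs.)
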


We proceed to sketch the proof. One key idea of Friedgut's proof is to work with the {\em $p$-biased measure} on $\pn$, where $p \approx k/n$, rather than the uniform (counting) measure on ${[n] \choose k}$, and then translate results from the former setting to the latter. This strategy makes sense, as the $p$-biased measure on $\pn$ has nicer analytic and algebraic properties than the uniform measure on ${[n] \choose k}$.

The $p$-biased measure $\mu_p$ on $\pn$ is defined as follows: for $S \subset [n]$, we define
$$\mu_p(\{S\}) = p^{|S|}(1-p)^{n-|S|},$$
and for a family $\f \subset \pn$, we define
$$\mu_p(\f) = \sum_{S \in \f} \mu_p(\{S\}).$$
Hence, $\mu_p(\f)$ is the probability that if a subset $S \subset [n]$ is chosen at random, by placing each point of $[n]$ in $S$ independently with probability $p$, then the resulting set $S$ lies in $\f$.

For a family $\f \subset \pn$, we define $\f^{\uparrow}$ to be its up-closure, i.e. $\f^{\uparrow} : = \{T \subset [n]:\ S \subset T \text{ for some }S \in \f\}$. The following lemma (proved in a slightly different form, by Friedgut in \cite{friedgut-measure}) states that if $p$ is a little larger than $k/n$, and $\f \subset {[n] \choose k}$, then $\mu_p(\f^{\uparrow})$ cannot be much smaller than $|\f|/{n \choose k}$.
\begin{lemma}
\label{lem:chernoff}
Let $n,k \in \mathbb{N}$ and suppose that $0 < p,\phi < 1$ satisfy
\[p \ge \frac{k}{n} + \frac{\sqrt{2n \log (1/\phi)}}{n}.\]
Then for any family $\f \subset {[n] \choose k}$, we have
\[\mu_p(\f^{\uparrow}) > (1-\phi) \frac{|\f|}{\binom{n}{k}}.\]
\end{lemma}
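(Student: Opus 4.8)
The plan is to build an explicit coupling between the uniform measure on $\binom{[n]}{k}$ and the $p$-biased measure $\mu_p$ on $\pn$, designed so that it carries $\f$ into $\f^{\uparrow}$, and then to control the resulting loss by a Chernoff-type tail estimate. A word of warning is in order first: the most tempting coupling --- start with a uniformly random $S \in \binom{[n]}{k}$ and then add each remaining element of $[n]$ to $S$ independently with probability $p$ --- does \emph{not} work, because the random set one obtains is biased towards larger sets and is not $\mu_p$-distributed. The fix is to run the coupling in the opposite direction.

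Concretely, I would proceed as follows. First sample $T \subseteq [n]$ according to $\mu_p$ (so that $|T| \sim \mathrm{Bin}(n,p)$); then, conditional on the event $\{|T| \ge k\}$, let $S'$ be a uniformly random $k$-element subset of $T$. The whole experiment is invariant under permutations of $[n]$, so the conditional law of $S'$ given $\{|T| \ge k\}$ is a permutation-invariant probability measure on $\binom{[n]}{k}$, hence the uniform one; in particular $\Pr[\,S' \in \f \mid |T| \ge k\,] = |\f| / \binom{n}{k}$. Since $S' \subseteq T$, the event $\{S' \in \f\} \cap \{|T| \ge k\}$ forces $T \in \f^{\uparrow}$, and therefore
\[ \mu_p(\f^{\uparrow}) \ \ge\ \Pr\big[\,S' \in \f\ \text{and}\ |T| \ge k\,\big] \ =\ \Pr[\,|T| \ge k\,]\cdot\frac{|\f|}{\binom{n}{k}}. \]
It remains to check that $\Pr[\,|T| \ge k\,] > 1 - \phi$, and here I would invoke Hoeffding's inequality for a sum of $n$ independent $[0,1]$-valued random variables: with $s := np - (k-1)$, the hypothesis on $p$ gives $np \ge k + \sqrt{2n\log(1/\phi)}$ and hence $s \ge 1 + \sqrt{2n\log(1/\phi)} > \sqrt{2n\log(1/\phi)} \ge 0$, so
\[ \Pr[\,|T| \le k-1\,] \ \le\ \exp\!\left(-\frac{2 s^2}{n}\right) \ <\ \exp\big(-4\log(1/\phi)\big) \ =\ \phi^{4} \ <\ \phi. \]
Combining the two displays gives $\mu_p(\f^{\uparrow}) > (1-\phi)|\f|/\binom{n}{k}$ for nonempty $\f$, as required.

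I do not expect a genuine obstacle here. The only point that really needs care is the \emph{direction} of the coupling: one must sample from $\mu_p$ first and then sub-sample a uniform $k$-set, so that this sub-sampling step is symmetric enough to reconstruct the uniform measure on $\binom{[n]}{k}$ --- the naive reverse construction fails. Everything else is the bookkeeping above, together with the one-line Hoeffding estimate and the elementary inequality $\phi^4 < \phi$ valid for $\phi \in (0,1)$.
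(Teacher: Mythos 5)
Your argument is correct. The paper's proof and yours share the same skeleton: both reduce the claim to the inequality $\mu_p(\f^{\uparrow}) \ \ge\ \Pr[\mathrm{Bin}(n,p) \ge k]\cdot |\f|/\binom{n}{k}$ and then dispose of the binomial lower tail with a Chernoff/Hoeffding estimate. Where you differ is in how that reduction is established. The paper invokes the iterated Local LYM inequality to lower-bound the density of $\f^{\uparrow}$ on each layer $\binom{[n]}{l}$ with $l \ge k$ by $|\f|/\binom{n}{k}$, and then sums over layers against the binomial weights. Your coupling (sample $T \sim \mu_p$, then a uniform $k$-subset $S'$ of $T$ on the event $|T|\ge k$) replaces that citation with a self-contained symmetry argument; indeed, conditioning on $|T|=l$ in your computation recovers exactly the layer-by-layer inequality that the paper extracts from Local LYM, so your coupling is in effect a probabilistic reproof of iterated Local LYM. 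What you gain is self-containedness and a one-line justification (transitivity of the $S_n$-action forces the conditional law of $S'$ to be uniform); what the paper gains is brevity, given that Local LYM is standard. Your warning about the direction of the coupling is well placed, and the Hoeffding bookkeeping ($s \ge 1+\sqrt{2n\log(1/\phi)}$, hence tail probability $<\phi^4<\phi$) checks out. The only caveat --- which you already flag, and which the paper's strict inequality shares --- is that the statement fails trivially for $\f=\emptyset$, so one should read it for nonempty $\f$.
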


We provide a proof, for completeness. The proof rests on the local LYM inequality. For any family $\mathcal{A} \subset {[n] \choose k}$, we write
$$\partial^+ \mathcal{A}: = \{S \in {[n] \choose k+1}:\ S \supset T \text{ for some }T \in \mathcal{A}\}$$
for the {\em upper shadow} of $\mathcal{A}$, and 
$$\partial^{+(j)}(\mathcal{A}) := \{S \in {[n] \choose k+j}:\ S \supset T \text{ for some }T \in \mathcal{A}\} = \partial^{+}(\partial^{+(j-1)}\mathcal{A})$$
for its $j$th iterate (for each $j \in \mathbb{N}$ with $j\leq n-k$). The Local LYM inequality (see e.g.\ \cite{bollobas}, \S 3) states that for any integers $1 \leq k < n$ and any family $\mathcal{A} \subset {[n] \choose k}$, we have
$$\frac{|\partial^{+} \mathcal{A}|}{{n \choose k+1}}\geq \frac{|\mathcal{A}|}{{n \choose k}}.$$
Iterating the local LYM inequality yields
\begin{equation}\label{eq:ll-it} \frac{|\partial^{+(j)} \mathcal{A}|}{{n \choose k+j}}\geq \frac{|\mathcal{A}|}{{n \choose k}}\end{equation}
for all $j \leq n-k$.

\begin{proof}(of Lemma \ref{lem:chernoff}.)
Let $\f \subset {[n] \choose k}$, let $\delta := |\f|/\binom{n}{k}$ and let $X \sim \text{Bin}(n,p)$. 
We will use the Chernoff bound 
\begin{equation}\label{eq:chernoff} \Prob(X < (1-\eta)np) < \exp(-\eta^2np/2) \quad \forall \eta >0.\end{equation}
Observe that (\ref{eq:ll-it}) implies 
\[\frac{|\f^{\uparrow} \cap {[n] \choose l} |}{ \binom{n}{l}} \geq \frac{|\f|}{\binom{n}{k}} = \delta \quad \forall k \leq l \leq n.\]
Hence,
\begin{align*}
\mu_{p}\left(\f^{\uparrow}\right) & = \sum_{l=k}^{n}p^{l}\left(1-p\right)^{n-l} \left|\f^{\uparrow} \cap \binom{[n]}{l}\right| \\
& \geq \sum_{l=k}^{n}p^{l}\left(1-p\right)^{n-l}\binom{n}{l}\delta\\
&= \Prob(X \geq k) \cdot \delta \\ 
&> (1-\phi) \cdot \delta,
\end{align*}
where the last inequality above follows from setting $k = (1-\eta)np$ and using the Chernoff bound above.
\end{proof}

Friedgut's next step is to prove the following via a spectral method.
\begin{lemma}
\label{lem:friedgut-spectral}
If $\f \subset \pn$ is $t$-intersecting and $0 < p < 1/(t+1)$, then $\mu_p(\f) \leq p^t$, and equality holds if and only if there exists $B \in {[n] \choose t}$ such that $\f = \{S \subset [n]:\ S \supset B\}$. Moreover, if $\mu_p(\f) \geq (1-\epsilon)p^t$ then there exists $B \in {[n] \choose t}$ such that $\mu_p(\f \Delta \{S \subset [n]:\ S \supset B\}) = O_p(\epsilon)$.
\end{lemma}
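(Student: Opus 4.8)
The plan is to run Wilson's spectral argument (the proof of Theorem~\ref{thm:wilson}) on the $p$-biased cube rather than on a uniform slice, and then read off the equality case and the stability statement from the (robust) equality case of the Delsarte--Hoffman bound. First we may assume $\f$ is monotone increasing: replacing $\f$ by its up-closure $\f^{\uparrow}$ preserves the $t$-intersecting property (if $A\supseteq A_{0}$ and $B\supseteq B_{0}$ with $A_{0},B_{0}\in\f$, then $|A\cap B|\ge|A_{0}\cap B_{0}|\ge t$) and can only increase $\mu_p$, while a $t$-umvirate is already monotone, so the conclusions transfer back. Note also that every member of a $t$-intersecting family has size $\ge t$.

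Equip $\mathbb{R}[\pn]$ with the inner product $\langle f,g\rangle:=\sum_{S}\mu_p(\{S\})f(S)g(S)$. A $t$-intersecting family is precisely an independent set in the graph $G$ on $\pn$ in which $A\sim B$ iff $|A\cap B|\le t-1$, and the Delsarte--Hoffman bound (Theorem~\ref{thm:delsarte-hoffman}) holds verbatim with the uniform counting measure replaced by $\mu_p$: its proof only uses that $\langle M1_{\f},1_{\f}\rangle=0$ for a $\mu_p$-self-adjoint operator $M$ that vanishes off the edges of $G$ (including on the diagonal on the support of $1_{\f}$, which is fine since every set in $\f$ has size $\ge t$), together with orthogonality of an eigenbasis and the expansion of $1_{\f}$. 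Hence it suffices to produce a symmetric, $S_{n}$-invariant matrix $M$ indexed by $\pn$ with $M_{A,B}=0$ whenever $|A\cap B|\ge t$, whose constant weighted row sum $\lambda_{0}:=\sum_{B}M_{A,B}\mu_p(\{B\})$ is positive, and whose least eigenvalue $\lambda_{\min}$ satisfies $-\lambda_{\min}/(\lambda_{0}-\lambda_{\min})=p^{t}$; since $\mu_p(\{S:B\subseteq S\})=p^{t}$ for every $B\in\binom{[n]}{t}$, such an $M$ yields $\mu_p(\f)\le p^{t}$ at once.

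Constructing $M$ is the $p$-biased analogue of the hardest part of Wilson's proof, and is where I expect the main obstacle to lie. By $S_{n}$-invariance $M_{A,B}$ depends only on the triple $(|A\cap B|,|A\setminus B|,|B\setminus A|)$ and is constrained to vanish once $|A\cap B|\ge t$, so $M$ ranges over a finite-dimensional family; within it one imposes (i) that the weighted row sums are constant, i.e.\ that $\mathbf 1$ is an eigenvector, and (ii) that the eigenvalues of $M$ on the degree-$1$ through degree-$t$ parts of the $p$-biased Fourier decomposition all coincide. Condition (ii) is exactly what puts the span of the mean-zero indicators of $t$-umvirates — which have $p$-biased Fourier degree $\le t$ — inside the $\lambda_{\min}$-eigenspace, which by the equality case of Delsarte--Hoffman forces the ratio to be exactly $p^{t}$ and makes the $t$-umvirates extremal. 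The laborious step, precisely as for Wilson, is to verify that with this choice the remaining eigenvalues (on degrees $0$ and $t+1,\dots,n$) all strictly exceed $\lambda_{\min}$, so that $\lambda_{\min}$ really is the smallest eigenvalue; this is exactly the point where the hypothesis $p<1/(t+1)$ is needed. (For a soft route to the bound alone: since $\f$ is a monotone up-set, $\mu_p(\f)=\mathbb{E}_{k\sim\mathrm{Bin}(n,p)}\big[|\f\cap\binom{[n]}{k}|/\binom nk\big]$, and Theorem~\ref{thm:wilson} bounds the $k$-th term by $\binom{n-t}{k-t}/\binom nk=k(k-1)\cdots(k-t+1)/\big(n(n-1)\cdots(n-t+1)\big)$ whenever $n\ge(k-t+1)(t+1)$, which, as $p<1/(t+1)$, covers all $k\le pn+\Omega(n)$, the rest carrying $\mu_p$-mass $e^{-\Omega(n)}$ by the Chernoff bound of Lemma~\ref{lem:chernoff}; combined with the identity $\mathbb{E}\,X(X-1)\cdots(X-t+1)=n(n-1)\cdots(n-t+1)p^{t}$ for $X\sim\mathrm{Bin}(n,p)$ this gives $\mu_p(\f)\le p^{t}+e^{-\Omega(n)}$, enough for the applications in the sequel but not for the stated exact bound.)

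Finally, for the equality and stability claims, feed the output of Delsarte--Hoffman into a robustness argument. If $\mu_p(\f)=(1-\epsilon)p^{t}$, tracing the inequality chain in the proof of Theorem~\ref{thm:delsarte-hoffman} (the analogue of~(\ref{eq:ip})) shows that $\sum_{d\ge t+1}(\lambda_{d}-\lambda_{\min})\|P_{d}1_{\f}\|^{2}=O(\epsilon)\,|\lambda_{\min}|$, where $P_{d}$ is the projection onto the degree-$d$ part; since $\lambda_{d}-\lambda_{\min}$ is bounded below by a constant $c(p,t)>0$ for $d\ge t+1$, this gives $\|P_{\ge t+1}1_{\f}\|_{\mu_p}^{2}=O_{p,t}(\epsilon)$, i.e.\ $1_{\f}$ is within $L^{2}(\mu_p)$-distance $O_{p,t}(\sqrt\epsilon)$ of its degree-$\le t$ truncation $g:=P_{\le t}1_{\f}$. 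When $\epsilon=0$ this says $1_{\f}$ itself has degree $\le t$, and a monotone, $t$-intersecting, degree-$\le t$ Boolean function of $\mu_p$-measure $p^{t}$ must be $1_{\{S:B\subseteq S\}}$ for some $B\in\binom{[n]}{t}$ — a finite combinatorial fact (examine the minimal sets of $\f$ and use $t$-intersection to force a common $t$-set, or induct on $n$ via the sub-families $\f_{0},\f_{1}$). The quantitative version — that a monotone $t$-intersecting Boolean $\f$ within $L^{2}(\mu_p)$-distance $O(\sqrt\epsilon)$ of a degree-$\le t$ function must be within $\mu_p$-symmetric-difference $O_{p,t}(\epsilon)$ of some $t$-umvirate — is the delicate step: $g$ is $O(\sqrt\epsilon)$-close to Boolean, so by an FKN/Kindler--Safra-type argument it is $O_{p,t}(\epsilon)$-close to a $t$-junta, after which monotonicity, $t$-intersection and the measure constraint identify that junta as a $t$-umvirate; alternatively one bootstraps off the exact characterisation above by a compactness argument. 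This conversion of "close to low degree" into "close to a $t$-umvirate" is where essentially all the remaining work goes.
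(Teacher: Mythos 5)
Your proposal follows essentially the same route as Friedgut's proof as presented in the paper: a $\mu_p$-pseudoadjacency matrix for the graph $K(n,<\!\!t)$ diagonalised by the $p$-biased characters, tuned so that the Delsarte--Hoffman bound (Theorem \ref{thm:biased-hoff}) gives $\mu_p(\f)\le p^t$ with the degree-$1$ through degree-$t$ levels in the bottom eigenspace; near-equality then forces $W^{(p)}_{>t}(1_{\f})=O_p(\epsilon)$, and a Kindler--Safra-type argument (Theorem \ref{thm:ks}) plus identification of the resulting junta as a $t$-umvirate finishes the stability claim. The two steps you flag as the remaining work --- verifying the eigenvalue ordering for $p<1/(t+1)$ and converting ``close to degree $\le t$'' into ``close to a $t$-umvirate'' --- are exactly the steps the paper also defers to Friedgut's original argument and to the Kindler--Safra theorem, so the match is complete.
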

For brevity, we will sometimes refer to families of the form $\{S \subset [n]:\ S \supset B\}$ (for $B \in {[n] \choose t}$) as the $t$-umvirates; note that we previously used this term for their $k$-uniform analogues, i.e.\ families of the form $\{S \in {[n] \choose k}:\ S \supset B\}$ (for $B \in {[n] \choose t}$).

The first part of Lemma \ref{lem:friedgut-spectral} was already known; indeed, it follows easily from Ahlswede and Khachatrian's complete intersection theorem (see Section \ref{sec:sets}), as was observed by Dinur and Safra \cite{dshardness}. However, the second (stability) part was new. Friedgut's spectral proof yields stability, whereas the Ahlswede-Khachatrian machinery does not seem to do so.
 
Friedgut's spectral proof of Lemma \ref{lem:friedgut-spectral} relies on some deep machinery from the analysis of Boolean functions, together with the following generalisation of Theorem \ref{thm:delsarte-hoffman}. Let $G = (V,E)$ be a finite graph, and let $\mu$ be a probability measure on $V(G)$. We say that a matrix $M \in \mathbb{R}^{V(G)^2}$ is a {\em pseudoadjacency matrix of $G$ with respect to the measure $\mu$} if $M_{x,y} = 0$ whenever $xy \notin E(G)$, $M$ has all its row-sums equal and positive, and $M$ is symmetric with respect to the inner product on $\mathbb{R}^{V(G)}$ induced by $\mu$, i.e.\ the inner product
$$\langle u,v \rangle: = \sum_{x \in V(G)} \mu(x) u(x) v(x) \quad \forall u,v \in \mathbb{R}^{V(G)}.$$
The symmetry condition means that $\langle M u,v \rangle = \langle u, Mv \rangle$ for all $u,v \in \mathbb{R}^{V(G)}$, or equivalently, that
 $$\mu(x)M_{x,y} = \mu(y) M_{y,x}\quad \forall x,y \in V(G).$$
It is easy to see that the proof of Hoffman's bound (Theorem \ref{thm:hoffman}) generalises to imply the following `measure-theoretic' version.
 
 \begin{theorem}
 \label{thm:biased-hoff}
 Let $G = (V,E)$ be a finite graph, let $\mu$ be a probability measure on $V(G)$, and let $M$ be a pseudoadjacency matrix of $G$ with respect to $\mu$. Let $\lambda_1$ be the eigenvalue of $M$ corresponding to the eigenvector $\mathbf{f}: = (1,1,\ldots,1)$, and let $\lambda_{\min}$ be the minimum (i.e.\ the `most negative') eigenvalue of $M$. Let $S \subset V(G)$ be an independent set of vertices of $G$. Then
$$\mu(S) \leq \frac{-\lambda_{\min}}{\lambda_1-\lambda_{\min}}.$$
Equality holds only if 
$$1_S - \frac{|S|}{|V(G)|}(1,1,\ldots,1)$$
is a $\lambda_{\min}$-eigenvector of $M$, where $1_{S}$ denotes the indicator vector of $S$.
\end{theorem}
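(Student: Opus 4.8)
The plan is to transcribe the proof of Theorem~\ref{thm:hoffman} almost verbatim, with the normalised counting inner product $\tfrac{1}{N}\sum_{x} u(x)v(x)$ replaced by the $\mu$-weighted inner product $\langle u,v\rangle = \sum_{x\in V(G)}\mu(x)u(x)v(x)$. (We may assume $\mu$ has full support, since deleting the $\mu$-null vertices affects neither the hypotheses nor the conclusion.) The first step is the linear-algebraic set-up: although a pseudoadjacency matrix with respect to a non-uniform $\mu$ is in general \emph{not} a symmetric matrix, the stated condition $\mu(x)M_{x,y}=\mu(y)M_{y,x}$ says precisely that $M$ is self-adjoint with respect to $\langle\cdot,\cdot\rangle$ — equivalently, that $D^{1/2}MD^{-1/2}$ is symmetric, where $D=\mathrm{diag}\big(\mu(x)\big)_{x\in V(G)}$. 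Hence the spectral theorem supplies a $\langle\cdot,\cdot\rangle$-orthonormal basis $v_1,\dots,v_N$ of eigenvectors of $M$ with real eigenvalues. Since all row-sums of $M$ are equal to some $d>0$, the all-ones vector $\mathbf f=(1,\dots,1)$ satisfies $M\mathbf f=d\mathbf f$ and $\langle\mathbf f,\mathbf f\rangle=\sum_{x}\mu(x)=1$, so we may take $v_1=\mathbf f$, with $\lambda_1=d>0$; and $\lambda_{\min}<0$ (for instance, $\mathrm{tr}\,M=\sum_{x} M_{x,x}=0$ for a simple graph, while $\lambda_1>0$).

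Next I would carry out the same computation as in the proof of Theorem~\ref{thm:hoffman}. Expanding $1_S=\sum_{i=1}^N\alpha_i v_i$ in this basis, orthonormality gives $\alpha_1=\langle 1_S,\mathbf f\rangle=\mu(S)$ and $\sum_{i=1}^N\alpha_i^2=\langle 1_S,1_S\rangle=\mu(S)=\alpha_1$. Since $S$ is independent we have $M_{x,y}=0$ for all $x,y\in S$ (including the diagonal entries, as a vertex of $S$ carries no loop), so $\langle M1_S,1_S\rangle=\sum_{x,y\in S}\mu(x)M_{x,y}=0$, and therefore
\[ 0=\sum_{i=1}^N\lambda_i\alpha_i^2\geq\lambda_1\alpha_1^2+\lambda_{\min}\sum_{i\geq 2}\alpha_i^2=\lambda_1\alpha_1^2+\lambda_{\min}(\alpha_1-\alpha_1^2). \]
Since $\lambda_1-\lambda_{\min}>0$, rearranging yields $\alpha_1\leq-\lambda_{\min}/(\lambda_1-\lambda_{\min})$ (the case $\alpha_1=0$ being trivial), i.e.\ $\mu(S)\leq-\lambda_{\min}/(\lambda_1-\lambda_{\min})$, which is the claimed bound.

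For the equality statement, if equality holds in the bound then equality holds in the displayed chain, which forces $\alpha_i=0$ for every $i\geq 2$ with $\lambda_i>\lambda_{\min}$; hence $1_S-\mu(S)\mathbf f=\sum_{i:\,\lambda_i=\lambda_{\min}}\alpha_i v_i$ lies in the $\lambda_{\min}$-eigenspace of $M$, which is the asserted necessary condition (here $\mu(S)$ plays the role of the uniform density $|S|/N$ of the counting version).

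I do not expect a genuine obstacle: as the surrounding text indicates, this is a routine generalisation, and every line of the original proof goes through once one replaces $|S|/N$ by $\mu(S)$ and the counting inner product by $\langle\cdot,\cdot\rangle$. The one point that repays a moment's attention is the opening move — recognising that the real spectrum and orthogonal diagonalisability of $M$ come not from symmetry of $M$ itself (which typically fails when $\mu$ is non-uniform) but from its self-adjointness with respect to $\langle\cdot,\cdot\rangle$, equivalently from symmetry of $D^{1/2}MD^{-1/2}$.
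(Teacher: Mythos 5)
Your proof is correct and is precisely the generalisation the paper has in mind: the paper gives no separate proof of Theorem \ref{thm:biased-hoff}, merely remarking that the proof of Theorem \ref{thm:hoffman} carries over once the counting inner product is replaced by the $\mu$-weighted one, and that is exactly what you do (including the key opening observation that the detailed-balance condition $\mu(x)M_{x,y}=\mu(y)M_{y,x}$, rather than symmetry of $M$ itself, is what supplies the real spectrum and the orthonormal eigenbasis). Your derivation also correctly identifies that the shifted vector in the equality statement should be $1_S-\mu(S)\mathbf{f}$ rather than the $1_S-\tfrac{|S|}{|V(G)|}\mathbf{f}$ printed in the theorem, which appears to be a slip carried over verbatim from the uniform-measure version.
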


Let $K(n,<\!\!t)$ denote the graph with vertex-set $\mathcal{P}([n])$, where two sets are joined by an edge if their intersection has size less than $t$. Friedgut's strategy is essentially to construct a pseudoadjacency matrix of $K(n,<\!\!t)$ with respect to the measure $\mu_p$, which has appropriate maximum and minimum eigenvalues so that applying Theorem \ref{thm:biased-hoff} to it, implies the upper bound in Lemma \ref{lem:friedgut-spectral}. Friedgut explains how one is naturally led to this construction (we do not repeat his explanation, due to lack of space). The pseudoadjacency matrix constructed by Friedgut has the $p$-biased characters of $\pn$ as an orthonormal basis of eigenvectors (orthonormal, that is, with respect to the inner product induced by $\mu_p$). The $p$-biased characters of $\pn$ are the functions $\{\chi_S^{(p)}:\ S \subset [n]\}$ defined by
$$\chi_S^{(p)}:\pn \to \mathbb{R};\ \chi_S^{(p)}(A) = \left(-\sqrt{(1-p)/p}\right)^{|S \cap A|} \left(\sqrt{p/(1-p)}\right)^{|S|-|S \cap A|}\ \forall A \subset [n].$$
The term `character' is a slight abuse of terminology, since these are only characters of a group in the case $p=1/2$ (where they are characters of $\mathbb{Z}_2^n$, under the natural identification of $\mathbb{Z}_2^n$ with $\pn$), but they share some of the useful (for us) properties of the `genuine' characters $\chi_{S}^{(1/2)}$. Specifically, they are the unique set of functions (up to changes of sign) that form an orthonormal basis of $\mathbb{R}^{\pn}$ with respect to $\mu_p$, such that for all $S \subset [n]$, the function $\chi_S$ depends only upon the coordinates in $S$. As such, they are of crucial importance in the Analysis of Boolean Functions, an important (and rapidly growing) field connecting combinatorics, discrete analysis and theoretical computer science. The reader is referred to \cite{odonnell} for more background on the $p$-biased characters, and their importance.

Since the $p$-biased characters form a basis of $\mathbb{R}[\pn]$, any function $f:\pn \to \mathbb{R}$ has a unique expression in the form
$$f = \sum_{S \subset [n]} \alpha^{(p)}_S \chi_S^{(p)};$$
this is known as the {\em $p$-biased Fourier expansion} of $f$. We write
$$W^{(p)}_{>r}(f) := \sum_{S \subset [n]: |S| > r} (\alpha_S^{(p)})^2;$$
this quantity can be viewed as the `Fourier weight' of the function $f$ on `high frequencies' (i.e.\ on the characters corresponding to sets of size greater than $r$).

An examination of (\ref{eq:ip}), together with the eigenvalues of the matrix constructed by Friedgut, shows that a $t$-intersecting family $\f \subset \pn$ with $\mu_p(\f) \geq (1-\epsilon)p^t$, has characteristic function $1_{\f}$ satisfying $W^{(p)}_{>t}(1_{\f}) = O_p(\epsilon)$. The next step is to apply the following deep theorem of Kindler and Safra \cite{kindler-safra}, with $f = 1_{\f}$.

\begin{theorem}[Kindler-Safra, 2003]
\label{thm:ks}
For $0 < p < 1$ and $t \in \mathbb{N}$, there exist $C = C(p,t) >0$ and $K = K(p,t)>0$ such that the following holds. Let $\delta >0$, and let $f:\pn \to \{0,1\}$ such that $W_{>t}^{(p)}(f) < \delta$. Then there exists a $K$-junta $g:\pn \to \{0,1\}$ such that $\|f-g\|^2_2 < C\delta$. (Here, $\| \cdot \|_2$ is the norm induced by the $\mu_p$-inner product.) 
\end{theorem}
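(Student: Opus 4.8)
The plan is to translate the Fourier hypothesis on $f$ into a statement purely about low-degree functions, and then to ``round'' that low-degree function to a Boolean junta. All norms, inner products, expectations and Fourier expansions below are with respect to $\mu_p$. We may assume $\delta$ is smaller than any prescribed constant depending on $p$ and $t$ (otherwise $g\equiv 0$ works, after enlarging $C$), and since $f$ is $\{0,1\}$-valued we have $\|f\|_2^2=\mathbb{E}_{\mu_p}[f]\le 1$. First I would let $g_0:=\sum_{|S|\le t}\alpha_S^{(p)}\chi_S^{(p)}$ be the projection of $f$ onto the span of the $p$-biased characters of degree at most $t$. By Parseval, $\|f-g_0\|_2^2=W_{>t}^{(p)}(f)<\delta$, so $g_0$ has $\mu_p$-degree at most $t$, satisfies $\|g_0\|_2\le\|f\|_2\le 1$, and is $\sqrt\delta$-close in $L^2(\mu_p)$ to the Boolean function $f$; in particular $\mathbb{E}_{\mu_p}[\mathrm{dist}(g_0(x),\{0,1\})^2]\le\|g_0-f\|_2^2<\delta$. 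It therefore suffices to prove the following $n$-independent structural statement: a function of $\mu_p$-degree at most $t$ that is $\delta$-close in $L^2(\mu_p)$ to $\{0,1\}$-valued is $O_{p,t}(\delta)$-close to a Boolean $K(p,t)$-junta.

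I would prove this last statement by induction on $t$. The base case $t=1$ is the $p$-biased (near-Boolean) Friedgut--Kalai--Naor theorem: a degree-$\le 1$ function that is $\delta$-close to $\{0,1\}$ is $O_p(\delta)$-close to one of $0$, $1$, a coordinate $x_i$, or $1-x_i$, each of which is a $1$-junta. For the inductive step one exploits influences and hypercontractivity. Writing $\mathrm{Inf}_i^{(p)}(g_0)=\sum_{S\ni i}(\alpha_S^{(p)})^2$, one has $\sum_i\mathrm{Inf}_i^{(p)}(g_0)=\sum_S|S|(\alpha_S^{(p)})^2\le t\|g_0\|_2^2\le t$, so at most $t/\tau$ coordinates have influence exceeding any chosen threshold $\tau=\tau(p,t)$. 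For each such ``heavy'' coordinate $i$ one analyses the discrete derivative $g_0|_{x_i=1}-g_0|_{x_i=0}$, which has $\mu_p$-degree at most $t-1$ and, because $g_0$ is near-Boolean, takes values near $\{-1,0,1\}$; applying the inductive hypothesis --- formulated, as one must, for functions valued near a bounded set of integers rather than just $\{0,1\}$ --- shows this derivative is close to a junta on $K(p,t-1)$ coordinates. Collecting the heavy coordinates together with the junta-coordinates of all these derivatives produces a set $J$ of coordinates whose size is bounded by some $K(p,t)$; $p$-biased hypercontractivity is then used to show that, once the coordinates in $J$ are fixed, the remaining function is $O_{p,t}(\delta)$-close to a constant in $\{0,1\}$, so that rounding yields a Boolean $K(p,t)$-junta $h$ with $\|g_0-h\|_2^2<C_1\delta$ for some $C_1=C_1(p,t)$. (This last step, in which the near-Booleanness of $g_0$ must be used in an essential way --- naive influence bounds being far too weak --- is the crux.)

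Granting the structural statement, the proof concludes at once: taking $g:=h$, we have $\|f-g\|_2\le\|f-g_0\|_2+\|g_0-h\|_2<\sqrt\delta+\sqrt{C_1\delta}$, hence $\|f-g\|_2^2<C\delta$ with $C:=(1+\sqrt{C_1})^2=C(p,t)$, and $g$ is a $K=K(p,t)$-junta. As an alternative route for the structural statement, one can first produce a Boolean function $h$ of $\mu_p$-degree at most $t$ with $\|g_0-h\|_2^2=O_{p,t}(\delta)$, and then deduce that $h$ is a $K(p,t)$-junta from a $p$-biased analogue of the Nisan--Szegedy theorem on the junta-arity of bounded-degree Boolean functions; this separates the ``rounding'' step from the ``junta'' step.

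The main obstacle is the inductive structural statement: showing that a low-degree function which is merely $L^2$-close to Boolean is in fact $O(\delta)$-close --- not just $O(\sqrt\delta)$-close --- to an honest Boolean function, and that this can be arranged with a bound on the number of relevant coordinates and on the closeness constant that is independent of $n$, depending only on $p$ and $t$. This is exactly where $p$-biased hypercontractivity enters, and why the constants $C(p,t)$ and $K(p,t)$ must deteriorate as $p\to 0$ or $p\to 1$; it is the technical core of the Kindler--Safra argument, generalising the already delicate corresponding step in the proof of the Friedgut--Kalai--Naor theorem.
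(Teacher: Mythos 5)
The paper does not actually prove this statement: it is quoted verbatim as a ``deep theorem of Kindler and Safra'' and used as a black box in the proof of Lemma \ref{lem:friedgut-spectral}, so there is no in-paper argument to compare yours against. Judged on its own terms, your outline does reproduce the standard architecture of the Kindler--Safra proof: project $f$ onto the span of the $p$-biased characters of degree at most $t$ (Parseval correctly gives $\|f-g_0\|_2^2=W^{(p)}_{>t}(f)<\delta$), reduce to a purely structural statement about low-degree functions that are $L^2$-close to Boolean, and prove that statement by induction on the degree with the ($p$-biased) Friedgut--Kalai--Naor theorem as the base case, using influences, discrete derivatives and hypercontractivity in the inductive step. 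The alternative route you mention (round to an exactly Boolean low-degree function, then invoke a Nisan--Szegedy-type bound on its junta-arity) is also a legitimate known path.

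That said, what you have written is a plan rather than a proof, and the gap sits exactly where you locate it. Two points in the inductive step are not merely ``technical'' but are where the argument could fail if done naively. First, $L^2$-closeness of $g_0$ to $\{0,1\}$ gives no pointwise control, so the assertion that the derivative $g_0|_{x_i=1}-g_0|_{x_i=0}$ ``takes values near $\{-1,0,1\}$'' must itself be an $L^2$ statement with respect to the correct restricted measure; in the $p$-biased setting the two restrictions carry weights $p$ and $1-p$, so the error in the derivative is controlled only up to a factor of order $1/(p(1-p))$, and one must check these losses compose across $t$ levels of induction without picking up any $n$-dependence. Second, the passage from ``$\sqrt{\delta}$-close'' to ``$O(\delta)$-close'' after fixing the coordinates in $J$ -- the step you correctly call the crux -- is precisely the content of the FKN/Kindler--Safra machinery and is not supplied by the influence bound $\sum_i \mathrm{Inf}_i^{(p)}(g_0)\le t$ alone; without carrying it out, the conclusion $\|f-g\|_2^2<C\delta$ (as opposed to $C\sqrt{\delta}$) is not established. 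So the proposal should be regarded as a correct road map with the destination still to be reached, not as a proof.
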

Here, a Boolean function $g: \pn \to \{0,1\}$ is said to be a {\em $K$-junta} if $\{S \subset [n]:\ g(S)=1\}$ is a $K$-junta, or equivalently, identifying $\pn$ with $\{0,1\}^n$, $g$ is a $K$-junta if it depends upon at most $K$ coordinates. Theorem \ref{thm:ks} says that a Boolean function whose Fourier weight is concentrated on low frequencies, is close (in $L^2$-norm) to a Boolean junta. Friedgut shows that, in fact, the Boolean function $g$ that arises when Theorem \ref{thm:ks} is applied to $1_{\f}$, is of the form $A \mapsto 1_{B \subset A}$, for some $B \in {[n] \choose t}$. This implies the conclusion of Lemma \ref{lem:friedgut-spectral}.

It is not too hard to deduce Theorem \ref{thm:frt} from Lemmas \ref{lem:chernoff} and \ref{lem:friedgut-spectral}; we omit the details.

In \cite{ekl}, Keller, Lifshitz and the author obtain the following.
\begin{theorem}[E.-Keller-Lifshitz, 2019] 
\label{thm:stability-uniform-t}
For any $t \in \mathbb{N}$ and $\eta >0$, there exists $\delta_{0}=\delta_0(\eta,t)>0$ such that the following holds. Let $n,k \in \mathbb{N}$ with $k \leq (\tfrac{1}{t+1}-\eta)n$, and let $d\in\mathbb{N}$. Let $\mathcal{F}\subset {[n] \choose k}$ be a $t$-intersecting family with
\begin{equation} \label{eq:wilson-stability-1}
\left|\mathcal{F}\right| > \max\left\{ \binom{n-t}{k-t}\left(1-\delta_{0}\right),\binom{n-t}{k-t}-
\binom{n-t-d}{k-t}+\left(2^{t}-1\right)\binom{n-t-d}{k-t-d+1}\right\}.
\end{equation}
Then there exists a set $B \in {[n] \choose t}$ such that
\begin{equation}\label{eq:wilson-stability-2}
\left|\mathcal{F} \setminus \{S \subset [n]:\ S \supset B\} \right| \leq \left(2^{t}-1\right)\binom{n-t-d}{k-t-d+1}.
\end{equation}
\end{theorem}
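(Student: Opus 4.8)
\emph{Overall plan.} I would prove this in two phases: a ``crude stability'' phase that first locates the relevant $t$-umvirate $\{S\subset[n]:S\supset B\}$, using Friedgut's theorem, and then a ``bootstrapping'' phase that upgrades the crude conclusion to the exact bound (\ref{eq:wilson-stability-2}) by a Kruskal--Katona-type argument on the ``exceptional'' sets.

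\emph{Phase 1 (crude stability).} I would apply Friedgut's theorem (Theorem~\ref{thm:frt}), transported from the $p$-biased to the uniform setting via Lemmas~\ref{lem:chernoff} and~\ref{lem:friedgut-spectral} (pass to the up-closure $\f^{\uparrow}$, which is still $t$-intersecting, take $p$ slightly above $k/n$, and translate back), to obtain, for $n$ above a threshold depending on $\eta,t$, a set $B\in\binom{[n]}{t}$ with $\gamma:=\left|\f\setminus\{S\subset[n]:S\supset B\}\right|\le C\delta_{0}\binom{n-t}{k-t}$, where $C=C(\eta,t)$. For $n$ below that threshold one uses instead that by Wilson's theorem (Theorem~\ref{thm:wilson}, whose hypothesis $n\ge(k-t+1)(t+1)$ follows from $k\le(\tfrac{1}{t+1}-\eta)n$) taking $\delta_{0}$ small enough already forces $\f$ to be an exact $t$-umvirate, so $\gamma=0$ and there is nothing to prove. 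Identify $\{S\in\binom{[n]}{k}:S\supset B\}$ with $\binom{[n]\setminus B}{k-t}$ via $S\mapsto S\setminus B$, let $\mathcal{R}$ be the image of $\{S\in\f:S\supset B\}$, and let $\mu:=\binom{n-t}{k-t}-|\mathcal{R}|$ be the number of $(k-t)$-subsets of $[n]\setminus B$ \emph{missing} from $\mathcal{R}$; then $|\f|=\binom{n-t}{k-t}-\mu+\gamma$.

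\emph{Phase 2 (bootstrapping).} The heart of the argument is the claim that, for every $d$ with $\gamma>(2^{t}-1)\binom{n-t-d}{k-t-d+1}$, one has $\mu\ge\gamma+\binom{n-t-d}{k-t}-(2^{t}-1)\binom{n-t-d}{k-t-d+1}$. Granting the claim, the theorem is immediate: if $\gamma>(2^{t}-1)\binom{n-t-d}{k-t-d+1}$ then $|\f|=\binom{n-t}{k-t}-\mu+\gamma\le\binom{n-t}{k-t}-\binom{n-t-d}{k-t}+(2^{t}-1)\binom{n-t-d}{k-t-d+1}$, contradicting the hypothesis on $|\f|$; hence $\gamma\le(2^{t}-1)\binom{n-t-d}{k-t-d+1}$, which is exactly (\ref{eq:wilson-stability-2}). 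To prove the claim, take an exceptional set $T$ (so $B\not\subset T$), write $j=|T\cap B|\le t-1$ and $\phi(T)=T\setminus B$ (of size $k-j\ge k-t+1$); the $t$-intersecting property applied to $T$ and each $B\sqcup R$ with $R\in\mathcal{R}$ forces $|\phi(T)\cap R|\ge t-j$, so every $(k-t)$-subset $R$ of $[n]\setminus B$ with $|\phi(T)\cap R|<t-j$ lies in the missing set. In the dominant case $j=t-1$ this says the whole block $\binom{([n]\setminus B)\setminus\phi(T)}{k-t}$ is missing, i.e.\ the missing set contains the level-$(k-t)$ lower shadow of the family $\mathcal{C}=\{([n]\setminus B)\setminus\phi(T):T\in\f,\ B\not\subset T\}$ of $(n-k-1)$-subsets of $[n]\setminus B$; since at most $2^{t}-1$ exceptional sets share a given value of $\phi(T)$ (as $T=(T\cap B)\cup\phi(T)$ with $T\cap B$ a proper subset of $B$), this family has at least $\gamma/(2^{t}-1)$ members. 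Now I would invoke the Kruskal--Katona theorem: since $\binom{n-t-d}{k-t-d+1}=\binom{n-t-d}{n-k-1}$, as soon as $|\mathcal{C}|\ge\binom{n-t-d}{n-k-1}$ its lower shadow has size at least $\binom{n-t-d}{k-t}$, and — crucially, because the ground set $[n]\setminus B$ is large, so each successive set in the shadow-minimising (colex) order of $\binom{[n]\setminus B}{n-k-1}$ contributes a number of new $(k-t)$-sets that vastly exceeds $2^{t}-1$ — the shadow exceeds $\binom{n-t-d}{k-t}$ by at least $(2^{t}-1)\bigl(|\mathcal{C}|-\binom{n-t-d}{n-k-1}\bigr)\ge\gamma-(2^{t}-1)\binom{n-t-d}{k-t-d+1}$. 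The exceptional sets with $j<t-1$ are folded in either by noting that they forbid strictly larger blocks (count $R$ with $|\phi(T)\cap R|\le t-j-1$) or by a separate, easier estimate; they contribute to the $2^{t}-1$ factor, which simply counts the proper subsets of $B$.

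\emph{Main obstacle.} The delicate step is the refined Kruskal--Katona estimate in Phase~2: it does not suffice that the shadow of $\mathcal{C}$ reaches $\binom{n-t-d}{k-t}$; one needs it to grow at least $(2^{t}-1)$ times as fast as $|\mathcal{C}|$ just past the threshold $|\mathcal{C}|=\binom{n-t-d}{k-t-d+1}$, \emph{uniformly} over the whole range $k\le(\tfrac{1}{t+1}-\eta)n$. This calls for a careful analysis of the Kruskal--Katona cascade function near that value, and the regime of small $k$ (where a single exceptional set already forbids a constant proportion of $\binom{[n]\setminus B}{k-t}$) behaves quite differently from the regime $k=\Theta(n)$ (where each forbidden block is only an exponentially small fraction), so the two may need to be separated. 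A secondary technical nuisance is making the $p$-biased $\leftrightarrow$ uniform transference in Phase~1 precise — in particular recovering a single $B$ in the uniform setting from approximate closeness to a $t$-umvirate in the $\mu_{p}$-metric — and covering the ranges of $n$ for which Friedgut's hypothesis $\epsilon\ge\sqrt{(\log n)/n}$ or Wilson's hypothesis are not available.
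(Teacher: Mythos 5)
Your two-phase architecture (rough stability, then a shadow-based bootstrap) is a reasonable way to organise a proof of this kind, and something like your Phase 2 is indeed needed to get the sharp form of the bound (the survey's own sketch only describes how to reach rough stability; the exact $(2^t-1)\binom{n-t-d}{k-t-d+1}$ term comes from a further combinatorial step in \cite{ekl}). But your Phase 1 has a genuine gap, and it is precisely the gap the paper's argument is designed to avoid. Friedgut's theorem (Theorem \ref{thm:frt}) requires $k>\eta n$, i.e.\ a \emph{lower} bound on $k/n$, whereas Theorem \ref{thm:stability-uniform-t} assumes only $k\leq(\tfrac{1}{t+1}-\eta)n$; the survey explicitly lists ``applying for smaller $k$'' as one of the ways Theorem \ref{thm:stability-uniform-t} improves on Theorem \ref{thm:frt}. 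Your fallback via Wilson's theorem disposes only of boundedly many values of $n$ (where one can take $\delta_0<1/\binom{n-t}{k-t}$ and force exact equality); it says nothing about, say, $k=\sqrt{n}$ with $n\to\infty$, where Friedgut's hypothesis fails but $\binom{n-t}{k-t}$ is enormous. The paper's route for the rough-stability phase is different and avoids this entirely: pass to the up-closure and the $p$-biased measure, use Russo's lemma and the mean value theorem to locate a $p_2$ at which $\mu_{p_2}(\partial\f^{\uparrow})$ is small, apply the stability version of the biased edge-isoperimetric inequality (Theorem \ref{thm:mon-iso-stability}) to conclude closeness to a $t$-umvirate, and transfer back to the uniform setting via Lemma \ref{lem:chernoff}. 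This is purely combinatorial (no Fourier analysis, no Kindler--Safra) and is valid throughout the stated range of $k$.

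In Phase 2, your reduction of (\ref{eq:wilson-stability-2}) to the claim that $\gamma>(2^t-1)\binom{n-t-d}{k-t-d+1}$ forces $\mu\geq\gamma+\binom{n-t-d}{k-t}-(2^t-1)\binom{n-t-d}{k-t-d+1}$ is logically correct, and the observation that each exceptional $T$ with $|T\cap B|=t-1$ forces the entire block $\binom{([n]\setminus B)\setminus\phi(T)}{k-t}$ to be missing is the right starting point. However, the step you yourself flag as the main obstacle --- that past the threshold $|\mathcal{C}|=\binom{n-t-d}{n-k-1}$ the iterated lower shadow grows at rate at least $2^t-1$ per additional member of $\mathcal{C}$ --- is not a consequence of Kruskal--Katona as stated, and it is false in part of the allowed range: for $k=t+1$ the shadow is taken down to level $k-t=1$, each new colex set beyond $\binom{[n-t-d]}{n-k-1}$ contributes at most one new singleton, and all of them contribute the same one, so the shadow gains $O(1)$ rather than $(2^t-1)x$. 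The bootstrap therefore has to be run differently when $k$ is small (there one exploits instead that a single exceptional set already deletes a constant fraction of the umvirate), and even for $k=\Theta(n)$ the ``$(2^t-1)$ per set'' rate needs an actual proof, together with a correct accounting of the exceptional sets with $|T\cap B|<t-1$, whose forbidden blocks are not obviously larger (their $\phi(T)$ is larger, so the disjointness condition is harder, not easier, to satisfy). As it stands, the two load-bearing steps --- rough stability valid for all $k\le(\tfrac{1}{t+1}-\eta)n$, and the refined shadow estimate --- are both missing.
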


Theorem \ref{thm:stability-uniform-t} improves significantly over Theorem \ref{thm:frt}. As well as applying for smaller $k$, it implies the following in the case where $k = \Theta(n)$.

\begin{cor}
\label{corr:rough}
Let $n,k,t \in \mathbb{N}$ with $\eta n \leq k \leq (\tfrac{1}{t+1}-\eta)n$, let $\epsilon >0$, and let $\mathcal{F}\subset {[n] \choose k}$ be a $t$-intersecting family with
$$\left|\mathcal{F}\right| \geq (1-\epsilon)\binom{n-t}{k-t}.$$
Then there exists a set $B \in {[n] \choose t}$ such that
$$\left|\mathcal{F} \setminus \{S \subset [n]:\ S \supset B\} \right| = O_{t,\eta}(\epsilon^{\log_{1-k/n}(k/n)}) \binom{n-t}{k-t}.$$
\end{cor}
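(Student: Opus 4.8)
The plan is to deduce the corollary from Theorem~\ref{thm:stability-uniform-t} by choosing the free parameter $d$ as large as possible subject to the hypothesis (\ref{eq:wilson-stability-1}). Write $p:=k/n$; the assumptions $\eta n\le k\le(\tfrac1{t+1}-\eta)n$ guarantee that $p$, $1-p$ and $\tfrac{1-p}{p}-1$ are bounded below by constants depending only on $t,\eta$, and in particular $\log_{1-p}(p)\in[1,M]$ for some $M=M(t,\eta)$. I would first dispose of two easy ranges of $\epsilon$. If $\epsilon\ge\epsilon_0$ for a suitable constant $\epsilon_0=\epsilon_0(t,\eta)\le\delta_0(t,\eta)$, then $\epsilon^{\log_{1-p}(p)}=\Omega_{t,\eta}(1)$ while $|\mathcal{F}\setminus\{S:S\supseteq B\}|\le|\mathcal{F}|\le\binom nk=O_{t,\eta}\!\big(\binom{n-t}{k-t}\big)$ (the last step since $\binom nk/\binom{n-t}{k-t}=\prod_{i=0}^{t-1}\tfrac{n-i}{k-i}\le (n/(k-t+1))^t=O_{t,\eta}(1)$), so the bound is trivial. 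If $\epsilon<1/\binom{n-t}{k-t}$, then $|\mathcal{F}|\ge(1-\epsilon)\binom{n-t}{k-t}>\binom{n-t}{k-t}-1$ forces $|\mathcal{F}|=\binom{n-t}{k-t}$, and since $n>(t+1)(k-t+1)$ Theorem~\ref{thm:wilson} then gives that $\mathcal{F}$ is a $t$-umvirate, so the left-hand side vanishes. Hence assume $1/\binom{n-t}{k-t}\le\epsilon<\epsilon_0$.

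Set $f(d):=\binom{n-t-d}{k-t}-(2^t-1)\binom{n-t-d}{k-t-d+1}$, so the second family in (\ref{eq:wilson-stability-1}) has size $\binom{n-t}{k-t}-f(d)$, and whenever Theorem~\ref{thm:stability-uniform-t} applies with parameter $d$ it yields $|\mathcal{F}\setminus\{S:S\supseteq B\}|\le(2^t-1)\binom{n-t-d}{k-t-d+1}$ for some $B\in\binom{[n]}{t}$. Using the identity
$$\frac{\binom{n-t-d}{k-t-d+1}}{\binom{n-t-d}{k-t}}=\frac{\binom{k-t}{d-1}}{\binom{n-k-1}{d-1}}=\prod_{i=0}^{d-2}\frac{k-t-i}{n-k-1-i}$$
together with the fact that every factor of the last product is at most $\tfrac{k-t}{n-k-1}\le 1-c_1$ for a constant $c_1=c_1(t,\eta)>0$, one gets a constant $d_0=d_0(t,\eta)$ with $f(d)\ge\tfrac12\binom{n-t-d}{k-t}$ for all $d\ge d_0$. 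Now take $d$ to be the largest integer with $f(d)>\epsilon\binom{n-t}{k-t}$; this exists and, shrinking $\epsilon_0$ if needed, $d\ge d_0$ since $f(d_0)\ge\tfrac12\binom{n-t-d_0}{k-t}=\Omega_{t,\eta}\!\big(\binom{n-t}{k-t}\big)>\epsilon\binom{n-t}{k-t}$. If $d\ge k-t+2$ then $k-t-d+1<0$ and $\binom{n-t-d}{k-t-d+1}=0$, so we are done; otherwise $d_0\le d\le k-t+1$, and applying Theorem~\ref{thm:stability-uniform-t} with this $d$ is legitimate because $\epsilon<\delta_0$ and $|\mathcal{F}|\ge(1-\epsilon)\binom{n-t}{k-t}>\binom{n-t}{k-t}-f(d)$.

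It remains to bound $(2^t-1)\binom{n-t-d}{k-t-d+1}$ by $O_{t,\eta}\!\big(\epsilon^{\log_{1-p}(p)}\big)\binom{n-t}{k-t}$. By maximality, $f(d+1)\le\epsilon\binom{n-t}{k-t}$; since $d+1>d_0$ this gives $\binom{n-t-d-1}{k-t}\le2\epsilon\binom{n-t}{k-t}$, and combining with $\binom{n-t-d}{k-t}\ge f(d)>\epsilon\binom{n-t}{k-t}$ and $\binom{n-t-d}{k-t}/\binom{n-t-d-1}{k-t}=\tfrac{n-t-d}{n-k-d}=O_{t,\eta}(1)$ (valid since $d\le k-t+1$, as then $n-k-d\ge(\tfrac{t-1}{t+1}+2\eta)n-1\ge\eta n$), we conclude that $\rho:=\binom{n-t-d}{k-t}/\binom{n-t}{k-t}=\Theta_{t,\eta}(\epsilon)$. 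It therefore suffices to prove the purely numerical inequality
$$\binom{n-t-d}{k-t-d+1}\ \le\ C(t,\eta)\,\rho^{\,\log_{1-p}(p)}\,\binom{n-t}{k-t},$$
valid over our range of parameters; then $(2^t-1)\binom{n-t-d}{k-t-d+1}=O_{t,\eta}\!\big(\rho^{\log_{1-p}(p)}\big)\binom{n-t}{k-t}=O_{t,\eta}\!\big(\epsilon^{\log_{1-p}(p)}\big)\binom{n-t}{k-t}$ (using $\log_{1-p}(p)\le M$ to absorb the constant in $\rho=\Theta(\epsilon)$). On the exponential scale (dividing exponents by $n$ and using Stirling's formula, with $c$ playing the role of $d/n$), this reduces to the assertion that
$$\psi(c):=(1-c)H_2\!\Big(\tfrac{p-c}{1-c}\Big)-H_2(p)-\log_{1-p}(p)\Big[(1-c)H_2\!\Big(\tfrac{p}{1-c}\Big)-H_2(p)\Big]\ \le\ 0 \quad\text{on } [0,p];$$
one checks that $\psi(0)=\psi'(0)=0$ and that $\psi''(c)$ is a negative multiple of $\tfrac{(1-p)(1-c-p)}{p(p-c)}-\log_{1-p}(p)$, which is positive throughout $[0,p)$ because $p\le\tfrac1{t+1}-\eta$ is bounded away from $\tfrac12$ and, for $0<p<\tfrac12$, one has $\big(\tfrac{1-p}{p}\big)^2>\log_{1-p}(p)$; hence $\psi$ is concave with a double zero at $0$, giving the inequality at the exponential scale, and the sub-exponential factors are lower order. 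The main technical point is exactly this uniform binomial-coefficient bookkeeping over $d_0\le d\le k-t+1$ (equivalently, over all admissible $\epsilon$); when $d=O(\log n)$ the elementary estimates $\rho=\Theta_{t,\eta}\big((1-p)^{d}\big)$ and $\binom{n-t-d}{k-t-d+1}/\binom{n-t-d}{k-t}=\Theta_{t,\eta}\big((p/(1-p))^{d}\big)$ hold with absolute constants and give the inequality directly via the identity $p^{d}=\big((1-p)^{d}\big)^{\log_{1-p}(p)}$, which is also the clean source of the exponent $\log_{1-k/n}(k/n)$. The two boundary reductions for $\epsilon$ and the choice of $d$ are routine.
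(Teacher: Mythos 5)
The paper states this corollary without proof, as a direct consequence of Theorem \ref{thm:stability-uniform-t}, and your deduction is exactly the intended one: choose $d$ (essentially) maximal subject to the hypothesis \eqref{eq:wilson-stability-1}, note that $\binom{n-t-d}{k-t}/\binom{n-t}{k-t}\approx(1-p)^d$ is then $\Theta(\epsilon)$ while the error term behaves like $p^{d}$, and obtain the exponent from $p^{d}=\bigl((1-p)^{d}\bigr)^{\log_{1-p}(p)}$. Your boundary reductions in $\epsilon$, the choice of $d$, and the verification of the key analytic inequality $\psi\le 0$ (via $\psi(0)=\psi'(0)=0$ and the fact that $(1-p)^2/p^2>\log_{1-p}(p)$ for $p<1/2$, so that $\psi''<0$ on $[0,p)$) are all correct. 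The only compressed point is the final uniformity claim: ``$d=O(\log n)$ elementary, otherwise exponential scale with lower-order corrections'' does not quite delineate the regimes, since for intermediate $d$ the gain $e^{-\Omega(d^2/n)}$ need not dominate generic $\mathrm{poly}(n)$ corrections. This is repairable within your framework: the elementary telescoping estimate in fact has error only $e^{O_{t,\eta}(d^2/n)}$ and so covers all $d=O(\sqrt n)$; for $\sqrt n\le d\le \delta n$ every index of every binomial involved is $\Theta_{t,\eta}(n)$, so the Stirling and discretization corrections are $O_{t,\eta}(1)$ and $n\psi(d/n)\le 0$ suffices; and for $d\ge\delta n$ the strict bound $\psi''\le -c(\eta)<0$ (which your own computation yields, since $(1-p)^2/p^2-\log_{1-p}(p)$ is bounded below on the compact range of $p$) gives an exponential gain $e^{-\Omega_\eta(n)}$ that swamps the at-most-polynomial corrections. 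With that three-way split made explicit, the argument is complete.
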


The $\epsilon$-dependence in Corollary \ref{corr:rough} is tight up to a constant factor depending upon $t$ and $\eta$ alone. Moreover, for $d$ sufficiently large (as a function of $t$ and $\eta$), Theorem~\ref{thm:stability-uniform-t} is tight (even for $k=o(n)$), up to
replacing $2^t-1$ with $t$ in the inequalities (\ref{eq:wilson-stability-1}) and (\ref{eq:wilson-stability-2}), as evidenced by the families $(\mathcal{F}_{t,s})_{t,s \in \mathbb{N}}$, defined by
\begin{align*}
\mathcal{F}_{t,s} & :=\left\{ A \in {[n]\choose k}\,:\,\left[t\right]\subset A,\,\left\{t+1,\ldots,t+s\right\}\cap A\ne\emptyset\right\} \\
 & \cup\left\{ A \in {[n] \choose k}\,:\,|\left[t\right]\cap A|=t-1,\,\left\{t+1,\ldots,t+s\right\}\subset A\right\}.
\end{align*}

 In fact, in \cite{ekl}, we present a general strategy for proving `stability' versions of extremal theorems, where the following conditions hold. Suppose $P$ is a property of subsets of $\pn$ that is preserved under taking the up-closure: meaning, if $\f \subset \pn$ has the property $P$, then so does $\f^{\uparrow}$. (If $\f = \f^{\uparrow}$, then we call $\f$ an {\em up-set}.) Suppose there exists $p_0 \in (0,1)$ such that 
 $$\max\{\mu_{p_0}(\f):\ \f \subset \pn,\ \f \text{ has the property }P\}$$
 is attained by $t$-umvirates for some $t \in \mathbb{N}$, i.e.\ by families of the form $\{S \subset [n]:\ S \supset B\}$ for $B \in {[n] \choose t}$, so that 
 $$\max\{\mu_{p_0}(\f):\ \f \subset \pn,\ \f \text{ has the property }P\} = p_0^t.$$
 Then we are able to show not only that 
 $$\max\{\mu_{p}(\f):\ \f \subset \pn,\ \f \text{ has the property }P\} = p^t$$
 for all $p < p_0$ (a statement that was already known; see e.g.\ \cite{grimmett} Theorem 2.38), but also that if $\epsilon >0$ and $p < p_0$ is bounded away from $p_0$, then any up-set $\f \subset \pn$ with the property $P$ and with $\mu_p(\f) \geq (1-\epsilon)p^t$, must be close in structure (with the closeness depending on $\epsilon$) to a $t$-umvirate. 
 
Our proof of this is based on isoperimetric inequalities for the hypercube (the graph with vertex-set $\{0,1\}^n$, where two 0-1 vectors are joined by an edge if they differ in just one coordinate); unlike the proof of Friedgut's theorem, our proof does not use Fourier analysis. We proceed as follows. Given an up-set $\f \subset \mathcal{P}([n])$, we view it as a subset of the hypercube $\{0,1\}^n$, and we compare the measures $\mu_p(\f)$ for different values of $p$. A well-known lemma of Russo \cite{russo} states that the function $f: p \mapsto \mu_p(\f)$ satisfies $\frac{df}{dp} = \mu_{p}(\partial \f)$, where $\partial \f$ denotes the edge boundary of $\f$. (If $S \subset \{0,1\}^n$, the {\em edge boundary} $\partial S$ of $S$ is defined to be the set of edges of the hypercube that join an element of $S$ to an element of $\{0,1\}^n \setminus S$. We define $\mu_p(xy) := \mu_p(x)+\mu_p(y)$, for any hypercube edge $xy$.) Our assumptions on $\f$ supply us with two values $p_1<p_0$ such that $\mu_{p_0}(\f)$ is not much larger than $\mu_{p_1}(\f)$. By applying the mean value theorem to the function $f$, it follows that there exists $p_2 \in (p_1,p_0)$ such that the edge boundary of $\f$ is small with respect to $\mu_{p_2}$, i.e.\ $\mu_{p_2}(\partial \f)$ is small. 

On the other hand, the biased version of the edge-isoperimetric inequality on the hypercube asserts that for any  $p \in (0,1)$ and any up-set $\mathcal{F} \subset \p([n])$, we have
$$p \mu_p(\partial \f) \geq \mu_{p}(\f) \log_{p}(\mu_{p}(\f)),$$ 
the minimum being attained (for any $p$) only by sets of the form $\{S \subset [n]: S \supset B\}$. Moreover, the following stability version of this isoperimetric inequality (due to the author, Keller and Lifshitz \cite{ekl-iso}) implies that if $\mu_p(\partial \f)$ is small, then $\f$ is close to a set of the form $\{S \subset [n]: S \supset B\}$ (with respect to $\mu_p$).

\begin{theorem}[E.-Keller-Lifshitz, 2019]
\label{thm:mon-iso-stability} For any $\eta>0$, there exist $C_{1}=C_{1}(\eta)$,
$c_{0}=c_{0}(\eta)>0$ such that the following holds. Let $0<p\leq1-\eta$,
and let $\epsilon\leq c_{0}/\ln(1/p)$. Let $\f \subset \pn$ be an up-set such that
\[
p\mu_{p}(\partial \f)\leq\mu_{p}(\f)\left(\log_{p}(\mu_{p}(\f))+\epsilon\right).
\]
Then there exists $B \subset [n]$ such that
\begin{equation}
\mu_{p}(\f\Delta\{S \subset [n]:\ S \supset B\})\leq C_1 \frac{\epsilon \ln(1/p)}{\ln\left(1/(\epsilon \ln(1/p)\right)}\mu_{p}(\f).\label{eq:conc}
\end{equation}
\end{theorem}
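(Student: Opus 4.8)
The plan is to strengthen the classical inductive proof of the sharp $p$-biased edge-isoperimetric inequality for up-sets, keeping quantitative track of the ``defect'' at every step. Recall the recursion: given an up-set $\f\subset\pn$, split it by the last coordinate into the up-sets $\f_0=\{S\subset[n-1]:\ S\in\f\}$ and $\f_1=\{S\subset[n-1]:\ S\cup\{n\}\in\f\}$ in $\mathcal{P}([n-1])$, which satisfy $\f_0\subseteq\f_1$. Writing $a:=\mu_p(\f_0)$ and $b:=\mu_p(\f_1)$ (computed in $\mathcal{P}([n-1])$), one has $\mu_p(\f)=(1-p)a+pb$ and, separating off the boundary edges in direction $n$,
\[
p\,\mu_p(\partial\f)=p(b-a)+(1-p)\bigl(p\,\mu_p(\partial\f_0)\bigr)+p\bigl(p\,\mu_p(\partial\f_1)\bigr).
\]
Feeding the (already known) sharp inequality into the two terms for $\f_0$ and $\f_1$ reduces the sharp inequality for $\f$ to the elementary fact that $G_p(a,b)\ge0$, where, writing $\phi(x):=x\log_p x$,
\[
G_p(a,b):=p(b-a)+(1-p)\phi(a)+p\phi(b)-\phi\bigl((1-p)a+pb\bigr).
\]
Since $\phi$ is concave on $[0,1]$ with $\phi(0)=\phi(1)=0$ and $\phi'(0^{+})=+\infty$, one checks that $G_p\ge0$, with equality precisely when $a=b$ (coordinate $n$ ``irrelevant'') or $a=0$ (coordinate $n$ ``forced into $B$'').

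The second ingredient is a sharp \emph{local} stability estimate for $G_p$: if $G_p(a,b)\le\delta$ then, measuring relative to $\mu_p(\f)=(1-p)a+pb$, either $a$ is small or $b-a$ is small, and quantitatively the bound near the zero $a=0$ has the shape $a\lesssim\delta/\ln(1/\delta)$, whereas near the zero $a=b$ it is merely linear, $b-a\lesssim\delta$ (with all implied constants depending on $p$ alone). This is a one-variable calculus computation: the graph of $a\mapsto G_p(a,b)$ leaves $a=0$ with a vertical tangent, its leading behaviour there being the positive, super-linearly small quantity $(1-p)\phi(a)=(1-p)a\log_p a$, and leaves $a=b$ with slope $-p$; inverting $a\log_p a\lesssim\delta$ near $0$ produces the denominator $\ln(1/\delta)$. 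This is exactly where the correction factor $\epsilon\ln(1/p)/\ln(1/(\epsilon\ln(1/p)))$ in (\ref{eq:conc}) comes from, the $\ln(1/p)$ factors arising from converting between $\ln$ and $\log_p$ and from the $1/p$ relating $a,b$ to $\mu_p(\f)$.

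Third, I would run the induction for the stability statement itself, with inductive hypothesis roughly ``defect $\le\epsilon$ implies that $\f$ is within relative $\mu_p$-distance $C_1\epsilon\ln(1/p)/\ln(1/(\epsilon\ln(1/p)))$ of some subcube $\{S:\ S\supset B\}$''. In the step, if $\f$ has defect $\epsilon$, the recursion above gives $(1-p)a\,\epsilon_0+p\,b\,\epsilon_1+G_p(a,b)\le\epsilon\,\mu_p(\f)$, where $\epsilon_0,\epsilon_1$ are the defects of $\f_0,\f_1$; in particular all three pieces are small. By the inductive hypothesis $\f_j$ is close to a subcube $\{S\supset B_j\}$ ($j=0,1$), and because $\epsilon$ is assumed below the threshold $c_0/\ln(1/p)$ these closenesses are so small that the nearest subcube is unique, so $B_0,B_1$ are well-defined. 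The local stability then forces either $a$ tiny --- in which case $\f_0$ has negligible mass and we take $B:=B_1\cup\{n\}$ --- or $b-a$ tiny --- in which case $\f_0\approx\f_1$, forcing $B_0=B_1=:B$ with $n\notin B$; either way $\f$ is close to $\{S:\ S\supset B\}$, the error being a convex combination of the children's errors plus a single instance of the local estimate.

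The main obstacle is to run this recursion \emph{without losing a factor of $n$}, and with the sharp logarithmic shape. The point is that the natural potential $\Phi(\f):=\min_B\mu_p(\f\Delta\{S:\ S\supset B\})/\mu_p(\f)$ obeys a \emph{convex-combination} recursion: the coefficients $(1-p)a/\mu_p(\f)$ and $pb/\mu_p(\f)$ multiplying $\Phi(\f_0),\Phi(\f_1)$ sum to $1$, so unrolling over the $n$ levels gives, up to the contribution of the low-mass nodes where $\delta_v$ is not small, $\Phi(\f)\lesssim\sum_v W_v\,h(\delta_v)$, where $W_v$ is the path-product weight of node $v$, $\delta_v=G_p(a_v,b_v)/\mu_p(\f_v)$ the local defect there, $h(x)=x/\ln(1/x)$, and --- crucially --- $\sum_v W_v\delta_v=\epsilon$. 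Since $h$ is convex with $h(0)=0$ this already yields the essential bound $\Phi(\f)=O_p(\epsilon)$, and a more careful summation (splitting off the nodes where $\delta_v$ is not small, which necessarily carry correspondingly little mass, and estimating the rest via the convexity of $h$) sharpens it to the stated $\epsilon/\ln(1/\epsilon)$-type bound, uniformly in $n$; keeping the normalisations consistent throughout --- in particular tracking the $1/p$ factors relating $a,b$ to $\mu_p(\f)$ --- is the technical heart. A softer alternative for this last step, at the cost of worse constants, is to use the inductive structure only to deduce that $\f$ is $O_p(\epsilon)$-close to some up-set $\mathcal{J}$ supported on its set $J$ of ``relevant'' coordinates, and then to apply the sharp inequality and its finite-dimensional (hence elementary) equality-stability to $\mathcal{J}$ inside $\mathcal{P}(J)$; I would develop whichever route produces the cleaner constants.
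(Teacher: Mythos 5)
The survey does not actually prove Theorem \ref{thm:mon-iso-stability}; it only cites \cite{ekl-iso} and remarks that the proof there is ``purely combinatorial, though rather intricate'', so I can only assess your proposal on its own terms. Your skeleton is the right one and is consistent with that description: the coordinate-wise splitting of an up-set into $\f_0\subseteq\f_1$, the exact identity $p\mu_p(\partial\f)=p(b-a)+(1-p)p\mu_p(\partial\f_0)+p^2\mu_p(\partial\f_1)$, the function $G_p$, its concavity in $a$ with zeros exactly at $a=0$ and $a=b$, and the observation that the vertical tangent of $\phi$ at $0$ is what produces the $\ln$ in the error term are all correct, and your telescoping identity $\sum_v W_v\delta_v\leq\epsilon$ for the path-weighted local defects is a genuinely nice way to organise the induction without losing a factor of $n$.

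The gap is in the final assembly. First, the claimed sharpening from $O_p(\epsilon)$ to the stated $\epsilon/\ln(1/\epsilon)$-type bound by ``splitting off the nodes where $\delta_v$ is not small'' cannot work as described: for any threshold $\delta_0$, the nodes with $\delta_v>\delta_0$ carry total weight up to $\epsilon/\delta_0$, and at such a node the only local error bound you have is $e_v=O(1)$, so their contribution to $\Phi(\f)$ is only $O(\epsilon/\delta_0)$ --- which is never $O(\epsilon/\ln(1/\epsilon))$ for any admissible constant $\delta_0$. A single node with $W_v\approx\epsilon$, $\delta_v\approx 1$ and $e_v\approx 1$ is consistent with all the inequalities you state and would violate the conclusion; ruling such configurations out requires a further structural input (essentially, showing that a large local defect at a node forces the corresponding local error to be much smaller than $W_v$, again via the $\phi'(0^+)=\infty$ phenomenon applied \emph{globally}, not just at the good nodes). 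Second, at nodes in the ``$b-a$ small'' regime the convex-combination recursion needs $B_{v0}=B_{v1}$, which in turn needs both children to be provably close to \emph{unique} subcubes; but the relative defects $\epsilon_0\leq\epsilon\mu_p(\f_v)/((1-p)a)$ and $\epsilon_1\leq\epsilon\mu_p(\f_v)/(pb)$ of the children can exceed the threshold $c_0/\ln(1/p)$ deep in the tree, so the inductive closeness statement is simply unavailable there, and your unrolled summation does not substitute for it at exactly the nodes where the cheap fallback $B=B_{v1}\cup\{n\}$ costs $e_v=w_0\approx 1-p$. Your ``softer alternative'' in the last sentence would likewise only deliver $O_p(\epsilon)$, not the sharp bound. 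So the proposal correctly locates the source of the theorem's error term but does not yet contain a proof of it.
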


Our proof of Theorem \ref{thm:mon-iso-stability} is purely combinatorial, though rather intricate. Applying Theorem \ref{thm:mon-iso-stability} with $p=p_2$, where $p_2$ is obtained from the `mean value theorem' argument above, we see that the original up-set $\f$ is close to a $t$-umvirate with respect to $\mu_{p_2}$. A monotonicity argument then implies that $\f$ is close to the same $t$-umvirate with respect to $\mu_{p_1}$.

The argument sketched above yields a stability result in the biased-measure setting. By applying an argument analogous to Friedgut's (based on Lemma \ref{lem:chernoff}), we can obtain from this a stability result for uniform families, i.e.\ for subsets of ${[n] \choose k}$ for $k/n$ bounded away from $p_0$ (from above).

\section{Imposing extra structure on the ground set}
\label{sec:structure}
The problems we have considered so far, are intersection problems about `unstructured' ground-sets. (Though we often used the ground-set $[n] = \{1,2,\ldots,n\}$, for notational convenience, any $n$-element set would have sufficed.) It is natural to ask what happens in an intersection problem when we impose some extra structure on the ground-set.

One of the most natural structures to impose is the additive structure of the integers. This leads to several attractive problems, some solved and some still open.

As usual, we let $\mathbb{Z}_n$ denote the cyclic group of the integers modulo $n$, under addition. For a set $B \subset \mathbb{Z}_n$, we say a family $\f \subset \mathcal{P}(\mathbb{Z}_n)$ is {\em $B$-translate-intersecting} if for any two sets $S,T \in \f$, there exists $x \in \mathbb{Z}_n$ such that $B +x \subset S \cap T$, i.e.\ $S$ and $T$ intersect on a (cyclic) translate of $B$. For $n \in \mathbb{N}$ and $B \subset \mathbb{Z}_n$, we let $m_n(B)$ denote the maximum possible size of a $B$-translate-intersecting family of subsets of $\mathbb{Z}_n$. Chung, Frankl, Graham and Shearer \cite{cfgs} made the following conjecture.

\begin{conj}[Chung, Frankl, Graham and Shearer, 1986]
\label{conj:cfgs-translate}
For any $n \in \mathbb{N}$ and any $B \subset \mathbb{Z}_n$, $m_n(B) = 2^{n-|B|}$.
\end{conj}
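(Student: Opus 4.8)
The lower bound is immediate: fixing one translate of $B$, say $B$ itself, the family $\{S\subseteq\mathbb{Z}_n:B\subseteq S\}$ is $B$-translate-intersecting and has size $2^{n-|B|}$, so $m_n(B)\ge 2^{n-|B|}$. The content is the matching upper bound, and the first move is to reformulate. For $S\subseteq\mathbb{Z}_n$ put $\mathcal{T}(S):=\{x\in\mathbb{Z}_n:B+x\subseteq S\}$; since $B+x\subseteq S\cap T$ iff $x\in\mathcal{T}(S)\cap\mathcal{T}(T)$, a family $\f$ is $B$-translate-intersecting precisely when $\{\mathcal{T}(S):S\in\f\}$ is a classical intersecting family of nonempty subsets of $\mathbb{Z}_n$. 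This is suggestive, but it does not yield a one-line deduction: the map $S\mapsto\mathcal{T}(S)$ is highly non-injective (a fibre can have size up to $2^{n-|B|}$), so the trivial bound $2^{n-1}$ on an intersecting family gives only the useless estimate $|\f|\le 2^{n-1}\cdot 2^{n-|B|}$. Nor can one hope for a fixed partition of $\mathcal{P}(\mathbb{Z}_n)$ into $2^{n-|B|}$ blocks of size $2^{|B|}$ each meeting $\f$ at most once: already for $B=\{0,1\}\subseteq\mathbb{Z}_4$ the distinct sets $\{0,2,3\}$ and $\{1,2,3\}$ can both lie in a $B$-translate-intersecting family (they meet in $\{2,3\}=B+2$) even though they agree off $B$. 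So any proof must be genuinely global.

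\textbf{The structured case.} When $B$ is an interval (arc) of length $b$ in $\mathbb{Z}_n$, I would run an entropy argument --- pleasingly, the relevant tool, Shearer's entropy inequality, originates in the very paper \cite{cfgs}. Pick $S$ uniformly from $\f$ and write $S=(X_0,\dots,X_{n-1})$, so that $H(X_0,\dots,X_{n-1})=\log_2|\f|$; apply Shearer's inequality to the $n$ windows $W_j:=B+j$, each coordinate of $\mathbb{Z}_n$ lying in exactly $b$ of them, to obtain $H(S)\le\tfrac1b\sum_{j\in\mathbb{Z}_n}H(X_{W_j})$. Since $H(X_{W_j})\le b$, it then suffices to show that the windows carry, on average, a deficit of $b/n$ bits --- equivalently $\sum_j H(X_{W_j})\le b(n-b)$ --- and this is exactly where the ordered, nested, linearly-sliding structure of the windows of an interval should be exploited, together with the facts that every $\mathcal{T}(S)$ is nonempty and that the $\mathcal{T}(S)$ pairwise meet. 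This is designed to recover Conjecture \ref{conj:cfgs-translate} for $B$ an interval; combined with the density-increment philosophy of the Frankl--R\"odl proof (Section \ref{sec:sets}), the same strategy is the natural thing to try for $B$ a union of a bounded number of arcs, or an arithmetic progression.

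\textbf{The obstacle.} The genuine difficulty is arbitrary $B$: the translates $B+x$ then overlap in a combinatorially unstructured, non-nested way, the Shearer deficit computation above has no obvious analogue, and compressions are equally unpromising, since the cyclic symmetry (the only symmetry the problem has) is destroyed by any $ij$-compression, while deleting a point of $\mathbb{Z}_n$ both breaks the group structure and shreds the windows, so induction on $n$ does not even get started. Three lines of attack I would pursue, with their sticking points: (i) induction on $|B|$ --- a $B$-translate-intersecting family is $(B\setminus\{\beta\})$-translate-intersecting, giving $|\f|\le 2^{n-|B|+1}$, and one needs to recover the missing factor $2$ from the extra point $\beta$, for which I see no clean mechanism; (ii) an entropy- or energy-increment argument in the spirit of Theorem \ref{thm:frtwofamily}, passing to smaller cyclic groups or to wider "forbidden-translate" conditions, where the obstruction is to find the right notion of "subproblem" compatible with the $\mathbb{Z}_n$-structure; (iii) the Delsarte--Hoffman method (Theorem \ref{thm:delsarte-hoffman}) applied to the conflict graph induced on the sets $S$ with $\mathcal{T}(S)\ne\emptyset$ (joining $S\sim T$ when $\mathcal{T}(S)\cap\mathcal{T}(T)=\emptyset$), where the eigenstructure is governed by the Fourier transform of $B$ and it is unclear whether a pseudoadjacency matrix with eigenvalue ratio exactly $2^{-|B|}$ exists. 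I expect the arbitrary-$B$ case to be the crux, and as far as I am aware the conjecture remains open in full generality.
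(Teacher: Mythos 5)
You are right that this is an open conjecture: the paper offers no proof of the general case, and your overall assessment (lower bound trivial, general $B$ open, interval case known) matches the paper's. However, your one concrete structural claim --- that "one cannot hope for a fixed partition of $\mathcal{P}(\mathbb{Z}_n)$ into $2^{n-|B|}$ blocks each meeting $\f$ at most once" --- is exactly backwards relative to how the known interval case is actually proved. The proof of Chung, Frankl, Graham and Shearer (and Russell's later direct version) \emph{is} a partitioning proof. The idea you are missing is the passage to the $B$-translate-\emph{agreeing} problem: call $\f$ $B$-translate-agreeing if for any $S,T\in\f$ there is an $x$ with $(S\Delta T)\cap(B+x)=\emptyset$. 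The maximum size of such a family equals $m_n(B)$, and --- crucially --- this agreement relation is preserved when $S$ and $T$ are translated simultaneously, which is what makes a partition compatible with the cyclic symmetry possible. Russell partitions $\mathcal{P}(\mathbb{Z}_n)$ into $2^{n-|B|}$ parts so that no two distinct sets in the same part agree on any translate of the interval $B$; an agreeing family then meets each part at most once. Your $\mathbb{Z}_4$ example does not obstruct this: there $S\Delta T=\{0,1\}$ and the two sets agree on $B+2=\{2,3\}$, so they simply lie in different parts. What your example rules out is only the naive partition into subcubes indexed by $S\setminus B$, not partition arguments in general.

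Separately, your Shearer-entropy route to the interval case is a genuinely different strategy from the paper's, but as written it is a plan rather than a proof: the entire content is the asserted average deficit $\sum_j H(X_{W_j})\le b(n-b)$, and you give no mechanism for extracting it from the intersection hypothesis (the projection of $\f$ onto a single window $W_j$ inherits no obvious intersecting structure, so $H(X_{W_j})\le b$ is all that is immediate). So the interval case, which is a theorem, is not actually recovered by your argument. Your discussion of the obstacles for arbitrary $B$ --- unstructured overlaps of translates, incompatibility of $ij$-compressions with the cyclic symmetry, and the three candidate attacks with their sticking points --- is sensible and consistent with the problem's status; the paper records only that the general case remains open.
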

Conjecture \ref{conj:cfgs-translate} says that one cannot do better than to take the `trivial' family 
$$\f = \{S \subset \mathbb{Z}_n:\ B \subset S\};$$
in other words, the Erd\H{o}s-Ko-Rado property holds for this problem. Chung, Frankl, Graham and Shearer prove this in the case of $B$ a (cyclic) interval. Their proof-method relies on the following observation. If  $B \subset \mathbb{Z}_n$, we say a family $\f \subset \mathcal{P}(\mathbb{Z}_n)$ is {\em $B$-translate-agreeing} if for any $S,T \in \f$, there exists $x \in \mathbb{Z}_n$ such that $(S \Delta T) \cap (B + x) = \emptyset$, i.e.\ the sets $S$ and $T$ have exactly the same intersection with the set $B+x$. Chung, Frankl, Graham and Shearer observed that the maximum possible size of a $B$-translate-agreeing family of subsets of $\mathbb{Z}_n$ is the same as the maximum possible size of a $B$-translate-intersecting family of subsets of $\mathbb{Z}_n$. Since the relation of $S$ and $T$ being `$B$-translate-agreeing' is preserved when we apply the same translation to $S$ and $T$, this observation opens up the possibility that a partitioning proof will work, and this is indeed the case. The original partitioning proof of Chung, Frankl, Graham and Shearer was somewhat indirect, but Russell \cite{russell} more recently gave a direct partitioning proof. Russell proves directly that for any interval $B \subset \mathbb{Z}_n$, we may partition $\mathcal{P}(\mathbb{Z}_n)$ into $2^{n-|B|}$ parts such that no two distinct sets in the same part agree on any (cyclic) translate of the interval $B$. Since any $B$-translate-intersecting family can contain at most one set in each part, this immediately implies Conjecture \ref{conj:cfgs-translate} in the case where $B$ is an interval.

The general case of Conjecture \ref{conj:cfgs-translate} remains open.

A different notion was considered by Simonovits and S\'os in 1976. For $k \in \mathbb{N}$ with $k \geq 3$, we say a family $\f \subset \pn$ is {\em $k$-AP-intersecting} if for any $S,T \in \f$, there exists a $k$-term arithmetic progression $P$ with nonzero common difference, such that $P \subset S \cap T$. Simonovits and S\'os conjectured the following.
\begin{conj}[Simonovits-S\'os, 1976]
If $\f \subset \pn$ is 3-AP-intersecting, then $|\f| \leq 2^{n-3}$.
\end{conj}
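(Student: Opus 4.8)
\medskip
\noindent\textbf{A proof proposal.} This is a folklore open problem, so what follows is a plan of attack together with where I expect the real difficulty to lie.

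\emph{Reductions and reformulation.} If $\f \subset \pn$ is $3$-AP-intersecting, then so is its up-closure $\f^{\uparrow}$ (if $P \subset S \cap T$ is a $3$-AP and $S \subset S'$, $T \subset T'$, then $P \subset S' \cap T'$) and $|\f^{\uparrow}| \geq |\f|$, so we may assume $\f$ is an up-set. The complementation map gives the trivial bound $|\f| \leq 2^{n-1}$, since $\f$ cannot contain a set and its complement. I would then try to follow the Chung--Frankl--Graham--Shearer / Russell strategy for the interval-translate problem: it suffices to partition $\pn$ into $2^{n-3}$ classes, each containing no two distinct sets that share a common $3$-term arithmetic progression (a ``$3$-AP-independent'' class), because a $3$-AP-intersecting family meets each such class in at most one set. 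Equivalently, if $G$ denotes the graph on $\pn$ with $S \sim T$ iff $S,T$ contain a common $3$-AP, one wants $\chi(G) \leq 2^{n-3}$ (then $|\f| \leq \omega(G) \leq \chi(G)$).

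\emph{The natural attempt, and the obstacle.} In the classical cases such a partition comes from averaging over a transitive symmetry group (Katona's cyclic orderings; the $\mathbb{Z}_n$-action in Chung--Frankl--Graham--Shearer). Passing to $\mathbb{Z}_n$, the relation ``$S,T$ contain a common $3$-AP'' is translation-invariant (and, for $n$ prime, invariant under the affine group $\mathrm{AGL}(1,\mathbb{Z}_n)$), so one is tempted to find a small ``model'' family $\s \subset \mathcal{P}(\mathbb{Z}_n)$ on which every $3$-AP-agreeing family (i.e.\ any two sets have the same trace on some $3$-AP) has density at most $\tfrac{1}{8}$, and then average. \emph{This is where I expect the main obstacle.} Unlike cyclic intervals of a fixed length, the $3$-APs of $[n]$ (or of $\mathbb{Z}_n$, $n$ composite) do not form a single orbit of any natural action on the ground set --- there are $\Theta(n^2)$ of them with $\Theta(n)$ distinct common differences --- and, in particular, no coset partition works: in a coset of a $3$-dimensional subspace $V \leq \mathbb{F}_2^n$, two elements differing by $w \in V$ meet in $(S+v) \setminus \mathrm{supp}(w)$, and a short argument (using that $3$-AP-free subsets of $[n]$ have size $o(n)$, by Roth's theorem) shows the seven nonzero vectors of $V$ cannot all have support meeting every $3$-AP, so some coset contains two sets sharing a $3$-AP. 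Hence the desired partition, if it exists, must be genuinely combinatorial, in the spirit of Russell's explicit partition for intervals; constructing it, or proving its existence by a random/greedy argument, is the crux.

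\emph{Other routes, and weaker targets.} Failing a direct partition I would try: (i) induction on $n$, splitting by whether $n \in S$ and controlling how the $3$-APs through $n$ link the link-families $\f_0, \f_1 \subset \mathcal{P}([n-1])$ --- the snag being that $\f_0,\f_1$ are not themselves $3$-AP-intersecting, so one needs a stronger ``cross-'' or ``agreeing''-type invariant that recurses; (ii) a shifting/compression argument \`a la Erd\H{o}s--Ko--Rado, which founders because the usual $ij$-compression does not send $3$-APs to $3$-APs and the compressions respecting the affine structure do not assemble into a transitive family; (iii) proving weaker quantitative statements first --- an explicit $c<1$ with $|\f| \leq c\,2^n$ (the complement argument gives $c=\tfrac{1}{2}$; reaching $c=\tfrac{1}{4}$ would already be new), the exact bound under the extra hypothesis that every member of $\f$ has size at most $\alpha n$ for small $\alpha$ (where a junta/delta-system method in the style of Keller--Lifshitz should apply, the relevant minimal configuration being a single $3$-AP), or an asymptotic version $|\f| \leq (\tfrac{1}{8} + o(1))2^n$. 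Note that density-increment or Fourier-weight methods of Frankl--R\"odl / Kindler--Safra type are not in the right regime: the conjectured extremum is only a constant factor below $2^n$, not exponentially smaller, so those methods (which save a factor $(1-\epsilon)^n$ or reach $o(2^n)$) cannot see the constant $\tfrac{1}{8}$. Pinning down that constant, with no transitive symmetry to average over, is the essential difficulty.
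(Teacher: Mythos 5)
The statement you were asked to prove is not a theorem of this paper but a conjecture that the paper explicitly records as completely open: immediately after stating it, the author remarks that ``somewhat embarrassingly, no upper bound of the form $(1/2-c)\cdot 2^n$ (for $c$ a positive absolute constant) is yet known.'' There is therefore no proof in the paper to compare your proposal against, and your proposal --- correctly and candidly --- does not claim to be one.

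That said, your preliminary observations are sound and consistent with the paper's discussion. The reduction to up-sets, the trivial bound $|\f|\leq 2^{n-1}$ via complementation (the paper makes the same point, since a $3$-AP-intersecting family is in particular intersecting), and the reformulation as a partition/colouring problem are all correct. Your argument ruling out a coset partition by a $3$-dimensional subspace $V\leq \mathbb{F}_2^n$ also checks out: for such a partition to work, every nonzero $w\in V$ would need $[n]\setminus\mathrm{supp}(w)$ to be $3$-AP-free, hence of size $o(n)$ by Roth's theorem, and then the sum of two such vectors has support of size $o(n)$, whose complement certainly contains a $3$-AP for large $n$. Your diagnosis of the core obstruction --- that the $3$-APs do not form a single orbit of a transitive action on the ground set, which is precisely what powers Katona's averaging and the Chung--Frankl--Graham--Shearer/Russell partition for translates of a fixed interval --- matches the paper's implicit framing of why the interval-translate case is tractable while this one is not. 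One small calibration: your remark that reaching $c=1/4$ ``would already be new'' is an understatement; by the paper's account, establishing \emph{any} positive $c$ would be new.
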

This conjecture is completely open; indeed, somewhat embarrassingly, no upper bound of the form $(1/2-c) \cdot 2^{n}$ (for $c$ a positive absolute constant) is yet known, as far as we are aware. (Note that an upper bound of $2^{n-1}$ is trivial, as a 3-AP-intersecting family is certainly 1-intersecting.) Needless to say, the analogous problem for $k$-AP-intersecting families (for $k > 3$) is also completely open.
\newline

Another natural structure to impose on the ground set is a graph structure, viz.\ the edge-set of the complete graph. A family of labelled graphs with vertex-set $[n]$ is naturally identified with a subset of $\mathcal{P}({[n] \choose 2})$ (by identifying a labelled graph with its edge-set, which is a subset of ${[n] \choose 2}$). If $H$ is a fixed, unlabelled graph, we say a family $\f$ of labelled graphs with the (common) vertex-set $[n]$ is {\em $H$-intersecting} if any two of the graphs in $\f$ share a copy of $H$. For example, a family $\f$ of labelled graphs on a common vertex-set is {\em triangle-intersecting} if any two graphs in $\f$ share some triangle. Simonovits and S\'os made the following conjecture in 1976.

\begin{conj}[Simonovits-S\'os, 1976]
\label{conj:sim-sos-conj}
If $\f \subset \mathcal{P}({[n] \choose 2})$ is a triangle-intersecting family of labelled graphs on the vertex-set $[n]$, then $|\f| \leq 2^{{n \choose 2}-3}$.
\end{conj}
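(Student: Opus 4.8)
The plan is to recast Conjecture~\ref{conj:sim-sos-conj} as a bound on the independence number of a Cayley graph and to apply the Delsarte--Hoffman bound (Theorem~\ref{thm:delsarte-hoffman}) with a carefully engineered pseudoadjacency matrix, in the spirit of Lov\'asz's and Wilson's proofs above. The first move is a reduction that both simplifies the combinatorics and turns out to be essential for the method: since a triangle is an odd cycle, every triangle-intersecting family is \emph{odd-cycle-intersecting} --- any two members have a non-bipartite intersection --- so it suffices to prove $|\f|\le 2^{\binom n2-3}$ for odd-cycle-intersecting families $\f$. Identifying a labelled graph on $[n]$ with its edge-indicator, we regard $\mathcal P(\binom{[n]}2)$ as the group $\mathbb{Z}_2^{\binom{[n]}2}$ under symmetric difference, and let $\Gamma$ be the ``conflict graph'' in which $G_1\sim G_2$ iff $G_1\cap G_2$ is bipartite. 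An odd-cycle-intersecting family is exactly an independent set in $\Gamma$, so the goal becomes $\alpha(\Gamma)\le 2^{\binom n2-3}=\tfrac18|V(\Gamma)|$.

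The difficulty is that $\Gamma$ is \emph{not} a Cayley graph on $\mathbb{Z}_2^{\binom{[n]}2}$: bipartiteness of $G_1\cap G_2$ depends on more than $G_1\triangle G_2$. The key idea is nonetheless to construct a pseudoadjacency matrix $M$ for $\Gamma$ that \emph{is} a Cayley-graph matrix. For each of the $2^n$ vertex two-colourings $\sigma:[n]\to\{0,1\}$, let $D_\sigma$ be the set of $\sigma$-monochromatic edges, and let $B_\sigma$ be the tensor product, over the $\binom n2$ edge-coordinates, of the $2\times2$ block $\left(\begin{smallmatrix}0&1\\1&0\end{smallmatrix}\right)$ on the coordinates in $D_\sigma$ and of $\left(\begin{smallmatrix}1&t\\t&1\end{smallmatrix}\right)$ on those outside $D_\sigma$, where $t>0$ is a parameter; set $M=\sum_\sigma w_\sigma B_\sigma$ for positive weights $w_\sigma$ which, by $S_n$-symmetry, may be taken to depend only on the colour-class sizes of $\sigma$. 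Three facts must be checked. (i) Every block entry depends only on whether the two graphs agree on that edge, so $M$ depends only on $G_1\triangle G_2$ and is the (pseudo)adjacency matrix of the Cayley graph on $\mathbb{Z}_2^{\binom{[n]}2}$ with connection set $\{H:\overline H\text{ is bipartite}\}$. (ii) $M$ is symmetric with equal, positive row sums $\sum_\sigma w_\sigma(1+t)^{\binom n2-|D_\sigma|}$. (iii) $M_{G_1,G_2}=0$ whenever $G_1\cap G_2$ is non-bipartite: indeed $G_1\cap G_2\subseteq\overline{G_1\triangle G_2}$, so a non-bipartite intersection forces $\overline{G_1\triangle G_2}$ non-bipartite, which annihilates every $B_\sigma$. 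It is precisely this implication ($\cap$ non-bipartite $\Rightarrow$ $\triangle$-complement non-bipartite) that makes the Cayley trick work --- it would fail with ``triangle-free'' in place of ``bipartite'' --- which is why the reduction to odd cycles is needed.

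Being a Cayley-graph matrix, $M$ has the characters $\chi_S$ (indexed by subgraphs $S\subseteq K_n$) as eigenvectors, with eigenvalue
\[\mu_S=\sum_\sigma w_\sigma(-1)^{|S\cap D_\sigma|}(1-t)^{|S|-|S\cap D_\sigma|}(1+t)^{\binom n2-|S|-|D_\sigma|+|S\cap D_\sigma|},\]
so that $\lambda_1=\mu_\emptyset$ is the (common row sum) eigenvalue at the all-ones vector and $\lambda_{\min}=\min_S\mu_S$. Theorem~\ref{thm:delsarte-hoffman} then yields $\alpha(\Gamma)\le\frac{-\lambda_{\min}}{\lambda_1-\lambda_{\min}}2^{\binom n2}$, so it remains to choose $t$ and the $w_\sigma$ so that $\lambda_1=-7\lambda_{\min}$ (which gives the factor $\tfrac18$) \emph{and} to verify that this value really is the minimum, i.e.\ that $\mu_S\ge\lambda_1/(-7)$ for every subgraph $S$. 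The correct choice is dictated by the equality clause of Theorem~\ref{thm:delsarte-hoffman} applied to the extremal example $\f=\{G:G\supseteq\Delta\}$ for a fixed triangle $\Delta=\{ab,bc,ca\}$: its shifted indicator is $1_{\f}-\tfrac18(1,\ldots,1)=\tfrac18\sum_{\emptyset\ne S\subseteq\Delta}(-1)^{|S|}\chi_S$, which must be a $\lambda_{\min}$-eigenvector, forcing $\mu_S$ to take one and the same minimal value when $S$ is a single edge, a path of length two, and a triangle; these equations pin down $t$ and the $w_\sigma$. Running the equality clause backwards at the end then shows that equality in the bound can only occur for families of this extremal form.

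The main obstacle is the final step: the global estimate $\mu_S\ge\lambda_1/(-7)$ for \emph{all} subgraphs $S$ at once. For $S$ of extreme type (empty, one edge, a triangle) this is built into the construction, but for ``intermediate'' $S$ --- for instance dense graphs, or graphs close to complete bipartite ones --- one must control how $|S\cap D_\sigma|$ is distributed as $\sigma$ ranges over all $2^n$ colourings, and show that no such $S$ pushes $\mu_S$ below the target. This amounts to a delicate estimate, uniform in $n$, for an alternating sum of products, and is the genuinely computational heart of the argument; it is also the step that becomes intractable for $K_r$-intersecting families with $r\ge4$, which is why those cases are far harder.
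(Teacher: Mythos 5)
Your framework is the paper's (the Ellis--Filmus--Friedgut proof): relax to the odd-cycle/bipartite setting so that the matrix lives on the Cayley graph of $\mathbb{Z}_2^{{[n] \choose 2}}$ whose connection set is the complements of bipartite graphs, diagonalise by characters, apply Delsarte--Hoffman, and use the equality clause at $\{G:\ G \supseteq \Delta\}$ to force the eigenvalues at an edge, a cherry (path of length two) and a triangle all to equal $-\lambda_1/7$. The setup (support, row sums, eigenvalue formula) is correct. The first genuine gap is that your specific ansatz provably cannot meet those forced equality conditions. Normalising the tilted weights $w_\sigma (1+t)^{{n \choose 2}-|D_\sigma|}$ to a probability distribution on colourings and setting $\theta := -\tfrac{1-t}{1+t}$, your eigenvalue collapses to $\mu_S = \lambda_1 (-1)^{|S|}\, \mathbb{E}_\sigma[\theta^{c_\sigma(S)}]$, where $c_\sigma(S)$ is the number of edges of $S$ cut by $\sigma$ --- a single geometric generating function of the cut statistics. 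For any $S_n$-invariant distribution on $2$-colourings, $\Pr[\text{three fixed vertices monochromatic}] = \tfrac{3p-1}{2}+O(1/n)$ where $p = \Pr[\text{two fixed vertices monochromatic}] \geq \tfrac12 - O(1/n)$; substituting this into the cherry eigenvalue shows that, for every $t>0$ and every choice of positive $w_\sigma$, one has $\mu_{\mathrm{cherry}} \geq -O(1/n)\lambda_1$, so $\mu_{\mathrm{edge}}$ and $\mu_{\mathrm{cherry}}$ can never both equal the required negative value. The cure is to drop the positivity/one-parameter restriction and allow an arbitrary bounded linear combination of the cut statistics $q_i(S) := \Pr_\sigma[c_\sigma(S)=i]$ (a pseudoadjacency matrix need not have nonnegative entries): with the uniform distribution on colourings, $f = q_0 - \tfrac57 q_1 - \tfrac17 q_2$ gives $g(\emptyset)=1$ and $g = -\tfrac17$ on the edge, the cherry and (automatically) the triangle, where $g(S)=(-1)^{|S|}f(S)$. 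This is the form of function the paper's proof actually uses.

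The second gap is that you leave the decisive verification $\mu_S \geq -\lambda_1/7$ for \emph{all} $S$ as an unresolved ``delicate estimate, uniform in $n$,'' to be fought out over dense and near-complete-bipartite graphs. That mislocates the difficulty, and it is exactly where the proposal stops being a proof. The whole point of taking $f$ to be a bounded combination of $q_0,q_1,q_2$ is that $|g(S)| \leq \sum_{i\leq 2}|c_i|\, q_i(S) \leq 2\Pr_\sigma[c_\sigma(S)\leq 2]$, and this probability decays exponentially in the number of vertices of $S$ (for connected $S$ on $v$ vertices $q_0(S)=2^{1-v}$, and $\Pr[c_\sigma(S)\leq 2]$ is comparably small; for disconnected $S$ one multiplies over components). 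Hence the constraint holds automatically once $S$ has more than a bounded number of vertices, and the check reduces to a finite list of small graphs, independent of $n$; no global estimate over dense $S$ is needed. Identifying this decay --- the ``random cut statistic'' observation --- is the missing idea that makes the final step tractable.
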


Conjecture \ref{conj:sim-sos-conj} says that one cannot do better than to take all graphs containing a fixed triangle.

We note that a triangle-intersecting family of graphs is clearly intersecting (meaning that any two graphs in the family share some edge), so the upper bound of $|\f| \leq 2^{{n \choose 2}-1}$ is trivial. The first improvement on this was due to Chung, Frankl, Graham and Shearer \cite{cfgs}, who used Shearer's entropy lemma to obtain an upper bound of $|\f| \leq 2^{{n \choose 2}-2}$: `halfway' between the trivial bound and the conjectured bound. We sketch the proof. Shearer's entropy lemma can be (re-)stated as follows.

\begin{lemma}[Shearer's lemma, restatement]
\label{lem:shearer}
Let $X = (X_1,\ldots,X_N)$ be a random vector. Let $S \subset [N]$ be a random subset with $\Pr[i \in S] \geq p$ for all $i \in [N]$. Then
$$p \cdot H[X] \leq \mathbb{E}_SH[X_S],$$
where $X_S: = (X_i)_{i \in S}$, and $H[Y]$ denotes the entropy of the random variable $Y$.
\end{lemma}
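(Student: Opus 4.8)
The plan is to deduce the inequality from two completely standard facts about discrete (Shannon) entropy: the chain rule, and the fact that conditioning does not increase entropy. Fix once and for all the natural ordering of $[N]$, and write $X_{[m]}:=(X_1,\ldots,X_m)$ (with $X_{[0]}$ the empty tuple).

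First I would prove the deterministic ``pointwise'' bound: for every fixed $S=\{i_1<i_2<\cdots<i_k\}\subseteq[N]$,
$$H[X_S] \;\ge\; \sum_{i\in S} H\!\left[X_i \mid X_{[i-1]}\right].$$
This follows by applying the chain rule along the ordering inside $S$, namely $H[X_S]=\sum_{j=1}^{k} H[X_{i_j}\mid X_{i_1},\ldots,X_{i_{j-1}}]$, and then observing that $\{i_1,\ldots,i_{j-1}\}\subseteq\{1,2,\ldots,i_j-1\}$, so conditioning on the (larger) tuple $X_{[i_j-1]}$ can only decrease each term.

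Next I would take expectations over the random set $S$. By linearity over the finite index set, $\mathbb{E}_S H[X_S] \ge \sum_{i=1}^{N}\Pr[i\in S]\,H[X_i\mid X_{[i-1]}]$. Since every coefficient $H[X_i\mid X_{[i-1]}]$ is nonnegative (this is where we use that $X$ is a discrete random vector), we may replace $\Pr[i\in S]$ by the lower bound $p$, obtaining $\mathbb{E}_S H[X_S]\ge p\sum_{i=1}^N H[X_i\mid X_{[i-1]}]$. One final application of the chain rule identifies $\sum_{i=1}^{N}H[X_i\mid X_{[i-1]}]=H[X]$, which yields $p\cdot H[X]\le \mathbb{E}_S H[X_S]$, as required.

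There is no genuinely hard step here; the whole argument is a short manipulation. The only place calling for a moment's care is the passage from $\Pr[i\in S]$ to its lower bound $p$, which is legitimate precisely because the entropy coefficients are nonnegative (so the statement would fail for differential entropy), and the bookkeeping observation that the chain-rule ordering is fixed in advance, so that the pointwise bound on $H[X_S]$ holds simultaneously for all $S$ with respect to that single ordering.
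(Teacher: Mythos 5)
Your proof is correct: it is the standard chain-rule argument for Shearer's lemma in its expectation form (chain rule within each fixed $S$, "conditioning reduces entropy" to pass to the common conditioning $X_{[i-1]}$, linearity of expectation, nonnegativity of the conditional entropies to replace $\Pr[i\in S]$ by $p$, and the chain rule once more to reassemble $H[X]$). The paper does not actually prove this lemma --- it states it as a known result and immediately applies it to the triangle-intersecting problem --- so there is nothing to compare against, but your argument is the canonical one and all the steps, including the two points you flag as needing care, are handled correctly.
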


We apply Lemma \ref{lem:shearer} with $N = {n \choose 2}$. We fix an ordering of the edges of $K_n$ and we let $X = (X_1,\ldots,X_{{n \choose 2}})$ be the indicator vector of the edge-set of a uniform random graph in the triangle-intersecting family $\f$ (using this ordering). Then $H[X] = \log_2(|\f|)$. We now take $S$ to be a random set of the form ${T \choose 2} \cup {[n] \setminus T \choose 2}$, where $T \subset [n]$ is chosen uniformly at random. Clearly, any edge of $K_n$ has probability exactly $1/2$ of appearing in the random set $S$. We note that for any set $S$ of the form ${T \choose 2} \cup {[n] \setminus T \choose 2}$, the vector $X_S$ is the indicator vector of the `projected' family $\{F \cap ({T \choose 2} \cup {[n] \setminus T \choose 2}):\ F \in \f\}$, which is an intersecting family and therefore has size at most $2^{{|T| \choose 2}+{n-|T| \choose 2}-1} = 2^{|S|-1}$; therefore $H(X_S) \leq \log_2(2^{|S|-1}) = |S|-1$. Applying Lemma \ref{lem:shearer}, we obtain
$$(1/2) \cdot \log_2(|\f|) \leq \mathbb{E}_S[|S|-1] = {n \choose 2}/2-1,$$
so rearranging, $|\f| \leq 2^{{n\choose 2}-2}$, as required.

In 2012, Filmus, Friedgut and the author \cite{eff} proved Conjecture \ref{conj:sim-sos-conj} using a spectral method. We use the `generalised harmonic analysis' approach, outlined in our Introduction (Section 1). Consider the graph $G = G_n$ whose vertex-set is $\mathcal{P}({[n] \choose 2})$, i.e.\ the set of all labelled graphs on $[n]$, where we join two labelled graphs by an edge of $G_n$ if and only if they share no triangle in common. A triangle-intersecting family of labelled graphs on $[n]$ is precisely an independent set in $G_n$. Our strategy is to construct a pseudoadjacency matrix $M_n$ for $G_n$ that has appropriate maximum and minimum eigenvalues such that, when the Delsarte-Hoffman bound (Theorem \ref{thm:delsarte-hoffman}) is applied to $G_n$, we obtain the desired upper bound on the size of an independent set in $G_n$, viz.\ $2^{n-3}$. In fact, we work with a sparser graph than $G_n$ (one with more symmetries): we consider the graph $G_n'$ whose vertex-set is again $\mathcal{P}({[n] \choose 2})$, but where we join two labelled graphs $H$ and $H'$ by an edge of $G_n'$ if and only if $E(H) \Delta E(H')$ intersects every triangle. Since $G_n'$ is a subgraph of $G_n$ (on the same vertex-set), it suffices to show that any independent set in $G_n'$ has size at most $2^{n-3}$ (also, any pseudoadjacency matrix for $G_n'$ is also a pseudoadjacency matrix for $G_n$). However, $G_n'$ has the advantage of being a Cayley graph\footnote{Recall that if $(\Gamma,+)$ is an Abelian group, and $S \subset \Gamma$ with $\text{Id} \notin S$, the {\em Cayley graph of $\Gamma$ with generating set $S$} is the graph with vertex-set $\Gamma$, where $g$ is joined to $g+s$ for all $g \in \Gamma$ and $s \in S$.} of the (Abelian) group $\mathbb{Z}_2^{X}$, where $X : = {[n] \choose 2}$; indeed, its generating set is precisely the set of all complements of triangle-free graphs on $[n]$. The characters of the group $\mathbb{Z}_2^X$ therefore form an orthonormal basis of eigenvectors of the adjacency matrix of $G_n'$, and the same is true for any Cayley subgraph of $G_n'$ (this fact is well-known, and goes back to Frobenius). Specifically, if $\Gamma = \Cay(\mathbb{Z}_2^X,\mathcal{R})$ is the Cayley graph of $\mathbb{Z}_2^X$ with generating set $\mathcal{R}$, then for each $A \subset X$, the character
$$\chi_A:\ \mathbb{Z}_2^{X} \to \{\pm 1\};\quad \chi_A(S) = (-1)^{|A \cap S|} \quad (S \subset X)$$
is an eigenvector of $\Gamma$ with eigenvalue
$$\lambda_A : = \sum_{R \in \mathcal{R}} \chi_A(R).$$
(Here, we identify $\mathbb{Z}_2^{X}$ with $\mathcal{P}(X)$ in the natural way.) Our strategy is to choose $M_n$ to be an appropriate linear combination of adjacency matrices of Cayley subgraphs of $G_n'$. In other words, we choose
$$M_n = \sum_{i} c_i A(\Cay(\mathbb{Z}_2^X,\{R_i\}))$$
where $c_i \in \mathbb{R}$ for each $i$, $R_i$ is the complement of a triangle-free graph for each $i$. (Recall that, for a graph $G$, we denote by $A(G)$ the adjacency matrix of $G$.) In fact, it suffices to take each $R_i$ to be the complement of a bipartite graph, for each $i$. The eigenvalues of $M_n$ are then given by
$$\lambda_A = \sum_{i} c_i \chi_A(R_i)\quad (A \subset X).$$
Writing $R_i = \overline{B_i}$ for each $i$, where $B_i$ is bipartite, we have
$$\lambda_A = (-1)^{|A|} \sum_i c_i \chi_A(B_i) = (-1)^{|A|} \sum_i c_i \chi_{B_i}(A) \quad (A \subset X).$$
We note that the functions of the form 
$$A \mapsto \sum_i c_i \chi_{B_i}(A)$$
form a linear subspace ($W$, say) of $\mathbb{R}^{\mathbb{Z}_2^X}$: $W$ is precisely the span of the `bipartite characters', i.e.\ the characters of the form $\chi_B$ where $B$ is (the edge-set of) a bipartite graph on $[n]$. Our task reduces to finding a function $f \in W$ such that the function
$$g: A \mapsto (-1)^{e(A)}f(A)$$
satisfies $g(\emptyset) = 1$ and
\begin{equation}
\label{eq:cond} g(A) \geq -1/7 \quad \forall A \subset X:\ A \neq \emptyset.
\end{equation}
A key observation is that for any $i \in \mathbb{N} \cup \{0\}$, the `random cut statistic'
$$q_i(A) = \Prob[\text{a random bipartition of }A \text{ has exactly }i \text{ edges}]$$
lies in the subspace $W$. It also has the convenient property that, for any fixed $i \in \mathbb{N} \cup \{0\}$, $q_i(A)$ tends to zero rather rapidly as $|A|$ increases. Hence, if we take $f$ to be a bounded linear combination of the $q_i$, we only need to worry about boundedly many of the conditions (\ref{eq:cond}). Luckily, we are able to find a bounded linear combination of the $q_i$ with the properties we want; the same linear combination works for all $n \in \mathbb{N}$.

A similar strategy was used recently by Berger and Zhao \cite{bz} to prove an analogous result for $K_4$-intersecting families of graphs.
\begin{theorem}[Berger-Zhao, 2021+]
If $\f \subset \mathcal{P}({[n] \choose 2})$ is a $K_4$-intersecting family of labelled graphs on the vertex-set $[n]$, then $|\f| \leq 2^{{n \choose 2}-6}$. Equality holds only if $\f$ consists of all graphs on $[n]$ containing a fixed $K_4$.
\end{theorem}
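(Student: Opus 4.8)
The plan is to follow the spectral (`generalised harmonic analysis') strategy of Filmus, Friedgut and the author \cite{eff} for triangle-intersecting families, replacing the triangle by $K_4$ throughout. First I would rephrase the problem. Put $X := {[n] \choose 2}$, identify $\mathcal{P}(X)$ with $\mathbb{Z}_2^X$, and let $G_n'$ be the Cayley graph $\Cay(\mathbb{Z}_2^X,\mathcal{R})$ whose generating set $\mathcal{R}$ consists of all $R \subseteq X$ whose complement $X \setminus R$ contains no $K_4$. A $K_4$-intersecting family $\f$ is an independent set in $G_n'$: if $H,H' \in \f$ then $E(H)\Delta E(H')$ meets every copy of $K_4$, so $E(H)\Delta E(H') \in \mathcal{R}$. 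Hence it suffices to show $\alpha(G_n') \le 2^{|X|-6}$. Since $G_n'$ is a Cayley graph of the abelian group $\mathbb{Z}_2^X$, the characters $\chi_A$ ($A \subseteq X$) form a common eigenbasis of $G_n'$ and of every Cayley subgraph, so I would build a pseudoadjacency matrix $M_n = \sum_i c_i\, A(\Cay(\mathbb{Z}_2^X,\{\overline{B_i}\}))$, where each $B_i$ is a $3$-colourable graph on $[n]$ (so $\overline{B_i}$, its complement within $X$, lies in $\mathcal{R}$), and apply the Delsarte--Hoffman bound (Theorem~\ref{thm:delsarte-hoffman}).

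The eigenvalue of $M_n$ on $\chi_A$ is $\lambda_A = (-1)^{|A|}\sum_i c_i\chi_{B_i}(A)$, so writing $f(A) := \sum_i c_i\chi_{B_i}(A)$, the whole task reduces to choosing $f$ in the linear span $W$ of the `$3$-partite characters' $\{\chi_B : B \text{ a } 3\text{-colourable graph on }[n]\}$ such that $g(A) := (-1)^{e(A)}f(A)$ satisfies $g(\emptyset)=1$ and $g(A) \ge -\tfrac{1}{63}$ for all $\emptyset \ne A \subseteq X$. The constant $-\tfrac{1}{63}$ is forced because $\tfrac{1/63}{1+1/63} = \tfrac{1}{64} = 2^{-6}$, so Theorem~\ref{thm:delsarte-hoffman} then yields $|\f|/2^{|X|} \le 2^{-6}$, that is, $|\f| \le 2^{|X|-6}$.

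The construction of $f$ uses the `random $3$-cut statistics': for $i \ge 0$ let $q_i(A)$ be the probability that a uniformly random $3$-colouring of the vertices $[n]$ leaves exactly $i$ bichromatic edges inside $A$. Each $q_i$ lies in $W$, and --- crucially --- for each fixed $i$, $q_i(A) \to 0$ rapidly as $|A| \to \infty$, since a random $3$-colouring of a large graph almost surely has many bichromatic edges. So if $f$ is a fixed, bounded linear combination $\sum_i a_i q_i$ of the first few $q_i$, then $g(A) \ge -\tfrac{1}{63}$ holds automatically once $|A|$ exceeds an absolute constant, and one is left with finitely many inequalities --- one for each of the finitely many isomorphism types of small graphs $A$ --- i.e.\ a finite linear feasibility problem whose solution is the same for every $n$. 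I expect the main obstacle to be exactly this step: producing a suitable combination and verifying it, since the constant $2^{-6}$ is tight (there is no slack), and the finite verification, though elementary, is genuinely intricate --- it is the $K_4$-analogue of the bounded combination of the $q_i$ that handled the threshold $-1/7$ in the triangle case.

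Finally, for the equality characterisation I would argue as in \cite{eff}. The family of all graphs on $[n]$ containing the $K_4$ on $\{1,2,3,4\}$ has measure exactly $2^{-6}$, and its shifted indicator is $2^{-6}\sum_{\emptyset \ne A \subseteq T}(-1)^{|A|}\chi_A$, where $T = {\{1,2,3,4\} \choose 2}$; for this to be a $\lambda_{\min}$-eigenvector of $M_n$ (as forced by the equality clause of Theorem~\ref{thm:delsarte-hoffman}) one needs $g(A) = -\tfrac{1}{63}$ for every nonempty $A \subseteq T$, a requirement that both constrains the search for $f$ and pins down the relevant minimal eigenspace. Conversely, for an arbitrary extremal $\f$, the Delsarte--Hoffman equality condition says $1_\f - 2^{-6}\mathbf{1}$ is supported, in the character basis, on $\{A : g(A) = -\tfrac{1}{63}\}$; the crux is to show this set is exactly the collection of nonempty subsets of edge-sets of copies of $K_4$ in $K_n$, and then, using that $1_\f$ is $\{0,1\}$-valued, to deduce that $\f$ consists of all graphs containing one fixed $K_4$. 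Establishing the minimal eigenspace precisely, rather than merely bounding $\lambda_{\min}$, is the delicate part of this last step.
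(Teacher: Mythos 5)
Your plan is exactly the strategy the survey attributes to Berger and Zhao: the $K_4$-analogue of the Ellis--Filmus--Friedgut spectral argument, and your setup is all in order --- the reduction to independent sets in the Cayley graph generated by complements of $K_4$-free graphs, the restriction to complements of $3$-colourable graphs, the reduction to finding $f$ in the span $W$ of the $3$-partite characters with $g(A)=(-1)^{e(A)}f(A)$, $g(\emptyset)=1$ and $g(A)\ge -\tfrac{1}{63}$ (and $\tfrac{1/63}{1+1/63}=2^{-6}$ is indeed the right threshold for the Delsarte--Hoffman bound), and the observation that the random $3$-cut statistics $q_i$ lie in $W$ and, for fixed $i$, decay as $e(A)\to\infty$.

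The genuine gap is that the proposal stops precisely where the content of the theorem begins. Because the bound $2^{-6}$ is tight, the existence of an $f\in W$ with $g(\emptyset)=1$ and $\min_{A\ne\emptyset}g(A)=-\tfrac{1}{63}$ is essentially \emph{equivalent} to the theorem, so ``I expect to produce a suitable combination of the $q_i$'' is a restatement of the goal rather than a step towards it. In the triangle case the survey can say ``luckily, we are able to find'' such a combination because \cite{eff} exhibits one explicitly and verifies the finitely many inequalities; for $K_4$ nothing guarantees a priori that bounded combinations of the $q_i$ alone are rich enough to make the finite feasibility problem solvable, and indeed establishing this feasibility is the substance of \cite{bz}, which needed ideas beyond a mechanical rerun of \cite{eff}. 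The same applies to uniqueness: showing that the $\lambda_{\min}$-eigenspace of your $M_n$ is spanned exactly by $\{\chi_A:\ \emptyset\ne A\subseteq E(K),\ K \text{ a copy of } K_4\}$ is a property of the particular $f$ you construct (you need $g(A)>-\tfrac{1}{63}$ strictly for every other nonempty $A$), not something that can be arranged afterwards, and even then one still needs the combinatorial endgame deducing from the Fourier support that a Boolean independent set of measure exactly $2^{-6}$ is a $K_4$-umvirate. So the framework is right and the reductions are correct, but the proof is missing its central construction and verification.
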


The following conjecture of the author, Filmus and Friedgut (from \cite{eff}) remains open for all $t > 4$, however.
\begin{conj}[E.-Filmus-Friedgut, 2012]
\label{conj:eff}
Let $t \geq 3$. If $\f \subset \mathcal{P}({[n] \choose 2})$ is a $K_t$-intersecting family of labelled graphs on the vertex-set $[n]$, then $|\f| \leq 2^{{n \choose 2}-{t \choose 2}}$. Equality holds only if $\f$ consists of all graphs on $[n]$ containing a fixed $K_t$.
\end{conj}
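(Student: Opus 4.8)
The plan is to follow the spectral (``generalised harmonic analysis'') route that settled the cases $t=3$ in \cite{eff} and $t=4$ in the work of Berger and Zhao \cite{bz}, and to push it to general $t$. As in the proof of Conjecture \ref{conj:sim-sos-conj}, write $X={[n]\choose 2}$ and work with the Cayley graph $G_n'$ on vertex-set $\mathcal{P}(X)\cong\mathbb{Z}_2^X$ in which two labelled graphs $H,H'$ are adjacent iff $E(H)\Delta E(H')$ meets every copy of $K_t$; equivalently its generating set is $\mathcal{R}=\{R\subseteq X:\ \overline{R}\text{ is }K_t\text{-free}\}$. If $E(H)\cap E(H')$ contained a $K_t$ then $E(H)\Delta E(H')$ would miss it, so $G_n'$ is a spanning subgraph of the graph in which $H\sim H'$ iff $H,H'$ share no $K_t$; hence a $K_t$-intersecting family is an independent set in $G_n'$, and it suffices to bound the independence number of $G_n'$. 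One then applies the Delsarte--Hoffman bound (Theorem \ref{thm:delsarte-hoffman}) to $G_n'$ with a pseudoadjacency matrix of the form $M_n=\sum_i c_i A(\Cay(\mathbb{Z}_2^X,\{R_i\}))$, where each $\overline{R_i}$ is $K_t$-free --- the natural choice, as for $t=3,4$, being to take each $\overline{R_i}$ properly $(t-1)$-colourable. Writing $R_i=\overline{B_i}$, the characters $\chi_A$ ($A\subseteq X$) diagonalise $M_n$, with eigenvalues $\lambda_A=(-1)^{e(A)}\sum_i c_i\chi_{B_i}(A)$; after rescaling so that $\lambda_\emptyset=1$, the Delsarte--Hoffman bound yields $|\f|\leq 2^{{n\choose 2}-{t\choose 2}}$ as soon as $\lambda_A\geq -1/(2^{{t\choose 2}}-1)$ for every nonempty $A$.

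This reduces the problem to a function-construction task: exhibit $f$ in the subspace $W$ spanned by the ``$(t-1)$-partite characters'' $\{\chi_B:\ B\text{ a properly }(t-1)\text{-colourable graph on }[n]\}$ such that $g(A):=(-1)^{e(A)}f(A)$ satisfies $g(\emptyset)=1$ and $g(A)\geq -1/(2^{{t\choose 2}}-1)$ for all $A\neq\emptyset$, with the \emph{same} $f$ working for every $n$. The key structural input, generalising the ``random cut statistic'' of \cite{eff}, is that for each fixed $i\geq 0$ the function
\[ q_i^{(t-1)}(A):=\Prob\big[\text{a uniformly random }(t-1)\text{-colouring of }[n]\text{ properly colours exactly }i\text{ edges of }A\big] \]
lies in $W$ (the properly-coloured edges of $A$ form a $(t-1)$-partite subgraph), and that, since the expected number of properly-coloured edges is $\tfrac{t-2}{t-1}e(A)$, one has $q_i^{(t-1)}(A)\to 0$ rapidly as $e(A)\to\infty$ for each fixed $i$. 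Taking $f$ to be a bounded linear combination $f=\sum_{i=0}^{D(t)}a_i q_i^{(t-1)}$, the inequality $g(A)\geq -1/(2^{{t\choose 2}}-1)$ then holds automatically once $e(A)$ exceeds a threshold $E_0(t)$, leaving only finitely many inequalities --- one for each isomorphism type of graph $A$ with $e(A)\leq E_0(t)$ --- to be verified; this is a finite linear-feasibility problem in the $a_i$.

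The main obstacle is precisely this finite, but $t$-dependent, feasibility problem, and in particular knowing that it is feasible at all. For $t=3$ a short explicit choice of coefficients works; for $t=4$ a choice is found with some computer assistance. For general $t$ it is not known whether the $(t-1)$-partite characters span a large enough subspace to realise the extremal ratio $1/(2^{{t\choose 2}}-1)$; if they do not, one would be forced to work inside the much less tractable span of \emph{all} $K_t$-free characters, or to use weighted or thresholded variants of the $q_i^{(t-1)}$. A secondary difficulty is the uniqueness statement: the equality clause of Theorem \ref{thm:delsarte-hoffman} forces $1_\f$ to lie in the span of the constants together with the $\lambda_{\min}$-eigenspace of $M_n$, so one must show that this eigenspace is exactly the span of the shifted indicators of the ``$K_t$-stars'' $\{H:\ K\subseteq E(H)\}$ (with $K$ a fixed copy of $K_t$) --- which requires the construction to attain its minimum value on precisely the right set of characters, and a separate (delicate) argument, as it did already for $t=3,4$. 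Finally, even obtaining the correct \emph{exponential} rate for all $t$, let alone the exact constant, appears to be open: the entropy (Shearer-type) approach of Chung, Frankl, Graham and Shearer \cite{cfgs}, suitably recursed through random vertex-partitions, seems to fall well short of $2^{{n\choose 2}-{t\choose 2}}$.
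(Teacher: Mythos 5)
The statement you are asked to prove is not a theorem of the paper but an open conjecture: the survey states explicitly that Conjecture \ref{conj:eff} ``remains open for all $t>4$'' and that the author believes ``new ideas (other than those in \cite{eff} and \cite{bz}) will be required'' to prove it. So there is no proof in the paper to compare against, and your proposal is, as you yourself make clear, a programme rather than a proof. The set-up is correct and faithfully generalises the $t=3$ argument: the reduction to bounding the independence number of the Cayley graph $G_n'$ with generating set $\{R:\ \overline{R}\text{ is }K_t\text{-free}\}$, the diagonalisation by characters, and the target eigenvalue bound $\lambda_A\geq -1/(2^{{t\choose 2}}-1)$ (which via Delsarte--Hoffman gives the ratio $2^{-{t\choose 2}}$) are all right.

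The genuine gap is exactly the one you flag, and it is worth being precise about why it is the whole problem and not a technicality. The method needs a function $f$ in the span $W$ of the characters $\chi_B$ with $\overline{B}$ ranging over an admissible class of $K_t$-free graphs, such that $(-1)^{e(A)}f(A)$ is $1$ at $\emptyset$ and never dips below $-1/(2^{{t\choose 2}}-1)$. Whether such an $f$ \emph{exists} --- i.e.\ whether the linear programming bound for this association-scheme-like structure actually attains the conjectured value --- is not known for any $t\geq 5$; it is conceivable that the true LP optimum over $W$ (even over the span of all $K_t$-free characters) is strictly worse than $2^{-{t\choose 2}}$, in which case no choice of the $c_i$ can work and the entire spectral route fails. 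Your reduction to a ``finite feasibility problem'' also deserves a caveat: the claim that $q_i^{(t-1)}(A)\to 0$ rapidly as $e(A)\to\infty$ controls the inequalities only for graphs with many edges, but one must check that the decay is uniform enough (independent of $n$) and that the remaining finitely many isomorphism types can be handled by a single choice of coefficients valid for all $n$; for $t=4$ this already required a delicate, partly computer-assisted construction in \cite{bz}. In short, your outline is the correct and natural first attack, but it stops exactly where the open problem begins, and you should present it as a research plan rather than a proof.
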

We believe that new ideas (other than those in \cite{eff} and \cite{bz}) will be required to prove Conjecture \ref{conj:eff}.

The following beautiful conjecture of Alon and Spencer \cite{alon-spencer} also remains open.
\begin{conj}[Alon-Spencer, 1990]
\label{conj:as}
There exists an absolute constant $c>0$ such that if $\f \subset \mathcal{P}({[n] \choose 2})$ is a $P_3$-intersecting family of labelled graphs on the vertex-set $[n]$, we have $|\f| \leq (1/2-c)2^{{n \choose 2}}$.
\end{conj}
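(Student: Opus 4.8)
The plan is to pursue a constant-factor improvement over the trivial bound $|\f| \le 2^{\binom{n}{2}-1}$ via the spectral (Delsarte--Hoffman) method, following the strategy that resolved the triangle-intersecting case in \cite{eff}. First I would record the basic reformulation: a family $\f$ of labelled graphs on $[n]$ is $P_3$-intersecting precisely when, for every $G,H \in \f$, the intersection graph $G \cap H$ fails to be a matching --- equivalently, when some vertex $v$ satisfies $|N_G(v) \cap N_H(v)| \ge 2$. As this is in particular an intersecting condition, the pairing $G \leftrightarrow \overline{G}$ already forces $|\f| \le 2^{\binom{n}{2}-1}$, and the goal is to locate enough further ``conflicting pairs'' to shave off a constant factor.

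Next I would set up the Cayley-graph framework. Put $X = \binom{[n]}{2}$ and let $\Gamma_n = \Cay(\mathbb{Z}_2^{X},\mathcal{R})$, where $\mathcal{R}$ consists of all subsets of $X$ that meet every copy of $P_3$, i.e.\ of all complements of matchings on $[n]$. If $G,H$ share a copy $P$ of $P_3$, then $E(G)\Delta E(H)$ is disjoint from $E(P)$ and so does \emph{not} lie in $\mathcal{R}$; hence every $P_3$-intersecting family is an independent set in $\Gamma_n$, and it suffices to bound $\alpha(\Gamma_n)$. The adjacency matrix of $\Gamma_n$ alone is useless (using the recurrence for the number of matchings one checks that $\Gamma_n$ has an eigenvalue within a $(1-o(1))$-factor of $-\lambda_1$, so Hoffman's bound yields only $(1/2-o(1))2^{\binom{n}{2}}$), so instead I would seek a pseudoadjacency matrix of the form $M_n = \sum_i c_i\, A(\Cay(\mathbb{Z}_2^{X},\{R_i\}))$ with each $R_i = X\setminus M_i$ the complement of a matching $M_i$ and $c_i \in \mathbb{R}$. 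The eigenvalue of $M_n$ on the character $\chi_A$ is then $(-1)^{e(A)}f(A)$, where $e(A)$ is the number of edges of $A$ and $f := \sum_i c_i \chi_{M_i}$ lies in the span $W_n \subset \mathbb{R}[\mathbb{Z}_2^{X}]$ of the ``matching characters'' $\{\chi_M : M \text{ a matching on } [n]\}$. Normalising so that $f(\emptyset)=1$, the Delsarte--Hoffman bound (Theorem~\ref{thm:delsarte-hoffman}) reduces the entire problem to the following clean statement: find, for all $n$ (or all large $n$), a function $f \in W_n$ with $f(\emptyset)=1$ and $(-1)^{e(A)}f(A) \ge -\gamma$ for every nonempty $A \subset X$, where $\gamma < 1$ is a constant independent of $n$; this gives the conjecture with $c = (1-\gamma)/(2(1+\gamma))$. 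The natural candidates for $f$ --- the analogues of the ``random cut statistics'' of \cite{eff} --- are bounded linear combinations of the \emph{random matching statistics} $r_i(A) := \Prob_M[|M \cap A| = i]$ (for $M$ drawn from a well-chosen distribution on matchings of $[n]$), which lie in $W_n$ because $\mathbf{1}[|M\cap A|=i]$ depends only on $A \cap M$ and every subset of a matching is a matching.

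The principal obstacle --- and the reason I expect a genuinely new idea is needed, consistent with the conjecture being open --- is controlling $(-1)^{e(A)}f(A)$ when $A$ is a \emph{dense} graph. In the triangle-intersecting case the relevant statistics decay rapidly as $A$ grows, so only the boundedly many constraints coming from small $A$ need checking; here this fails, since a dense graph $A$ can contain a near-perfect matching, so $r_i(A)$ does not tend to $0$ and the wildly oscillating sign $(-1)^{e(A)}$ cannot be absorbed. Overcoming this would, I expect, require handling dense $A$ by a separate device --- for instance an entropy or compression estimate showing that the Fourier weight of $\mathbf{1}_{\f}$ on characters $\chi_A$ with $A$ dense is automatically small, so that those eigenspaces may be discarded --- and splicing it with the pseudoadjacency bound on the remaining ``sparse'' part.

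A purely entropic substitute for the spectral argument seems to run into a parallel difficulty. To make a coordinate projection of a $P_3$-intersecting family automatically intersecting, one must project onto the complement of an edge-set meeting every $P_3$, i.e.\ onto the complement of a matching; but a random matching covers any fixed edge with probability only $O(1/n)$, so no constant fraction of $\binom{n}{2}$ coordinates can be projected away, and Shearer's Lemma again recovers only $(1/2-o(1))2^{\binom{n}{2}}$. I therefore regard the combined strategy --- entropy (or a shifting/compression argument) to dispose of the dense eigenspaces, together with a carefully chosen pseudoadjacency matrix built from matching characters for the rest --- as the most promising route, with the construction of that matrix on the sparse part being the crux.
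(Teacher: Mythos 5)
This statement is an open conjecture: the paper offers no proof of it (it records only the trivial bound $2^{\binom{n}{2}-1}$, the CFGS entropy bound in the triangle case, and the fact that Christofides disproved the stronger CFGS form of the conjecture with a $P_3$-intersecting family of measure $\tfrac{17}{128}$). Your submission is, by your own admission, a research programme rather than a proof: the crux --- producing $f\in W_n$ with $f(\emptyset)=1$ and $(-1)^{e(A)}f(A)\ge -\gamma$ uniformly, or any substitute device for the dense characters $\chi_A$ --- is left entirely open, and without it nothing beyond $(1/2-o(1))2^{\binom{n}{2}}$ is established. So there is a genuine gap, namely the whole of the argument; what you have written is a (reasonable) plan for attacking the problem, closely modelled on the triangle- and $K_4$-intersecting proofs of Ellis--Filmus--Friedgut and Berger--Zhao, together with a correct diagnosis of why that template does not transfer.

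One concrete error to fix even at the level of the set-up: since $P_3$ is the path with \emph{three} edges, ``$G\cap H$ contains a $P_3$'' is strictly stronger than ``$G\cap H$ is not a matching'' (equivalently, than $|N_G(v)\cap N_H(v)|\ge 2$ for some $v$): a single cherry, or any disjoint union of stars and triangles, fails to be a matching yet contains no $P_3$. For the same reason, the edge-sets meeting every copy of $P_3$ are the complements of disjoint unions of stars and triangles, not merely the complements of matchings. Your Cayley graph generated by complements of matchings is still legitimate for the upper bound (it is a subgraph of the correct graph, exactly as the restriction to complements of bipartite graphs is in the triangle case), but the two ``equivalently''/``i.e.'' claims are false as stated, and the relaxation means you would in fact be bounding the larger class of families whose pairwise agreement sets are never matchings. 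A further warning sign worth recording: by Christofides' example the extremal $P_3$-intersecting families are not juntas, so the device that pins down the coefficients in \cite{eff} --- forcing the shifted characteristic vectors of the conjectured extremal families to be $\lambda_{\min}$-eigenvectors --- has no obvious analogue here.
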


Here, $P_3$ denotes the path with three edges. Conjecture \ref{conj:as} is appealing because an affirmative answer would determine precisely the graphs $H$ for which the measure of an $H$-intersecting family of graphs can be (uniformly) bounded away from $1/2$: such graphs would be precisely those that are not disjoint unions of stars. (Indeed, it is easy to see, by considering the family of graphs on the vertex-set $[n]$ in which the degrees of at least $(T+t)/2$ of the vertices $1,\ldots,T$ are at least $(n+t-1)/2$, for an appropriate choice of $T$, that for any $t \in \mathbb{N}$ there exists an $H$-intersecting family of $(1/2-o(1))2^{{n \choose 2}}$ labelled graphs on the vertex-set $[n]$, where $H$ is a disjoint union of $t$ stars each with $t$ rays. All graphs that are not disjoint unions of stars, either contain a triangle or a path with three edges.)

We note that Chung, Frankl, Graham and Shearer conjectured in \cite{cfgs} that if $\f \subset \mathcal{P}({[n] \choose 2})$ is a $P_3$-intersecting family of labelled graphs on the vertex-set $[n]$, then $|\f| \leq 2^{{n \choose 2}-3}$. This was disproved in 2008 by Christofides \cite{christofides-personal}, who constructed (for each $n \geq 6$) a $P_3$-intersecting family of labelled graphs on the vertex-set $[n]$, with size $\tfrac{17}{128} 2^{{n \choose 2}} > 2^{{n \choose 2}-3}$. We briefly describe Christofides' construction. It suffices to exhibit a graph $G$ with six vertices and seven edges, together with a $P_3$-intersecting family $\mathcal{G}$ of 17 subgraphs of $G$. (Given such a graph $G$ and such a family $\mathcal{G}$ of subgraphs of $G$, and given an integer $n \geq 6$, fix a copy $G_1$ of $G$ in $K_{[n]}$, and consider the $P_3$-intersecting family $\f$ of all subgraphs $F$ of $K_{[n]}$ such that $F \cap G_1 \in \mathcal{G}$; it is clear that $\f$ is $P_3$-intersecting with $|\mathcal{F}| = \tfrac{17}{128} \cdot 2^{{n\choose 2}}$.) The graph $G$ is constructed as follows. Let $X$ and $Y$ be disjoint sets with $|X|=2$ and $|Y|=3$, and let $G$ be the graph produced by taking the complete bipartite graph $K_{X,Y}$ and adding to it a new vertex $v$ which is joined by an edge ($e$, say), to one vertex of $X$ but to no other vertex of $K_{X,Y}$. Let $\mathcal{A}$ be the family of all subgraphs of $K_{X,Y}$ with five or six edges, and let $\mathcal{B}$ be the family of all subgraphs of $K_{X,Y}$ that are isomorphic to a cycle of length four. Clearly, $|\mathcal{A}| = 7$ and $|\mathcal{B}| = 3$. It is also easy to see that $\mathcal{A}$ is a $P_3$-intersecting family, that any graph $A \in \mathcal{A}$ shares a $P_3$ with any graph $B \in \mathcal{B}$, and that any two distinct graphs $B,B' \in \mathcal{B}$ intersect on a two-edge path whose two endpoints are the two vertices of $X$. It follows that the family
$$\mathcal{G} = \mathcal{A} \cup \{A \cup \{e\}:\ A \in \mathcal{A}\} \cup \{B \cup \{e\}:\ B \in \mathcal{B}\}$$
is a $P_3$-intersecting family of subgraphs of $G$. Clearly, we have $|\mathcal{G}| = 2|\mathcal{A}|+|\mathcal{B}| = 2 \cdot 7 + 3 = 17$, so we are done.

It would be of great interest to characterise the set of unlabelled graphs $H$ such that for any $n \in \mathbb{N}$, a maximum-sized $H$-intersecting family of labelled graphs on $[n]$ can be obtained by taking all graphs containing a fixed copy of $H$: in other words, to characterise the set of unlabelled graphs $H$ such that the Erd\H{o}s-Ko-Rado property holds, for the $H$-intersecting problem. The results above imply that this is the case when $H$ is an edge, a triangle or a $K_4$; it is not the case when $H$ is a path of three edges, or a disjoint union of stars. Conjecture \ref{conj:eff} would imply that it is the case when $H = K_t$ for all $t \geq 5$. In 2012, the authors of \cite{eff} raised the question of whether the Erd\H{o}s-Ko-Rado property holds whenever $H$ is 2-connected, but a construction of Balogh and Linz \cite{linz-personal} shows that when $H = K_{s,t}$ and $t > 2^{2s}-2s-1$, the Erd\H{o}s-Ko-Rado property does not hold for the $H$-intersection problem, so even the condition of $H$ being $s$-connected is insufficient to guarantee the Erd\H{o}s-Ko-Rado property (for any $s \in \mathbb{N}$). 

\subsection{`Graph-intersecting' families of sets}
If $\f \subset \pn$ and $G$ is a graph with vertex-set $[n]$, we say $\f$ is {\em $G$-edge-intersecting} if for any $S,T \in \f$, there exists an edge $e \in E(G)$ such that $e \cap S \neq \emptyset$ and $e \cap T \neq \emptyset$: in other words, either $S$ and $T$ intersect, or else there is an edge of $G$ with one vertex in $S$ and the other in $T$. This is another natural weakening of the condition of being an intersecting family, introduced by Bohman, Frieze, Ruszink\'o and Thoma in \cite{bfrt}. In \cite{bfrt}, the maximum possible size of a $G$-edge-intersecting family $\f \subset {[n] \choose k}$ is determined in several natural cases. For example, in the case where $G = C_n$, i.e.\ the cycle $123\ldots n1$, a family $\f \subset \pn$ is $C_n$-edge-intersecting if and only if for any $S,T \in \f$, there exist $s \in S$ and $t \in T$ such that the cyclic distance between $s$ and $t$ is at most one, or more succinctly, if for any $S,T \in \f$ we have $d(S,T) \leq 1$, where $d(S,T)$ denotes the cyclic distance between the sets $S$ and $T$. For $n \geq ck^4$, where $c$ is an absolute constant, Bohman, Frieze, Ruszink\'o and Thoma determine the maximum possible size of a $C_n$-edge-intersecting family of $k$-element subsets of $[n]$; it is best to take all $k$-element sets containing $2$ or $3$ or both $1$ and $4$. This was later shown to hold for all $n \geq ck^2$ by Bohman and Martin \cite{bm}, and finally for all $n \geq ck$ by Raynaud and the author \cite{er}. Many open questions remain, however; the reader is referred to \cite{jt} for some of these, as well as several elegant results.         

\section{Intersection problems for families of more complicated mathematical objects}
\label{sec:more-complicated}

\subsection{Permutations}
\label{subsection:perms}
As well as considering (uniform or non-uniform) set families, it is natural to pose intersection problems concerning families of more complicated mathematical objects. In 1977, Deza and Frankl \cite{df} considered families of permutations, introducing the following definition. 
\begin{Def}
Let $S_n$ denote the symmetric group on $[n]$, and let $t \in \mathbb{N}$. A family of permutations $\f \subset S_n$ is said to be {\em $t$-intersecting} if for any two permutations $\sigma,\pi \in \f$, we have $|\{i \in [n]:\ \sigma(i)=\pi(i)\}| \geq t$.
 \end{Def}
 In other words, a family of permutations is said to be $t$-intersecting if any two permutations in the family agree on at least $t$ points. A family of permutations is said to be {\em intersecting} if it is 1-intersecting, i.e.\ if any two permutations in the family agree on at least one point.
 
In \cite{df}, Deza and Frankl gave a short proof that for any $n \in \mathbb{N}$, an intersecting family $\f \subset S_n$ has size at most $(n-1)!$. This is best-possible, since $\{\sigma \in S_n:\ \sigma(1)=1\}$ is an intersecting family of this size. Their proof is a partitioning argument: they observe that the cyclic group $H: = \langle \rho \rangle$ generated by any $n$-cycle $\rho \in S_n$, has the property that any two permutations in $H$ disagree everywhere, so $H$ can contain at most one permutation from an intersecting family $\f$. The same is true of any left coset of $H$. The $(n-1)!$ left cosets of $H$ partition $S_n$; since each contains at most one permutation from an intersecting family $\f$, it follows that $|\f| \leq (n-1)!$.
 
 Somewhat unusually, it took a while before the maximum-sized intersecting families of permutations were characterized, but this was done by Cameron and Ku \cite{ck} and independently Larose and Malvenuto \cite{lm} in 2003; they proved that an intersecting family of permutations $\f \subset S_n$ has $|\f| = (n-1)!$ iff $\f = \{\sigma \in S_n:\ \sigma(i)=j\}$ for some $i,j \in [n]$, i.e.\ iff $\f$ is a coset of the stabilizer of a point. The proofs of Cameron and Ku and Larose and Malvenuto are mainly combinatorial; a more algebraic proof was later given by Godsil and Meagher \cite{gm}.

For $t$-intersecting families of permutations, Deza and Frankl conjectured the following, in \cite{df}.
\begin{conj}[Deza-Frankl, 1977]
For any $t \in \mathbb{N}$, if $n$ is sufficiently large depending on $t$, then any $t$-intersecting family $\f \in S_n$ satisfies $|\f| \leq (n-t)!$.
\end{conj}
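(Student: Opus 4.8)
The plan is to recast the conjecture as a bound on the independence number of a Cayley graph of $S_n$ and then apply the Delsarte--Hoffman bound (Theorem~\ref{thm:delsarte-hoffman}) with a carefully chosen pseudoadjacency matrix, in direct analogy with Wilson's proof of Theorem~\ref{thm:wilson}. For $t \in \mathbb{N}$ let $\mathcal{D}_{n,t} := \{\sigma \in S_n:\ |\{i \in [n]:\ \sigma(i)=i\}| \leq t-1\}$ be the set of permutations fixing at most $t-1$ points, and put $\Gamma_{n,t} := \Cay(S_n,\mathcal{D}_{n,t})$. Since $\{i:\ \sigma(i)=\pi(i)\} = \mathrm{Fix}(\pi^{-1}\sigma)$, a family $\f \subset S_n$ is $t$-intersecting if and only if it is an independent set in $\Gamma_{n,t}$. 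Writing $(n)_t := n(n-1)\cdots(n-t+1)$, the target bound reads $|\f| \leq (n-t)! = n!/(n)_t$, and the families conjectured to be extremal are the cosets of stabilizers of ordered $t$-tuples, $\f_{\vec{\imath},\vec{\jmath}} := \{\sigma \in S_n:\ \sigma(i_1)=j_1,\ldots,\sigma(i_t)=j_t\}$, each of size $(n-t)!$. (For $t=1$ the plain adjacency matrix of the derangement graph already works, via the known value of its least eigenvalue; for $t\geq 2$ one genuinely needs a pseudoadjacency matrix, exactly as in the Wilson situation.)

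As $\mathcal{D}_{n,t}$ is a union of conjugacy classes, $\Gamma_{n,t}$ is a normal Cayley graph, so (as is well known, going back to Frobenius) any matrix of the form $M = \sum_{C} c_C A_C$ --- the sum over conjugacy classes $C \subset \mathcal{D}_{n,t}$, with $A_C$ the adjacency matrix of $\Cay(S_n,C)$ --- acts as a scalar $\eta_\lambda = \sum_C c_C |C|\,\chi_\lambda(C)/\dim(\rho_\lambda)$ on the isotypic component $V_\lambda \subset \mathbb{R}[S_n]$, for each partition $\lambda \vdash n$. I would choose the $c_C$ so that $M$ is a pseudoadjacency matrix of $\Gamma_{n,t}$ --- which only requires its common row sum $\eta_{(n)} = \sum_C c_C|C|$ to be positive --- whose least eigenvalue satisfies $\eta_{\min} = -\eta_{(n)}/((n)_t - 1)$; then Theorem~\ref{thm:delsarte-hoffman} gives exactly $|\f| \leq \frac{-\eta_{\min}}{\eta_{(n)} - \eta_{\min}}\, n! = (n-t)!$. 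The crucial clue, exactly as for Wilson, comes from the equality case: equality in Theorem~\ref{thm:delsarte-hoffman} forces $1_{\f_{\vec{\imath},\vec{\jmath}}} - \tfrac{(n-t)!}{n!}\mathbf{f}$ to be an $\eta_{\min}$-eigenvector of $M$, and these shifted indicators span precisely $\bigoplus_{\lambda \neq (n):\ \lambda_1 \geq n-t} V_\lambda$. Hence one must have $\eta_\lambda = \eta_{\min}$ for every partition $\lambda \neq (n)$ with $\lambda_1 \geq n-t$. There are only $\sum_{s=1}^{t} p(s) = O_t(1)$ such partitions, so these are boundedly many linear constraints on the $c_C$, together with the normalization $\eta_{(n)}>0$; one shows this system is solvable for all sufficiently large $n$.

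The main obstacle --- the technical heart of the proof --- is to check that, for this choice of $M$, one has $\eta_\lambda \geq \eta_{\min}$ for \emph{all remaining} partitions, i.e.\ for every $\lambda$ with $\lambda_1 < n-t$. Since $\eta_{(n)}$ is of order $n!$ (it is, up to lower-order terms, the number of permutations fixing at most $t-1$ points) while $|\eta_{\min}| = \eta_{(n)}/((n)_t-1)$ is of order $n!/n^t$, this amounts to showing that
\[
\Bigl|\sum_C c_C|C|\,\frac{\chi_\lambda(C)}{\dim(\rho_\lambda)}\Bigr| \;\leq\; \frac{\eta_{(n)}}{(n)_t - 1} \qquad \text{whenever } \lambda_1 < n-t,
\]
unless $\eta_\lambda$ happens to be positive. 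This calls for strong upper bounds on the normalized irreducible characters $|\chi_\lambda(\sigma)|/\dim(\rho_\lambda)$ of $S_n$ for $\sigma$ with few fixed points and $\lambda$ `far from the trivial partition' --- precisely the estimates of Roichman and of Larsen--Shterenberg, refined for this setting by Ellis, Friedgut and Pilpel. The genuinely delicate range is that of partitions $\lambda$ with $\lambda_1$ only slightly below $n-t$: there the normalized characters are not small, and instead one must exploit the precise combinatorial structure of $\mathcal{D}_{n,t}$ (and the remaining freedom in the $c_C$) to prove $\eta_\lambda > 0$. Once all these eigenvalue inequalities are established, Theorem~\ref{thm:delsarte-hoffman} yields $|\f| \leq (n-t)!$; and a short additional argument --- identifying which up-sets have shifted indicator lying in $\bigoplus_{\lambda_1 \geq n-t} V_\lambda$ --- upgrades this, via the equality clause, to the statement that the $\f_{\vec{\imath},\vec{\jmath}}$ are the unique extremal families. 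An alternative route that sidesteps the character estimates is the `junta method' of Ellis, Keller and Lifshitz: one first shows, via a pseudorandomness/regularity argument, that any near-extremal $t$-intersecting $\f \subset S_n$ is well approximated by a $j$-junta with $j = O_t(1)$, and then analyzes $t$-intersecting juntas directly; this, too, requires substantial machinery.
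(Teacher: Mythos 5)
Your proposal follows essentially the same route as the proof described in the paper (the Ellis / Friedgut--Pilpel argument): view a $t$-intersecting family as an independent set in the normal Cayley graph generated by permutations with fewer than $t$ fixed points, build a pseudoadjacency matrix as a linear combination of conjugacy-class adjacency matrices, use the Frobenius eigenvalue formula and the equality case of the Delsarte--Hoffman bound to pin down the coefficients via the partitions $\lambda$ with $\lambda_1 \geq n-t$, and then verify the remaining eigenvalue inequalities by an intricate analysis of the characters of $S_n$. The outline is accurate, including the correct identification of where the real technical work lies.
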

 
In the case where there exists a sharply $t$-transitive subgroup of $S_n$, essentially the same partitioning argument as that of Deza and Frankl above, implies that a $t$-intersecting subfamily of $S_n$ has size at most $(n-t)!$. Unfortunately, such a subgroup exists only in a small number of cases:
\begin{itemize}
\item when $t=1$ (for any $n \in \mathbb{N}$);
\item when $t=2$ and $n$ is a prime power;
\item when $t=3$ and $n$ is one more than a prime power;
\item when $t=4$ and $n=11$;
\item when $t=5$ and $n=12$;
\item when $t=n-2$;
\item when $t=n$.
\end{itemize}
The Deza-Frankl conjecture remained open in essentially all other cases, until it was proved by the author, and independently and simultaneously by Friedgut and Pilpel, in 2009. Our proofs were very similar indeed, and we wrote a joint paper \cite{efp}. The high-level strategy is analogous to Wilson's proof of Theorem \ref{thm:wilson}: for each $t \in \mathbb{N}$ and each $n \geq n_0(t)$, we construct a pseudoadjacency matrix $M = M_{n,t}$ for the Cayley graph $G_{n,t}$ on $S_n$ generated by $\{\sigma \in S_n:\ \sigma \text{ has less than }t \text{ fixed points}\}$, with eigenvalues that are such as to imply the desired upper bound when the Delsarte-Hoffman bound (Theorem \ref{thm:delsarte-hoffman}) is applied to $M$. (Observe that a $t$-intersecting family $\f \subset S_n$ is precisely an independent set in $G_{n,t}$.) However, unlike in the proof of Wilson's theorem, there is more than just one natural choice of such a pseudoadjacency matrix $M$ (and this makes the construction harder).

To make it easier to analyse the eigenvalues of our matrix $M$, we take $M$ to be a linear combination of adjacency matrices of normal Cayley subgraphs of $G_{n,t}$; this enables us to use tools from non-Abelian Fourier analysis (a.k.a.\ representation theory) to analyse the eigenvalues. Recall that if $\Gamma$ is a group, and $S \subset \Gamma$ with $S^{-1} = S$ and $\text{Id} \notin S$, the {\em Cayley graph of $\Gamma$ with generating set $S$} is the graph with vertex-set $\Gamma$, where $g$ is joined to $gs$ for all $g \in \Gamma$ and $s \in S$; it is denoted by $\Cay(\Gamma,S)$. A Cayley graph $\Cay(\Gamma,S)$ is said to be {\em normal} if $S$ is conjugation-invariant, i.e.\ $gsg^{-1} \in S$ for all $s \in S$ and $g \in G$. It is a well-known fact, due originally to Frobenius, that if $\Gamma$ is a finite group, $\mathcal{R}$ is a complete set of inequivalent irreducible complex representations of $\Gamma$, and $G = \Cay(\Gamma,S)$ is a normal Cayley graph of $\Gamma$, then the eigenvalues of the adjacency matrix of $G$ are given by
\begin{equation}
\label{eq:frobenius}
\lambda_{\rho} = \frac{1}{\dim(\rho)} \sum_{s \in S} \chi_\rho(s)\quad (\rho \in \mathcal{R}),
\end{equation}
where $\chi_\rho$ denotes the character of the representation $\rho$. We use this, together with an intricate analysis of the representations of $S_n$, to engineer a matrix with the appropriate eigenvalues to prove the Deza-Frankl conjecture. 

Our construction only works for $n \geq n_0(t)$ where $n_0(t)$ is doubly exponential in $t$, and it would be of great interest to determine the maximum-sized $t$-intersecting families in $S_n$, for smaller $n$. In \cite{efp}, we conjectured the following, which remains open.
\begin{conj}[E.-Friedgut-Pilpel, 2011]
\label{conj:2t}
For any $n,t \in \mathbb{N}$, a maximum-sized \(t\)-intersecting family in \(S_{n}\) must be a double translate of one of the families
\[\mathcal{F}_{i} := \{\sigma \in S_{n}:\ \sigma \textrm{ has at least } t+i \textrm{ fixed points in } [t+2i]\}\ (0 \leq i \leq (n-t)/2),\]
i.e.\ it must be of the form \(\pi \mathcal{F}_i \tau\), for some \(\pi,\tau \in S_n\).
\end{conj}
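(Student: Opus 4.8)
The plan is to split the problem into two regimes according to which family $\mathcal{F}_i$ is conjecturally extremal, exactly as one passes from Wilson's theorem (Theorem \ref{thm:wilson}) to the Ahlswede--Khachatrian Complete Intersection Theorem: there should be a threshold $n_1(t)$ (conjecturally linear, or at worst polynomial, in $t$) above which $\mathcal{F}_0$ --- a double coset of the pointwise stabiliser of a $t$-set --- is the unique maximum, while for $n \le n_1(t)$ one of the $\mathcal{F}_i$ with $i \ge 1$ wins, with two consecutive $\mathcal{F}_i$'s tying at the crossover values of $n$. Establishing that the maximisers are precisely the double translates $\pi \mathcal{F}_i \tau$ would then amount to proving the permutation analogue of the Complete Intersection Theorem, and I would attack the two regimes with quite different tools.

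For $n \ge n_1(t)$ I would refine the spectral argument of \cite{efp}, whose drawback is only that it produces a doubly-exponential $n_0(t)$ rather than the true threshold. One keeps the same machine: the Cayley graph $G_{n,t}$ on $S_n$ generated by the permutations with fewer than $t$ fixed points, a pseudoadjacency matrix $M$ taken to be a linear combination of adjacency matrices of normal Cayley subgraphs, and Frobenius' formula (\ref{eq:frobenius}) together with the representation theory of $S_n$ to compute the eigenvalues $\lambda_\rho = \tfrac{1}{\dim\rho}\sum_{s}\chi_\rho(s)$; the coefficients of $M$ are chosen so that the shifted indicator vectors of the translates $\pi\mathcal{F}_0\tau$ are least-eigenvalue eigenvectors, which is exactly what forces the Delsarte--Hoffman bound (Theorem \ref{thm:delsarte-hoffman}) to be tight on those families. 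Two ingredients are needed beyond \cite{efp}: first, sharp character-ratio estimates of Larsen--Shalev/Roichman type, $|\chi_\rho(\sigma)| \le (\dim\rho)^{1-c}$ for $\sigma$ with few fixed points, to control the sign of $\lambda_\rho$ for every non-trivial $\rho$ all the way down to $n$ polynomial in $t$; and second, the equality analysis, which should follow from the equality clause of Theorem \ref{thm:delsarte-hoffman} by showing that the least-eigenvalue eigenspace of $M$ is spanned precisely by the shifted indicators of the $\pi\mathcal{F}_0\tau$ and then invoking a module-theoretic argument identifying the only $0/1$-vectors of the correct weight in that low-degree space which are $t$-intersecting --- the permutation counterpart of how Wilson pins down the $t$-umvirates, and of the Cameron--Ku/Godsil--Meagher argument in the $t=1$ case.

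The regime $n \le n_1(t)$ is where I expect the real obstacle, for precisely the reason Ahlswede and Khachatrian could not extend Wilson's spectral proof: a single pseudoadjacency matrix attains equality in Delsarte--Hoffman only on one eigenspace, whereas the shifted indicators of $\mathcal{F}_i$ and $\mathcal{F}_{i+1}$ lie in different eigenspaces, so no one matrix can certify tightness at a crossover value of $n$. The natural response is to imitate the Ahlswede--Khachatrian ``pushing--pulling'' method --- $ij$-compressions, generating families, complementation --- but there is at present no good compression operation on $S_n$ that preserves the property of being $t$-intersecting, essentially because $S_n$ lacks the Boolean-lattice structure that makes compressions work for set systems; devising such an operation, or a substitute for it, is the heart of the difficulty. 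An alternative is the junta method of \cite{keller-lifshitz,ekl-entropy}: prove a stability statement that any $t$-intersecting family of near-maximum size is essentially concentrated on boundedly many coordinates, reduce the extremal problem to a finite instance in $n$ and $t$, and check that instance directly; the relevant stability input might now be obtainable from the recently-developed hypercontractivity-for-global-functions techniques on the symmetric group. The weakness of this route is that such stability results still require $n$ large relative to $t$, so they do not obviously reach the genuinely small-$n$ cases, and one would be left needing a bespoke (probably combinatorial) argument there. I therefore anticipate that a complete proof will be a hybrid --- spectral above $\mathrm{poly}(t)$, junta-plus-stability in an intermediate range, and a new combinatorial idea for the remaining small values of $n$ --- with that last step, and specifically the absence of a workable compression-type operation on $S_n$, being the main obstacle.
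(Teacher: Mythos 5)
This statement is an open conjecture: the paper explicitly records that Conjecture \ref{conj:2t} ``remains open'', and offers no proof of it (nor even of the weaker consequence that the Deza--Frankl bound $(n-t)!$ holds for all $n>2t$). So there is no paper proof to compare against, and what you have written is a research programme, not a proof. Judged as such it is sensible and well-informed --- the division into a Wilson-type regime where $\mathcal{F}_0$ wins and an Ahlswede--Khachatrian-type regime where some $\mathcal{F}_i$ with $i\geq 1$ wins is the right picture, and your diagnosis of why a single Delsarte--Hoffman certificate cannot work at a crossover value of $n$ is correct --- but every stage of it contains an unproven assertion that is itself a hard open problem.

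Concretely: (1) In the large-$n$ regime, character-ratio bounds of Larsen--Shalev/Roichman type control $|\chi_\rho(\sigma)|/\dim\rho$, but the eigenvalues (\ref{eq:frobenius}) of a candidate pseudoadjacency matrix involve sums of character ratios over the generating set weighted by chosen coefficients, and there is no known choice of coefficients for which the required inequalities $\lambda_\rho \geq \lambda_{\min}$ hold once $n$ is only polynomial in $t$; the best threshold currently reachable by any method is $e^{Ct\log t}$ (via Theorem \ref{thm:extremal-forbidden}), and the paper notes this is ``still likely very far from the truth''. Asserting that sharper character estimates ``control the sign of $\lambda_\rho$ for every non-trivial $\rho$'' is a hope, not an argument. (2) The equality analysis you sketch requires knowing that the $0/1$-vectors in the span of the low-degree eigenspaces that correspond to $t$-intersecting families of the right size are exactly the indicators of cosets $\pi\mathcal{F}_0\tau$; this characterisation is itself nontrivial and is not supplied. (3) For $n\leq n_1(t)$ you correctly identify that no compression operation on $S_n$ is known to preserve $t$-intersection, and you offer no substitute; the junta-method alternative you mention requires $n$ large relative to $t$, which is precisely the regime it is meant to avoid. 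Since the conjecture is open, none of this is a criticism of your understanding --- but the proposal should not be mistaken for a proof, and each of the three gaps above would need to be filled before any part of the conjecture is established beyond what is already in \cite{efp}.
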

Conjecture \ref{conj:2t} would imply that the conclusion of the Deza-Frankl conjecture holds for all \(n > 2t\); this also remains open. (The proof of Theorem \ref{thm:extremal-forbidden}, below, implies that the conclusion of the Deza-Frankl conjecture holds for all $n > e^{C t \log t}$, where $C>0$ is an absolute constant; this is a slight improvement on the doubly exponential bound mentioned above, but is still likely very far from the truth.)

In \cite{el}, Lifshitz and the author study the forbidden intersection problem for permutations; this is a natural analogue of the well-studied forbidden intersection problem for families of sets (see Section \ref{sec:sets}). We prove the following strengthening of the Deza-Frankl conjecture.
 \begin{theorem}[E.-Lifshitz, 2021+]
\label{thm:extremal-forbidden}
If $t \in \mathbb{N}$, $n$ is sufficiently large depending on $t$, and $\f \subset S_n$ contains no two permutations agreeing on exactlty $t-1$ points, then $|\f| \leq (n-t)!$, with equality only if $\f$ consists of a coset of the pointwise-stabilizer of a $t$-element set.
\end{theorem}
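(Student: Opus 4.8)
The plan is to prove Theorem \ref{thm:extremal-forbidden} by the \emph{junta method}, combined with hypercontractivity-type tools (``global hypercontractivity'') for the symmetric group, in the spirit of the Keller--Lifshitz approach to forbidden-intersection problems for set families. Equip $S_n$ with the uniform probability measure $\mu$, so $\mu(\f)=|\f|/n!$, and observe that the conjectured extremal family $\mathcal{C}_{D,\tau}:=\{\sigma\in S_n:\ \sigma|_D=\tau\}$ (for a $t$-element set $D\subset[n]$ and an injection $\tau:D\hookrightarrow[n]$) has $\mu(\mathcal{C}_{D,\tau})=(n-t)!/n!$, is a coset of the pointwise stabiliser of $D$, and contains no two permutations agreeing on exactly $t-1$ points (any two agree on all of $D$, hence on at least $t$ points). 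The goal is thus to show $\mu(\f)\leq (n-t)!/n!$, with equality only for such families.

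The analytic heart of the argument is a \emph{junta approximation}: there is a constant $C=C(t)$ such that either $\mu(\f)=o_{n\to\infty}((n-t)!/n!)$, or else there is a $C$-junta $\j\subset S_n$ — a family of the form $\{\sigma:\ \sigma|_D\in\mathcal{P}\}$ for some $D\subset[n]$ with $|D|\leq C$ and some set $\mathcal{P}$ of patterns on $D$ — with $\mu(\f\setminus\j)=o_{n\to\infty}((n-t)!/n!)$ and with $\f$ of density $1-o_{n\to\infty}(1)$ in each branch of $\j$. This dichotomy comes from the standard ``pseudorandomness-or-structure'' mechanism: if $\f$ is \emph{global} (admits no large density increment upon restricting the images of a bounded set of coordinates), then global hypercontractivity for $S_n$ forces $\f$ to mix strongly across the (normal) Cayley graph $\Cay(S_n,\{g\in S_n:\ g\text{ has exactly }t-1\text{ fixed points}\})$; since $\f$, by hypothesis, contains no edge of this graph, a global such $\f$ must be tiny. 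Otherwise $\f$ is correlated with a junta, and a bootstrapping argument (iteratively restricting coordinates on which $\f$ is concentrated) upgrades this to the approximation above. Establishing this junta approximation — in particular the hypercontractive expansion estimate for spread families in $S_n$ — is by far the main obstacle; the remaining steps are, comparatively, bookkeeping.

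Granting the junta approximation, the next step is to pin down the structure of $\j$. Since $\f$ contains no two permutations agreeing on exactly $t-1$ points and is almost-all of each branch of $\j$, no two patterns $p,q\in\mathcal{P}$ can agree on fewer than $t$ coordinates of $D$: otherwise a $\Theta_t(1)$-proportion of pairs $(\sigma,\pi)$ with $\sigma|_D=p$, $\pi|_D=q$ agree on exactly $t-1$ points in total, and the near-full density of $\f$ in the two branches would produce such a pair inside $\f$. Hence $\mathcal{P}$ is a ``pairwise $t$-agreeing'' family of patterns on the bounded set $D$, and a direct finite analysis — essentially an Erd\H{o}s--Ko--Rado statement for injections from a $t$-element set — shows that any such $\j$ has $\mu(\j)\leq (n-t)!/n!$, with near-equality only when $\j$ is close to a coset $\mathcal{C}_{D_0,\tau_0}$ of a pointwise stabiliser of a $t$-set. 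Combining with the approximation, $\f\subseteq \mathcal{C}_{D_0,\tau_0}\cup\mathcal{E}$ where $\mathcal{E}:=\f\setminus\mathcal{C}_{D_0,\tau_0}$ satisfies $|\mathcal{E}|=o_{n\to\infty}((n-t)!)$.

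Finally I would run an exact-stability (bootstrapping) step to show $\mathcal{E}=\emptyset$, which upgrades the approximate bound to the exact one and yields the equality case. Suppose $\sigma\in\mathcal{E}$; since $\sigma\notin\mathcal{C}_{D_0,\tau_0}$, the set $A:=\{i\in D_0:\ \sigma(i)=\tau_0(i)\}$ has $|A|\leq t-1$. For $n$ large one chooses $t-1-|A|$ coordinates $i\notin D_0$ with $\sigma(i)\notin\tau_0(D_0)$ and extends $\tau_0$ to a permutation $\pi\in\mathcal{C}_{D_0,\tau_0}$ agreeing with $\sigma$ on precisely those coordinates and nowhere else outside $D_0$; then $\pi$ and $\sigma$ agree on exactly $t-1$ points. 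The number of such $\pi$ is $\Omega_t((n-t)!)$, whereas $|\mathcal{C}_{D_0,\tau_0}\setminus\f|=(n-t)!-|\f|+|\mathcal{E}|$, which is $o_{n\to\infty}((n-t)!)$ provided $|\f|\geq (n-t)!$; hence some such $\pi$ lies in $\f$, contradicting the hypothesis on $\f$. Therefore $|\f|<(n-t)!$ unless $\mathcal{E}=\emptyset$, in which case $\f\subseteq\mathcal{C}_{D_0,\tau_0}$ and $|\f|\leq (n-t)!$ with equality only if $\f=\mathcal{C}_{D_0,\tau_0}$, a coset of the pointwise stabiliser of a $t$-element set. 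The delicate points here are choosing the junta $\j$ minimal enough that $|\mathcal{C}_{D_0,\tau_0}\setminus\f|$ is genuinely negligible, and verifying that $n\geq n_0(t)$ makes all the relevant counts positive — both routine once $n$ is large.
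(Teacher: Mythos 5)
Your proposal follows essentially the same route as the paper: approximate $\f$ by a bounded-complexity junta which the forbidden-intersection condition forces to be $t$-intersecting (this is Theorem~\ref{thm:junta-approximation}), solve the easy extremal problem for such juntas, and then remove the $o((n-t)!)$ error term by the derangement-counting perturbation argument you describe. The only substantive divergence is that you invoke ``global hypercontractivity'' as the engine for the junta approximation, whereas the paper obtains it via a weak regularity lemma, a combinatorial bootstrapping of pseudorandomness, and a spectral (representation-theoretic) argument for pairs of quasirandom fractional families --- in either formulation this approximation theorem is the genuine content of the proof and is taken as a black box in your write-up, so the proposal is a correct outline rather than a complete argument.
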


Our main tool for proving Theorem \ref{thm:extremal-forbidden} is a structural result, concerning the approximate structure of large families of permutations with a forbidden intersection. To state it, we need some more notation and terminology. If $A,B \subset \{1,2,\ldots,n\}$ with $|A|=|B|$, and $\pi: A \to B$ is a bijection, the \emph{$\pi$-star in $S_n$} is the family of all permutations in $S_n$ that agree with $\pi$ pointwise on all of $A$. An {\em $s$-star} is a $\pi$-star such that $\pi$ is a bijection between sets of size $s$. (Note that an $s$-star is precisely a coset of the pointwise-stabilizer of an $s$-element set.) If for each $i \in [l]$, $A_i,B_i \subset [n]$ and $\pi_i:A_i \to B_i$ is a bijection, we define
$$\langle \pi_1,\ldots,\pi_l \rangle := \{\sigma \in S_n:\ (\exists i \in [l])(\forall j \in A_i)(\sigma(j)=\pi(j))\},$$
i.e.\ $\langle \pi_1,\ldots,\pi_l \rangle$ is the set of all permutations in $S_n$ that agree everywhere with at least one of the bijections $\pi_i$. We say that $\j \subset S_n$ is a {\em $C$-junta} if $\j = \langle \pi_1,\ldots,\pi_l\rangle$ for some bijections $\pi_i :A_i \to B_i$, where $l \leq C$ and $|S_i| \leq C$ for all $i \in [l]$. We may think of $C$ as (an upper bound on) the `complexity' of the junta $\j$. We note that this definition of a junta is a natural analogue of the definition of a junta of subsets of $[n]$ (see page \pageref{junta}).

We can now state our `junta approximation' result.

\begin{theorem}[E.-Lifshitz, 2021+]
\label{thm:junta-approximation}
For any $r,t \in \mathbb{N}$, there exists $C=C(r,t) \in \mathbb{N}$ such that if $\f \subset S_n$ is $(t-1)$-intersection-free, there exists a $t$-intersecting $C$-junta $\j \subset S_n$ such that $|\f \setminus \j| \leq Cn!/n^r$.
\end{theorem}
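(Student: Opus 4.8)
The plan is to adapt the ``junta method'' for uniform set-families, in the form developed by Keller, Lifshitz and the author, to the symmetric group. The high-level strategy has three ingredients: a \emph{regularity/pseudorandomness dichotomy}, a \emph{bootstrapping} step that upgrades a weak density bound to the sharp one on pseudorandom pieces, and a \emph{junta-extraction} step that assembles the global structure. I would set things up around the $p$-biased-type measure on $S_n$, i.e.\ counting measure normalised so $\mu(\f) = |\f|/n!$, and work with the notion of the ``restriction'' of $\f$ to a partial bijection: for a bijection $\pi: A \to B$ with $|A| = |B| = s$, let $\f_\pi := \{\sigma \in S_{[n]\setminus A \to [n]\setminus B} : \sigma \cup \pi \in \f\}$, a family of bijections between two $(n-s)$-sets, on which $\mu$ again makes sense. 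A family is \emph{$j$-pseudorandom} (for a parameter depending on $r,t$) if $\mu(\f_\pi) \le (1+o(1))\mu(\f) \cdot (\text{something small})$ for all partial bijections $\pi$ of size $\le j$ that do not ``capture too much density'' — the precise quantitative form is exactly what one has to nail down.

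\emph{Step 1 (dichotomy).} Either $\f$ is already contained, up to an $O(n!/n^r)$ error, in a $C_0$-junta $\j_0 = \langle \pi_1,\dots,\pi_\ell\rangle$ of bounded complexity, in which case we are essentially done after passing to the ``uncaptured'' part $\f' := \f \setminus \j_0$; or else $\f'$ is pseudorandom in the above sense. This is a purely combinatorial/greedy argument: repeatedly pull out the ``heaviest'' small partial bijection while its restriction has density exceeding the pseudorandomness threshold; the process terminates in boundedly many steps because each extraction removes a definite proportion of the density, and the leftover is pseudorandom by construction. \emph{Step 2 (the key analytic input).} Show that a \emph{pseudorandom}, $(t-1)$-intersection-free family $\f' \subset S_n$ has $\mu(\f') \le n^{-r}$ (say). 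This is where I would invoke a spectral/representation-theoretic bound in the spirit of Theorem \ref{thm:delsarte-hoffman} and the Deza--Frankl machinery (equation \eqref{eq:frobenius}): the forbidden-intersection condition means $\f'$ is independent in the normal Cayley graph generated by permutations with exactly $n-t+1$ fixed points, and one analyses the eigenvalue on the relevant low-dimensional isotypic components (those indexed by partitions $(n-i,\dots)$ with $i < t$) using the hypercontractivity / level-$d$-inequality technology for $S_n$ that has been developed for exactly this purpose. Pseudorandomness is precisely what lets one ignore those ``bad'' low-frequency components and push the trivial bound $\mu \le$ const down to a genuinely polynomially small bound. \emph{Step 3 (assembly).} Combine: the junta $\j_0$ from Step 1 can be replaced by a $t$-intersecting $C$-junta $\j$ — one checks that each $\pi_i$ appearing may be taken to be a $t$-star (a partial bijection of size $t$), using that the $(t-1)$-intersection-free condition forces the relevant partial bijections to agree on $t$ points pairwise, so that $\langle\pi_1,\dots,\pi_\ell\rangle$ with $\pi_i$ of size $t$ is $t$-intersecting — and then $|\f \setminus \j| \le |\f'| = \mu(\f') n! \le Cn!/n^r$ by Step 2.

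\emph{Main obstacle.} The crux is Step 2: proving the sharp polynomial density bound for pseudorandom forbidden-intersection families. The naive Delsarte--Hoffman computation only yields a bound bounded away from $1$ (this is precisely the phenomenon flagged in the discussion of Wilson's theorem), so one genuinely needs the hypercontractive/level-inequality estimates for the symmetric group, together with a careful choice of the pseudoadjacency matrix as a bounded linear combination of normal Cayley graph adjacency matrices whose eigenvalues on the top $t-1$ ``levels'' are tuned — and one has to control the \emph{growth} of $n_0(r,t)$ and $C(r,t)$ through this. A secondary technical nuisance is that, unlike in the $k$-uniform set case, there is no literal $p$-biased product measure on $S_n$, so the restriction/projection formalism and the ``capturing'' bookkeeping in Step 1 must be set up by hand, and one must verify that restrictions of $(t-1)$-intersection-free families are $(t-1-|\pi|)$-intersection-free (immediate) while tracking densities exactly.
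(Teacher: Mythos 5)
Your overall route --- a weak regularity/dichotomy step producing a bounded-complexity junta with pseudorandom slices, a bootstrapping step that upgrades the pseudorandomness, and a spectral/representation-theoretic argument on the pseudorandom pieces --- is the same high-level ``junta method'' strategy that the paper describes for this theorem. However, there is a genuine gap at your Step 3, and it sits at the crux of the result. The entire content of the statement is that the approximating junta $\j$ is \emph{$t$-intersecting} (without that requirement one could simply take $\j = S_n$). You dispose of this by asserting that ``the $(t-1)$-intersection-free condition forces the relevant partial bijections to agree on $t$ points pairwise''. But the partial bijections $\pi_i$ are not members of $\f$; they are objects extracted by the greedy process, and the hypothesis on $\f$ says nothing directly about them. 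To show that $\pi_i$ and $\pi_j$ must agree on at least $t$ points one argues by contradiction: if they agreed on fewer than $t$ points then, since the slices $\f(\pi_i)$ and $\f(\pi_j)$ are both large and pseudorandom, one must exhibit $\sigma_1 \in \f(\pi_i)$ and $\sigma_2 \in \f(\pi_j)$ agreeing on exactly $t-1$ points, contradicting the hypothesis. This is a \emph{cross-family} (two-family) statement about a pair of pseudorandom families, and it is exactly where the paper places all of the analytic work: after bootstrapping, and after reductions to the case $t=1$ and to equal domains and ranges, one shows that two sufficiently quasirandom $[0,1]$-valued (fractional) families of sufficiently large expectation must contain a cross-pair of permutations disagreeing everywhere. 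Your Step 2, a one-family density bound for a pseudorandom $(t-1)$-intersection-free family, neither implies nor substitutes for this cross-family lemma.

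Two secondary soft spots. First, your termination argument for the greedy extraction (``each extraction removes a definite proportion of the density'') is not right as stated, since a single star of size $s$ carries only an $O(n^{-s})$ fraction of the measure of $S_n$; bounding the number of extracted stars by a constant $C(r,t)$ needs a more careful accounting. Second, the reductions that make the representation theory tractable --- passing to slightly smaller subfamilies so as to replace $\pi_1,\pi_2$ by bijections with a prescribed number of extra agreements (reducing to $t=1$), and arranging equal domains and ranges --- are absent from your sketch, and without them the spectral step you invoke is substantially harder (essentially the full machinery of the earlier Deza--Frankl proof rather than the shorter argument the quasirandomness permits). These are repairable; the missing cross-family spectral lemma behind Step 3 is the substantive gap.
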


Informally, this theorem says that any $(t-1)$-intersection-free family is `almost' contained within a $t$-intersecting junta of bounded complexity. Its use here is a good example of the `junta method', which has proven very useful in extremal combinatorics and theoretical computer science over the last 30 years. This method was first introduced into extremal combinatorics by Dinur and Friedgut \cite{dinur-friedgut} in 2008, and was further developed significantly by Keller and Lifshitz in \cite{keller-lifshitz}. The high-level idea of the method is as follows. Suppose we wish to prove an extremal theorem concerning families of mathematical objects satisfying a certain property, $P$ say. We take such a family, and we first show that it can be approximated by a `junta' of bounded complexity (where the notion of `junta' depends on the problem, but means roughly a family depending upon only a bounded number of coordinates). We then obtain an extremal result about such a junta (this is usually easy), and then finally we use a (usually combinatorial) `perturbation' argument to obtain the desired result about general families possessing our property $P$ (which, by the our junta approximation result, must be close to a junta).

Our proof of Theorem \ref{thm:junta-approximation} employs a mixture of combinatorial, probabilistic and algebraic techniques. Specifically, it relies on (i) a weak regularity lemma for families of permutations (which outputs a junta whose stars are intersected by $\f$ in a weakly pseudorandom way), (ii) a combinatorial argument that `bootstraps' the weak notion of pseudorandomness into a stronger one, and finally (iii) a spectral argument for pairs of highly-pseudorandom fractional families (this spectral argument being significantly shorter than the spectral argument in \cite{efp}, though still non-trivial). Our proof employs four different notions of pseudorandomness, three being combinatorial in nature, and one being algebraic. We believe the connection we demonstrate between these combinatorial and algebraic notions of pseudorandomness may find further applications.

We note that arguments involving pseudorandomness (or quasirandomness), in various forms, have had a huge impact on combinatorics, theoretical computer science and number theory, ever since Szemer\'edi proved his celebrated regularity lemma for graphs, in 1978. The common theme of such arguments is that many mathematical structures can be partitioned into a bounded number of large pieces, together with a small `leftover' piece, such that any $r$ of the large pieces induce a structure that is `random-like' (pseudorandom, or quasirandom), in an appropriate sense. (The right notion of pseudorandomness, and the right value of $r$, depends upon the problem.) For surveys of applications of pseudorandomness and regularity methods in combinatorics, theoretical computer science and additive number theory, the reader is referred for example to \cite{kro,ksss,ks,luczak}.

Theorem \ref{thm:junta-approximation} (together with a short combinatorial argument) also quickly implies the following Hilton-Milner type result for $t$-intersecting families of permutations, first proved by the author \cite{ellis-dfs} in 2009.
\begin{theorem}[E., 2009]
If \(n\) is sufficiently large depending on \(t\), and $\f \subset S_n$ is a $t$-intersecting family of permutations which is not contained within a coset of the pointwise-stabiliser of a $t$-element set, then $|\f| \leq |\mathcal{H}|$, where
\begin{align*}
\mathcal{H} & = \{\sigma \in S_n: \sigma(i)=i \text{ for all } i \leq t,\ \sigma(j)=j \textrm{ for some } j > t+1\} \\
&\cup \{(1\ t+1),(2\ t+1),\ldots,(t\ t+1)\}.
\end{align*}
Equality holds if only if there exist $\sigma,\tau \in S_n$ such that $\f = \sigma \mathcal{H} \tau$.
\end{theorem}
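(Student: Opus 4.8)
The plan is to deduce the theorem from the junta approximation result, Theorem~\ref{thm:junta-approximation}. Fix $t$, let $n$ be large, and let $\f \subset S_n$ be $t$-intersecting but not contained in a coset of the pointwise-stabiliser of a $t$-element set (equivalently, not contained in any $t$-star). Since a $t$-intersecting family contains no two permutations agreeing on exactly $t-1$ points, it is $(t-1)$-intersection-free, so Theorem~\ref{thm:junta-approximation} applied with $r := t+2$ yields a $t$-intersecting $C$-junta $\j = \langle \pi_1,\ldots,\pi_l \rangle$ with $|\f \setminus \j| \le C n!/n^{t+2}$.

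The first step is to classify the $t$-intersecting $C$-juntas. Two permutations extending a common $\pi_i$ whose domain has size $s_i$ agree on at least $s_i$ points, while for $n$ large they can be chosen to disagree everywhere off $\Domain(\pi_i)$; hence $s_i \ge t$ for every $i$. Moreover, if some $s_i = t$, then applying the $t$-intersecting condition to an extension of $\pi_i$ and an extension of $\pi_j$ forces $\pi_j$ to extend $\pi_i$, for every $j$, so that $\j$ is the single $t$-star $\s := \{\sigma \in S_n :\ \sigma|_{\Domain(\pi_i)} = \pi_i\}$. Thus either $\j$ is a single $t$-star $\s$, or all $s_i \ge t+1$ and $|\j| \le l\,(n-t-1)! \le C(n-t-1)!$. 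In the latter case $|\f| \le C(n-t-1)! + Cn!/n^{t+2} = o((n-t)!) < |\h|$ for $n$ large, and we are done; so assume $\j = \s$ is a $t$-star. Since $\f$ is contained in no $t$-star, $\f \setminus \s \neq \emptyset$; fix $\sigma^{*} \in \f \setminus \s$.

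Next comes a small, self-contained extremal computation inside $\s$. Write $\s = \{\sigma :\ \sigma|_A = \pi\}$ for a bijection $\pi : A \to B$ with $|A| = t$, and put $p := |\{a \in A :\ \sigma^{*}(a) = \pi(a)\}| < t$. Any $\tau \in \f \cap \s$ must $t$-agree with $\sigma^{*}$, hence must agree with $\sigma^{*}$ on at least $t - p$ points outside $A$; since $\tau(A) = B$, such agreements occur only at $x \notin A$ with $\sigma^{*}(x) \notin B$, of which there are at most $n-t-p$ (and exactly $n-t-1$ when $p = t-1$). Optimising over $\sigma^{*} \notin \s$, a short inclusion--exclusion argument gives
$$|\f \cap \s| \;\le\; (n-t)! - D_{n-t} - D_{n-t-1} \;=\; |\h| - t,$$
where $D_m$ denotes the $m$-th derangement number, with (near-)equality only when $p = t-1$ and $\f \cap \s$ is (almost) the whole set of permutations in $\s$ that $t$-agree with $\sigma^{*}$ --- a family which, up to the action of $S_n \times S_n$, is exactly the ``main part'' $\{ \sigma :\ \sigma(i)=i\ \forall i \le t,\ \sigma(j)=j\ \text{for some } j > t+1 \}$ of $\h$.

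Combining the two estimates only gives $|\f| \le |\h| - t + Cn!/n^{t+2}$, which falls short, so the final --- and, I expect, hardest --- step is a rigidity argument improving $|\f \setminus \s| \le Cn!/n^{t+2}$ to $|\f \setminus \s| \le t$. If $|\f \cap \s| \le |\h| - t - Cn!/n^{t+2}$ we are already done; otherwise a stability version of the previous step forces $p = t-1$ and forces $\f \cap \s$ to agree, off a set of size $O(n!/n^{t+2})$, with an $\h$-type family $\g$ of size $|\h| - t$. One then argues, by a Cameron--Ku type counting argument, that any $\rho \notin \s$ that $t$-agrees with all but $o((n-t)!)$ members of $\g$ must be one of the $t$ distinguished transpositions (the appropriate translates of $(1\ t+1), \ldots, (t\ t+1)$): for every other $\rho \notin \s$, the fixed-point pattern of $\rho$ on $A$ together with $|\rho([n]\setminus A) \cap ([n]\setminus B)|$ leaves a positive proportion of $\g$ failing to $t$-agree with $\rho$. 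Hence $\f \setminus \s$ lies inside this set of $t$ transpositions, and $|\f| = |\f \cap \s| + |\f \setminus \s| \le (|\h| - t) + t = |\h|$. Tracing equality back through the argument forces $\f \cap \s$ to equal an $\h$-type family and $\f \setminus \s$ to equal the corresponding $t$ transpositions, i.e.\ $\f = \sigma \h \tau$ for some $\sigma, \tau \in S_n$, giving the characterisation. The delicate point is that, unlike the crude junta bound, this last step has to be pushed to an exact conclusion, which is exactly why a careful combinatorial stability argument (and not Theorem~\ref{thm:junta-approximation} alone) is needed to close the final additive gap of size $t$.
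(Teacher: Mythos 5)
Your proposal is correct in approach and substance: it fleshes out exactly the deduction the survey merely asserts (the paper gives no details beyond ``Theorem \ref{thm:junta-approximation} together with a short combinatorial argument''; the original 2009 proof used representation-theoretic stability instead), namely junta approximation, classification of $t$-intersecting juntas, a derangement count inside the surviving $t$-star, and a rigidity step bounding $|\f\setminus\s|$ by $t$. Two small points. First, your count ``at most $n-t-p$'' of candidate agreement points outside $A$ is not right in general (the correct count lies between $n-2t+p$ and $n-t$); what saves the key bound $|\f\cap\s|\leq(n-t)!-D_{n-t}-D_{n-t-1}$ is that when $p=t-1$ injectivity of $\sigma^{*}$ forces $\sigma^{*}(a_0)\notin B$, so the count is exactly $n-t-1$, while for $p\leq t-2$ one needs at least two agreements outside $A$ and the count drops to roughly $(1-2/e)(n-t)!$ — worth spelling out, since otherwise the ``optimisation over $\sigma^{*}$'' is not justified. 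Second, in the final step the proportion of $\g$ failing to $t$-agree with a non-distinguished $\rho$ is not in general a positive constant: if $\rho$ differs from the reference bijection at only one point off $A$, the failing proportion is only $\Theta(1/n)$; this still comfortably exceeds the junta error $O(n!/n^{t+2})=O((n-t)!/n^{2})$, so your choice $r=t+2$ works, but the claim should be stated as $\Omega((n-t)!/n)$ rather than ``a positive proportion''.
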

This is a natural analogue, for permutations, of Theorem \ref{thm:aknontriv}. 

\subsection{Intersection problems for more general group actions}
If $\alpha:G \times X \to X$ is a transitive action of a finite group $G$ on a finite set $X$, we say that a subset $\mathcal{F} \subset G$ is {\em $\alpha$-intersecting} if for any two elements $\sigma,\tau \in \mathcal{F}$, there exists $x \in X$ such that $\alpha(\sigma,x) = \alpha(\tau,x)$. One can ask, for each action $\alpha:G \times X \to X$, what is the maximum possible size of an $\alpha$-intersecting subset of $G$. The Deza-Frankl problem discussed in the previous section, is clearly of this form, with $G$ being $S_n$ and $\alpha$ being the natural action of $S_n$ on ordered $t$-tuples. Several other well-studied intersection problems are also of this form. In fact, by quotienting out by the kernel of the action $\alpha$, one can reduce to the case where $G$ is a subgroup of $S_n$ (i.e.\ a permutation group of degree $n$), and the action is the natural action of $G$ on $[n]$; but this reformulation may be less natural than the original problem, in many cases. A well-studied set of problems comes from taking $G$ to be a group of matrices over $\mathbb{F}_q$ (or a quotient thereof), such as $\text{GL}(n,\mathbb{F}_q)$, $\text{SL}(n,\mathbb{F}_q)$ or $\text{PGL}(n,\mathbb{F}_q)$, and taking $\alpha$ to be the natural action of $G$ on $d$-dimensional subspaces or $d$-dimensional projective subspaces, for $d < n$. For example, we say a family $\f \subset \text{PGL}(n+1,\mathbb{F}_q)$ is {\em point-intersecting} if for any $\sigma,\tau \in \f$ there exists a projective point $p \in \text{PG}(n,\mathbb{F}_q)$ such that $\sigma(p) = \tau(p)$. Meagher and Spiga \cite{mspiga,mspiga2} proved the following.
\begin{theorem}[Meagher-Spiga, 2011]
If $q$ is a prime power, and $\f \subset \text{PGL}(2,\mathbb{F}_q)$ is point-intersecting, then $|\f| \leq q(q-1)$; equality holds iff $\f$ is a coset of the stabilizer of a projective point. 
\end{theorem}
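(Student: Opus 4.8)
The plan is to run the spectral/Delsarte--Hoffman strategy used by Wilson for Theorem \ref{thm:wilson} and by Ellis--Friedgut--Pilpel for the Deza--Frankl conjecture. Write $G=\mathrm{PGL}(2,\mathbb{F}_q)$, which acts sharply $3$-transitively on the $q+1$ points of the projective line $\mathrm{PG}(1,\mathbb{F}_q)$, so $|G|=(q+1)q(q-1)$ and a point-stabiliser $G_p$ has order $q(q-1)$; every coset $gG_p$ is point-intersecting (all its elements send $p$ to $g(p)$), giving the extremal construction. A family $\mathcal{F}\subseteq G$ is point-intersecting if and only if it is an independent set in the Cayley graph $\Gamma:=\Cay(G,D)$, where $D$ is the set of derangements of the action, i.e.\ elements with no fixed projective point --- equivalently those whose lift to $\mathrm{GL}(2,\mathbb{F}_q)$ has irreducible characteristic polynomial --- since $\sigma^{-1}\tau\in D$ exactly when $\sigma(x)\ne\tau(x)$ for all $x$. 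Since $D=D^{-1}$ and $D$ is conjugation-invariant, $\Gamma$ is a normal Cayley graph and Frobenius's formula (\ref{eq:frobenius}) computes its eigenvalues.

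For the upper bound I would first count $|D|=q^2(q-1)/2$: each of the $(q^2-q)/2$ irreducible monic quadratics over $\mathbb{F}_q$ is the characteristic polynomial of one $\mathrm{GL}(2,\mathbb{F}_q)$-conjugacy class of size $q^2-q$, and one divides by $|Z(\mathrm{GL}(2,\mathbb{F}_q))|=q-1$. Next, since a non-identity element of $G$ fixes at most two projective points, inclusion--exclusion on fixed points yields the class-function identity
\[ 1_D=\mathbf{1}-\chi_{M_1}+\tfrac12\chi_{M_2}-\tfrac{q(q-1)}{2}\,1_{\{e\}}, \]
where $M_1=\mathbb{C}[\mathrm{PG}(1,\mathbb{F}_q)]$ and $M_2$ is the permutation module on ordered pairs of distinct projective points (so $\chi_{M_1}(g)=\mathrm{fix}(g)$ and $\chi_{M_2}(g)=\mathrm{fix}(g)(\mathrm{fix}(g)-1)$; one checks the identity at $e$ and at elements with $0,1,2$ fixed points). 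Feeding this into (\ref{eq:frobenius}), and using $M_1=\mathbf{1}\oplus\psi$ with $\psi$ irreducible of dimension $q$ (the action is $2$-transitive), gives for each irreducible character $\rho$ of $G$
\[ \lambda_\rho=\frac{|G|}{\dim\rho}\Bigl(-[\rho\cong\psi]+\tfrac12\langle M_2,\rho\rangle\Bigr)-\frac{q(q-1)}{2}. \]
The bracketed term is always $\ge0$, and it vanishes for $\rho=\psi$ because the two-point stabiliser (a split torus of order $q-1$) has exactly three orbits on $\mathrm{PG}(1,\mathbb{F}_q)$, whence $\langle M_2,M_1\rangle=3$ and $\langle M_2,\psi\rangle=2$. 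Hence $\lambda_{\min}(\Gamma)=-q(q-1)/2$, and Theorem \ref{thm:delsarte-hoffman} gives
\[ |\mathcal{F}|\le\frac{-\lambda_{\min}}{|D|-\lambda_{\min}}\,|G|=\frac{q(q-1)/2}{q^2(q-1)/2+q(q-1)/2}\,|G|=\frac{|G|}{q+1}=q(q-1). \]
(The same bound also drops out of the clique--coclique inequality, since a non-split torus is a cyclic subgroup of order $q+1$ acting sharply transitively and hence a maximum clique of $\Gamma$; but the Delsarte--Hoffman route is the one that drives the characterisation.)

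For the equality case I would invoke the equality clause of Theorem \ref{thm:delsarte-hoffman}: if $|\mathcal{F}|=q(q-1)$ then $1_{\mathcal{F}}-\tfrac{1}{q+1}\mathbf{1}$ lies in the $\lambda_{\min}$-eigenspace $V_{\min}$ of $\Gamma$. The key structural input is that $\Span\{1_{gG_p}:g\in G,\ p\in\mathrm{PG}(1,\mathbb{F}_q)\}$ --- each such indicator being the pull-back of a point-indicator along $g\mapsto g(p)$, with the different $p$ related by right translation --- equals $\mathbb{C}\mathbf{1}\oplus V_\psi$, where $V_\psi$ is the $\psi$-isotypic component of $\mathbb{C}[G]$. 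From the eigenvalue formula, $V_{\min}$ consists of $V_\psi$ together with the isotypic components of a short, explicit list of further irreducibles $\rho$ --- precisely those, other than $\psi$, with no fixed vector under the two-point stabiliser --- read off the character table of $\mathrm{PGL}(2,\mathbb{F}_q)$ (for odd $q$ this list includes the sign character). It then remains to show that the components of the $0$--$1$-valued function $1_{\mathcal{F}}$ along these exceptional modules vanish, so that $1_{\mathcal{F}}\in\mathbb{C}\mathbf{1}\oplus V_\psi=\Span\{1_{gG_p}\}$, after which a short combinatorial argument forces a Boolean function of weight $q(q-1)$ in this span to be a single coset-indicator $1_{gG_p}$. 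To eliminate the exceptional components I would bring in the clique--coclique equality clause --- namely that $\mathcal{F}$ meets every left coset of the order-$(q+1)$ non-split torus in exactly one element --- together with the explicit $G$-action on those small modules; a Hilton--Milner-type perturbation argument applied directly to $\mathcal{F}$ is a plausible alternative finish.

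The main obstacle is exactly this last step. The eigenvalue computation is pleasantly clean, since the inclusion--exclusion identity reduces everything to $2$-transitivity and one orbit count; but pinning down $V_{\min}$ precisely uses the full character table of $\mathrm{PGL}(2,\mathbb{F}_q)$, and ruling out the exceptional low-eigenvalue modules in the characterisation --- equivalently, showing that no exotic maximum point-intersecting family exists, for every prime power $q$ including the small ones --- is where the real work lies, and where a purely combinatorial finish, if one can be found, would be the cleanest.
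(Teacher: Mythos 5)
The survey does not actually prove this theorem -- it only cites it from Meagher--Spiga [mspiga] -- so there is no in-paper proof to compare against; I am assessing your proposal on its own terms. For the inequality $|\f|\leq q(q-1)$ your plan is correct and essentially complete, and it is the standard route (it is also how Meagher and Spiga, and Godsil--Meagher in their book, obtain the bound). Your count $|D|=q^2(q-1)/2$ checks out, your inclusion--exclusion identity for $1_D$ is valid (it evaluates to $1,0,0,0$ at elements with $0,1,2,q+1$ fixed points respectively), the resulting eigenvalue formula gives $\lambda_{\mathrm{triv}}=|D|$ and $\lambda_\psi=-q(q-1)/2$, the nonnegativity of $\tfrac12\langle M_2,\rho\rangle$ shows no eigenvalue lies below $-q(q-1)/2$, and the Delsarte--Hoffman ratio then yields $|G|/(q+1)=q(q-1)$. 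The clique--coclique alternative via the regular non-split torus is also correct.

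The genuine gap is the uniqueness statement, and you have correctly located it but not closed it. The Delsarte--Hoffman equality clause only places $1_{\f}-\tfrac{1}{q+1}\mathbf{1}$ in the full $\lambda_{\min}$-eigenspace, which contains not just the $\psi$-isotypic component but also the isotypic components of every irreducible $\rho$ with $\langle M_2,\rho\rangle=0$; for $\mathrm{PGL}(2,\mathbb{F}_q)$ this list is nonempty (e.g.\ the nontrivial linear character when $q$ is odd, among others read off the character table). Showing that $1_{\f}$ has zero projection onto these exceptional modules -- equivalently, that $1_{\f}$ lies in $\mathbb{C}\mathbf{1}\oplus V_\psi=\Span\{1_{gG_p}\}$ -- is the substantive content of Meagher and Spiga's paper, and your proposal offers only candidate strategies (clique--coclique equality on cosets of the non-split torus, or a Hilton--Milner-type perturbation) without executing either. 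The final step, that a $\{0,1\}$-valued function of weight $q(q-1)$ in $\Span\{1_{gG_p}\}$ must be a single coset indicator, also needs an argument (it is true, but not automatic). So as written this is a complete proof of the bound together with a credible but unproven plan for the characterisation of equality.
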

\begin{theorem}[Meagher-Spiga, 2014]
If $q$ is a prime power, and $\f \subset \text{PGL}(3,\mathbb{F}_q)$ is point-intersecting of maximum size, then $\f$ is either a coset of the stabilizer of a projective point or a coset of the stabilizer of a projective line.
\end{theorem}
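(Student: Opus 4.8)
The plan is to follow the same high-level route used above for $\mathrm{PGL}(2,\mathbb{F}_q)$ and for the Deza--Frankl conjecture: reformulate point-intersection as independence in a normal Cayley graph, bound the independence number via the Delsarte--Hoffman bound (Theorem~\ref{thm:delsarte-hoffman}), and then read off the structure of the extremal families from the equality clause. Set $G=\mathrm{PGL}(3,\mathbb{F}_q)$, and let $D\subset G$ be the set of elements fixing no projective point of $\mathrm{PG}(2,\mathbb{F}_q)$; lifting to $\mathrm{GL}(3,\mathbb{F}_q)$, these are precisely the elements whose characteristic polynomial is an irreducible cubic over $\mathbb{F}_q$, equivalently the non-identity elements of the Singer subgroups (cyclic of order $q^2+q+1$). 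Since $D$ is conjugation-invariant, $\Gamma:=\Cay(G,D)$ is a normal Cayley graph, and $\f\subset G$ is point-intersecting iff $\f$ is an independent set in $\Gamma$. There are two natural families of large independent sets: cosets of a point-stabiliser $\mathrm{Stab}(p)$, and cosets of a line-stabiliser $\mathrm{Stab}(L)$. The latter are point-intersecting because of the small geometric lemma that \emph{every element of $G$ stabilising a line also fixes some projective point} (in a basis adapted to $L$ the element is block upper-triangular; either the $2\times 2$ block has an $\mathbb{F}_q$-eigenvalue, giving a fixed point on $L$, or one lifts the eigenvector of the quotient action to a fixed point off $L$). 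By duality $|\mathrm{Stab}(p)|=|\mathrm{Stab}(L)|=|G|/(q^2+q+1)$.

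The first main step is to compute the spectrum of $\Gamma$. By Frobenius's formula~(\ref{eq:frobenius}), the eigenvalue attached to an irreducible character $\rho$ is $\lambda_\rho=\tfrac{1}{\dim\rho}\sum_{g\in D}\chi_\rho(g)$. The action of $G$ on the $q^2+q+1$ points of $\mathrm{PG}(2,\mathbb{F}_q)$ is $2$-transitive, so its permutation character is $1+\chi$ with $\chi$ irreducible of degree $q^2+q$; since each $g\in D$ fixes no point, $\chi(g)=-1$ on all of $D$, whence $\lambda_\chi=-|D|/(q^2+q)$. (The dual action on lines is also $2$-transitive, and elements of $D$ fix no line either, so it yields the same value.) The bulk of the work here is to show this is the \emph{least} eigenvalue, i.e. $\tfrac{1}{\dim\rho}\sum_{g\in D}\chi_\rho(g)\ge -|D|/(q^2+q)$ for every irreducible $\rho$ of $\mathrm{PGL}(3,\mathbb{F}_q)$; this requires the full conjugacy-class data and character table of the group, with a case analysis over the families of irreducibles (principal series, cuspidal, and the small-degree ones) and explicit bounds on their character sums over the point-free classes. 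Granting this, the Delsarte--Hoffman bound gives
$$|\f|\ \le\ \frac{-\lambda_{\min}}{|D|-\lambda_{\min}}\,|G|\ =\ \frac{1}{q^2+q+1}\,|G|,$$
which is exactly the size of the point- and line-stabiliser cosets, so those families are of maximum size.

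For the classification, the equality clause of Theorem~\ref{thm:delsarte-hoffman} tells us that for a maximum-sized point-intersecting family $\f$, the centred indicator $1_{\f}-\tfrac{|\f|}{|G|}\mathbf{1}$ lies in the $\lambda_{\min}$-eigenspace of $\Gamma$. I would next identify that eigenspace precisely — it should be the $\chi$-isotypic component of the regular representation of $G$ — so that $1_{\f}$ lies in the span of $\mathbf{1}$ together with the (centred) indicators of the cosets of point-stabilisers and of line-stabilisers. It then remains to prove the combinatorial statement that the only $\{0,1\}$-valued vectors of weight $|G|/(q^2+q+1)$ inside this module are the indicators of a \emph{single} such coset. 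This is the step I expect to be the main obstacle, and the reason the degree-$3$ case is genuinely harder than the degree-$2$ case: there are now two competing families of extremal objects, and one must rule out Boolean vectors that "mix" point-stabiliser and line-stabiliser structure. I would attack this by translating the eigenvector condition into incidence identities in $\mathrm{PG}(2,\mathbb{F}_q)$ describing how $\f$ meets the various point- and line-stabiliser cosets, and then use a counting/convexity argument — together with the action of the normaliser of a Singer subgroup, which permutes the canonical families — to force $\f$ to be "pure", i.e. a single coset of $\mathrm{Stab}(p)$ or of $\mathrm{Stab}(L)$.
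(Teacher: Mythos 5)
Your framework is the right one and, as far as this survey indicates, is essentially the route of Meagher and Spiga: translate point-intersection into independence in the normal Cayley graph on the derangements, apply the Delsarte--Hoffman bound via Frobenius's eigenvalue formula (\ref{eq:frobenius}), and use the equality clause of Theorem \ref{thm:delsarte-hoffman} to place $1_{\f}$ in the $\lambda_{\min}$-eigenspace. Your preliminary steps check out: the derangements are exactly the non-identity elements of Singer subgroups; every line-stabilising element fixes a point (its characteristic polynomial acquires a linear factor from the quotient action), so line-stabiliser cosets really are point-intersecting; the standard character $\chi$ of degree $q^2+q$ takes the value $-1$ on derangements, giving $\lambda_\chi=-|D|/(q^2+q)$ and a Hoffman ratio of exactly $1/(q^2+q+1)$.

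However, the proposal defers rather than supplies both load-bearing ingredients. First, for the characterisation you need not merely $\lambda_\rho\geq -|D|/(q^2+q)$ for every irreducible $\rho$ (as written in your display) but \emph{strict} inequality for every $\rho$ other than the trivial and standard characters; otherwise the $\lambda_{\min}$-eigenspace is strictly larger than the $\chi$-isotypic component and your identification of the module fails. This verification genuinely requires the conjugacy-class data and character table of $\mathrm{PGL}(3,\mathbb{F}_q)$ together with class-by-class estimates; it is a substantial computation, not a formality. Second, and more seriously, the classification of Boolean vectors of weight $|G|/(q^2+q+1)$ in the module spanned by $\mathbf{1}$ and the $\chi$-isotypic component is the actual content of the theorem --- it is exactly where ``mixed'' families combining point-stabiliser and line-stabiliser structure must be excluded --- and you only gesture at it (``incidence identities'', ``counting/convexity'', the Singer normaliser). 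Without an executed argument there, what you have is the upper bound $|\f|\leq |G|/(q^2+q+1)$ (itself conditional on the eigenvalue verification), not the stated characterisation. The plan is sound, but the proof is missing precisely at its two hardest points.
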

They conjecture the following (in \cite{mspiga}). 
\begin{conj}[Meagher-Spiga, 2011]
\label{conj:ms}
If $q$ is a prime power and $\f \subset \text{PGL}(n+1,\mathbb{F}_q)$ is point-intersecting of maximum size, then $\f$ is either a coset of the stabilizer of a projective point or a coset of the stabilizer of an $(n-1)$-dimensional projective hyperplane.
\end{conj}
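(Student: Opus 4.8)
The conjecture remains open; what follows is a proposal for a spectral attack, modelled on Wilson's proof of Theorem \ref{thm:wilson} and on the resolution of the Deza--Frankl conjecture sketched in Section \ref{subsection:perms}. Let $G = \text{PGL}(n+1,\mathbb{F}_q)$ act on $\text{PG}(n,\mathbb{F}_q)$, whose point-set has size $N := (q^{n+1}-1)/(q-1)$. An element of $G$ fixes some projective point if and only if a representing matrix in $\GL(n+1,\mathbb{F}_q)$ has an eigenvalue in $\mathbb{F}_q$, i.e.\ iff its characteristic polynomial has a root in $\mathbb{F}_q$; let $D \subset G$ be the set of elements fixing no projective point, equivalently the images of matrices whose characteristic polynomial is a product of irreducibles each of degree at least $2$. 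Since ``fixes no point'' is a class function, $D$ is conjugation-invariant, so $X := \Cay(G,D)$ is a \emph{normal} Cayley graph, and a point-intersecting family is precisely an independent set in $X$. By the Frobenius formula (\ref{eq:frobenius}), for any real combination $M := \sum_{C \subset D} c_C A_C$ of class-sum matrices $A_C$ supported on conjugacy classes inside $D$ (with $c_C = c_{C^{-1}}$, so that $M$ is symmetric, using that $D = D^{-1}$), the eigenvalue of $M$ on an irreducible complex representation $\rho$ of $G$ is $\lambda_\rho = \sum_{C \subset D} c_C |C| \chi_\rho(C)/\dim(\rho)$; in particular $\lambda_{\mathbf{1}} = \sum_{C \subset D} c_C|C|$ on the constants, and $M$ is a pseudoadjacency matrix for $X$ as soon as $\lambda_{\mathbf{1}} > 0$.

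The plan is then to choose the $c_C$ so that the minimum eigenvalue $\lambda_{\min}$ of $M$ is attained precisely on the two ``standard'' modules $W$ and $W'$, where $\mathbb{C}[\text{PG}(n,\mathbb{F}_q)] = \mathbf{1} \oplus W$ and $\mathbb{C}[\text{hyperplanes of }\text{PG}(n,\mathbb{F}_q)] = \mathbf{1} \oplus W'$ (both non-trivial constituents are irreducible, since $G$ acts $2$-transitively both on points and on hyperplanes; for $n=1$ one has $W = W'$, and for $n \geq 2$ the module $W'$ is $W$ twisted by the transpose-inverse automorphism). The reason $W$ and $W'$ must \emph{both} be the bottom of the spectrum is that equality in the target bound is achieved both by cosets of point-stabilisers (whose shifted indicators span $W$) and by cosets of hyperplane-stabilisers (spanning $W'$). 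Imposing $\lambda_W = \lambda_{W'} = \lambda_{\min}$ together with the normalisation $-\lambda_{\min}/(\lambda_{\mathbf{1}} - \lambda_{\min}) = 1/N$ pins down the $c_C$ up to a scalar, exactly as in Wilson's argument; the Delsarte--Hoffman bound (Theorem \ref{thm:delsarte-hoffman}) would then give $|\f| \leq |G|/N = |\text{Stab}_G(\text{point})|$, the conjectured maximum.

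The crux — and the reason the conjecture is still open — is to verify that for this choice of $(c_C)$ one has $\lambda_\rho \geq \lambda_W$ for \emph{every other} irreducible $\rho$ of $G$, i.e.\ to obtain sufficiently strong upper bounds on the character ratios $\sum_{C \subset D} c_C |C| \chi_\rho(C)/\dim(\rho)$, uniformly in $\rho$. Meagher and Spiga carried this out for $n = 1, 2$ using the explicit character tables of $\text{PGL}(2,\mathbb{F}_q)$ and $\text{PGL}(3,\mathbb{F}_q)$; for general $n$ one must instead work with Green's description of the irreducible characters of $\GL(n+1,q)$ restricted to those trivial on the centre, understand the distribution of characteristic polynomials with no $\mathbb{F}_q$-root (so as to control $|D|$ and the weights $c_C|C|$), and presumably invoke Deligne--Lusztig theory together with character bounds of the shape $|\chi_\rho(g)| \leq \dim(\rho)^{1-\delta}$ for elements $g$ that are suitably generic in $D$, in the spirit of Gluck and of Liebeck--Shalev. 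This is the analogue of the ``intricate analysis of the representations of $S_n$'' in \cite{efp}, but the far more complicated combinatorics of $\GL_{n+1}(q)$-characters makes it substantially more delicate, and this is where I expect the main obstacle to lie. The characterisation of the extremal families should then follow as in the permutation case: the equality clause of Theorem \ref{thm:delsarte-hoffman} forces $1_{\f} - (|\f|/|G|)\mathbf{f}$ to lie in $W \oplus W'$, so $1_{\f}$ is a linear combination of the constants, functions of the image of a single projective point, and functions of the image of a single hyperplane; a short combinatorial argument exploiting that $\f$ is point-intersecting of maximum size would then force $\f$ to be a coset of a point-stabiliser or of a hyperplane-stabiliser.
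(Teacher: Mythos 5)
This statement is a conjecture, not a theorem: the paper offers no proof of it and explicitly records that it remains open for all $n>2$ (the cases $n=1,2$ being the two Meagher--Spiga theorems quoted just above it). So there is no proof of the paper's to compare yours against; you have correctly written a research proposal rather than a proof, and the spectral set-up you describe (the normal Cayley graph on the fixed-point-free elements, a weighted class-function pseudoadjacency matrix, the Delsarte--Hoffman bound) is indeed the standard and natural line of attack, matching what Meagher and Spiga did for $n=1,2$.

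That said, your proposal slightly misplaces where the open difficulty lies. The upper bound $|\f|\leq |G|/N$ is already a theorem: $\text{PGL}(n+1,\mathbb{F}_q)$ acts $2$-transitively on projective points, so Theorem \ref{thm:mst} (Meagher--Spiga--Tiep) gives the bound for every $n$. The genuinely open content of Conjecture \ref{conj:ms} is the characterisation of the extremal families, and that is exactly the part you dispatch as ``a short combinatorial argument''. Two real obstacles sit there. First, for the equality analysis you need the minimum eigenvalue of your matrix $M$ to be attained \emph{only} on the standard isotypic component, a strictly stronger spectral statement than the inequality needed for the bound. Second, and more seriously, passing from ``$1_{\f}-(|\f|/|G|)\mathbf{f}$ lies in the $W$-isotypic component of $\mathbb{C}[G]$'' to ``$\f$ is a coset of a point- or hyperplane-stabiliser'' is the analogue of the Cameron--Ku / Ellis--Friedgut--Pilpel characterisation of Boolean functions whose nontrivial Fourier support lies in the standard module; this required substantial work even for $S_n$ and is not known for $\text{PGL}(n+1,\mathbb{F}_q)$. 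A small related imprecision: since an element of $\text{PGL}(n+1,\mathbb{F}_q)$ fixes equally many points and hyperplanes, the two permutation characters coincide and $W\cong W'$ as $G$-modules, so ``$\lambda_W=\lambda_{W'}$'' is not an extra constraint you get to impose; rather, the shifted indicators of both types of extremal coset live in one and the same isotypic component, which is precisely why disentangling the two families in the equality case is delicate.
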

This remains open, to the best of our knowledge, for all $n > 2$.

If $V$ is a finite-dimensional vector space, we let $\text{GL}(V)$ denote the general linear group over $V$, i.e.\ the group of all invertible linear maps from $V$ to itself. We say a family $\f \subset \text{GL}(V)$ is {\em $(t-1)$-intersection-free} if for any $\sigma,\tau \in \f$, the subspace $\{v \in V:\ \sigma(v)=\tau(v)\}$ does not have dimension $t-1$. Using similar techniques to in the proof of Theorem \ref{thm:extremal-forbidden}, together with a hypercontractivity result for Boolean functions on spaces of linear maps, Kindler, Lifshitz and the author recently proved the following \cite{ekl-prep}.
\begin{theorem}[E.-Kindler-Lifshitz, 2021+]
For any $t \in \mathbb{N}$ and any prime power $q$, there exists $n_0 = n_0(q,t) \in \mathbb{N}$ such that the following holds. If $n \in \mathbb{N}$ with $n \geq n_0$, $V$ is an $n$-dimensional vector space over $\mathbb{F}_q$, and $\f \subset \text{GL}(V)$ is $(t-1)$-intersection-free, then
$$|\f| \leq \prod_{i=1}^{n-t}(q^n - q^{i+t-1}).$$
Equality holds only if there exists a $t$-dimensional subspace $U$ of $V$ on which all elements of $\f$ agree, or a $t$-dimensional subspace $A$ of $V^*$ on which all elements of $\{\sigma^*:\ \sigma \in \f\}$ agree.
\end{theorem}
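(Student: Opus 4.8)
Write $\mathrm{Fix}(g):=\{v\in V:\ g(v)=v\}$, and note that two maps $\sigma,\tau\in\GL(V)$ agree on a subspace of dimension exactly $t-1$ if and only if $\dim\mathrm{Fix}(\tau^{-1}\sigma)=t-1$. Hence a $(t-1)$-intersection-free family is precisely an independent set in the normal Cayley graph $\Cay(\GL(V),S_{t-1})$, where $S_{t-1}:=\{g\in\GL(V):\ \dim\mathrm{Fix}(g)=t-1\}$ is conjugation-invariant. The conjectured extremal families are the \emph{$t$-stars} $\{\sigma:\ \sigma|_{U}=\phi\}$, for a $t$-dimensional subspace $U\leq V$ and a linear injection $\phi:U\to V$, and the \emph{$t$-co-stars} $\{\sigma:\ \sigma^{*}|_{A}=\psi\}$, for a $t$-dimensional subspace $A\leq V^{*}$ and a linear injection $\psi:A\to V^{*}$ (these are, respectively, the pointwise stabiliser of a $t$-dimensional subspace of $V$ and of a $t$-dimensional subspace of $V^{*}$); an elementary count shows each has size exactly $\prod_{i=1}^{n-t}(q^{n}-q^{i+t-1})$, and each is in fact $t$-intersecting, hence $(t-1)$-intersection-free. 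The plan is to run the \emph{junta method}, exactly as in the proof of Theorem~\ref{thm:extremal-forbidden} for permutations: first approximate an arbitrary $(t-1)$-intersection-free $\f$ by a $t$-intersecting ``junta'' of bounded complexity --- a union of boundedly many $t'$-stars and $t'$-co-stars with $t'\leq C$ --- and then upgrade this approximation to an exact statement by a short combinatorial perturbation argument.

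The heart of the proof, and the main obstacle, is the junta-approximation step: for every $r\in\mathbb{N}$ there is $C=C(q,r,t)$ such that every $(t-1)$-intersection-free $\f\subseteq\GL(V)$ satisfies $|\f\setminus\j|\leq C\,|\GL(V)|/q^{rn}$ for some $t$-intersecting $C$-junta $\j$. I would follow the three-stage scheme of \cite{el}: (i) a \emph{weak regularity lemma} for subsets of $\GL(V)$, producing a bounded-complexity junta $\j_{0}$ such that on all but a $q^{-\Omega(n)}$-fraction of the stars of $\j_{0}$ the family $\f$ behaves weakly pseudorandomly; (ii) a purely combinatorial \emph{bootstrapping} step, using the $(t-1)$-intersection-free hypothesis, that amplifies this weak pseudorandomness of the leftover of $\f$ into a much stronger, ``global'' pseudorandomness; and (iii) a short \emph{spectral} argument showing that two highly pseudorandom fractional families cannot be cross-$(t-1)$-intersection-free unless both are negligible --- here only boundedly many ``global'' irreducible characters contribute to the relevant quadratic form, so one needs merely crude estimates on the eigenvalues $\lambda_{\rho}=(\dim\rho)^{-1}\sum_{g\in S_{t-1}}\chi_{\rho}(g)$ of $\Cay(\GL(V),S_{t-1})$, given by the Frobenius formula (\ref{eq:frobenius}), rather than their exact values. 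The new analytic input this requires is a hypercontractive (``global level-$d$'') inequality for Boolean functions on spaces of linear maps over $\mathbb{F}_{q}$, playing the role of the hypercontractivity on $S_{n}$ used in \cite{el}; establishing this inequality, and reconciling it with the several combinatorial notions of pseudorandomness, is where I expect the real work to lie. A genuinely two-sided feature, absent in the $S_{n}$ case, is that $V$ and $V^{*}$ must be tracked simultaneously, since $t$-stars and $t$-co-stars are now distinct; every notion of pseudorandomness and every step of the regularity argument must accordingly be made symmetric under $\sigma\mapsto\sigma^{*}$.

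Next I would establish a finite, combinatorial classification of bounded-complexity $t$-intersecting juntas: for $n$ large in terms of $q,t,C$, any $t$-intersecting $C$-junta $\j\subseteq\GL(V)$ has $|\j|\leq(1+o(1))\prod_{i=1}^{n-t}(q^{n}-q^{i+t-1})$, the bound being attained only by single $t$-stars and single $t$-co-stars, and any $t$-intersecting $C$-junta not contained in a single $t$-star or $t$-co-star has size smaller than $\prod_{i=1}^{n-t}(q^{n}-q^{i+t-1})$ by a constant factor. This is the linear-algebraic analogue of the Hilton--Milner/Frankl phenomenon, and should follow from inclusion--exclusion over the boundedly many stars and co-stars comprising $\j$, together with elementary estimates for how the $t$-intersection condition constrains which stars and co-stars can coexist.

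Finally, to pass from approximation to the exact bound, let $\f$ be $(t-1)$-intersection-free of maximum size. By the two preceding steps, $\f$ is, up to a $q^{-\Omega(rn)}$-fraction, contained in a single $t$-star or $t$-co-star, say a $t$-star $\mathcal{S}$ with $|\f\setminus\mathcal{S}|\leq C\,|\GL(V)|/q^{rn}$. It remains to show $\f\subseteq\mathcal{S}$: I would run a ``shifting''/bootstrapping argument on the few elements of $\f\setminus\mathcal{S}$. Given such a $\sigma$, a rank computation inside $\mathcal{S}$ shows that unless $\sigma\in\mathcal{S}$ there is $\tau\in\mathcal{S}$ with $\dim\mathrm{Fix}(\tau^{-1}\sigma)=t-1$; moreover the set of such good $\tau$ is all but a $q^{-\Omega(n)}$-fraction of $\mathcal{S}$, so (taking $r$ large) it meets $\f$, contradicting the hypothesis on $\f$. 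Hence $\f\subseteq\mathcal{S}$, which gives $|\f|\leq\prod_{i=1}^{n-t}(q^{n}-q^{i+t-1})$ with equality exactly when $\f$ is a $t$-star or a $t$-co-star, as claimed.
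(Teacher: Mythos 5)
Your proposal takes essentially the same route the paper indicates: the survey cites this result from a paper in preparation and describes its proof only as using the techniques of Theorem \ref{thm:extremal-forbidden} (weak regularity, combinatorial bootstrapping of pseudorandomness, and a spectral step for pseudorandom families) together with a hypercontractivity result for Boolean functions on spaces of linear maps, which is precisely the scheme you outline. You also correctly identify the two genuinely new difficulties relative to the symmetric-group case --- establishing the hypercontractive inequality over $\mathbb{F}_q$ and tracking $V$ and $V^*$ symmetrically --- so the plan is faithful to the intended argument.
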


The following elegant general result was obtained by Meagher, Spiga and Tiep in 2015 \cite{mst-2-trans}. 
\begin{theorem}[Meagher-Spiga-Tiep, 2015]
\label{thm:mst}
Let $H \leq S_n$ be a 2-transitive permutation group of degree $n$, and let $\f \subset H$ be an intersecting family of permutations in $H$. Then $|\f| \leq |H|/n$.
\end{theorem}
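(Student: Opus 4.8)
The plan is to recast the statement as a bound on the independence number of a normal Cayley graph, and then apply the ratio (Hoffman) bound. Let $S \subseteq H$ be the set of fixed-point-free elements of $H$ (the ``derangements in $H$''), and put $\Gamma = \Cay(H,S)$. Being fixed-point-free is a conjugation-invariant property, so $S$ is a union of conjugacy classes of $H$, with $\Id \notin S$ and $S^{-1}=S$; hence $\Gamma$ is a normal Cayley graph, and it is $d$-regular with $d := |S| \geq 1$ (for $n \geq 2$ a transitive group contains a derangement, by Jordan's theorem; the case $n=1$ is trivial). Two permutations $\sigma,\tau \in H$ agree at some point of $[n]$ if and only if $\sigma\tau^{-1} \notin S$, so an intersecting family $\f \subseteq H$ is exactly an independent set in $\Gamma$. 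It therefore suffices to show $\alpha(\Gamma) \leq |H|/n$.

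First I would compute the relevant eigenvalues of $\Gamma$ from Frobenius' formula (\ref{eq:frobenius}): to each irreducible complex character $\chi$ of $H$ is attached the eigenvalue $\lambda_\chi = \frac{1}{\chi(1)}\sum_{s\in S}\chi(s)$. The trivial character gives $\lambda_{\mathbf 1} = d$. Since $H$ is $2$-transitive, its permutation character on $[n]$ splits as $\pi = \mathbf 1 + \chi_0$ with $\chi_0$ irreducible of degree $n-1$; as $\pi(s) = \mathrm{fix}(s) = 0$ for every $s \in S$, we get $\chi_0(s) = -1$ for all such $s$, and hence
$$\lambda_{\chi_0} = \frac{1}{n-1}\sum_{s\in S}(-1) = -\frac{d}{n-1}.$$
If one grants that $-d/(n-1)$ is the \emph{least} eigenvalue of $\Gamma$, then Hoffman's bound (the special case of Theorem~\ref{thm:hoffman} applied to the adjacency matrix of $\Gamma$) yields
$$|\f| = \alpha(\Gamma) \leq \frac{-\lambda_{\min}}{d-\lambda_{\min}}\,|H| = \frac{d/(n-1)}{d + d/(n-1)}\,|H| = \frac{|H|}{n},$$
which is the desired bound.

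The heart of the matter — and the step I expect to be the main obstacle — is to prove that $\lambda_\chi \geq -d/(n-1)$, i.e. $\sum_{s\in S}\chi(s) \geq -\frac{\chi(1)}{n-1}\,d$, for \emph{every} irreducible character $\chi$ of $H$; equivalently, that no irreducible other than $\chi_0$ beats the standard eigenvalue. I do not know a classification-free argument, and the route I would take is to invoke the classification of finite $2$-transitive permutation groups (which rests on CFSG). Such a group is either of affine type — with elementary abelian socle $V$ acting regularly and a point stabiliser transitive on $V\setminus\{0\}$ — or of almost simple type, with socle on the known short list (alternating groups in the natural action, $\mathrm{PSL}_d(q)$ on projective points, and boundedly many further families and sporadic examples). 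In the affine case one can argue fairly uniformly by Fourier analysis on the normal subgroup $V$, computing $\sum_{s\in S}\chi(s)$ in terms of character sums over $V$ and the action of the point stabiliser; in the almost simple case one works through the list, using the available character tables and bounds on character values at fixed-point-free elements (it is enough to bound $\bigl|\sum_{s\in S}\chi(s)\bigr| \leq \frac{\chi(1)}{n-1}\,d$). This case analysis is the technical core.

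Finally, to identify the extremal families one analyses equality in Hoffman's bound: equality forces $1_{\f} - \frac{|\f|}{|H|}\mathbf 1$ to lie in the $\lambda_{\min}$-eigenspace, which by the Frobenius description of the eigenspaces is the $\chi_0$-isotypic component of $\mathbb{C}[H]$. Using $2$-transitivity one identifies this component, modulo the constants, with the span of the coordinate functions $g \mapsto \mathbf{1}[g(i)=j]$ for $i,j \in [n]$; a short combinatorial argument, using that $1_{\f}$ takes values in $\{0,1\}$ and that $\f$ is intersecting, then forces $\f$ to be a single right coset of a point stabiliser $H_i = \{\sigma \in H:\ \sigma(i)=i\}$, which has order exactly $|H|/n$.
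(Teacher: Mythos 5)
Your proposal follows exactly the route the paper describes for Theorem \ref{thm:mst}: pass to the derangement graph of $H$ (a normal Cayley graph), apply the Delsarte--Hoffman bound, use (\ref{eq:frobenius}) to see that the standard character $\chi_0$ contributes the eigenvalue $-d/(n-1)$, and then carry out an intricate character-theoretic analysis (via the classification of $2$-transitive groups) to show no other irreducible gives a smaller eigenvalue. The gap you flag as the ``technical core'' is precisely the part the original Meagher--Spiga--Tiep argument spends most of its effort on, so your outline is faithful to their proof.
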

Theorem \ref{thm:mst} says that the Erd\H{o}s-Ko-Rado property holds for intersecting subsets of 2-transitive permutation groups. The proof uses the Delsarte-Hoffman bound, applied to the derangement graph of $H$ (the normal Cayley graph of $H$ generated by the derangements of $H$), together with (\ref{eq:frobenius}), and an intricate analysis of the character theory of 2-transitive groups.

It would be of interest to determine other (similarly general) sufficient conditions on permutation groups, for the conclusion of Theorem \ref{thm:mst} to hold. It is easy to see that it does not hold in general for transitive primitive permutation groups, e.g.\ by considering the $t$-intersecting family of permutations
$$\{\sigma \in S_n:\ \sigma \text{ has at least }t+i \text{ fixed points in } [t+2i]\},$$
which appears in Conjecture \ref{conj:2t}, for appropriate $n,t,i \in \mathbb{N}$, and viewing $S_n$ as a transitive subgroup of $S_{n(n-1)\ldots(n-t+1)}$ via the action of $S_n$ on $t$-tuples of distinct points. Examples of transitive primitive permutation groups where the Erd\H{o}s-Ko-Rado property (for intersecting subsets) fails by a larger multiplicative factor can be found in \cite{li}. In \cite{li}, it is proved that transitive permutation groups of prime power degree (or of prime power order) satisfy the Erd\H{o}s-Ko-Rado property for intersecting subsets; this resolved a conjecture of Meagher, Razafimahatratra and Spiga \cite{mrs}. It is an interesting open problem, raised in \cite{mrs}, to determine the behaviour of the function
\begin{align*} I(n): = \max\{n|\f|/|H|:\ & H \text{ is a transitive permutation group of degree }n,\\
& \f \subset H \text{ is intersecting}\},\end{align*}
i.e.\ to determine the largest possible multiplicative factor by which the conclusion of Theorem \ref{thm:mst} can fail, if the hypothesis of 2-transitivity is weakened to mere transitivity. It was conjectured in 2020 by Li, Song and Pantangi \cite{lsp-orig} that $I(n) < \sqrt{n}$ all $n$; counterexamples to this (for $n=18,30$) were then given by Meagher, Razafimahatratra and Spiga in \cite{mrs}, after which Li, Song and Pantangi showed in \cite{li} that $I(n) \geq n^{\ln 10 / \ln 30} > n^{0.67699}$ for infinitely many $n$. It would be interesting to determine
$$\limsup_{n \to \infty} \frac{\ln I(n)}{\ln n}.$$

The following elegant conjecture was posed by J\'anos K\"orner in 2009, regarding the natural action of the symmetric group on $t$-element subsets.

\begin{conj}
Let $t \in \mathbb{N}$ and let $\f \subset S_n$ be a family of permutations such that for any $\sigma,\pi \in \f$, there exists a $t$-element set $T \in {[n] \choose t}$ such that $\sigma(T)=\pi(T)$. Then $|\f| \leq (n-t)!$.
\end{conj}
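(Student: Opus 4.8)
The plan is to follow the template that delivered the Deza--Frankl conjecture in \cite{efp} and the Meagher--Spiga--Tiep theorem (Theorem \ref{thm:mst}): recast the question as an independent-set problem in a normal Cayley graph on $S_n$ and attack it with the Delsarte--Hoffman bound (Theorem \ref{thm:delsarte-hoffman}) together with the character theory of $S_n$. Call $\sigma,\pi\in S_n$ \emph{compatible} if $\sigma(T)=\pi(T)$ for some $T\in\binom{[n]}{t}$; equivalently $\sigma^{-1}\pi$ stabilises some $t$-element set setwise, equivalently (a set is setwise-stabilised by a permutation iff it is a union of its cycles) some sub-multiset of the cycle lengths of $\sigma^{-1}\pi$ sums to $t$. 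Let $S\subseteq S_n$ be the set of permutations no partial cycle-length sum of which equals $t$; this is a union of conjugacy classes, so $\Gamma_{n,t}:=\Cay(S_n,S)$ is a normal Cayley graph, and a family $\f$ of the type in the conjecture is exactly an independent set in $\Gamma_{n,t}$. (The clean partitioning argument that handles the $t=1$ case, using the cosets of a cyclic group generated by an $n$-cycle, has no analogue here for $t\ge 2$: it would require a subgroup of $S_n$ acting regularly on $\binom{[n]}{t}$, which almost never exists.) So instead one seeks a pseudoadjacency matrix for $\Gamma_{n,t}$, most naturally a non-negative combination $M=\sum_i c_i\, A(\Cay(S_n,C_i))$ over conjugacy classes $C_i\subseteq S$, whose top eigenvalue $\lambda_1$ and least eigenvalue $\lambda_{\min}$ satisfy $-\lambda_{\min}/(\lambda_1-\lambda_{\min})=(n-t)!/n!$, so that Theorem \ref{thm:delsarte-hoffman} yields $|\f|\le(n-t)!$.

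By Frobenius's formula (\ref{eq:frobenius}) the eigenvalue of such an $M$ on the $\rho$-isotypic subspace is $\lambda_\rho=\sum_i c_i\,\chi_\rho(C_i)/\dim\rho$, so everything reduces to estimating normalised characters $\chi_\rho(\sigma)/\dim\rho$ as $\rho$ runs over the irreducibles of $S_n$ (partitions $\lambda\vdash n$) and $\sigma$ over the relevant classes. The first and decisive step is to guess which irreducibles attain $\lambda_{\min}$: the intended equality statement --- that optimal families are cosets of pointwise stabilisers of $t$-element sets --- combined with the equality clause of Theorem \ref{thm:delsarte-hoffman} forces the ``shifted'' indicator vectors of those families to span the $\lambda_{\min}$-eigenspace, and computing the Fourier transforms of these indicators pins this eigenspace down to a short list of partitions near the top of the dominance order (those with at most $t$ cells outside the first row). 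One then solves for the $c_i$ making $\lambda_\rho$ equal the required common value precisely on this list. The crux --- where I expect essentially all the difficulty to lie --- is to prove that $\lambda_\rho\ge\lambda_{\min}$ for \emph{every} other partition $\lambda\vdash n$ once $n$ is large in terms of $t$: this is a uniform character-ratio bound over all of $\widehat{S_n}$, for a generating set noticeably more intricate than the ``few fixed points'' set of \cite{efp}, so the Murnaghan--Nakayama rule, Roichman-type normalised-character bounds, and a careful ``wide vs.\ tall partition'' split will all be needed, and --- as in \cite{efp} --- there may be several candidate pseudoadjacency matrices, which makes the search harder rather than easier.

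A second, arguably more robust, route mirrors the Ellis--Lifshitz treatment of forbidden intersections for permutations (Theorems \ref{thm:junta-approximation} and \ref{thm:extremal-forbidden}): the junta method. First prove a junta-approximation theorem --- any family $\f$ of the conjectured type with $|\f|\ge\epsilon(n-t)!$ lies, outside a set of size $O(n!/n^r)$, inside a bounded-complexity ``junta'' $\j$ of the same type, where here a junta should mean a family built from a bounded number of stars (its membership determined by the images of a bounded set of points). Then classify the compatible juntas combinatorially --- an easy finite problem --- to see that the only ones of near-maximal density are cosets of stabilisers of $t$-element sets, and finally run a combinatorial perturbation/``bootstrapping'' step to pass from the approximate statement to the exact bound and the exact extremal characterisation. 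The engine for the junta-approximation step would be a weak regularity lemma for families of permutations together with a hypercontractivity input on $S_n$ (or on the homogeneous space $\binom{[n]}{t}$), exactly as in \cite{el}; the main obstacle along this route is choosing the notion of pseudorandomness so that a pair of highly pseudorandom fractional families exhibiting the forbidden configuration can be ruled out by a short spectral argument. Either way one should expect the result only for $n$ large compared with $t$; pushing it down to all $n>ct$ looks beyond current techniques.
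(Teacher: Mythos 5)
There is a concrete problem before any of the analysis starts: the statement you are aiming at, with the bound $(n-t)!$ and extremal families equal to cosets of \emph{pointwise} stabilisers, is not the right target for the setwise condition (the printed bound in the survey is evidently a typo). The setwise stabiliser $\{\sigma \in S_n:\ \sigma([t]) = [t]\}$ satisfies the hypothesis trivially --- every pair of its elements maps $[t]$ to $[t]$ --- and has $t!\,(n-t)!$ elements, so for $t \ge 2$ the bound $|\f| \le (n-t)!$ is simply false; the correct bound is $t!\,(n-t)! = n!/\binom{n}{t}$, with equality for cosets of setwise stabilisers of $t$-sets. This is not cosmetic for your plan: the Delsarte--Hoffman ratio $-\lambda_{\min}/(\lambda_1-\lambda_{\min})$ of any admissible pseudoadjacency matrix is an upper bound for the measure of \emph{every} independent set, hence is at least $1/\binom{n}{t}$, so the linear system you propose to solve --- forcing the ratio to equal $(n-t)!/n!$ --- has no solution. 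Likewise your guess for the $\lambda_{\min}$-eigenspace (all partitions with at most $t$ cells outside the first row) is the one appropriate to the Deza--Frankl \emph{pointwise} problem; for the setwise problem the shifted indicators of the extremal cosets live in $\mathbb{C}\left[{[n] \choose t}\right] \cong \bigoplus_{i=0}^{t} S^{(n-i,i)}$, so only the two-row partitions $(n-i,i)$ with $0 \le i \le t$ should appear.

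With the target corrected, your first route is exactly the strategy of \cite{ellis-setwise}, which is what the survey cites for the known partial result ($n$ sufficiently large depending on $t$): the reduction to an independent set in the normal Cayley graph whose generating set consists of the permutations with no partial cycle-length sum equal to $t$ is correct, and your assessment of where the difficulty lies --- a uniform bound $\lambda_\rho \ge \lambda_{\min}$ over all irreducibles of $S_n$ --- is accurate. But that step is precisely the content of the proof and is not carried out here, so what you have is a programme rather than a proof. For the record, the conjecture for all $n$ (as opposed to $n \ge n_0(t)$) remains open, as you correctly anticipate; it is also known for $t=2$ and all $n$ by Meagher and Razafimahatratra.
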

This was proved by the author in 2012 \cite{ellis-setwise} for all $n$ sufficiently large depending on $t$, using a similar strategy to in the proof of the Deza-Frankl conjecture in \cite{efp}. It was also recently proved for $t=2$ (and all $n$) by Meagher and Razafimahatratra \cite{mr}. It remains open in full generality.

\subsection{Partitions}
In this section, a {\em partition of $[n]$} (into $k$ sets) is a family of (exactly $k$) nonempty sets $\{S_1,\ldots,S_k\}$ such that $S_i \subset [n]$ for all $i$, $S_i \cap S_j = \emptyset$ for all $i \neq j$, and $S_1 \cup S_2 \cup \ldots \cup S_k = [n]$. The sets $S_i$ are called the {\em parts} of the partition.

If $t \in \mathbb{N}$, we say a family of partitions of $[n]$ is {\em $t$-intersecting} if any two partitions in the family have at least $t$ parts in common. If $n \in \mathbb{N}$, we write $\mathcal{B}(n)$ for the set of all partitions of $[n]$; recall that $B_n:= |\mathcal{B}(n)|$ is the $n$th {\em Bell number}.

If $k,n \in \mathbb{N}$ with $k \leq n$, we write $P_{n}^{k}$ for the set of all partitions of $[n]$ into $k$ sets, and for $t \in \mathbb{N}$ with $t \leq k$ we let
$$\mathcal{P}(n,k,t) := \{P \in P_n^k:\ \{\{1\},\{2\},\ldots,\{t\}\} \subset P\}.$$
If $k,n \in \mathbb{N}$ with $k \mid n$, we let $U_n^k$ denote the set of all partitions of $[n]$ into $k$ equal-sized sets, each of size $n/k$, and writing $c=n/k$, for $t \leq k$ we let
$$\mathcal{Q}(n,k,t) := \{P \in U_n^k:\ \{[c],\{c+1,\ldots,2c\},\ldots,\{(t-1)c+1,\ldots,tc\}\} \subset P\}.$$
An element of $U_n^k$ (for some $n,k$) is called a {\em uniform set partition}.

Ku and Renshaw proved the following in \cite{kr}.
\begin{theorem}[Ku-Renshaw, 2008]
\label{thm:kr1}
Let $n \geq 2$. Suppose $\f \subset \mathcal{B}(n)$ is 1-intersecting. Then $|\f| \leq |\mathcal{B}(n-1)|$, and equality holds iff $\f$ consists of all partitions of $[n]$ containing a fixed singleton.
\end{theorem}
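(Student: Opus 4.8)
The plan is to prove the bound by induction on $n$, after disposing of two easy ingredients. For the construction and the trivial half of the characterisation: the family of all partitions of $[n]$ containing a fixed singleton block $\{i\}$ is in bijection with $\mathcal{B}(n-1)$ via $P \mapsto P \setminus \{\{i\}\}$, and any two of its members share the block $\{i\}$; moreover for any $B \subset [n]$ with $|B| \geq 2$ the ``star'' family $\{P \in \mathcal{B}(n) : B \in P\}$ has size $|\mathcal{B}(n-|B|)| < |\mathcal{B}(n-1)|$, so among families all of whose members contain one fixed common block, the singleton stars are the unique extremal ones. What remains is to prove $|\mathcal{F}| \leq |\mathcal{B}(n-1)|$ for an arbitrary $1$-intersecting $\mathcal{F} \subset \mathcal{B}(n)$, and then that equality forces $\mathcal{F}$ to be a singleton star; I expect the latter to need a Hilton--Milner-type refinement and to require $n \geq 3$ (for $n = 2$ the bound $|\mathcal{B}(1)| = 1$ is also met by the family whose unique member is the one-block partition $\{[2]\}$, which is not a singleton star).

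For the inductive step I would single out the ground-set element $n$ and write $\mathcal{F} = \mathcal{A} \sqcup \mathcal{B}$, where $\mathcal{A} = \{P \in \mathcal{F} : \{n\} \in P\}$ and $\mathcal{B} = \mathcal{F} \setminus \mathcal{A}$ consists of the partitions in which $n$ lies in a block of size at least two. Deleting the block $\{n\}$ injects $\mathcal{A}$ into $\mathcal{B}(n-1)$, so $|\mathcal{A}| \leq |\mathcal{B}(n-1)|$; the whole content is that $\mathcal{B}$ cannot push the total above $|\mathcal{B}(n-1)|$. Two structural observations, both immediate from ``$\mathcal{F}$ intersecting'', drive this. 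First, if $P \in \mathcal{A}$ and $P' \in \mathcal{B}$ then $P$ and $P'$ share a block, which cannot be the block containing $n$, hence is a block contained in $[n-1]$; so their restrictions to $[n-1]$ share a block. Second, if $Q$ is a partition of $[n-1]$ with at most two blocks, then at most one member of $\mathcal{F}$ restricts to $Q$ — two members of $\mathcal{B}$ placing $n$ in the two different blocks of such a $Q$ would share no block — and it is only over partitions $Q$ with at least three blocks that several members of $\mathcal{F}$ can restrict to the same $Q$. I would then aim to refine the restriction map $P \mapsto P|_{[n-1]}$ into an injection $\mathcal{F} \hookrightarrow \mathcal{B}(n-1)$, rerouting the ``surplus'' preimages over each $Q$ into partitions of $[n-1]$ obtained from $Q$ by merging (or perhaps splitting) blocks according to a fixed ordering, using the first observation together with the induction hypothesis applied to the sub-families of $\mathcal{F}$ that share a prescribed block to guarantee that enough targets are free.

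The hard part will be exactly this rerouting: the restriction map is badly many-to-one — a partition of $[n-1]$ with $k \geq 3$ blocks has up to $k$ preimages in $\mathcal{F}$ — and a crude union bound over the blocks of a fixed member of $\mathcal{F}$ only yields the useless estimate $\sum_i |\mathcal{B}(n-|B_i|)|$, which can be as large as $n\,|\mathcal{B}(n-1)|$. So one must genuinely exploit the intersecting condition, both between $\mathcal{A}$ and $\mathcal{B}$ and inside $\mathcal{B}$, to certify that the rerouted images are distinct from one another and from the images of $\mathcal{A}$; organising the tie-breaking so that no two of the globally very numerous rerouted partitions collide is the crux and is more delicate than a first attempt suggests. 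I note that the usual Katona-style averaging route is unavailable here, since $\mathcal{B}(n)$ carries no transitive group action to average over, so an inductive/injective argument of this flavour seems to be forced. The equality characterisation for $n \geq 3$ should then follow by tracing the equality cases through the injection — equality forces the surplus to vanish and the block-sharing sub-families to be extremal — which is where the Hilton--Milner-type input is needed.
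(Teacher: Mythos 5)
Your proposal is a plan rather than a proof: the central inequality $|\mathcal{A}|+|\mathcal{B}|\leq |\mathcal{B}(n-1)|$ is exactly the point at which you stop, and the ``rerouting'' you describe is never carried out. The difficulty is real and you have located it correctly: under the restriction map $P \mapsto P|_{[n-1]}$, a partition $Q$ of $[n-1]$ with $k\geq 3$ blocks can have as many as $k+1$ preimages in an intersecting family (namely $Q\cup\{\{n\}\}$ together with the $k$ partitions obtained by inserting $n$ into one block of $Q$; any two of these share at least $k-2\geq 1$ blocks of $Q$, so the intersecting condition does not exclude any of them). To bring the average number of preimages down to one you must exhibit, for every such over-loaded $Q$, a supply of partitions of $[n-1]$ that provably receive no preimage at all, and certify globally that the reassignments never collide with each other or with the images of $\mathcal{A}$. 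Nothing in your two structural observations, nor in the appeal to the induction hypothesis ``applied to the sub-families sharing a prescribed block'', pins this down. As it stands the argument establishes neither the bound nor, a fortiori, the equality characterisation.

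For comparison, the proof of Ku and Renshaw cited in the survey does not proceed by such a restriction-and-reroute induction: it uses a combinatorial shifting/compression strategy in the sense described in the Introduction. One defines compression operators on families of partitions that preserve both the size of the family and the intersecting property, iterates them until the family is stable under all compressions, and then analyses the compressed family directly. If you wish to salvage an inductive approach, you would likely need to first normalise $\mathcal{F}$ by some such compression so that the fibres of the restriction map become controllable; without that, the many-to-one behaviour you flagged is precisely the obstruction. Your side remarks are correct and worth keeping: the singleton stars are indeed the unique extremal stars, and for $n=2$ the family consisting of the single one-block partition also attains the bound, so the equality statement as quoted really does need $n\geq 3$ (or a separate convention for $n=2$) --- but these observations do not touch the main gap.
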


The proof of Theorem \ref{thm:kr1} relies on a strategy of combinatorial shifting/compressions; see our Introduction (Section 1), for a description of this general strategy.

For $t$-intersecting families, Ku and Renshaw \cite{kr} proved the following.

\begin{theorem}[Ku-Renshaw, 2008]
\label{thm:krt}
Let $t \geq 2$ and $n \geq n_0(t)$. Suppose $\f \subset \mathcal{B}(n)$ is $t$-intersecting. Then $|\f| \leq |\mathcal{B}(n-t)|$, and equality holds iff $\f$ consists of all partitions containing $t$ fixed singletons.
\end{theorem}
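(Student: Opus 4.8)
The plan is to use the \emph{junta method}, in the spirit of the argument sketched in Section~\ref{subsection:perms} for permutations: first show that a $t$-intersecting family $\f\subset\mathcal{B}(n)$ is essentially contained in a $t$-intersecting ``junta'' of bounded complexity, then classify such juntas, then upgrade near-optimality to exact optimality by a perturbation argument. Write $\f'=\f'_{a_1,\ldots,a_t}:=\{P\in\mathcal{B}(n):\ \{\{a_1\},\ldots,\{a_t\}\}\subset P\}$ for the conjectured extremal families; each has size exactly $B_{n-t}$, and the relevant ``gap'' is $B_{n-t-1}/B_{n-t}=\Theta(\log n/n)\to 0$, so that forbidding even one larger part is very cheap. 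The three steps are: \textbf{(a)} \emph{junta approximation}: there is $C=C(t)$ such that every $t$-intersecting $\f\subset\mathcal{B}(n)$ differs, in at most $O_t(B_n/n^{\,t+2})$ partitions, from a $t$-intersecting ``partition junta'' $\j$ of complexity $\le C$ (a union of $\le C$ families of the form ``partitions containing all parts of a fixed partial partition on a ground set of size $\le C$''); \textbf{(b)} a classification showing that, for $n$ large depending on $t$, any $t$-intersecting partition junta of complexity $\le C$ has size $\le B_{n-t}$, and that the only ones of size $>(1-c_0)B_{n-t}$, for a suitable $c_0=c_0(t)>0$, are sub-families of some $\f'_{a_1,\ldots,a_t}$; \textbf{(c)} the bootstrapping step.

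For step (a) one runs a weak regularity lemma for families of partitions (it outputs a junta each of whose ``cells'' $\f$ meets in a weakly pseudorandom, or ``uncapturable'', fashion), bootstraps the weak pseudorandomness into a strong one by a purely combinatorial argument, and finally applies a short spectral (Delsarte--Hoffman-type) estimate to a pair of highly pseudorandom fractional cells to conclude that any two cells must share $t$ parts, i.e.\ that the junta is $t$-intersecting. This is the substantial part, and is where the hypothesis $n\ge n_0(t)$ is really used. Step (b) is a finite combinatorial check: a $t$-intersecting junta cannot ``branch'' (for example ``contain $\{1\},\ldots,\{t-1\}$ and one of $\{t\},\{t+1\}$'' is not $t$-intersecting), so the only candidates of almost-maximal size are ``contain $t$ fixed parts'', which is largest when all $t$ parts are singletons, giving $B_{n-t}$.

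For step (c), take a $t$-intersecting $\f$ and let $\j$ be its junta approximation from (a). If $|\j|\le(1-c_0)B_{n-t}$ then $|\f|\le|\j|+O_t(B_n/n^{\,t+2})<B_{n-t}$ and we are done; otherwise, by (b), $\j\subset\f'$ for some $\f'=\f'_{a_1,\ldots,a_t}$, whence $\f\setminus\f'\subset\f\setminus\j$ has size $O_t(B_n/n^{\,t+2})=o(B_{n-t-1})$. If $\f\subset\f'$ then $|\f|\le B_{n-t}$, with equality iff $\f=\f'$. Otherwise pick $P\in\f\setminus\f'$, say $\{a_1\}\notin P$. Every $Q\in\f\cap\f'$ must share $\ge t$ parts with $P$; the only parts of $Q$ meeting $\{a_1,\ldots,a_t\}$ are $\{a_1\},\ldots,\{a_t\}$, and $P$ contains at most $t-1$ of these as parts, so $Q$ must contain a part of $P$ lying inside $[n]\setminus\{a_1,\ldots,a_t\}$. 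A routine count (of partitions of an $(n-t)$-set avoiding all parts of a fixed partial partition, bounded below by the number of partitions with no singleton, which is $\Theta((\log n/n)\,B_{n-t})$) shows that at least $\Theta(B_{n-t-1})$ members of $\f'$ fail this, so $|\f\cap\f'|\le B_{n-t}-\Theta(B_{n-t-1})$ and hence $|\f|\le B_{n-t}-\Theta(B_{n-t-1})+o(B_{n-t-1})<B_{n-t}$ --- a contradiction. Thus equality forces $\f\subset\f'$ and then $\f=\f'_{a_1,\ldots,a_t}$.

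The hard part will be step (a). For sets (Ahlswede--Khachatrian) and permutations (Section~\ref{subsection:perms}) the analogous structural input is already delicate, and partitions are in some ways less friendly: there is no ambient group to exploit (so no Cayley-graph spectral shortcut), and the ``cells'' carry the recursive Bell-number structure rather than a product structure, so identifying the right notions of pseudorandomness and pushing the spectral comparison through is the main technical burden --- and is exactly what forces $n_0(t)$ to be large, the result being genuinely false for small $n$ (compare the remark that no exact analogue of Frankl's stability theorem is known even for $t$-intersecting uniform families).
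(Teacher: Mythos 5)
You have not given a proof of this theorem; you have given a programme whose entire weight rests on step (a), which you do not prove. The junta approximation result for families of partitions that you invoke is not available to quote: Theorem \ref{thm:junta-approximation} is a genuine (and difficult) theorem for the symmetric group, and the analogous machinery for the partition lattice $\mathcal{B}(n)$ — a weak regularity lemma, a bootstrapping argument, and a spectral estimate for pseudorandom fractional cells — does not exist off the shelf, for exactly the reasons you list yourself (no ambient group, no product structure on the cells, the recursive Bell-number structure). Since step (a) is where the hypothesis $n\geq n_0(t)$ and all the real content of the theorem live, and steps (b) and (c) are comparatively routine bookkeeping, what remains is a plausible research plan rather than an argument. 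Note also that the route actually taken by Ku and Renshaw (to whom the survey attributes the result, alongside Theorem \ref{thm:kr1}) is entirely different and far more elementary: it is a combinatorial shifting/compression argument in the sense of the general strategy described in the Introduction, with no regularity or spectral input at all. So even granting step (a), your approach would be a genuinely new (and much heavier) proof, not a reconstruction of the known one.

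There is also a concrete error inside step (c). You lower-bound the number of partitions of an $(n-t)$-set avoiding all parts of a fixed partial partition by the number of partitions with no singleton part; this implication is false, since a singleton-free partition may perfectly well contain a non-singleton part of $P$. The quantity you actually need — that $\Omega(B_{n-t-1})$ members of $\f'$ share no part with $P$ outside $\{a_1,\ldots,a_t\}$ — is true, but requires a different count: for instance, the number of partitions of the $(n-t)$-set containing some fixed part of $P$ of size $a\geq 2$ is $B_{n-t-a}$, so summing over the at most $n$ such parts contributes $O(nB_{n-t-2})=o(B_{n-t-1})$, and the singleton parts of $P$ must then be excluded by a separate (Poisson-type or inclusion--exclusion) estimate. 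As written, the key inequality $|\f\cap\f'|\leq B_{n-t}-\Theta(B_{n-t-1})$ is unsupported.
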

It should be noted that some condition of the form $n \geq n_0(t)$ in Theorem \ref{thm:krt} is necessary, as can be seen by considering the family of all partitions of $[t+4]$ that have at least $t+2$ singletons, for $t \geq 2$ (there are $(t^2+7t+14)/2 \geq 16$ such, whereas $|\mathcal{B}(4)|=B_4=15$). For $t \geq 2$ and small $n$ (depending on $t$), very little seems to be known.

For partitions into a fixed number of sets, P.L. Erd\H{o}s and L.A. Sz\'ekely \cite{es} proved the following.
\begin{theorem}[P.L. Erd\H{o}s, L.A. Sz\'ekely, 1998]
Let $n,k,t \in \mathbb{N}$ with $k \leq n$ and $n \geq n_0(k,t)$, and let $\f \subset P_n^k$ be an intersecting family of partitions of $[n]$ into $k$ sets. Then $|\f| \leq |\mathcal{P}(n,k,t)|$.
\end{theorem}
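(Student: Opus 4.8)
The plan is to run a ``kernel''-type argument in the spirit of Deza--Frankl and Frankl--F\"uredi, but --- because a part of a partition can be \emph{prescribed} very cheaply --- with no appeal to the sunflower lemma, the whole thing reducing to counting. (I read the hypothesis as the only sensible one, that $\f\subset P_n^k$ is \emph{$t$-intersecting}: any two of its partitions share at least $t$ parts, with $t<k\le n$; for $t\ge2$ the bound $|\f|\le|\mathcal P(n,k,t)|$ is false for merely $1$-intersecting families, e.g.\ $\{P\in P_n^k:\{1\}\in P\}$.) Write $S(m,j):=|P_m^j|$ for the Stirling number of the second kind. I will use three elementary facts: (a) for pairwise disjoint nonempty $S_1,\dots,S_r$ with $r\le k$, the number of $P\in P_n^k$ having all the $S_i$ among its parts equals $S(n-\sum_i|S_i|,\,k-r)$; (b) $S(m,j)$ is non-decreasing in $m$ and $S(m+1,j)\ge jS(m,j)$; (c) $S(m,j)=(1+o_m(1))j^m/j!$ for fixed $j$, so that $k\,S(n-t-1,k-t-1)=o\bigl(S(n-t,k-t)\bigr)$ once $k-t\ge2$. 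I would dispose of the case $k=t+1$ at once (two distinct partitions into $t+1$ parts share at most $t-1$ parts, so $|\f|\le1=S(n-t,1)$), then assume $k-t\ge2$ and, for contradiction, $|\f|>S(n-t,k-t)$, aiming to force $|\f|\le S(n-t,k-t)$ provided $n\ge n_0(k,t)$.

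\emph{Step 1 (a popular bounded kernel).} Fix any $P_0\in\f$ with parts $T_1,\dots,T_k$. Every $Q\in\f$ shares $\ge t$ parts with $P_0$, hence contains some $t$-subset of $\{T_1,\dots,T_k\}$, so by (a),
\[
|\f|\ \le\ \sum_{I\in\binom{[k]}{t}}S\!\Bigl(n-\textstyle\sum_{i\in I}|T_i|,\ k-t\Bigr).
\]
Pick $D=D(k,t)$ (using (b)) so that $S(n-D-1,k-t)\le\tfrac12\binom{k}{t}^{-1}S(n-t,k-t)$ for all large $n$; then the $t$-subsets $I$ with $\sum_{i\in I}|T_i|\ge D+1$ contribute at most $\tfrac12 S(n-t,k-t)$ in all, so the ``light'' ones --- those with every $T_i$, $i\in I$, of size $\le D$ --- contribute more than $\tfrac12 S(n-t,k-t)$, and as there are $\le\binom{k}{t}$ of them some light $I^{*}$ has $|\f_{*}|>\beta:=\tfrac12\binom{k}{t}^{-1}S(n-t,k-t)$, where $\f_{*}:=\{Q\in\f:T_i\in Q\ \forall i\in I^{*}\}$. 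Set $\{Y_1,\dots,Y_t\}:=\{T_i:i\in I^{*}\}$: $t$ disjoint sets each of size $\le D$, lying in the parts of every member of $\f_{*}$.

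\emph{Step 2 (the kernel is global; conclusion).} Suppose some $P'\in\f$ omits one of $Y_1,\dots,Y_t$ from its parts. Each $Q\in\f_{*}$ has parts $Y_1,\dots,Y_t$ together with $k-t$ others partitioning $R:=[n]\setminus\bigcup_jY_j$; since $Q,P'$ share $\ge t$ parts but $P'$ contains at most $t-1$ of the $Y_j$, at least one of those $k-t$ other parts of $Q$ is also a part of $P'$, hence is one of the $\le k$ parts of $P'$ contained in $R$. So $\f_{*}$ is covered by $\le k$ families $\{Q\in P_n^k:Y_1,\dots,Y_t,C\in Q\}$ with $\emptyset\ne C\subseteq R$, each of size $\le S(n-t-1,k-t-1)$ by (a); thus $|\f_{*}|\le k\,S(n-t-1,k-t-1)=o(S(n-t,k-t))<\beta$ for $n\ge n_0(k,t)$, contradicting $|\f_{*}|>\beta$. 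Therefore \emph{every} $P\in\f$ has $Y_1,\dots,Y_t$ among its parts, whence by (a)
\[
|\f|\ \le\ S\!\Bigl(n-\textstyle\sum_j|Y_j|,\ k-t\Bigr)\ \le\ S(n-t,k-t)=|\mathcal P(n,k,t)|
\]
(using $|Y_j|\ge1$), the desired contradiction; and in the equality case $\sum_j|Y_j|=t$ forces each $Y_j$ to be a singleton and $\f$ to be all of $\{P\in P_n^k:Y_1,\dots,Y_t\in P\}$, i.e.\ a relabelling of $\mathcal P(n,k,t)$.

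The hard part is Step 2, and more precisely why it even \emph{can} work: Step 1 alone produces only a kernel shared by a constant fraction of $\f$, which would give the bound merely up to a factor $\binom{k}{t}$; pinning $|\f|$ to \emph{exactly} $S(n-t,k-t)$ requires upgrading to a kernel shared by \emph{all} of $\f$, and the leverage for that is the exponential gap between $S(\cdot,k-t)\asymp(k-t)^{\,n}$ and $S(\cdot,k-t-1)\asymp(k-t-1)^{\,n}$: ``spending'' one of the $k$ parts of the rogue $P'$ costs an exponential factor. This is also exactly why $n\ge n_0(k,t)$ cannot be dropped --- for small $n$ the gap has not yet taken hold, and there are ``pigeonhole'' $t$-intersecting families (not of the form $\mathcal P(n,k,t)$) that beat it. If one wished to avoid the Stirling asymptotics altogether, an alternative would be to left-compress $\f$ towards partitions with many singleton parts and then induct on $n$, imitating the compression proofs of the Ku--Renshaw theorems quoted above; but the counting route sketched here looks shorter and is where I would invest the effort.
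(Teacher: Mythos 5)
The survey only states this result and cites Erd\H{o}s--Sz\'ekely \cite{es}; it contains no proof for you to be compared against, so I have assessed your argument on its own terms. You are right that the statement as printed must be read as ``$t$-intersecting'' (otherwise the family of all partitions containing the part $\{1\}$ already violates the bound for $t\geq 2$), and with that reading your proof is correct. The case split at $k=t+1$, the counting identity (a), the recursion $S(m+1,j)\geq jS(m,j)$ used to choose $D(k,t)$, and the asymptotic separation $k\,S(n-t-1,k-t-1)=o(S(n-t,k-t))$ for $k-t\geq 2$ all check out; Step 1 correctly extracts a kernel $Y_1,\dots,Y_t$ of bounded total size owned by more than a $\tfrac12\binom{k}{t}^{-1}$-fraction of $\f$, and Step 2 correctly shows that any member of $\f$ missing one of the $Y_j$ would force $\f_*$ into at most $k$ subfamilies each of the exponentially smaller size $S(n-t-1,k-t-1)$, so the kernel is global and $|\f|\leq S(n-\sum_j|Y_j|,k-t)\leq S(n-t,k-t)=|\mathcal{P}(n,k,t)|$. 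This is very much in the spirit of the Deza--Erd\H{o}s--Frankl kernel arguments the survey alludes to for the uniform set case, and your observation that no sunflower lemma is needed here (because prescribing a whole part costs only a bounded amount) is the right structural point. Two minor remarks: your closing equality-case sentence needs the argument rerun with the non-strict hypothesis $|\f|\geq S(n-t,k-t)$ (and uses strict monotonicity of $S(\cdot,j)$ for $j\geq 2$), but the theorem as stated does not claim uniqueness so nothing is at stake; and you should record explicitly that the threshold in your choice of $D$, namely $n\geq D+1+k-t$, together with the threshold implicit in fact (c), depends only on $k$ and $t$, which is what licenses the hypothesis $n\geq n_0(k,t)$.
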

As with the case of Theorem \ref{thm:krt}, very little seems to be known for small $n$.

For families of uniform set-partitions, Meagher and Moura proved the following in \cite{mm}.
\begin{theorem}[Meagher-Moura, 2005]
Let $n,k \in \mathbb{N}$ with $k \mid n$, and let $\f \subset U_n^k$ be an intersecting family of partitions of $[n]$ into $k$ equal-sized sets. Then $|\f| \leq |\mathcal{Q}(n,k,1)|$, with equality iff $\f$ is equal to $\mathcal{Q}(n,k,1)$ up to a permutation of $[n]$.
\end{theorem}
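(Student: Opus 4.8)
The plan is to use the ``averaging over a transitively-moved substructure'' template from the Introduction, with the substructures supplied by Baranyai's theorem on $1$-factorizations of complete uniform hypergraphs. Write $c = n/k$. First I would record the two quantities that have to match: $|U_n^k| = n!/((c!)^k\, k!)$, while $\mathcal{Q}(n,k,1)$ is the set of partitions of $[n]$ into $k$ equal blocks containing a fixed $c$-set, so $|\mathcal{Q}(n,k,1)| = (n-c)!/((c!)^{k-1}(k-1)!)$, and a one-line computation gives $|\mathcal{Q}(n,k,1)|/|U_n^k| = k/\binom{n}{c}$. Thus it suffices to show that any block-sharing (intersecting) family $\mathcal{F} \subseteq U_n^k$ satisfies $|\mathcal{F}| \leq (k/\binom{n}{c})\,|U_n^k|$.

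Now the key substructure. Since $c$ divides $n$ (as $n = ck$), Baranyai's theorem says that the complete $c$-uniform hypergraph on $[n]$ has a $1$-factorization: the $\binom{n}{c}$ many $c$-subsets of $[n]$ can be partitioned into $\binom{n}{c}/k$ ``parallel classes'', each of which is itself a partition of $[n]$ into $k$ blocks of size $c$, hence an element of $U_n^k$. Let $\mathcal{S} \subseteq U_n^k$ be the resulting set of $\binom{n}{c}/k$ uniform partitions; by construction, distinct members of $\mathcal{S}$ share no block, so an intersecting family meets $\mathcal{S}$ --- and likewise any translate $g\mathcal{S}$ with $g \in S_n$ --- in at most one partition. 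Next I average over the $S_n$-orbit of $\mathcal{S}$: since $S_n$ acts transitively on $U_n^k$, every $P \in U_n^k$ lies in the same number $N$ of translates $g\mathcal{S}$. Double-counting incidences $(g\mathcal{S},P)$ with $P \in \mathcal{F} \cap g\mathcal{S}$ gives $N|\mathcal{F}| = \sum_{g\mathcal{S}} |\mathcal{F} \cap g\mathcal{S}| \leq T$, where $T$ is the number of distinct translates; counting all incidences $(g\mathcal{S},P)$ with $P \in g\mathcal{S}$ gives $T|\mathcal{S}| = N|U_n^k|$. Dividing, $|\mathcal{F}| \leq T/N = |U_n^k|/|\mathcal{S}| = (k/\binom{n}{c})\,|U_n^k| = |\mathcal{Q}(n,k,1)|$, which is the asserted bound. (Pleasingly, one never needs the value of $N$.)

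For the characterisation of equality, observe that $|\mathcal{F}| = |\mathcal{Q}(n,k,1)|$ forces $|\mathcal{F} \cap g\mathcal{S}| = 1$ for \emph{every} $g \in S_n$ and --- crucially --- for \emph{every} choice of Baranyai $1$-factorization $\mathcal{S}$. I would leverage this rigidity together with the abundance of $1$-factorizations: fixing some $P_0 \in \mathcal{F}$ with a block $B$, one may choose many distinct $1$-factorizations having $P_0$ as one of their parallel classes (Baranyai's theorem still applies after deleting a prescribed parallel class, since $\binom{n}{c}-k$ is again divisible by $k$), each of which can therefore contain no other member of $\mathcal{F}$; by letting the factorization vary through a prescribed $c$-set disjoint from $B$, one deduces that the block $B$ must lie in every member of $\mathcal{F}$. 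Hence $\mathcal{F}$ is contained in --- and, by maximality of the star, equal to --- the family of all uniform partitions containing $B$, which is $\mathcal{Q}(n,k,1)$ up to a permutation of $[n]$.

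I expect the inequality to be essentially routine once Baranyai's theorem is invoked: it is just the Katona-style averaging, with $\mathcal{S}$ chosen so that the trivial in-substructure bound is exactly tight. The main obstacle is the equality case, where one has to convert the very rigid condition ``$\mathcal{F}$ meets every translate of every Baranyai factorization in exactly one partition'' into the clean statement ``some block is common to all of $\mathcal{F}$''; making this precise requires a careful combinatorial argument exploiting the flexibility in choosing $1$-factorizations, and quite possibly a separate ad hoc treatment when $c$ or $k$ is very small (so that there are too few factorizations to manoeuvre with).
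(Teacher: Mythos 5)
The survey only states this theorem with a pointer to \cite{mm} and gives no proof, so there is nothing in the paper itself to compare against; judged on its own terms, your argument for the inequality is correct and is essentially the standard (Meagher--Moura) route, i.e.\ exactly the ``averaging over a transitively-moved substructure'' template from the Introduction. Baranyai's theorem (applicable since $c=n/k$ divides $n$) gives a spread $\mathcal{S}$ of $\binom{n}{c}/k=\binom{n-1}{c-1}$ pairwise block-disjoint uniform partitions, an intersecting family meets each $S_n$-translate of $\mathcal{S}$ in at most one member, and averaging gives $|\f|\leq |U_n^k|/|\mathcal{S}|=(k/\binom{n}{c})|U_n^k|=|\mathcal{Q}(n,k,1)|$; your ratio computation is right. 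One small tidiness point: sum over group elements $g\in S_n$ rather than over distinct translates --- then the fact that every $P\in U_n^k$ is covered equally often is an immediate consequence of transitivity, and you avoid having to think about the setwise stabiliser of $\mathcal{S}$.

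The genuine gap is the equality case. First, ``Baranyai's theorem still applies after deleting a prescribed parallel class'' is not Baranyai's theorem: once a perfect matching is removed, the $c$-uniform hypergraph is no longer complete, and the divisibility of $\binom{n}{c}-k$ by $k$ does not by itself yield a $1$-factorisation of what remains (nor, equivalently, a factorisation of $K_n^{(c)}$ containing a prescribed parallel class). This may well be true, but it needs the stronger flow-based form of Baranyai's argument or a separate construction, and as written it is an unproved assertion on which the whole uniqueness argument rests. Second, even granting an abundant supply of spreads through a fixed $P_0$, the passage from ``$\f$ meets every spread in exactly one partition'' to ``some block lies in every member of $\f$'' is never actually carried out: ``letting the factorization vary through a prescribed $c$-set disjoint from $B$'' does not visibly isolate a common block, and you yourself concede that small $c$ or $k$ may need ad hoc treatment (for $k=2$, for instance, distinct uniform partitions are automatically block-disjoint and the whole statement degenerates). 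So the upper bound is solid, but the characterisation of equality --- the delicate half of the theorem --- remains a plan rather than a proof.
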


For $t$-intersecting families of uniform set partitions, they proved the following \cite{mm}.

\begin{theorem}[Meagher-Moura, 2005]
Let $t \in \mathbb{N}$ If $k \mid n$, $c: = n/k$ and $n \geq n_0(k,t)$ or ($n \geq k(t+2)$ and $n \geq n_1(t,c)$) and $\f$ is a $t$-intersecting family of partitions of $[n]$ into $k$ disjoint sets, then $|\f| \leq |\mathcal{Q}(n,k,t)|$, with equality iff $\f$ is equal to $\mathcal{Q}(n,k,t)$ up to a permutation of $[n]$.
\end{theorem}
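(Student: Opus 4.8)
The plan is to handle the two stated regimes of $n$ separately, as in the original argument of Meagher and Moura \cite{mm}, starting from a common spectral reformulation. First I would identify the set $U_n^k$ of uniform set-partitions with the homogeneous space $S_n/(S_c\wr S_k)$, and introduce the ``partition Kneser graph'' $\Gamma=\Gamma_{n,k,t}$ on vertex set $U_n^k$, in which $P$ is joined to $Q$ whenever $P$ and $Q$ share fewer than $t$ common parts. A $t$-intersecting family is precisely an independent set in $\Gamma$, and $\Gamma$ is vertex-transitive, so the strategy is to apply the Delsarte--Hoffman bound (Theorem \ref{thm:delsarte-hoffman}) to a well-chosen pseudoadjacency matrix. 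The matrices $A_0,A_1,\ldots,A_k$, where $A_j$ joins two uniform partitions sharing exactly $j$ parts, generate a commutative association scheme (the ``uniform partition scheme''), whose common eigenspaces are the isotypic components of $\mathbb{C}[U_n^k]$ under the $S_n$-action; these, and the corresponding eigenvalues, are governed by the decomposition of the plethysm $h_k[h_c]$ into Schur functions.

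For the regime $n\geq n_0(k,t)$ I would take the pseudoadjacency matrix $M=\sum_{j=0}^{t-1}c_jA_j$ and fix the coefficients $c_j$ using the extremal family as a clue, exactly as in Wilson's proof of Theorem \ref{thm:wilson}: the shifted indicator vectors of all the permuted copies of $\mathcal{Q}(n,k,t)$ span a fixed $S_n$-submodule, an initial segment $V_1\oplus\cdots\oplus V_{t-1}$ of the nontrivial eigenspaces of the scheme, and requiring that the Delsarte--Hoffman ratio on each $V_i$ ($1\le i\le t-1$) equal $|\mathcal{Q}(n,k,t)|/|U_n^k|$ determines the $c_j$ up to scaling. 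One then has to verify that, for $n$ large depending on $k$ and $t$, every remaining eigenvalue $\lambda_i$ ($i\geq t$) is at least $\lambda_{t-1}$, so that $\lambda_{\min}=\lambda_{t-1}=\cdots=\lambda_1$ and the bound $|\f|\le|\mathcal{Q}(n,k,t)|$ drops out. I expect this eigenvalue estimate to be the main obstacle: the eigenvalues of the uniform partition scheme are substantially more intricate than those of the Johnson scheme (this is the usual difficulty with plethysm), and one needs asymptotics precise enough to rule out interference from the ``wrong'' eigenspaces. The equality clause of the Delsarte--Hoffman bound then forces $1_\f-\mu(\f)\mathbf{1}$ into $V_1\oplus\cdots\oplus V_{t-1}$; a short combinatorial argument, using $t$-intersection together with maximality to locate $t$ parts common to a dense subfamily of $\f$ and then to all of $\f$, upgrades this to $\f=\mathcal{Q}(n,k,t)$ up to a permutation of $[n]$.

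For the second regime --- $n\geq k(t+2)$, i.e.\ $c\geq t+2$, together with $n\geq n_1(t,c)$ --- the spectral bound above is genuinely weaker (it degrades when $k$ is large), so I would instead argue combinatorially by induction on $n$, in the spirit of the compression proof of the Erd\H{o}s--Ko--Rado theorem (Theorem \ref{thm:ekr}) and Wang's proof of Katona's theorem (Theorem \ref{thm:katona}) given earlier. The hypothesis $c\geq t+2$ is exactly what is needed to define shifting/compression operators on uniform set-partitions: the parts must be large enough that an element can be moved between parts without destroying equal-sizedness or forcing excessive collisions. The plan is then: reduce to a ``shifted'' $t$-intersecting family, split $\f$ according to whether the block containing the element $n$ is a fixed canonical block, peel that block off, and apply the inductive hypothesis to the resulting $(t-1)$- or $t$-intersecting families of uniform partitions on a smaller ground set, checking that the parameters stay inside the stated range at each step. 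A ``kernel'' cleanup --- showing that a near-extremal shifted family must already contain the $t$ canonical blocks $\{[c],\{c+1,\ldots,2c\},\ldots\}$ --- then yields the characterisation of equality.

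Overall, the single hardest ingredient is the least-eigenvalue computation for the uniform partition association scheme in the first regime; once the least eigenvalue and its multiplicity are under control, both the upper bound and (via the Delsarte--Hoffman equality condition plus a combinatorial rigidity step) the uniqueness follow, while the second regime is then a robust but bookkeeping-heavy compression-and-induction argument.
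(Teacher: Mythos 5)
The survey only states this theorem and cites it to Meagher and Moura; their own argument is a purely combinatorial counting one, and neither half of your plan matches it. More importantly, both halves contain gaps that I do not think can be repaired as written.

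In the first regime, the claim that the matrices $A_0,\ldots,A_k$ recording the number of common parts ``generate a commutative association scheme'' is false in general. The orbitals of $S_n$ acting on pairs of uniform partitions are indexed by the full meet pattern of the two partitions, not merely by the number of shared parts, so each $A_j$ is a proper sum of orbital matrices; and $(S_n, S_c\wr S_k)$ is not a Gelfand pair in general (the plethysm $h_k[h_c]$ has multiplicities), so even the full orbital algebra need not be commutative and there is no common eigenbasis labelled by irreducibles. Without that, the Wilson-style recipe of fixing $c_0,\ldots,c_{t-1}$ eigenspace by eigenspace does not get off the ground. Even in the one case where the scheme genuinely exists ($c=2$, perfect matchings), determining the least eigenvalue of the derangement graph was itself a hard problem settled only long after 2005, and the analogous spectral information for general $c$ and $t$ is simply not available; so ``verify that every remaining eigenvalue is at least $\lambda_{t-1}$'' is not a verification step but the entire, currently open, difficulty.

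In the second regime, there is no analogue of the $ij$-compression of the kind you invoke: moving a single element between parts always destroys the equal-size condition, however large $c$ is, and swapping elements between parts does not in general preserve $t$-intersection -- the hypothesis $c\ge t+2$ arises from a counting estimate, not from enabling a shift. Your induction also has a structural flaw: deleting the block containing $n$ from different members of $\f$ yields uniform partitions of different $(n-c)$-element ground sets, so the families you would feed to the inductive hypothesis do not live in a common copy of $U_{n-c}^{k-1}$. A workable elementary argument is instead a direct counting/kernel one: take a maximal $t$-intersecting family, bound the number of uniform partitions that $t$-intersect a fixed $P\in\f$ without containing a given $t$ of its blocks, and use $c\ge t+2$ and $n\ge n_1(t,c)$ to force $t$ common blocks first onto a dense subfamily and then onto all of $\f$, which also yields the uniqueness statement.
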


They also pose (in \cite{mm}) a conjecture for $t$-intersecting families of uniform set partitions which is a natural analogue of Ahlswede and Khachatrian's complete intersection theorem. This conjecture remains completely open, to the best of our knowledge.

A weaker (but very natural) notion of intersecting partitions was considered by Czabarka, P.L. Erd\H{o}s and L.A. Sz\'ekely (see \cite{es,mm}). For $t \in \mathbb{N}$, we say two partitions $P_1$ and $P_2$ of $[n]$ {\em partially $t$-intersect} if there exist parts $S_1 \in P_1$ and $S_2 \in P_2$ such that $|S_1 \cap S_2| \geq t$. (Note that any two partitions of a nonempty set are partially 1-intersecting, so this notion is only interesting for $t \geq 2$.) For $t \geq 2$, we say a family of partitions is {\em partially $t$-intersecting} if any two partitions in the family partially $t$-intersect. Meagher and Moura conjecture the following.
\begin{conj}[Meagher-Moura, 2005]
Let $n,k,t \in \mathbb{N}$ with $t \geq 2$, $k \mid n$ and $n \geq kt$, and let $\f \subset U_n^k$ be a partially $t$-intersecting family of partitions of $[n]$ into $k$ equal-sized sets. Then
$$|\f| \leq |\{P \in U_n^k:\ [t] \text{ is contained in some part of } P\}|.$$
with equality iff $\f$ is equal to the above family up to a permutation of $[n]$.
\end{conj}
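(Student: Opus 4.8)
The plan is to attack the conjecture by the spectral (Delsarte--Hoffman) method, since among the techniques surveyed above it is the one most likely to give an exact bound valid, or nearly valid, over the whole range $n \ge kt$. Write $c = n/k$ for the common size of a part, and let $\Gamma = \Gamma_{n,k,t}$ be the graph with vertex-set $U_n^k$ in which two uniform partitions $P,Q$ are adjacent precisely when they do \emph{not} partially $t$-intersect, i.e.\ when $|S \cap S'| \le t-1$ for every part $S$ of $P$ and every part $S'$ of $Q$. A partially $t$-intersecting family is exactly an independent set in $\Gamma$, so it suffices to prove $\alpha(\Gamma) \le |\mathcal{S}_{[t]}|$, where $\mathcal{S}_{T} := \{P \in U_n^k :\ T \text{ is contained in some part of } P\}$ for $T \in {[n] \choose t}$; the equality clause of Theorem~\ref{thm:delsarte-hoffman} should then identify the extremal families as exactly the $\mathcal{S}_{T}$. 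The symmetric group $S_n$ acts transitively on $U_n^k$ preserving adjacency, so $\Gamma$ is vertex-transitive; the orbits of $S_n$ on $U_n^k \times U_n^k$ form an association scheme whose classes $A_\theta$ are indexed by the $S_n$-isomorphism type of an ordered pair of uniform partitions, and $\Gamma$ is a union of classes of this scheme.

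The heart of the argument would be to construct a pseudoadjacency matrix $M$ for $\Gamma$ of the form $M = \sum_\theta c_\theta A_\theta$, the sum being over those classes $A_\theta$ contained in $E(\Gamma)$ (i.e.\ consisting of pairs that do not partially $t$-intersect), with the real coefficients $c_\theta$ chosen and symmetrised so that the Delsarte--Hoffman bound (\ref{eq:dhbound}) applied to $M$ evaluates to $|\mathcal{S}_{[t]}|/|U_n^k|$. As in Wilson's proof of Theorem~\ref{thm:wilson} and the Friedgut--Pilpel proof of the Deza--Frankl conjecture, the decisive clue for choosing the $c_\theta$ is the equality statement of Theorem~\ref{thm:delsarte-hoffman}: the shifted indicator vectors $1_{\mathcal{S}_{T}} - \tfrac{|\mathcal{S}_T|}{|U_n^k|}\mathbf{f}$ ($T \in {[n] \choose t}$) must all be $\lambda_{\min}$-eigenvectors of $M$. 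These vectors span an explicit $S_n$-submodule $W$ of $\mathbb{R}[U_n^k]$ --- informally the ``low-degree'' part, cut out by the conditions ``$T$ lies in a part'' --- so one imposes that $M$ act as the single scalar $\lambda_{\min}$ on every isotypic component inside $W$, as $\lambda_1>0$ on the constants, and checks that on the remaining isotypic components of $\mathbb{R}[U_n^k]$ the eigenvalue of $M$ is $\ge \lambda_{\min}$ for all $n \ge kt$. These linear constraints should determine $M$ up to an irrelevant scaling.

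The principal obstacle is that the representation theory needed to complete this last verification is much heavier here than for sets or permutations. The module $\mathbb{R}[U_n^k]$ is the permutation module $\operatorname{Ind}_{S_c \wr S_k}^{S_n} \mathbf{1}$, whose decomposition into $S_n$-irreducibles has multiplicities given by plethysm coefficients $\langle s_\lambda,\ h_k[h_c]\rangle$; these are not known in closed form in general, and since $h_k[h_c]$ is not multiplicity-free the association scheme above is non-commutative, so even writing down the spectrum of a combination of the $A_\theta$ is delicate. (The ``type'' $\theta$ of a pair of uniform partitions is itself a contingency-table-like object, so the scheme has many classes.) For this reason one would probably first settle the conjecture for $n$ large compared with $k$ and $t$ --- where the delta-system/kernel method of Frankl and F\"uredi, or a junta-approximation argument with a suitable notion of pseudorandomness as in Section~\ref{subsection:perms}, should reduce the problem to finitely many configurations and expose the extremal family directly --- and then try to extend the spectral bound to all $n \ge kt$, handling the bounded range $kt \le n \le n_0(k,t)$ separately. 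Here a secondary difficulty appears: the $ij$-compressions used by Ku--Renshaw and by Erd\H{o}s--Sz\'ekely for \emph{general} set partitions do not obviously preserve the constraint that all parts have equal size, so a new compression operator (or a different reduction) would be needed in the uniform setting. Finally, the characterisation of equality should follow either from the equality clause of Theorem~\ref{thm:delsarte-hoffman} --- determining which $0/1$ vectors lie in $W \oplus \langle \mathbf{f}\rangle$, together with a short perturbation/stability step --- or directly from the kernel-method analysis in the large-$n$ regime.
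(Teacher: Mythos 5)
You have written a research programme, not a proof, and in fact there is nothing in the paper to compare it against: the statement is the Meagher--Moura conjecture on partially $t$-intersecting uniform set partitions, and the survey states explicitly that it ``remains completely open, to the best of our knowledge.'' Your proposal correctly identifies the natural line of attack (reduce to an independence-number bound in the non-partial-$t$-intersection graph, build a pseudoadjacency matrix from the orbitals of $S_n$ on $U_n^k \times U_n^k$, use the equality clause of the Delsarte--Hoffman bound to pin down the coefficients), and it also correctly identifies why this is hard: the permutation module $\mathrm{Ind}_{S_c \wr S_k}^{S_n}\mathbf{1}$ is not multiplicity-free, so the orbital scheme is non-commutative, the $A_\theta$ are not simultaneously diagonalisable, and the spectral data is governed by plethysm coefficients with no closed form. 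But every substantive step is deferred: you never construct $M$, never verify that the remaining eigenvalues exceed $\lambda_{\min}$, never carry out the kernel/junta reduction for large $n$, and never produce the compression operator you acknowledge is missing in the uniform setting.

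Beyond the incompleteness, there is a gap in the logic of the plan itself that you should flag: there is no a priori guarantee that the ratio bound is even \emph{capable} of being tight here. For the method to work one needs a pseudoadjacency matrix supported on non-partially-$t$-intersecting pairs whose extreme eigenvalues satisfy $-\lambda_{\min}/(\lambda_1-\lambda_{\min}) = |\mathcal{S}_{[t]}|/|U_n^k|$ with all other eigenvalues in between; in several analogous problems (e.g.\ Wilson's theorem outside the range $n \ge (k-t+1)(t+1)$, or the Complete Intersection Theorem when $t$-umvirates are not extremal) no such matrix exists and the spectral method provably cannot close the problem. Since the conjecture is asserted for the full range $n \ge kt$, and since even the analogous (non-partial) $t$-intersecting conjecture of Meagher and Moura for uniform partitions is open, you should treat the existence of a suitable $M$ as an open question in its own right rather than a computation to be checked. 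As it stands, the proposal contains no proof of any part of the statement.
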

This conjecture also remains completely open, to the best of our knowledge.

\subsection*{Families of subspaces of a vector space}
If $q$ is a prime power, and $V$ is an $n$-dimensional vector space over the field $\mathbb{F}_q$, we say a family $\f$ of subspaces of $V$ is {\em $t$-intersecting} if $\dim(S \cap S') \geq t$ for all $S,S' \in \f$. Frankl and Wilson \cite{fwvs} considered the question of the maximum possible size of such a family; they gave a complete answer in the following theorem.

\begin{theorem}[Frankl-Wilson, 1986]
\label{thm:fwvs}
Let $q$ be a prime power, let $n,k,t \in \mathbb{N}$ with $n \geq 2k-t$, let $V$ be an $n$-dimensional vector space over $\mathbb{F}_q$, and let $\f$ be a $t$-intersecting family of $k$-dimensional subspaces of $V$. Then
$$|\f| \leq \max\left\{{n-t \brack k-t}_q,{2k-t \brack k}_q\right\}.$$
\end{theorem}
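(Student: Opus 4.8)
The plan is to lift Wilson's eigenvalue proof of Theorem~\ref{thm:wilson} to the Grassmann (``$q$-Johnson'') association scheme --- this settles the range $n\ge 2k$ --- and then to reduce the short remaining range $2k-t\le n<2k$ to that case by passing to annihilators. Fix $q$ and the $n$-dimensional space $V$ over $\mathbb{F}_q$, write ${V\brack k}_q$ for the set of its $k$-dimensional subspaces, and for $0\le j\le k$ let $A_j$ be the $0$--$1$ matrix indexed by ${V\brack k}_q$ with $(A_j)_{S,S'}=1$ iff $\dim(S\cap S')=j$. The $A_j$ are the relation matrices of the Grassmann scheme: they are simultaneously diagonalisable, with common eigenspaces $V_0,V_1,\dots,V_k$, where $V_0$ is spanned by the all-ones vector and, for $i\ge 1$, $V_i$ is the orthogonal complement of $V_0\oplus\cdots\oplus V_{i-1}$ inside the span of the ``contains a fixed $i$-dimensional subspace'' vectors (so $\dim V_i={n\brack i}_q-{n\brack i-1}_q$); the eigenvalue of each $A_j$ on $V_i$ has a known closed form, a $q$-analogue of an Eberlein number (see \cite{delsarte-73}). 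A $t$-intersecting family $\f\subset{V\brack k}_q$ is precisely an independent set in the graph $G$ whose edge set is $\bigcup_{j=0}^{t-1}A_j$, so it suffices to exhibit a pseudoadjacency matrix of $G$ with the right extreme eigenvalues and apply Theorem~\ref{thm:delsarte-hoffman}.

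For $n\ge 2k$, take $M=\sum_{j=0}^{t-1}c_jA_j$, which is automatically supported off the non-edges of $G$. As in Wilson's argument, each ``$t$-umvirate'' $\{S\in{V\brack k}_q:\ B\subset S\}$ ($B$ a fixed $t$-dimensional subspace) must attain equality in Theorem~\ref{thm:delsarte-hoffman}, and the shifted indicator vectors of these families span $V_1\oplus\cdots\oplus V_{t-1}$; this forces the eigenvalue $\lambda_i$ of $M$ on $V_i$ to satisfy $-\lambda_i/(\lambda_0-\lambda_i)={n-t\brack k-t}_q/{n\brack k}_q$ for $1\le i\le t-1$, which determines $c_0,\dots,c_{t-1}$ up to a common scalar --- exactly the situation in Wilson's proof. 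It then remains to verify that, with this choice, $\lambda_1=\cdots=\lambda_{t-1}$ is the minimum eigenvalue of $M$, i.e.\ that the eigenvalue of $M$ on $V_j$ is at least $\lambda_1$ for every $t\le j\le k$; this $q$-binomial inequality, the analogue of the (non-trivial) calculation Wilson had to carry out, is the step I expect to be the main obstacle. I would expect it to hold for every $n\ge 2k$ and to fail for $n<2k$, consistently with the fact that for $n<2k$ the bound ${n-t\brack k-t}_q$ is outright false --- the family of all $k$-subspaces of a fixed $(2k-t)$-dimensional subspace is $t$-intersecting and strictly larger. Granting the inequality, Theorem~\ref{thm:delsarte-hoffman} gives $|\f|\le{n-t\brack k-t}_q$, which for $n\ge 2k$ is the larger of the two quantities in the statement.

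For $2k-t\le n<2k$ we instead want $|\f|\le{2k-t\brack k}_q$. If $n=2k-t$, all of ${V\brack k}_q$ is $t$-intersecting and has size ${n\brack k}_q={2k-t\brack k}_q$, so there is nothing to prove. If $2k-t<n<2k$, pass to annihilators: for $\f^{\perp}:=\{S^{\perp}:S\in\f\}\subset{V^{*}\brack n-k}_q$ one has $\dim(S^{\perp}\cap S'^{\perp})=n-\dim(S+S')=n-2k+\dim(S\cap S')\ge n-2k+t$, so $\f^{\perp}$ is $(n-2k+t)$-intersecting, in the same ambient dimension $n$; write $t':=n-2k+t$ and $k':=n-k$. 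Since $n<2k$ we have $n>2k'$, so the case already proved applies to the triple $(n,k',t')$ and gives $|\f|=|\f^{\perp}|\le{n-t'\brack k'-t'}_q$, and substituting $n-t'=2k-t$ and $k'-t'=k-t$ turns this into ${2k-t\brack k-t}_q={2k-t\brack k}_q$, as required. The one point that must be nailed down carefully is that the eigenvalue inequality of the second paragraph really does hold for \emph{every} $n\ge 2k$, so that the two ranges abut with no gap; if it degraded in a bounded band of $n$ just above $2k$, that band would have to be handled on its own --- for instance by an induction on $n$ based at $n=2k-t$ together with the $S\mapsto S^{\perp}$ reduction, or by a direct counting argument --- and checking that no such gap occurs is the delicate part of writing this up.
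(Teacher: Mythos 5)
Your proposal is correct and follows essentially the same route as the paper: the paper likewise reduces the range $2k-t\le n<2k$ to $n\ge 2k$ by passing to annihilators (the same dimension count $\dim(S^{\perp}\cap T^{\perp})\ge n-2k+t$), and for $n\ge 2k$ it invokes exactly Wilson's pseudoadjacency-matrix strategy in the Grassmann scheme via Theorem~\ref{thm:delsarte-hoffman}. The eigenvalue inequality you flag as the delicate step is indeed the crux, but (as the paper notes) the rapid growth of the Gaussian binomial coefficients makes it hold for all $n\ge 2k$ with no exceptional band, so no separate argument is needed there.
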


\noindent (Here, for integers $0\leq r \leq m$ and $q$ a prime power, we define
$${m \brack r}_q: = \frac{(q^m-1)(q^{m-1}-1) \cdots (q^{m-r+1}-1)}{(q^r -1)(q^{r-1}-1) \cdots (q-1)};$$
these are known as the {\em Gaussian binomial coefficients}.)

We note that for $n \leq 2k-t$, any two $k$-dimensional subspaces of an $n$-dimensional vector space $V$ have intersection of dimension at least $t$, so the question is trivial for this range. In fact, a short argument of Frankl and Wilson shows that to prove Theorem \ref{thm:fwvs}, it suffices to consider the case $n \geq 2k$, since if $2k-t \leq n \leq 2k$ and $\f \subset {V \brack k}$ is $t$-intersecting, then $\{S^{\perp}:\ S \in \f\} \subset {V \brack n-k}$ is $(n-2k+t)$-intersecting, since for any $S,T \in \f$, we have
\begin{align*} \dim(S^{\perp} \cap T^{\perp}) &= \dim((S+T)^{\perp})\\
& = n-\dim(S+T)\\
&= n-\dim(S)-\dim(T)+\dim(S\cap T)\\
& \geq n-2k+t.
\end{align*}

The proof of Theorem \ref{thm:fwvs} (for $n \geq 2k$), follows exactly the same strategy as Wilson's proof of Theorem \ref{thm:wilson}: the proof is spectral, relying on the construction of an appropriate pseudoadjacency matrix. In fact, the calculations are easier than in the proof of Wilson's theorem, due to the rapid growth of the Gaussian binomial coefficients.

\subsection{Triangulations, and a conjecture of Kalai and Meagher}
Let $\mathscr{P}_n$ be a fixed, convex, $n$-vertex polygon. We say a family $\f$ of triangulations of $\mathscr{P}_n$ is {\em intersecting} if any two triangulations in $\f$ share a diagonal. We let $C_n = {2n \choose n}/(n+1)$ denote the $n$th Catalan number; it is well-known that for each $n \geq 3$, the number of triangulations of $\mathscr{P}_n$ is $C_{n-2}$. Kalai and Meagher (independently) conjectured the following the following in 2012.
\begin{conj}[Kalai, Meagher, 2012]
\label{conj:kalai}
Let $\mathscr{P}_n$ be a fixed, convex, $n$-vertex polygon, and let $\f$ be an intersecting family of triangulations of $\mathscr{P}_n$. Then $|\f| \leq C_{n-3}$, with equality iff $\f$ consists of all triangulations containing some fixed diagonal that forms a triangle with two consecutive edges of $\mathscr{P}_n$.
\end{conj}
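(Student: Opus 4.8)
The plan for attacking Conjecture~\ref{conj:kalai} is to follow the spectral (``generalised harmonic analysis'') philosophy used repeatedly in this survey, keeping a purely combinatorial inductive argument in reserve. First, a bookkeeping remark that identifies the extremal families. If $\delta$ is a diagonal of $\mathscr{P}_n$ splitting it into an $a$-gon and a $b$-gon ($a+b=n+2$, $a,b\ge 3$), then triangulations of $\mathscr{P}_n$ containing $\delta$ correspond to pairs of triangulations of these two polygons, so there are exactly $C_{a-2}C_{b-2}$ of them; since $C_p/C_{p-1}=2(2p-1)/(p+1)$ is increasing in $p$, the Catalan numbers are log-convex, and hence $C_{a-2}C_{b-2}$, over $a+b=n+2$, is maximised precisely at $\{a,b\}=\{3,n-1\}$, where it equals $C_{n-3}$. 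So the ``star'' families $\{T:\ \delta\in T\}$ are all of size at most $C_{n-3}$, with the $n$ ear-diagonals the unique maximisers, and the content of the conjecture is that (i) no intersecting family beats every star family, and (ii) the only families of size $C_{n-3}$ are the $n$ ear-stars.

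For (i), I would pass to independent sets: let $G_n$ be the graph whose vertices are the $C_{n-2}$ triangulations of $\mathscr{P}_n$, with $T\sim T'$ iff $T$ and $T'$ share no diagonal, so an intersecting family is exactly an independent set in $G_n$. Since $G_n$ is \emph{not} regular --- already for the hexagon a ``fan'' triangulation avoids five others while an ``inner-triangle'' triangulation avoids only four --- one cannot simply feed its adjacency matrix into Hoffman's bound; instead it suffices, by the Delsarte--Hoffman bound (Theorem~\ref{thm:delsarte-hoffman}), to build a pseudoadjacency matrix $M$ for $G_n$ (symmetric, zero diagonal, constant positive row-sum $\lambda_1$, and $M_{T,T'}=0$ whenever $T\cap T'\ne\emptyset$) whose least eigenvalue $\lambda_{\min}$ satisfies
\[
\frac{-\lambda_{\min}}{\lambda_1-\lambda_{\min}}=\frac{C_{n-3}}{C_{n-2}}=\frac{n-1}{2(2n-5)},
\]
and such that the shifted indicator vectors of the ear-stars are $\lambda_{\min}$-eigenvectors of $M$ --- the latter would then yield the equality statement (ii) in the usual way. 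The natural ansatz, in analogy with Wilson's proof of Theorem~\ref{thm:wilson}, is $M=\sum_{\tau}c_\tau A_\tau$ where $\tau$ ranges over small non-crossing sets of diagonals and $(A_\tau)_{T,T'}=1$ iff the common sub-complex $T\cap T'$ has combinatorial type $\tau$; the challenge is that, unlike the Johnson scheme, the algebra generated by the $A_\tau$ has no known closed-form spectral decomposition, so one must either isolate a small tractable sub-collection (perhaps the $A_\tau$ indexed by a single diagonal, or by the fans through a fixed vertex) and diagonalise it by hand using the recursive decomposition $C_{n-2}=\sum_{v=2}^{n-1}C_{v-2}C_{n-v-1}$, or produce an entirely ad hoc matrix. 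Extracting an $M$ with the exact eigenvalue ratio above, uniformly in $n$, is the main obstacle.

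A second, more combinatorial, route is induction on $n$ via the root decomposition: the triangle of $T$ incident with the polygon edge $\{1,n\}$ is $\{1,r(T),n\}$ for a unique $2\le r(T)\le n-1$, writing $T$ as a pair of triangulations of the sub-polygons on $\{1,\ldots,r(T)\}$ and on $\{r(T),\ldots,n\}$. Partitioning an intersecting $\f$ by the value of $r$, each class is automatically intersecting (its members all contain $\{1,r\}$ or $\{r,n\}$), so the real information is a web of \emph{cross}-intersecting constraints between classes: a common diagonal of a root-$v$ and a root-$v'$ triangulation ($v<v'$) must lie inside one of the sub-polygons cut off by $\{1,v\},\{v,n\},\{1,v'\},\{v',n\}$, which localises the constraint to strictly smaller instances. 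One would try to package this into a cross-intersecting strengthening of the conjecture that is closed under the induction; the obstacle is handling all $\binom{n-2}{2}$ pairs of root classes at once, together with the equality analysis. Finally, for large $n$ one could run the junta method of Dinur--Friedgut and Keller--Lifshitz, using the recursive (tree-like) model of a uniform random triangulation as the substrate for a regularity/pseudorandomness framework, showing a near-extremal intersecting family is approximately contained in a bounded-complexity intersecting junta, classifying such juntas (the ear-stars) and perturbing; the missing ingredient there is a usable hypercontractivity-type inequality on the space of triangulations. In all three approaches the essential difficulty is the same: triangulations of a polygon lack the homogeneity --- a large transitive symmetry group, a clean association scheme, a product structure --- that makes the other intersection theorems in this survey go through, so some genuinely new structural input appears to be needed.
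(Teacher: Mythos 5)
There is no proof to compare against here: the statement is Kalai's conjecture, which the survey explicitly records as ``completely open'', the only known bound being the trivial $|\f| \leq C_{n-2}/2$ obtained by rotating a regular $\mathscr{P}_n$ by $2\pi/n$. Your submission is, accordingly, a research plan rather than a proof, and you are candid about this. The parts you actually establish are correct and match the paper's remarks: the star of a diagonal splitting $\mathscr{P}_n$ into an $a$-gon and a $b$-gon has size $C_{a-2}C_{b-2}$, and by log-convexity of the Catalan numbers (via $C_p/C_{p-1} = 2(2p-1)/(p+1)$ being increasing) this is maximised exactly by the $n$ ``ear'' diagonals, giving $C_{n-3}$ --- this is the ``short calculation'' the survey alludes to when explaining why those diagonals are the right ones to fix. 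Your observation that the disjointness graph $G_n$ is irregular (degree $5$ for a fan versus $4$ for an inner-triangle triangulation of the hexagon) is also correct and correctly motivates passing to a pseudoadjacency matrix.

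The genuine gap is that each of your three routes terminates precisely at the point where the real work begins, as you yourself acknowledge. For the spectral route you would need a pseudoadjacency matrix supported on disjoint pairs with eigenvalue ratio exactly $(n-1)/(2(2n-5))$ and with the shifted indicators of the ear-stars in the bottom eigenspace; no association-scheme structure or transitive group action is available to produce one, and none is constructed. For the root-decomposition induction you would need a cross-intersecting strengthening closed under the recursion, which is not formulated. For the junta route you would need a hypercontractive or regularity framework on the (non-product, non-homogeneous) space of triangulations, which does not currently exist. Since the conjecture is open, none of this is a criticism of correctness --- but nothing beyond the identification of the candidate extremal families has been proved, and it would have strengthened the submission to at least include the rotation argument for the $C_{n-2}/2$ bound, which is the only unconditional progress the literature records.
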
 
This conjecture remains completely open. It is attractive because, although the conjectured extremal families are those of the form `all triangulations containing a fixed diagonal' (so, conjecturally, the Erd\H{o}s-Ko-Rado property holds), fixing different types of diagonal leads to different-sized intersecting families; a short calculation demonstrates that diagonals of the type in the Kalai-Meagher conjecture are the best ones to fix.

It is easy to show that an intersecting family $\f$ of triangulations of $\mathscr{P}_n$ has size at most $C_{n-2}/2$. (Take $\mathscr{P}_n$ to be a regular polygon, and observe that a triangulation $T$ shares no diagonal in common with $\sigma(T)$, where $\sigma$ is the rotation by $2\pi/n$ about the centre of $\mathscr{P}_n$.) This is roughly twice the conjectured bound, since $C_{n-3} = (1/4+O(1/n))C_{n-2}$. To our knowledge, no significant improvement on this `trivial' bound, is known.

Several conjectures generalising Conjecture \ref{conj:kalai} are posed by Olarte, Santos, Spreer and Stump, in \cite{olarte}.

\subsection{Down-sets, and conjectures of Chv\'atal and Kleitman}

 A very well-known open problem comes from considering intersecting families in down-sets. We say a family $\mathcal{B} \subset \pn$ is a {\em down-set} if it is closed under taking subsets, i.e.\ if whenever $B \in \mathcal{B}$ and $B' \subset B$, we have $B' \in \mathcal{B}$. A family $\f \subset \pn$ is said to be a {\em star} if $\cap_{S \in \f}S \neq \emptyset$. A celebrated conjecture of Chv\'atal \cite{chvatal} from 1974 states that an intersecting subfamily of a down-set is no larger than the largest star of the down-set; in other words, the Erd\H{o}s-Ko-Rado property holds for intersecting families in down-sets.
 
 \begin{conj}[Chv\'atal, 1974]
 Let $n \in \mathbb{N}$, and let $\mathcal{B} \subset \pn$ be a down-set. If $\f \subset \mathcal{B}$ is intersecting, then there exists $i \in [n]$ such that
 $$|\f| \leq |\{B \in \mathcal{B}:\ i \in B\}|.$$
 \end{conj}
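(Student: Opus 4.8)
The plan I would first try is the one that worked for Erd\H{o}s--Ko--Rado: combinatorial shifting. Note that applying an $ij$-compression operator $C_{ij}$ (for $i<j$) to $\b$ preserves the down-set property, preserves $|\b|$, and preserves the maximum size of an intersecting subfamily of $\b$ (since $C_{ij}$ maps an intersecting subfamily of $\b$ to an intersecting subfamily of $C_{ij}(\b)$ of the same size). So one can pass to a left-compressed down-set $\b^{*}$ and hope to prove the bound there. In a left-compressed down-set the element $1$ has (weakly) the largest degree, so $|\{B\in\b^{*}:\ 1\in B\}|$ is the largest star; and for such a $\b^{*}$ one can try to build an explicit injection from an arbitrary intersecting $\f\subseteq\b^{*}$ into $\{B\in\b^{*}:\ 1\in B\}$. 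Writing $\f=\f_{0}\cup\f_{1}$ with $\f_{1}=\{B\in\f:\ 1\in B\}$, one has $\f_{1}$ already inside the star, and one would map each $B\in\f_{0}$ to a suitable ``$1$-completion'' such as $(B\setminus\{j\})\cup\{1\}$ (which lies in $\b^{*}$ by left-compressedness), choosing $j\in B$ in a coordinated way, and using the intersecting property of $\f$ to keep the map injective and avoid collisions with $\f_{1}$. This left-compressed case of the conjecture is known; the difficulty is whether it helps in general.

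The main obstacle --- and the reason the conjecture remains open after nearly fifty years --- is that the reduction to the left-compressed case is \emph{invalid}. Although compressions never decrease the maximum size of an intersecting subfamily, they \emph{can increase} the largest star size $\max_{i}|\{B\in\b:\ i\in B\}|$. For instance, if $\b$ is the down-set generated by a perfect matching $\{1,2\},\{3,4\},\ldots,\{2m-1,2m\}$ on $[2m]$, then every element has degree $2$ in $\b$ (and the maximum size of an intersecting subfamily really is $2$), whereas after fully shifting, the element $1$ acquires degree $m+1$. So the statement we can prove for $\b^{*}$ (namely $|\f|\leq\deg_{\b^{*}}(1)$) is far too weak to yield $|\f|\leq\max_{i}\deg_{\b}(i)$. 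One is therefore forced to argue directly with an arbitrary down-set, and the known partial results --- for $\b$ generated by few sets, for $\b$ whose maximal sets are small, and various other special classes (Sch\"onheim, Sterboul, Mikl\'os, Snevily, Borg, and others) --- all exploit extra hypotheses precisely in order to sidestep this difficulty.

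A second, more hands-on line of attack is induction on $n$ via the decomposition $\b=\b_{0}\cup\b_{1}$, where $\b_{0}=\{B\in\b:\ n\notin B\}$ and $\b_{1}=\{B\setminus\{n\}:\ B\in\b,\ n\in B\}$ are down-sets in $\p([n-1])$ with $\b_{1}\subseteq\b_{0}$; one would split $\f$ into $\f_{0}$ (sets avoiding $n$) and the ``link'' $\f_{1}$ (the traces on $[n-1]$ of the members of $\f$ containing $n$), observe that $\f_{1}$ is intersecting inside $\b_{1}$ while $\f_{0}\cup\f_{1}$ is intersecting inside $\b_{0}$, and then combine the two inductive bounds. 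The difficulty here is that the two inductive statements point at possibly different ``witness'' elements $i\in[n-1]$, and there is no evident way to reconcile them or to control the degree balance across the two halves. My expectation is that neither a refinement of compressions nor of this induction suffices on its own, and that a genuinely new structural idea about maximal intersecting families inside down-sets will be needed.
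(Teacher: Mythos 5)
There is nothing to compare here: the statement you were given is Chv\'atal's conjecture, which the paper presents precisely as an open problem (``This conjecture remains completely open, though some partial results are known''), so the paper contains no proof of it. You have correctly recognised this rather than manufacturing a fake argument, and your diagnosis of why the obvious attack fails is accurate and matches the state of the art recorded in the survey. In particular, your central point is right: $ij$-compressions preserve the down-set property and can only (weakly) increase the maximum size of an intersecting subfamily, but they can also strictly increase $\max_i |\{B \in \b:\ i \in B\}|$, so establishing the bound for the left-compressed family $\b^*$ (which Chv\'atal himself did) says nothing about the original $\b$. Your perfect-matching example is a correct witness: in the down-set generated by $\{1,2\},\{3,4\},\ldots,\{2m-1,2m\}$ every vertex has degree $2$ and the largest intersecting subfamily has size $2$, while after full left-compression the star of $1$ has size $m+1$. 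The paper's own list of partial results (Berge's partition of $\b$ or $\b\setminus\{\emptyset\}$ into disjoint pairs, giving $|\f|\leq|\b|/2$; Chv\'atal's left-compressed case; Snevily's case where $\b$ is $ij$-compressed for all $j\neq i$ for a single fixed $i$) is consistent with your assessment that all known progress exploits extra hypotheses to sidestep exactly this obstruction.

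The one thing I would flag is that your proposal should not be mistaken for partial progress: neither of your two sketched strategies (the injection into the star of $1$ for left-compressed $\b$, or the induction on $n$ via $\b_0,\b_1$) is carried out, and for the purposes of this exercise the correct conclusion is simply that no proof exists to be reconstructed. If you want to go further in the direction the paper suggests, the natural next object is Kleitman's stronger flow formulation (an antipodal up-set flows down to a convex combination of dictatorships) and the correlation inequalities of Friedgut, Kahn, Kalai and Keller, which are the routes the survey highlights as most promising.
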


This conjecture remains completely open, though some partial results are known. Berge \cite{berge} proved that if $\mathcal{B} \subset \pn$ is a down-set, then either $\mathcal{B}$ or $\mathcal{B} \setminus \{\emptyset\}$ may be partitioned into pairs of disjoint sets, which implies that if $\f \subset \mathcal{B}$ is intersecting, then $|\f| \leq |\mathcal{B}|/2$. (A particularly elegant proof of this was given by Daykin, Hilton and Mikl\'os in \cite{dhm}.) Chv\'atal himself verified his conjecture in the case where $\mathcal{B}$ is left-compressed, and Snevily \cite{snevily} proved it in the case where there exists $i \in [n]$ such that $\mathcal{B}$ is $ij$-compressed for all $j \neq i$ (making use of the partitioning result of Berge, above).

Kleitman \cite{kleitman} posed the following strengthening of Chv\'atal's conjecture, which is attractive because it does not (explicitly) involve down-sets. To state it, we need to recall some more definitions.

\begin{Def}
If $f,g : \mathcal{P}([n]) \to \mathbb{R}_{\geq 0}$, we say $f$ {\em flows down to} $g$ if there exists $v: \pn \times \pn \to \mathbb{R}_{\geq 0}$ such that
\begin{itemize}
\item For any $A \subset [n]$, we have $\sum_{B \subset [n]} v(A,B) = f(A)$;
\item For any $B \subset [n]$, we have $\sum_{A \subset [n]} v(A,B) = g(B)$;
\item For any $B \not\subset A$, we have $v(A,B)=0$.
\end{itemize}
\end{Def}

Equivalently, via max-flow min-cut, $f$ flows down to $g$ if and only if $\sum_{A \subset [n]} f(A) = \sum_{A \subset [n]}g(A)$ and for any down-set $\mathcal{B} \subset \pn$ we have
$$\sum_{B \in \mathcal{B}}f(B) \leq \sum_{B \in \mathcal{B}} g(B).$$

\begin{Def}
A family $\mathcal{A} \subset \pn$ is said to be an {\em up-set} if it is closed under taking supersets, i.e.\ if whenever $A \in \mathcal{A}$ and $A \subset A'$, we have $A' \in \mathcal{A}$.
\end{Def}

\begin{Def} A family $\mathcal{F} \subset \pn$ is said to be {\em antipodal} if $\mathcal{F}$ contains exactly one of $S$ and $[n] \setminus S$, for all $S \subset [n]$.
\end{Def}

\begin{Def}
A Boolean function $f:\pn \to \{0,1\}$ is said to be a {\em dictatorship} if there exists $i \in [n]$ such that $f(S) = 1$ iff $i \in S$.
\end{Def}

\begin{conj}[Kleitman, 1979]
If $\f \subset \pn$ is a maximal intersecting family, then there exist $\lambda_1,\ldots,\lambda_n \geq 0$ with $\sum_{i=1}^{n} \lambda_i=1$, such that $1_{\f}$ flows down to $\sum_{i=1}^{n} \lambda_i 1_{\{S \subset [n]:\ i \in S\}}$.
\end{conj}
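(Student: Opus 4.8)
The plan is to turn the conjecture into a pure min-cut statement and then attack the resulting combinatorial inequality by compression, in the spirit of the partial results of Chvátal and Snevily on the weaker Chvátal conjecture. First recall that a maximal intersecting family $\f\subseteq\pn$ is exactly an intersecting \emph{up-set} that is \emph{antipodal} (it contains precisely one of $S$ and $[n]\setminus S$ for every $S$); in particular $|\f|=2^{n-1}$, which matches the total mass $\sum_i\lambda_i 2^{n-1}=2^{n-1}$ of $g_\lambda:=\sum_{i=1}^n\lambda_i 1_{\{S\subset[n]:\,i\in S\}}$ for every $\lambda$ in the simplex $\Delta_n:=\{\lambda\geq 0:\ \sum_i\lambda_i=1\}$. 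By the max-flow--min-cut description of ``flows down to'' recalled above, for a fixed $\lambda\in\Delta_n$ the indicator $1_\f$ flows down to $g_\lambda$ if and only if for every down-set $\mathcal{B}\subseteq\pn$ one has $|\f\cap\mathcal{B}|\leq\sum_{B\in\mathcal{B}}g_\lambda(B)=\sum_i\lambda_i|\{B\in\mathcal{B}:\,i\in B\}|$. Thus Kleitman's conjecture asks for a single $\lambda$ that ``beats'' every down-set simultaneously.

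Second, since $|\f\cap\mathcal{B}|-\sum_i\lambda_i|\{B\in\mathcal{B}:\,i\in B\}|$ is affine in $\lambda$, and becomes affine in the down-set variable once we replace a down-set by a probability distribution over down-sets (a maximum of an affine functional over the down-set polytope is attained at a vertex), LP duality --- equivalently von Neumann's minimax theorem --- lets us exchange the outer $\min_\lambda$ with the inner maximum. Identifying a distribution over down-sets with a ``fractional down-set'', i.e.\ a pointwise non-increasing weight $w:\pn\to\mathbb{R}_{\geq 0}$, via the threshold decomposition $w=\int_0^\infty 1_{\{S:\,w(S)\geq t\}}\,dt$ (each super-level set is a down-set, and $\sum_{S\in\f}w(S)$, $\sum_{S\ni i}w(S)$ are exactly the averaged quantities), the conjecture becomes equivalent to the following clean assertion: for every maximal intersecting family $\f\subseteq\pn$ and every non-increasing weight $w:\pn\to\mathbb{R}_{\geq 0}$,
\[
\sum_{S\in\f}w(S)\ \leq\ \max_{i\in[n]}\ \sum_{S\ni i}w(S).
\]
Taking $w=1_\mathcal{B}$ for a down-set $\mathcal{B}$ and an intersecting $\f'\subseteq\mathcal{B}$ extended to a maximal intersecting $\f\supseteq\f'$ recovers Chvátal's conjecture, so this weighted statement is genuinely a strengthening (the $\max_i$ escapes the threshold integral, so it is not merely Chvátal's conjecture levelwise).

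Third, I would attack this inequality by compression. Chvátal established his conjecture for left-compressed down-sets, and Snevily extended it using Berge's theorem that a down-set with $\emptyset$ removed admits a perfect matching into disjoint pairs; the aim is to imitate this in the weighted, fractional setting. Concretely, I would try to prove a compression lemma of the form: for each $i<j$, left-compressing the weight $w$ (pushing weight from sets containing $j$ to the corresponding sets containing $i$) does not decrease $\max_k\sum_{S\ni k}w(S)-\sum_{S\in\f}w(S)$; one then reduces to $w$ being left-compressed and proves the inequality there directly, by exhibiting an explicit optimal $\lambda$ (concentrated on the ``heaviest'' coordinates of $w$) together with an explicit flow routing each $A\in\f$ into its subsets. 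Antipodality --- pairing ``$S\in\f$'' against ``$[n]\setminus S\notin\f$'' --- is what I expect to make the bookkeeping in the explicit-flow step close.

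The main obstacle, I expect, is structural: maximal intersecting families have no usable classification and their number grows doubly exponentially, and, worse, the $ij$-compression of a maximal intersecting family need not be maximal or antipodal, so one cannot compress $\f$ itself and induct. The compression must therefore act on $w$ with $\f$ held fixed, and the required compression lemma above is genuinely delicate --- verifying it (or finding the right variant of it) is the crux of the whole plan. Two fallbacks worth pursuing in parallel: (i) try to deduce the weighted statement directly from Chvátal's conjecture, where the difficulty is upgrading the level-dependent optimal coordinate $i(t)$ produced by the threshold decomposition to a single $\lambda$ that wins on average against every down-set; and (ii) push Berge's and Snevily's pairing constructions directly into the fractional regime, using the perfect matching of a down-set into disjoint pairs to build the flow $v(\cdot,\cdot)$ explicitly.
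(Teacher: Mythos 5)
This statement is Kleitman's conjecture, which the paper itself records as open (``unfortunately, all of these conjectures remain unproven''), so there is no proof in the paper to measure you against; what you have written is a research plan rather than a proof, and you say as much when you call the compression lemma ``the crux of the whole plan''. Your preliminary reductions are correct: a maximal intersecting family is an antipodal up-set of size $2^{n-1}$, the max-flow/min-cut criterion turns the conjecture into finding a probability vector $\lambda$ that beats every down-set simultaneously, and the minimax/threshold-decomposition step correctly converts this into the single inequality $\sum_{S\in\f}w(S)\leq\max_i\sum_{S\ni i}w(S)$ for every non-increasing $w\geq 0$. But this reformulation is essentially known (it is close in spirit to the equivalent forms and strengthenings discussed by Friedgut, Kahn, Kalai and Keller, cited in the text), so all the genuinely new content would have to come from the compression step.

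That step fails as stated, for two concrete reasons. First, the direction of your lemma is backwards: to reduce to left-compressed $w$ you need the defect $D(w):=\max_k\sum_{S\ni k}w(S)-\sum_{S\in\f}w(S)$ not to \emph{increase} under compression, so that $D(\text{compressed }w)\geq 0$ implies $D(w)\geq 0$; ``does not decrease'' gives the implication the wrong way round. Second, and more seriously, no fixed choice of compression directions can be monotone for all maximal intersecting $\f$ simultaneously. Take $w=1_{\{\emptyset,\{j\}\}}$ (a down-set indicator) and push its weight from $j$ to $i$, producing $1_{\{\emptyset,\{i\}\}}$. If $\f$ is the star $\{S:\ i\in S\}$ then $D$ drops from $1$ to $0$; if $\f$ is the star $\{S:\ j\in S\}$ then $D$ rises from $0$ to $1$. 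So the compressions would have to be adapted to the particular $\f$ (pushing weight towards coordinates where $\f$ is concentrated), and since maximal intersecting families admit no usable classification --- the very obstacle you identify --- no such adapted scheme is supplied, nor is the terminal ``compressed'' case analysed. As it stands the proposal establishes nothing beyond a known equivalence, and the conjecture remains open.
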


It is easy to see, and well-known, that a maximal intersecting subfamily of $\pn$ is precisely an antipodal up-set, so Kleitman's conjecture can be restated as saying that (the characteristic function of) an antipodal up-set flows down to a convex linear combination of dictatorships.

It is clear that Kleitman's conjecture implies Chv\'atal's conjecture. In a more recent work \cite{fkkk}, Friedgut, Kahn, Kalai, and Keller give several elegant correlation inequalities that imply Chv\'atal's or Kleitman's conjecture; unfortunately, all of these conjectures remain unproven, though some partial results are given in \cite{fkkk}.

\section{Imposing extra symmetry constraints}
\label{sec:sym}
As we have seen, the extremal families in intersection problems are often (though not always) junta, in the sense that they depend only upon a bounded number of coordinates; sometimes the near-extremal families are also junta-like. In particular, they are far from being symmetric. It is therefore natural to ask what happens in intersection problems when we impose the additional requirement that the family be symmetric. (We say a family of subsets $\f \subset \pn$ is {\em symmetric} if its automorphism group is a transitive subgroup of $S_n$; recall that for a family of subsets $\f \subset \pn$, we define its automorphism group to be the group of all permutations of $[n]$ that preserve the family, i.e.\ $\text{Aut}(\f) := \{\sigma \in S_n: \sigma(S) \in \f \text{ for all } S \in \f\}$.)

A first natural question to ask, in this vein, is the following.
\begin{qn}
\label{question:sym0}
 For $n \in \mathbb{N}$, what is the maximum possible size of a symmetric intersecting family of subsets of $[n]$?
\end{qn}
For $n$ odd, this is rather easy: the family $\{S \subset [n]:\ |S| > n/2\}$ is a symmetric intersecting family of size $2^{n-1}$, and no intersecting family can be larger than this, as we saw on page \pageref{triv}. For $n$ even, we have
$$|\{S \subset [n]: |S| > n/2\}| = 2^{n-1} - \tfrac{1}{2}{n \choose n/2} = (1-O(1/\sqrt{n})) 2^{n-1},$$
so the answer is certainly $(1-o(1))2^{n-1}$, but for most even values of $n$, the exact answer is unknown. Indeed, even the set
$$A: = \{n \in \mathbb{N}:\ \exists \text{ a symmetric intersecting family } \f \subset \pn \text{ with } |\f| = 2^{n-1}\}$$
has not been fully characterised. It contains infinitely many even numbers (see \cite{pjc}), as well as all the odd numbers. Isbell conjectured in 1960 that there exists a function $f: \mathbb{N} \to \mathbb{N}$ such that for $a,b \in \mathbb{N}$ with $b$ odd and $a \geq f(b)$, we have $2^a \cdot b \in A$, but this is still open (see \cite{pjc,isbell}).

Still, the answer to Question \ref{question:sym0} is (asymptotically) not very different to that of Question \ref{question:power-set}, even for $n$ even. Frankl considered  the following variation on Question \ref{question:sym0}, where the situation is much less clear. For an integer $r \geq 2$, we say a family of sets is {\em $r$-wise-intersecting} if any $r$ of the sets in the family have nonempty intersection. Frankl posed the following question: for $n \in \mathbb{N}$, what is the maximum possible size of a symmetric $3$-wise intersecting family of subsets of $[n]$? He made the following.
\begin{conj}[Frankl \cite{frankl-sym}, 1981]
\label{conj:frankl-81}
If $\f \subset \pn$ is a symmetric, 3-wise intersecting family of subsets of $[n]$, then $|\f| = o(2^n)$.
\end{conj}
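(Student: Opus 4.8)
The plan is to work with the $p$-biased measures $\mu_p$ on $\pn$ (as in Section~\ref{sec:stab}), extract an elementary inequality from the $3$-wise intersecting condition, and then feed it into the sharp threshold theorem for monotone set systems with transitive symmetry. Since $|\f| = 2^n \mu_{1/2}(\f)$, it suffices to prove that $\mu_{1/2}(\f) \to 0$ as $n \to \infty$ for symmetric $3$-wise intersecting $\f \subset \pn$. First I would reduce to the case of an up-set: if $\f$ is $3$-wise intersecting then so is its up-closure $\f^{\uparrow}$ (given $A_1,A_2,A_3 \in \f^{\uparrow}$, choose $B_i \subseteq A_i$ with $B_i \in \f$; then $A_1 \cap A_2 \cap A_3 \supseteq B_1 \cap B_2 \cap B_3 \ne \emptyset$), while $\mathrm{Aut}(\f^{\uparrow}) \supseteq \mathrm{Aut}(\f)$ is still transitive and $|\f^{\uparrow}| \ge |\f|$. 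So we may assume that $\f$ is a symmetric $3$-wise intersecting up-set.

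The crucial observation is: if $A, B \in \f$ then $(A \cap B)^c \notin \f$, since otherwise $A$, $B$ and $(A\cap B)^c$ would be three members of $\f$ with empty common intersection. Now take $A, B$ to be independent $\mu_p$-random subsets of $[n]$. The event $\{A \in \f\} \cap \{B \in \f\}$ is contained in the event $\{(A \cap B)^c \notin \f\}$, and $(A \cap B)^c \sim \mu_{1-p^2}$ (each coordinate lies in it independently with probability $1-p^2$); taking probabilities yields
\begin{equation}
\label{eq:frankl-sym-key}
\mu_p(\f)^2 \le 1 - \mu_{1-p^2}(\f) \qquad (0 < p < 1).
\end{equation}
Fix any $p_0$ with $\tfrac12 < p_0 \le \tfrac12(\sqrt5 - 1)$. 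Then $1 - p_0^2 \ge p_0$, so monotonicity of $p \mapsto \mu_p(\f)$ gives $\mu_{1-p_0^2}(\f) \ge \mu_{p_0}(\f)$, and (\ref{eq:frankl-sym-key}) becomes $\mu_{p_0}(\f)^2 + \mu_{p_0}(\f) \le 1$, i.e.
\begin{equation}
\label{eq:frankl-sym-bdd}
\mu_{p_0}(\f) \le \tfrac12(\sqrt5 - 1) < 1 .
\end{equation}
This holds for \emph{every} $3$-wise intersecting up-set, with no symmetry needed; it is a genuine restriction precisely because $p_0 > \tfrac12$, a regime where being merely intersecting gives only the vacuous bound $\mu_{p_0}(\f) \le 1$.

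To finish, suppose for contradiction that there are a fixed $\delta > 0$ and infinitely many $n$ with $\mu_{1/2}(\f_n) \ge \delta$, where each $\f_n$ is a symmetric $3$-wise intersecting up-set in $\pn$. By the Friedgut--Kalai sharp threshold theorem for monotone families invariant under a transitive permutation group, such a family has a threshold window of width $O_\delta(1/\log n) = o(1)$; consequently $\mu_{1/2}(\f_n) \ge \delta$ forces $\mu_{p}(\f_n) \to 1$ (as $n \to \infty$) for every fixed $p > \tfrac12$. Taking $p = p_0$ contradicts (\ref{eq:frankl-sym-bdd}) for all large $n$. Hence $\mu_{1/2}(\f_n) \to 0$, i.e. $|\f_n| = o(2^n)$, which is Conjecture~\ref{conj:frankl-81}.

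The one non-elementary ingredient, and the step I expect to be the main obstacle, is the sharp threshold input in exactly this generality: the families in question are assumed invariant only under \emph{some} transitive group, and one needs that this alone forces a threshold window of width $o(1)$ --- this is precisely the Friedgut--Kalai theorem, which rests on a KKL-type total-influence bound. Everything else is soft. I would also note that iterating (\ref{eq:frankl-sym-key}), by chaining the map $p \mapsto 1-p^2$, sharpens the crude bound (\ref{eq:frankl-sym-bdd}) quantitatively, but the argument above already settles the conjecture as stated.
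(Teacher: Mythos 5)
Your proposal is correct and is essentially the paper's own argument (due to Ellis and Narayanan): your key inequality $\mu_p(\f)^2 \le 1-\mu_{1-p^2}(\f)$ is exactly what the paper obtains by combining $\mu_{p^2}(\mathcal{I}(\f)) \ge (\mu_p(\f))^2$ with the cross-intersecting bound $\mu_{p^2}(\mathcal{I}(\f)) \le 1-\mu_{1-p^2}(\f)$, and the finish via the Friedgut--Kalai sharp threshold theorem is the same. The only (cosmetic) difference is that you evaluate at a single $p_0\in(\tfrac12,\tfrac12(\sqrt5-1)]$ to get an absolute bound $\mu_{p_0}(\f)<1$, whereas the paper takes $p=\tfrac12$ to get $\mu_{3/4}(\f)+(\mu_{1/2}(\f))^2\le 1$ before invoking the threshold theorem.
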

To motivate Conjecture \ref{conj:frankl-81}, note that $\{S \subset [n]: 1 \in S\}$ is 3-wise-intersecting but very far from being symmetric, whereas $\{S \subset [n]: |S| > 2n/3\}$ is 3-wise-intersecting (just), but has exponentially small size (as a fraction of $2^n$).

In \cite{frankl-sym} Frankl proved his conjecture under the stronger hypothesis of $\f$ being 4-wise-intersecting, but until recently, it remained open in general. In 2017, Conjecture \ref{conj:frankl-81} was proved by Narayanan and the author \cite{en}. The proof is very short indeed, and is perhaps one of the shortest applications of the $p$-biased measure on $\pn$ being used to solve an extremal problem whose statement does not mention any biased measure. The idea as follows. Let $\f \subset \pn$ be a symmetric, 3-wise-intersecting family of the maximum possible size; then $\f$ must be an up-set, meaning that it is closed under taking supersets. Consider now the function $p \mapsto \mu_p(\f)$. It is well-known that this is a monotone non-decreasing function of $p$ (for any up-set $\f \subset \pn$); moreover, the following classical result of Friedgut and Kalai (based upon the celebrated Kahn-Kalai-Linial theorem \cite{kkl} on the influences of Boolean functions) says that this function has a `sharp threshold', in the sense that it jumps from near-zero to near-one, over a short interval.
\begin{theorem}[Friedgut-Kalai, 1996]
\label{thm:fk}
There exists a universal constant $c_0>0$ such that the following holds for all $n \in \N$. Let $0 < p,\epsilon < 1$ and let $\mathcal{F} \subset \pn$ be a symmetric up-set. If $\mu_p(\mathcal{F})> \epsilon$, then $\mu_q(\f) > 1-\epsilon$, where
\[q = \min\left\{1,p + c_0\left(\frac{\log(1/2\epsilon)}{ \log n}\right)\right\}.\]
\end{theorem}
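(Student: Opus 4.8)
The plan is to translate the statement into a differential inequality for the function $g(p):=\mu_p(\f)$ and then to integrate it, using the symmetry of $\f$ together with the Kahn--Kalai--Linial theorem. Since $\f$ is an up-set, $g$ is a non-decreasing polynomial in $p$, and the Margulis--Russo identity gives
\[
g'(p)=\sum_{i=1}^{n}\mathrm{Inf}_i^{(p)}(\f),
\]
where $\mathrm{Inf}_i^{(p)}(\f)$ denotes the $p$-biased influence of coordinate $i$ on $\f$, i.e.\ the $\mu_p$-probability that toggling the element $i$ changes the membership in $\f$ of a $\mu_p$-random set. Because $\mathrm{Aut}(\f)$ acts transitively on $[n]$ and automorphisms preserve influences, all the $\mathrm{Inf}_i^{(p)}(\f)$ are equal, say to $I(p)$, so that $g'(p)=n\,I(p)$.

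The key input is the Kahn--Kalai--Linial theorem, in the following ($p$-biased) form, obtained from hypercontractivity of the $p$-biased noise operator on $\pn$: there is a universal constant $c>0$ such that any monotone $h:\pn\to\{0,1\}$ all of whose $p$-biased influences are equal satisfies $\sum_i\mathrm{Inf}_i^{(p)}(h)\ge c\,\mu_p(h)\bigl(1-\mu_p(h)\bigr)\log n$. (If the common influence exceeds $(\log n)/n$ the total influence already exceeds $\log n\ge 4\mu_p(h)(1-\mu_p(h))\log n$; otherwise one substitutes $\max_i\mathrm{Inf}_i^{(p)}(h)<(\log n)/n$ into the sharp KKL bound $\sum_i\mathrm{Inf}_i^{(p)}(h)\ge c'\,\Var_p(h)\log\bigl(1/\max_i\mathrm{Inf}_i^{(p)}(h)\bigr)$ and uses $\log(1/I)\ge\tfrac12\log n$.) Applying this with $h=1_{\f}$ yields the differential inequality $g'(p)\ge c\,g(p)\bigl(1-g(p)\bigr)\log n$, valid whenever $g(p)\in(0,1)$.

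It then remains to integrate. Wherever $g(p)\in(0,1)$ we have $\tfrac{d}{dp}\log\tfrac{g(p)}{1-g(p)}=\tfrac{g'(p)}{g(p)(1-g(p))}\ge c\log n$. We may assume $\epsilon<1/2$ (for $\epsilon\ge 1/2$ the conclusion is immediate by monotonicity of $g$), that $\f\ne\pn$ (else $g\equiv 1$), and that the truncation in the definition of $q$ does not force $q=1$ (in which case $g(1)=1$ since $[n]\in\f$). Then $g(p')\in(\epsilon,1)$ for every $p'\in[p,q]$, so integrating from $p$ to $q$ gives
\[
\log\frac{g(q)}{1-g(q)}\;>\;\log\frac{\epsilon}{1-\epsilon}+c\,(q-p)\log n .
\]
With $c_0:=4/c$ and $q=\min\{1,\,p+c_0\log(1/2\epsilon)/\log n\}$, the right-hand side exceeds $\log\frac{1-\epsilon}{\epsilon}$ (equivalently, $4\epsilon(1-\epsilon)<1$, which holds for $\epsilon<1/2$), forcing $g(q)>1-\epsilon$, as required.

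I expect the real difficulty to lie in the middle step: obtaining the $p$-biased KKL inequality with a constant $c$ that is genuinely uniform over all $p\in(0,1)$. The direct hypercontractivity argument degrades as $p\to0$ or $p\to1$, so making it uniform needs the sharper Bourgain--Kahn--Katznelson--Linial analysis --- or, alternatively, one restricts to a favourable sub-interval of $p$ and disposes of the extremes separately, where $g(q)$ is automatically near $0$ or $1$. This uniform estimate on the threshold width is the technical heart of the theorem; the reduction above is essentially bookkeeping.
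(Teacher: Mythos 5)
The paper does not actually prove Theorem \ref{thm:fk}; it quotes it as a classical result of Friedgut and Kalai, citing only the KKL theorem \cite{kkl} as its basis. Your proposal reconstructs the standard Friedgut--Kalai argument, and it is essentially correct: the Margulis--Russo formula plus transitivity of $\mathrm{Aut}(\f)$ reduces everything to a lower bound on the total $p$-biased influence, the logistic substitution $\log\frac{g}{1-g}$ integrates the resulting differential inequality cleanly, and your bookkeeping at the end (the reduction to $4\epsilon(1-\epsilon)<1$, the truncation at $q=1$, the trivial cases) is sound. The one substantive issue is exactly the one you flag: the inequality $\sum_i \mathrm{Inf}_i^{(p)}(h)\ge c\,\mu_p(h)(1-\mu_p(h))\log n$ with $c$ \emph{uniform in $p$} does not follow from the hypercontractive proof of KKL or from the $p$-biased Talagrand inequality as usually stated, since those constants degrade like $p(1-p)$ (up to logarithms) as $p\to 0$ or $p\to 1$; and the extremes of $p$ cannot simply be discarded, since the conclusion is not automatic there. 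Friedgut and Kalai close this gap by passing to the continuous cube $[0,1]^n$ and invoking the Bourgain--Kahn--Katznelson--Linial influence theorem, which is the route you would need to take; with that ingredient supplied, your proof is complete and coincides with the original one.
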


We now observe two facts. Firstly, letting $\mathcal{I}(\f): = \{S \cap T: S,T \in \f\}$ denote the family of all pairwise intersections of sets in $\f$, we observe that if $\f$ is 3-wise-intersecting, then $\f$ and $\mathcal{I}(\f)$ are cross-intersecting, meaning that for any $A \in \f$ and $B \in \mathcal{I}(\f)$, we have $A \cap B \neq \emptyset$. It follows that $\mathcal{I}(\f)$ is contained within the dual family $\f^{*}: = \{[n] \setminus S:\ S \notin \f\}$, and therefore
$$\mu_p(\mathcal{I}(\f)) \leq \mu_p(\f^*) = 1-\mu_{1-p}(\f) \quad \forall p \in [0,1].$$
In particular, setting $p = 1/4$ we have
\begin{equation}
\label{eq:dual}
\mu_{3/4}(\f)+\mu_{1/4}(\mathcal{I}(\f)) \leq 1.
\end{equation}
Secondly, for all $p \in [0,1]$ we have
\begin{align*}
\mu_{p^2}(\mathcal{I}(\f)) &= \Pr_{A_1,A_2 \sim \mu_{p}}[A_1 \cap A_2 \in \mathcal{I}(\f)]\\
& \geq \Pr_{A_1,A_2 \sim \mu_{p}}[A_1 \in \f,\ A_2 \in \f]\\
& = \left(\Pr_{A_1 \sim \mu_{p}}[A_1 \in \f]\right)^2\\
& = (\mu_{p}(\f))^2,
\end{align*}
where the notation $A_1,A_2 \sim \mu_{p}$ means that $A_1$ and $A_2$ are chosen independently at random according to the $p$-biased probability measure on $\pn$. In particular, setting $p=1/2$ we have
\begin{equation}\label{eq:int-prod} \mu_{1/4}(\mathcal{I}(\f)) \geq (\mu_{1/2}(\f))^2.\end{equation}
Combining (\ref{eq:dual}) and (\ref{eq:int-prod}), we have
\begin{equation}
\label{eq:comb}
\mu_{3/4}(\f) + (\mu_{1/2}(\f))^2 \leq 1.
\end{equation}
Combining this with Theorem \ref{thm:fk}, it is easy to see that we must have $|\f|/2^n = \mu_{1/2}(\f) < n^{-1/(8c_0)}$, proving Conjecture \ref{conj:frankl-81}. (Intuitively, if $\mu_{1/2}(\f)$ were greater than $n^{-1/(8c_0)}$, then the `sharp jump' guaranteed by Theorem \ref{thm:fk} takes place just after $p=1/2$, which would contradict (\ref{eq:comb}).)

Our proof of Frankl's conjecture gives $|\f| \leq 2^n/n^{c}$, for $c>0$ an absolute constant. It is likely that this is far from the truth. We make the following conjecture in \cite{en}.
\begin{conj}[E.-Narayanan, 2017]
\label{conj:best}
If $\f \subset \pn$ is a symmetric $3$-wise intersecting family, then 
\[|\f| \le 2^{n - cn^{\delta}},\]
where $c,\delta>0$ are universal constants. 
\end{conj}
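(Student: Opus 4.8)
The plan is to push the argument of Ellis and Narayanan \cite{en} much further, replacing the ``soft'' sharp-threshold input (Theorem \ref{thm:fk}, whose threshold window has width only of order $1/\log n$) by a quantitatively far stronger statement tailored to symmetric $3$-wise intersecting up-sets. A first, cost-free reduction: we may assume $\f$ is an up-set, since passing to the up-closure of $\f$ preserves symmetry and $3$-wise intersection and does not decrease $|\f|$. Recall the structural facts underlying \cite{en}: with $\mathcal{I}(\f) := \{A \cap B:\ A, B \in \f\}$ and $\f^{*} := \{[n] \setminus S:\ S \notin \f\}$, the family $\mathcal{I}(\f)$ is itself a symmetric up-set with $\f \subseteq \mathcal{I}(\f) \subseteq \f^{*}$; the pair $\f, \mathcal{I}(\f)$ is cross-intersecting; and $\mu_{p^2}(\mathcal{I}(\f)) \geq \mu_p(\f)^2$ for every $p$, whence
\begin{equation}
\label{eq:proposal-engine}
\mu_p(\f)^2 + \mu_{1-p^2}(\f) \leq 1 \qquad \text{for all } p \in (0,1).
\end{equation}
The inequality (\ref{eq:proposal-engine}) only controls the growth of $p \mapsto \mu_p(\f)$ across a \emph{constant}-length window of $p$; combined with a threshold window of width $w$ (in the Friedgut--Kalai sense) it yields, exactly as in \cite{en}, a bound of the shape $\mu_{1/2}(\f) \leq e^{-c/w}$. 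So the entire task is to establish that $w = n^{-\delta}$ is admissible for some $\delta > 0$: that would give $|\f| = \mu_{1/2}(\f)\, 2^n \leq 2^{n - c n^{\delta}}$.

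Such a polynomially narrow window cannot be extracted from symmetry alone, since the ``tribes'' up-set is symmetric and monotone yet has a threshold window of width $\Theta(1/\log n)$; the key point is that tribes is \emph{not} intersecting, so the $3$-wise intersection hypothesis must be doing genuine work. The heart of the proposed argument is therefore a ``spreading lemma'' of the form: \emph{if $\f \subset \pn$ is a symmetric, $3$-wise intersecting up-set with $\mu_{p_0}(\f) \geq 2^{-n^{\delta}}$ for some $p_0$ near $1/2$, then $\f$ is globally $n^{\delta}$-spread} --- it cannot be $o(1)$-approximated (in $\mu_{p_0}$) by any family that is measurable with respect to $o(n^{\delta})$ suitably-defined generalised coordinates. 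Given this, a Bourgain-type sharp-threshold theorem --- or, more plausibly, a bespoke argument built on the ``global'' hypercontractive inequalities recently developed in the context of intersection problems --- upgrades the threshold window to $n^{-\delta}$, and (\ref{eq:proposal-engine}) then delivers the conjectured bound. The argument should be run iteratively: $\mathcal{I}(\f)$, $\mathcal{I}(\mathcal{I}(\f))$, $\ldots$ are all symmetric up-sets cross-intersecting with $\f$, yielding a whole system of biased-measure inequalities that feed into (and should strengthen) the spreading lemma.

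A closely related, and perhaps more hands-on, reformulation is the following supersaturation statement: setting $\mathcal{D} := \{[n]\setminus S:\ S \in \f\}$, the family $\f$ is $3$-wise intersecting precisely when no three members of the (symmetric) down-set $\mathcal{D}$ cover $[n]$, so Conjecture \ref{conj:best} is equivalent to the assertion that a symmetric up-set of density exceeding $2^{-cn^{\delta}}$ must contain three sets with empty common intersection. Here one would try to use hypercontractivity to show that a symmetric up-set of density $2^{-o(n^{\delta})}$ behaves, on three suitably-coupled random sets, enough like a truly random set of the same density that three of its members are forced to be pairwise ``spread'' and hence jointly disjoint. Either route would, as a by-product, pin down what the correct exponent $\delta$ should be; $\delta = 1/2$ is the natural guess.

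The main obstacle is precisely the spreading/threshold lemma: no off-the-shelf sharp-threshold result yields a polynomial window for arbitrary symmetric monotone families (iterated majority, ``tribes of tribes'', and similar constructions all have sub-polynomial windows), so one must genuinely exploit the interplay between the combinatorial intersection structure --- carried by the cross-intersecting pair $\f, \mathcal{I}(\f)$ and its iterates --- and the analytic machinery, rather than invoking a black box. A secondary difficulty is that $\mathrm{Aut}(\f)$ is only assumed transitive (not, say, $2$-homogeneous), so the averaging tricks that usually make symmetric families ``look random'' are weaker than one would like; it may be necessary to first prove the bound under a stronger symmetry hypothesis and then remove it.
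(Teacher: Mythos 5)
The statement you are asked to prove is stated in the paper as an \emph{open conjecture} (Conjecture \ref{conj:best}): the paper proves only the much weaker bound $|\f| \leq 2^n/n^c$ (Conjecture \ref{conj:frankl-81}, via the inequality (\ref{eq:comb}) combined with the Friedgut--Kalai theorem), and exhibits Riordan's construction showing the conjectured bound would be essentially tight. Your proposal correctly reconstructs that known argument --- the reduction to up-sets, the cross-intersecting pair $\f,\mathcal{I}(\f)$, the inequality $\mu_p(\f)^2 + \mu_{1-p^2}(\f)\leq 1$, and the observation that the only obstruction to the full conjecture is the $\Theta(1/\log n)$ width of the Friedgut--Kalai threshold window --- and your diagnosis that one must exploit the intersection structure (not just symmetry, as the tribes example shows) is exactly right. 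But the entire content of the conjecture is concentrated in the ``spreading lemma'' / polynomial-width sharp-threshold statement that you assert without proof. Saying that a Bourgain-type theorem ``or a bespoke argument built on global hypercontractive inequalities'' would ``upgrade the threshold window to $n^{-\delta}$'' is not an argument; it is a restatement of the open problem. No such statement is known, you give no indication of how the $3$-wise intersecting hypothesis would actually enter the proof of it, and consequently the proposal contains no proof of the conjecture.

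One concrete error beyond the missing central step: your suggestion to iterate, claiming that $\mathcal{I}(\f)$, $\mathcal{I}(\mathcal{I}(\f))$, $\ldots$ are all cross-intersecting with $\f$, is false for a family that is only $3$-wise intersecting. A member of $\mathcal{I}(\mathcal{I}(\f))$ is a fourfold intersection $A\cap B\cap C\cap D$ with $A,B,C,D\in\f$, and for this to meet an arbitrary $E\in\f$ one needs $\f$ to be $5$-wise intersecting; already for $\mathcal{I}(\f)$ to be intersecting one needs $\f$ to be $4$-wise intersecting. So the ``whole system of biased-measure inequalities'' you hope to feed into the spreading lemma does not exist under the stated hypothesis. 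Similarly, the closing remark that one might ``first prove the bound under a stronger symmetry hypothesis and then remove it'' has no accompanying mechanism; there is no known reduction from transitive to, say, $2$-homogeneous automorphism groups in this setting.
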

This would be best-possible up to the values of $c$ and $\delta$, as evidenced by the following construction communicated to us by Oliver Riordan. Let $k$ be an odd integer and let $n=k^2$, partition $[n]$ into $k$ `blocks' $B_1, B_2, \ldots,B_k$ each of size $k$, and take $\f \subset \pn$ to be the family of all those subsets of $[n]$ that contain more than half the elements in each block and all the elements in some block; in other words, 
\[\f = \{S \subset [n]: |S \cap B_i| > k/2 \text{ for all } i \in [k], \text{ and } B_j \subset S \text{ for some }j \in [k]\}.\]
It is easy to see that $\f$ is symmetric and $3$-wise intersecting, and that
\[\log_2 |\f| = n - 2\sqrt{n} + o(\sqrt{n}).\] 
It is fairly straightforward to generalise Riordan's construction to show that, for any integer $r \geq 3$, there exists a symmetric $r$-wise intersecting family $\f \subset \pn$ with
\[\log_2 |\f| = n - (r-1)n^{(r-2)/(r-1)} + o\left(n^{(r-1)/r}\right),\]
for infinitely many $n \in \mathbb{N}$. It would be very interesting to determine, for each integer $r \geq 3$, the asymptotic behaviour of the function $f_r$, defined by
\[f_r(n) = \max\{|\f|:\ \f \subset \pn,\ \f \text{ is symmetric and }r\text{-wise intersecting}\}.\]

It is also natural to consider intersection problems about $k$-uniform families, under additional symmetry requirements. For positive integers $n$ and $k$ with $k \leq n/2$, we write
$$s(n,k) := \max\{|\f|:\ \f \subset {[n] \choose k},\ \f \text{ is symmetric and intersecting}\}.$$
The determination of $s(n,k)$ is the `symmetric' equivalent of Question \ref{question:ekr}. In \cite{ekn}, it is proved that
\begin{equation}
\label{eq:upper-sym}
s(n,k) \le \exp\left(-\frac{c(n-2k)\log n}{k( \log n - \log k)} \right) \binom{n}{k},
\end{equation}
where $c >0$ is an absolute constant. Our proof proceeds by approximating $|\f|/{n \choose k}$ by the $p$-biased measure of the up-closure $\mu_p(\f^{\uparrow})$, where $p \approx k/n$ (using Lemma \ref{lem:chernoff}), then applying the Friedgut-Kalai `sharp threshold' theorem (Theorem \ref{thm:fk}, above) to $\f^{\uparrow}$, and then finally using the fact that $\mu_{1/2}(\f^{\uparrow}) \leq 1/2$ (since $\f^{\uparrow} \subset \pn$ is an intersecting family).

We also give a construction showing that for $\sqrt{n} \log n \leq k \leq n/2$, we have
\begin{equation}
\label{eq:lower-sym} s(n,k) \geq \exp\left(-(1+C/\log n)\left(\frac{\log n - \log k}{\log n - \log (n-k)}\right) \log n + \log n \right) \binom{n}{k},\end{equation}
where $C>0$ is an absolute constant. The upper bound (\ref{eq:upper-sym}) and the lower bound (\ref{eq:lower-sym}) together imply that if $k = k(n) \le n/2$, then as $n \to \infty$,
$$s(n,k) = o\left(\binom{n-1}{k-1}\right) \quad \text{iff}  \quad  \frac{1}{1/2 - k/n} = o(\log n).$$
This in turn determines roughly the threshold at which the imposition of the symmetry requirement, forces an upper bound which is $o(1)$-fraction of the bound in the Erd\H{o}s-Ko-Rado theorem. However, there is still a significant gap between the upper and lower bounds above, and it would be interesting to narrow this gap. Even the asymptotic behaviour of the function
$$g(n): = \min\{k:\ s(n,k) >0\}$$
is not known, though it follows from known results that $\sqrt{n} < g(n) \leq 1.1527 \sqrt{n}$ for all $n \in \mathbb{N}$. It would be interesting to determine whether $g(n) = (1+o(1))\sqrt{n}$. As we outline in \cite{ekn}, this problem is connected to a problem in additive combinatorics, raised e.g.\ in \cite{banakh}. We say a subset $S \subset \mathbb{Z}_n$ is a {\em difference cover for $\mathbb{Z}_n$} if $S-S = \mathbb{Z}_n$. (Recall that for a set $S \subset \mathbb{Z}_n$, we define its {\em difference set} by $S-S : = \{s-t:\ s,t \in S\}$.) For $n \in \mathbb{N}$, we write $h(n) := \min\{|S|: S \subset \mathbb{Z}_n,\ S \text{ is a difference cover for }\mathbb{Z}_n\}$. It is easy to see that $g(n) \leq h(n)$ for all $n \in \mathbb{N}$, so if $h(n) = (1+o(1))\sqrt{n}$ then it would follow that $g(n) = (1+o(1))\sqrt{n}$. It is not yet known, however, whether $h(n) =(1+o(1))\sqrt{n}$. We believe that the asymptotic determination of $h(n)$ is an interesting problem in its own right.

\thankyou{We thank Yuval Filmus, Ehud Friedgut, Dylan King, Imre Leader and Eoin Long for very helpful discussions and comments. We thank an anonymous reviewer, and the editors of the Proceedings of the 29th BCC, for their careful reading of the paper, and for their helpful comments and suggestions, which we have incorporated.}

\myaddress


\end{document}